\numberwithin{equation}{subsection}
\theoremstyle{plain}
\newtheorem{assertion}{Assertion}[subsection]
\newtheorem{corollary}[assertion]{Corollary}
\newtheorem{lemma}[assertion]{Lemma}
\newtheorem{proposition}[assertion]{Proposition}
\newtheorem{theorem}[assertion]{Theorem}
\theoremstyle{definition}
\newtheorem{definition}[assertion]{Definition}
\newtheorem{examples}[assertion]{Examples}
\theoremstyle{remark}
\newtheorem{notation}[assertion]{Notation}
\newtheorem{remark}[assertion]{Remark}
\newcommand{\Ac}{\mathcal{A}}
\newcommand{\Bc}{\mathcal{B}}
\newcommand{\Cc}{\mathcal{C}}
\newcommand{\Ec}{\mathcal{E}}
\newcommand{\Fc}{\mathcal{F}}
\newcommand{\Hc}{\mathcal{H}}
\newcommand{\Lc}{\mathcal{L}}
\newcommand{\Mc}{\mathcal{M}}
\newcommand{\Oc}{\mathcal{O}}
\newcommand{\Xc}{\mathcal{X}}
\newcommand{\Zc}{\mathcal{Z}}
\newcommand{\N}{\mathbf{N}}
\newcommand{\Z}{\mathbf{Z}}
\newcommand{\Q}{\mathbf{Q}}
\newcommand{\C}{\mathbf{C}}
\newcommand{\Qbar}{\overline{\Q}}
\newcommand{\Zp}{\Z_p}
\newcommand{\Qp}{\Q_p}
\newcommand{\Cp}{\C_p}
\newcommand{\Qpbar}{\Qbar_p}
\newcommand{\et}{\mathrm{ét}}
\newcommand{\dR}{\mathrm{dR}}
\newcommand{\cris}{\mathrm{cris}}
\newcommand{\e}{\mathrm{e}}
\newcommand{\Bb}{\mathbf{B}}
\newcommand{\BdR}{\Bb_\dR}
\newcommand{\Bcris}{\Bb_\cris}
\newcommand{\Be}{\Bb_\e}
\newcommand{\dprime}{{\prime\prime}}
\newcommand{\Mod}{\mathrm{Mod}}
\newcommand{\Rep}{\mathrm{Rep}}
\newcommand{\Bun}{\mathrm{Bun}}
\newcommand{\Coh}{\mathrm{Coh}}
\newcommand{\XFF}{X^\mathrm{FF}}
\newcommand{\ab}{\mathrm{ab}}
\newcommand{\ur}{\mathrm{ur}}
\newcommand{\free}{\mathrm{free}}
\newcommand{\fl}{\mathrm{fl.}}
\newcommand{\gfl}{\mathrm{g.fl.}}
\newcommand{\tor}{\mathrm{tor}}
\newcommand{\HN}{\mathrm{HN}}
\newcommand{\HT}{\mathrm{HT}}
\newcommand{\Iw}{\mathrm{Iw}}
\newcommand{\pcris}{\mathrm{pcris}}
\newcommand{\disc}{\mathrm{disc}}
\DeclareMathOperator{\Hom}{Hom}
\DeclareMathOperator{\Ker}{Ker}
\DeclareMathOperator{\Img}{Im}
\DeclareMathOperator{\Ext}{Ext}
\DeclareMathOperator{\Spec}{Spec}
\DeclareMathOperator{\Proj}{Proj}
\DeclareMathOperator{\DD}{D}
\DeclareMathOperator{\HH}{H}
\DeclareMathOperator{\KK}{K}
\DeclareMathOperator{\Fil}{Fil}
\DeclareMathOperator{\Gal}{Gal}
\DeclareMathOperator{\Sym}{Sym}
\DeclareMathOperator{\height}{ht}
\DeclareMathOperator{\rank}{rank}
\DeclareMathOperator{\length}{length}
\DeclareMathOperator{\res}{res}
\DeclareMathOperator{\cores}{cores}
\DeclareMathOperator{\id}{id}
\DeclareMathOperator{\DDb}{\mathbf{D}}
\newcommand{\similarrightarrow}{\xrightarrow{\sim}}
\newcommand{\keywords}[1]{\noindent{\small\textbf{Keywords:}~#1.}\par}
\newcommand{\msc}[2]{\noindent{\small\textbf{Mathematics Subject Classification (2020):}~Primary:~#1. Secondary:~#2.}\par}
\newcommand{\email}[1]{\href{mailto:#1}{\nolinkurl{#1}}}
\title{\textbf{Bloch--Kato groups over perfectoid fields and Galois theory of \(p\)\=/adic periods}}
\author{Gautier \textsc{Ponsinet}}
\date{\today}
\begin{document}
\maketitle
\begin{abstract}
	We relate the structure of the Bloch\--Kato groups associated with a de Rham Galois representation over a perfectoid field to the Galois theory of the ring \(\BdR^+\) of \(p\)\=/adic periods.
	As an application, we answer the question raised by Coates and Greenberg and motivated by Iwasawa theory to compute the Bloch\--Kato groups over perfectoid fields in new cases, generalising results of Coates and Greenberg and the author.
	Our method relies on the classification of vector bundles over the Fargues\--Fontaine curve.
\end{abstract}
\bigskip
\keywords{Iwasawa theory, perfectoid fields}
\msc{\texttt{11R23}}{\texttt{14G45}}
\tableofcontents

\section{Introduction}
\label{sec:introduction}

The present article is concerned with the question raised by Coates and Greenberg~\cite[131]{CoatesGreenberg1996} and motivated by Iwasawa theory to compute the Bloch\--Kato groups associated with a de Rham Galois representation over a perfectoid field.
The purpose of the present article is to answer Coates and Greenberg's question in new cases, generalising the results and the method of the article~\cite{Ponsinet2024:1}.

\subsection{Motivation}
\label{subsec:motivation}

Let \(p\) be a prime number.
Let \(\Qpbar\) be an algebraic closure of the field \(\Qp\) of \(p\)\=/adic numbers.
Let \(K\) be a finite extension of \(\Qp\) contained in \(\Qpbar\).
We denote by \(G_K = \Gal(\Qpbar/K)\) the absolute Galois group of \(K\).
Let \(V\) be a \(p\)\=/adic representation of \(G_K\), that is, a finite dimensional \(\Qp\)\=/vector space equipped with a continuous and \(\Qp\)\=/linear action of \(G_K\), and let \(T\) be a \(\Zp\)\=/lattice in \(V\) stable under the action of \(G_K\), that is, a finitely generated \(\Zp\)\=/submodule of \(V\), spanning \(V\) over \(\Qp\), and stable under the action of \(G_K\).

Bloch and Kato~\cite{BlochKato1990} have defined, for each finite extension \(K^\prime\) of \(K\), subgroups in Galois cohomology using \(p\)\=/adic Hodge theory (see \S \ref{subsec:BK})
\[
	\HH^1_e(K^\prime,V/T) \subseteq \HH^1_f(K^\prime,V/T) \subseteq \HH^1_g(K^\prime,V/T) \subseteq \HH^1(K^\prime,V/T),
\]
which are involved in their conjecture on the special values of \(L\)\=/functions of motives~\cite{Fontaine1992}.

Let \(L\) be an algebraic extension of \(K\) contained in \(\Qpbar\).
For each \(\ast \in \{e,f,g\}\), we consider the group
\[
	\HH^1_\ast(L,V/T) = \varinjlim_{\res, K^\prime} \HH^1_\ast(K^\prime,V/T),
\]
where \(K^\prime\) runs over all the finite extensions of \(K\) contained in \(L\), and the transition morphisms are the restriction maps.
The Bloch\--Kato subgroups thus defined
\[
	\HH^1_e(L,V/T) \subseteq \HH^1_f(L,V/T) \subseteq \HH^1_g(L,V/T) \subseteq \HH^1(L,V/T)
\]
are involved in the Iwasawa main conjecture for motives~\cite{CoatesFukayaKatoSujathaVenjakob2005,FukayaKato2006}.

Coates and Greenberg~\cite[131]{CoatesGreenberg1996} have raised the question motivated by Iwasawa theory to compute the Bloch\--Kato groups when the completion \(\hat{L}\) of \(L\) for the \(p\)\=/adic valuation topology is a perfectoid field~\cite[\S 3]{Scholze2012}.
In particular, if \(\hat{L}\) is a perfectoid field, then Coates and Greenberg have computed the Bloch\--Kato groups when \(T\) is the \(p\)\=/adic Tate module associated with an abelian variety \(A\) defined over \(K\) (see Remark~\ref{intro:remark:previous_results}), in which case~\cite[Example~3.11]{BlochKato1990} \(V/T\) is the group of \(p\)\=/power torsion points \(A[p^\infty]\) of \(A\), and the Bloch\--Kato groups are all equal and coincide with the image of the Kummer map
\[
	A(L) \otimes_{\Z} \Qp/\Zp \rightarrow \HH^1(L,A[p^\infty]).
\]

We refer the reader to the introduction of the articles~\cite{CoatesGreenberg1996,Perrin-Riou2000,Ponsinet2024:1} and the references therein for more details about this question, its history and its motivation from Iwasawa theory which can be traced back to the foundational article~\cite{Mazur1972} by Mazur.

\begin{remark}
	Coates and Greenberg~\cite{CoatesGreenberg1996} use the notion, which they have introduced, of deeply ramified extensions.
	Recall that the extension \(L/K\) is deeply ramified if and only if the field \(\hat{L}\) is perfectoid (see~\cite[Remark~3.3]{Scholze2012} or~\cite[Lemma~2.21]{KedlayaPottharst2018}).
\end{remark}

\begin{remark}
	Let \(T^\ast(1) = \Hom_{\Zp}(T,\Zp(1))\) be the Tate dual representation of \(T\).
	Let
	\[
		\HH^1_\Iw(K,L,T^\ast(1)) = \varprojlim_{\cores, K^\prime} \HH^1(K^\prime,T^\ast(1))
	\]
	be the first Iwasawa cohomology group of the extension \(L/K\) with coefficients in \(T^\ast(1)\) , where \(K^\prime\) runs over all the finite extensions of \(K\) contained in \(L\), and the transition morphisms are the corestriction maps.
	Bloch and Kato have also defined subgroups
	\[
		\HH^1_e(K^\prime,T^\ast(1)) \subseteq \HH^1_f(K^\prime,T^\ast(1)) \subseteq \HH^1_g(K^\prime,T^\ast(1)) \subseteq \HH^1(K^\prime,T^\ast(1)).
	\]
	For each \(\ast \in \{e,f,g\}\), the modules of \(\ast\)\=/universal norms associated with \(T^\ast(1)\) in the extension \(L/K\) is defined by
	\[
		\HH^1_{\Iw,\ast}(K,L,T^\ast(1)) = \varprojlim_{\cores, K^\prime} \HH^1_\ast(K^\prime,T^\ast(1)).
	\]
	Local Tate duality induces a perfect pairing
	\[
		\HH^1(L,V/T) \times \HH^1_\Iw(K,L,T^\ast(1)) \rightarrow \Qp/\Zp.
	\]
	If \(V\) is de Rham, then under local Tate duality (see \S \ref{subsec:universal_norms}) the groups
	\begin{equation} \label{intro:eq:BK}
		\HH^1_e(L,V/T) \subseteq \HH^1_f(L,V/T) \subseteq \HH^1_g(L,V/T)
	\end{equation}
	are respectively the orthogonal complements of the modules of universal norms
	\begin{equation} \label{intro:eq:universal_norms}
		\HH^1_{\Iw,g}(K,L,T^\ast(1)) \supseteq \HH^1_{\Iw,f}(K,L,T^\ast(1)) \supseteq \HH^1_{\Iw,e}(K,L,T^\ast(1)).
	\end{equation}
	To compute the Bloch\--Kato groups~\eqref{intro:eq:BK} is therefore equivalent to compute the modules of universal norms~\eqref{intro:eq:universal_norms}.
\end{remark}

\subsection{Main results}
\label{subsec:results}

We will first prove the following relation between the Bloch\--Kato groups.
We consider the group \(\HH^1(L,V/T)\) and its subgroups as discrete \(\Zp\)\=/modules.
Recall that the Pontryagin dual of a discrete \(\Zp\)\=/module \(M\) is the compact Hausdorff \(\Zp\)\=/module \(\Hom_{\Zp}(M,\Qp/\Zp)\).

\begin{proposition} \label{intro:proposition:comparison}
	If \(V\) is de Rham, then the Pontryagin dual of the quotient
	\[
		\HH^1_g(L,V/T)/\HH^1_e(L,V/T)
	\]
	is a free \(\Zp\)\=/module of finite rank bounded independently of \(L\).
\end{proposition}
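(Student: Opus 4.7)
The plan is to use local Tate duality to reformulate the statement in terms of universal norms, and then to describe the resulting quotient by means of the Fargues--Fontaine curve.

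First, by the duality recalled in the remark immediately preceding the statement, the Pontryagin dual of $\HH^1_g(L,V/T)/\HH^1_e(L,V/T)$ is canonically identified with the compact \(\Zp\)-module
\[
    \HH^1_{\Iw,g}(K,L,T^\ast(1))/\HH^1_{\Iw,e}(K,L,T^\ast(1)),
\]
so it is enough to prove that this quotient is \(\Zp\)-free of finite rank bounded independently of \(L\).

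To do so I would realise the universal-norm groups on the Fargues--Fontaine curve attached to \(\hat L^\flat\). Since \(V\) is de Rham, \(T^\ast(1)\) gives rise to a modification of a semistable vector bundle on this curve, whose stalk at the distinguished point carries the filtered \(\BdR\)-data of \(V^\ast(1)\) together with the lattice \(\DD_\dR(V^\ast(1))^+\). The modules \(\HH^1_{\Iw,e}\) and \(\HH^1_{\Iw,g}\) should then be expressible, after taking invariants under \(\Gal(\hat L/K)\), as kernels of morphisms built from the global sections of this bundle and from \(\BdR \otimes_{\Qp} V^\ast(1)\). Their difference embeds into a torsion-free \(\BdR\)-module of finite length whose \(\Zp\)-rank is controlled by \(\dim_{\Qp} V\) and by the spread of the Hodge--Tate weights of \(V\); because the filtered \(\BdR\)-module attached to \(V\) is insensitive to the intermediate completion, the resulting bound is independent of \(L\).

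The main obstacle will be setting up this description of the universal norms \emph{uniformly} over all finite subextensions \(K^\prime \subset L\), so that it survives the projective limit along the corestriction maps and yields an honestly torsion-free quotient. Freeness, as opposed to mere finite generation, should follow from the saturation of both \(\HH^1_{\Iw,e}\) and \(\HH^1_{\Iw,g}\) inside the ambient torsion-free \(\BdR\)-module attached to \(V^\ast(1)\); the dictionary between the Bloch--Kato formalism and the Galois theory of \(\BdR^+\) announced in the abstract is precisely what is needed to make this saturation explicit, and the Harder--Narasimhan classification of vector bundles on the curve is what converts the length bound into a uniform rank bound.
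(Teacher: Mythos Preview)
Your first step, passing via local Tate duality to the quotient
\[
    \HH^1_{\Iw,g}(K,L,T^\ast(1))/\HH^1_{\Iw,e}(K,L,T^\ast(1)),
\]
is exactly how the paper begins. After that, however, your proposal diverges sharply from the paper and runs into real trouble.

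The paper does \emph{not} use the Fargues--Fontaine curve for this proposition at all. Instead it observes that the quotient above injects into
\[
    \varprojlim_{\cores,K'} \HH^1_g(K',V^\ast(1))/\HH^1_e(K',V^\ast(1)),
\]
and then bounds each term of this system uniformly by the classical Bloch--Kato formulas
\[
    \dim_{\Qp} \HH^1_f(K',W)/\HH^1_e(K',W) = \dim_{\Qp} \DDb_{\cris,K'}(W)^{\varphi=1},\qquad
    \dim_{\Qp} \HH^1_g(K',W)/\HH^1_f(K',W) = \dim_{\Qp} \DDb_{\cris,K'}(W)^{\varphi=p^{-1}}.
\]
The uniform bound on these Frobenius eigenspaces as \(K'\) varies comes from the Dieudonné--Manin classification applied to the \(\varphi\)-module \(\widehat{\DDb}_{\pcris}(W)\) over \(\hat{\Q}_p^{\ur}\): the \(\varphi=p^i\) eigenspace is already finite-dimensional over \(\Qp\) before taking \(G_{K'}\)-invariants. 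The inverse limit is then a finite-dimensional \(\Qp\)-vector space, and a \(\Zp\)-submodule of such is free of bounded rank.

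Your approach has two concrete problems. First, the proposition places no perfectoid hypothesis on \(\hat L\), so there is no Fargues--Fontaine curve over \(\hat L^\flat\) to work with; the bound must be obtained level-by-level over the finite subextensions \(K'\). Second, the phrase ``torsion-free \(\BdR\)-module of finite length'' is self-contradictory (\(\BdR\) is a field), and the vague appeal to Harder--Narasimhan does not substitute for the explicit control coming from \(\DDb_{\cris}\). The missing idea is precisely that the relevant quotient at each finite level is governed by the crystalline Frobenius eigenspaces, whose dimension is bounded independently of the level by Dieudonné--Manin.
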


From the perspective of Iwasawa theory, the Bloch\--Kato groups are thus closely related with each other, and it is therefore enough to study one of them.
We study the exponential Bloch\--Kato group \(\HH^1_e(L,V/T)\).

We need the following notation to state the main result of the present article.

\begin{itemize}
	\item The definition of the Bloch\--Kato groups (see \S \ref{subsec:BK}) involves the field of \(p\)\=/adic periods \(\BdR\) and subrings of \(\BdR\) introduced by Fontaine~\cite{Fontaine1994:II}.
	In particular, the natural filtration \((\Fil^n \BdR)_{n \in \Z}\) on \(\BdR\) induces a decreasing separated and exhaustive filtration by subgroups
	\[
		(\Fil^n \HH^1_e(L,V/T))_{n \in \Z}
	\]
	on the exponential Bloch\--Kato groups \(\HH^1_e(L,V/T)\).
	\item Let \(\BdR^+ = \Fil^0 \BdR\) be the ring of \(p\)\=/adic periods, and for each integer \(n \geq 1\), let \(\Bb_n = \BdR^+/\Fil^n \BdR\).
	Recall that \(\BdR^+\) is endowed with a canonical topology and a continuous action by \(G_K\) which induces a structure on each \(\Bb_n\) of \(\Qp\)\=/Banach space equipped with a continuous and \(\Qp\)\=/linear action by \(G_K\).
	Recall also that \(\Qpbar\) can be identified with a subfield of \(\BdR^+\), and we have \(\Qpbar \subset \Bb_n\).
	In particular, there is an inclusion \(L \subset \Bb_n^{G_L}\), and we consider \(\Bb_n^{G_L}\) endowed with the subspace topology from \(\Bb_n\).
	\item If \(V\) is de Rham, then, for each integer \(n \geq 1\), we denote by \(V^{\leq 0,> n}\) the maximal quotient representation of \(V\) whose Hodge\--Tate weights are all in the set \(\Z \setminus [1,n]\), and by \(T^{\leq 0,> n}\) the image of \(T\) in \(V^{\leq 0,> n}\).
	The quotient map \(V/T \rightarrow V^{\leq 0,> n}/T^{\leq 0,> n}\) induces a morphism
	\[
		\pi_{0,n} : \HH^1(L,V/T) \rightarrow \HH^1(L,V^{\leq 0,> n}/T^{\leq 0,> n}).
	\]
	Note that if the Hodge\--Tate weights of \(V\) are all \(\leq n\), then the representation \(V^{\leq 0,> n}\) is the maximal quotient representation of \(V\) whose Hodge\--Tate weights are all \(\leq 0\), and we then simply denote the representation \(V^{\leq 0,> n}\) by \(V^{\leq 0}\), the lattice \(T^{\leq 0,> n}\) by \(T^{\leq 0}\), and the map \(\pi_{0,n}\) by
	\[
		\pi_0 : \HH^1(L,V/T) \rightarrow \HH^1(L,V^{\leq 0}/T^{\leq 0}).
	\]
\end{itemize}

The main result of the present article is then the following.

\begin{theorem} \label{intro:theorem:main}
	Let \(n \geq 1\) be an integer.
	If \(V\) is de Rham and if \(\hat{L}\) is a perfectoid field such that \(L\) is dense in \(\Bb_n^{G_L}\), then the map \(\pi_{0,n}\) induces an isomorphism
	\[
		\HH^1(L,V/T)/\Fil^{-n} \HH^1_e(L,V/T) \similarrightarrow \HH^1(L,V^{\leq 0,> n}/T^{\leq 0,> n}).
	\]
\end{theorem}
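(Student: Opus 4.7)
The plan is to reduce, via a short exact sequence of representations, to a statement about the case where all Hodge\--Tate weights lie in $[1,n]$. Let $W$ be the kernel of the quotient map $V \twoheadrightarrow V^{\leq 0, > n}$; by the maximality of the quotient, $W$ is a subrepresentation whose Hodge\--Tate weights all lie in $[1,n]$. Setting $T_W = W \cap T$, so that $T^{\leq 0, > n} = T/T_W$, we obtain a short exact sequence of discrete $G_L$-modules
\[
0 \to W/T_W \to V/T \to V^{\leq 0, > n}/T^{\leq 0, > n} \to 0
\]
whose long exact cohomology sequence reads
\[
\HH^1(L, W/T_W) \to \HH^1(L, V/T) \xrightarrow{\pi_{0,n}} \HH^1(L, V^{\leq 0, > n}/T^{\leq 0, > n}) \to \HH^2(L, W/T_W).
\]
Since $\hat L$ is perfectoid, a standard result on the Galois cohomology of deeply ramified extensions gives $\HH^2(L, W/T_W) = 0$; in particular, $\pi_{0,n}$ is surjective. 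The theorem therefore reduces to identifying the kernel of $\pi_{0,n}$, that is, the image of $\HH^1(L, W/T_W)$ in $\HH^1(L, V/T)$, with $\Fil^{-n} \HH^1_e(L, V/T)$.

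For the inclusion $\Fil^{-n} \HH^1_e(L,V/T) \subseteq \Img\bigl(\HH^1(L, W/T_W) \to \HH^1(L, V/T)\bigr)$, I would use that by construction of the Bloch\--Kato exponential and its filtration, every class in $\Fil^{-n} \HH^1_e(L, V/T)$ lifts, via the connecting map of the fundamental exact sequence, from a $G_L$-invariant element of $V \otimes_{\Q_p} (t^{-n}\BdR^+/\BdR^+)$; such invariants detect only the Hodge\--Tate weights of $V$ lying in $[1,n]$, and hence factor through the subrepresentation $W$. For the reverse inclusion, naturality of the Bloch\--Kato filtration under morphisms of representations reduces the claim to proving that
\[
\HH^1(L, W/T_W) = \Fil^{-n} \HH^1_e(L, W/T_W)
\]
whenever $W$ is de Rham with all Hodge\--Tate weights in $[1,n]$.

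This last equality is the crux of the argument: it amounts to showing that the filtered exponential $(W \otimes_{\Q_p} (t^{-n}\BdR^+/\BdR^+))^{G_L} \to \HH^1(L, W/T_W)$ is surjective. By the local Tate duality recalled in the remark above, this is equivalent to controlling the module of $e$\=/universal norms attached to the Tate dual $W^\ast(1)$. I would establish it through the classification of vector bundles on the Fargues\--Fontaine curve $\XFF$: the representation $W$ gives rise to a $G_L$-equivariant bundle all of whose slopes are strictly positive (reflecting the constraint on Hodge\--Tate weights), and the density hypothesis that \(L\) is dense in $\Bb_n^{G_L}$ is exactly what is required for the $L$-rational points of this finite-length $\Q_p$-Banach space to witness the relevant global sections.

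The main obstacle will be this last, genuinely geometric, step: converting the density assumption and the Fargues\--Fontaine classification into the surjectivity of the filtered exponential for $W$. All the rest---the short-exact-sequence reduction, the vanishing of $\HH^2(L, W/T_W)$, and the formal manipulation of the filtrations on the period rings---is essentially bookkeeping around this central geometric input, which is where the perfectoid hypothesis does the work and where the generalisation over the author's previous article~\cite{Ponsinet2024:1} must be concentrated.
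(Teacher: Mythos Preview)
Your reduction contains a genuine error. You assert that the kernel \(W = V_{0,n}\) of \(V \twoheadrightarrow V^{\leq 0,>n}\) has all its Hodge\--Tate weights in \([1,n]\), and your entire treatment of the crux step rests on this. But maximality of the quotient does not imply this. Take \(V\) an irreducible two-dimensional de Rham representation with Hodge\--Tate weights \(0\) and \(1\) (for instance the rational Tate module of a non-CM elliptic curve) and \(n=1\): since \(V\) is irreducible and has \(1\) among its weights, its only quotient with weights in \(\Z\setminus\{1\}\) is the zero representation, so \(V^{\leq 0,>1}=0\) and \(W=V\) has weights \(0,1\), not all in \([1,1]\). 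What is true, and what the paper uses (Lemma~\ref{lemma:V0n}), is the weaker statement that \(W\) admits no non-trivial \emph{quotient} with weights in \(\Z\setminus[1,n]\); this suffices, via Proposition~\ref{proposition:HN_E+n}, to force the Harder\--Narasimhan slopes of the \emph{modified} bundle \(\Ec_+^n(W)\) to be strictly positive. Your claim that ``the representation \(W\) gives rise to a \(G_L\)-equivariant bundle all of whose slopes are strictly positive'' is also incorrect as stated: the bundle \(\Ec(W)\) attached to any \(p\)-adic representation is semistable of slope \(0\); it is the modification \(\Ec_+^n(W)\), not \(\Ec(W)\), that carries positive slopes.

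Even with these points corrected, your reduction does not simplify matters: the statement \(\HH^1(L,W/T_W)=\Fil^{-n}\HH^1_e(L,W/T_W)\) is precisely the theorem for \(W\) in the special case \(W^{\leq 0,>n}=0\), and proving it still requires the full machinery of the universal extension \(E_+^n(W)\), the passage from \(E_\delta^n\) to \(E_+^n\) via the density hypothesis (Proposition~\ref{proposition:density}), and the Fargues\--Fontaine vanishing over perfectoid fields (Proposition~\ref{proposition:cohomology_almost_perfectoid}). The paper avoids the detour through \(W\) entirely: it applies this machinery once to \(V\) itself, identifying \(E_+^n(V)^0\) directly with \(V^{\leq 0,>n}\) via the adjunction property of \(\tau_\HT^{\leq 0,>n}\), which is both cleaner and immune to the weight issue above.
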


From Theorem~\ref{intro:theorem:main}, we then obtain the following descriptions of the full exponential Bloch\--Kato group.

\begin{corollary} \label{intro:corollary:main}
	Let \(n \geq 1\) be an integer.
	Assume that \(V\) is de Rham and that \(\hat{L}\) is a perfectoid field such that \(L\) is dense in \(\Bb_n^{G_L}\).
	\begin{enumerate}
		\item \label{intro:corollary:main:1} If the quotient representation \(V^{\leq 0,> n}\) is trivial, then
		\[
			\HH^1_e(L,V/T) = \HH^1(L,V/T).
		\]
		\item \label{intro:corollary:main:2} If the Hodge\--Tate weights of \(V\) are all \(\leq n\), then the map \(\pi_0\) induces an isomorphism
		\[
			\HH^1(L,V/T)/\HH^1_e(L,V/T) \similarrightarrow \HH^1(L,V^{\leq 0}/T^{\leq 0}).
		\]
	\end{enumerate}
\end{corollary}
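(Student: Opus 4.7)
The plan is to derive both statements as direct specializations of Theorem~\ref{intro:theorem:main}. I do not expect to need an independent argument; rather, each part should follow by substituting the stated hypothesis into the isomorphism
\[
	\HH^1(L,V/T)/\Fil^{-n} \HH^1_e(L,V/T) \similarrightarrow \HH^1(L,V^{\leq 0,>n}/T^{\leq 0,>n})
\]
and unwinding what the two sides become.

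For part~\ref{intro:corollary:main:1}, I would observe that if \(V^{\leq 0,>n}\) is trivial, then so is \(T^{\leq 0,>n}\), and consequently the right\=/hand side \(\HH^1(L,V^{\leq 0,>n}/T^{\leq 0,>n})\) vanishes. Theorem~\ref{intro:theorem:main} then gives \(\Fil^{-n} \HH^1_e(L,V/T) = \HH^1(L,V/T)\). Since by construction \(\Fil^{-n} \HH^1_e(L,V/T) \subseteq \HH^1_e(L,V/T) \subseteq \HH^1(L,V/T)\), the sandwich collapses to the asserted equality \(\HH^1_e(L,V/T) = \HH^1(L,V/T)\).

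For part~\ref{intro:corollary:main:2}, the key auxiliary claim is that when all Hodge\--Tate weights of \(V\) are \(\leq n\), the filtration on the exponential Bloch\--Kato group stabilizes at index \(-n\); that is,
\[
	\Fil^{-n} \HH^1_e(L,V/T) = \HH^1_e(L,V/T).
\]
This should be immediate from the construction of the filtration in \S \ref{subsec:BK}: the filtration is induced from \(\Fil^\bullet \BdR = t^\bullet \BdR^+\), and the Hodge\--Tate hypothesis bounds the denominators in \(t\) that can appear in a cocycle representative coming from \(\BdR \otimes V\). Granting this stabilization, the notational conventions \(V^{\leq 0,>n} = V^{\leq 0}\), \(T^{\leq 0,>n} = T^{\leq 0}\), and \(\pi_{0,n} = \pi_0\) are exactly those built into the paper's notation, so Theorem~\ref{intro:theorem:main} specializes directly to the isomorphism \(\HH^1(L,V/T)/\HH^1_e(L,V/T) \similarrightarrow \HH^1(L,V^{\leq 0}/T^{\leq 0})\) induced by \(\pi_0\).

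The only step requiring real care is the stabilization \(\Fil^{-n}\HH^1_e(L,V/T) = \HH^1_e(L,V/T)\) under the bound on Hodge\--Tate weights; I expect this to reduce to a direct inspection of the definition of \(\Fil^n \HH^1_e\) given in \S \ref{subsec:BK}, but it is the one place where the proof is not purely formal manipulation of the main theorem.
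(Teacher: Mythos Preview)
Your proposal is correct and matches the paper's approach: both parts are deduced from Theorem~\ref{intro:theorem:main}, with part~\ref{intro:corollary:main:2} additionally requiring the stabilization \(\Fil^{-n}\HH^1_e(L,V/T) = \HH^1_e(L,V/T)\) under the weight bound. The paper records this stabilization as Corollary~\ref{corollary:BK_HT}, proved not by inspecting cocycles but via Lemma~\ref{lemma:E+_Fil} and Proposition~\ref{proposition:BK_delta}: when all Hodge\--Tate weights are \(\leq n\) one has \(\DDb_\dR(V)=\Fil^{-n}\DDb_\dR(V)\), hence \(t_V=\Fil^{-n}t_V\) and \(E_+^n(V)=E_+(V)\), which forces the two quotients \(\HH^1(L,E_\delta^n(V/T))\) and \(\HH^1(L,E_\delta(V/T))\) to coincide.
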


Theorem~\ref{intro:theorem:main} therefore relates the structure of the Bloch\--Kato groups over perfectoid fields to the Galois theory of the ring \(\BdR^+\) of \(p\)\=/adic periods.

\begin{examples} \label{intro:examples:density}
	We recall results about the density of \(L\) in \(\Bb_n^{G_L}\).
	\begin{enumerate}
		\item Colmez~\cite{Colmez1994,Colmez2012} has proved that \(\Qpbar\) is dense in \(\BdR^+\).
		\item If \(\hat{L}\) is \emph{not} a perfectoid field, then Iovita and Zaharescu~\cite[Theorem~0.1]{IovitaZaharescu1999:1} have proved that, for each \(n \in \N\), there are isomorphisms of topological groups
		\[
			(\Bb_\dR^+)^{G_L} \similarrightarrow \Bb_n^{G_L} \similarrightarrow \hat{L}.
		\]
		Hence, if the field \(\hat{L}\) \emph{not} perfectoid, then \(L\) is dense in \((\Bb_\dR^+)^{G_L}\).
		\item Let \(\Cp\) be the completion of \(\Qpbar\) for the \(p\)\=/adic valuation topology.
		Recall that there exists an isomorphism \(\Bb_1 \similarrightarrow \Cp\) of \(p\)\=/adic Banach representations of \(G_K\).
		By the Ax\--Sen\--Tate theorem~\cite{Tate1967}, there are isomorphisms of \(p\)\=/adic Banach spaces
		\[
			\Bb_1^{G_L} \similarrightarrow \Cp^{G_L} \similarrightarrow \hat{L}.
		\]
		Hence, the field \(L\) is always dense in \(\Bb_1^{G_L}\).
		\item If \(L/K\) is an infinitely ramified \(\Zp\)\=/extension, then the field \(\hat{L}\) is perfectoid, and Berger~\cite{Berger2024} has proved that \(L\) is \emph{not} dense in \(\Bb_2^{G_L}\).
		The case of the cyclotomic \(\Zp\)\=/extension has also been proved by Colmez~\cite[Proposition~8.2]{IovitaZaharescu1999:1}.
		\item If \(L= K(p^{1/p^{\infty}})\) is the extension generated over \(K\) by all the \(p\)\=/power roots of \(p\), then the field \(\hat{L}\) is perfectoid, and Iovita and Zaharescu~\cite[Corollary~8.1]{IovitaZaharescu1999:1} have proved that \(L\) is \emph{not} dense in \(\Bb_2^{G_L}\).
		\item Let \(m \geq 2\). If \(K=\Q_{p^m}\) is the unique unramified extension of \(\Qp\) with residue field with \(p^m\) elements and if \(L = K^{\ab}\) is the maximal abelian extension of \(K\), then the field \(\hat{L}\) is perfectoid, and Iovita and Zaharescu~\cite[Corollary~8.2]{IovitaZaharescu1999:1} have proved that \(L\) is dense in \(\Bb_2^{G_L}\).
	\end{enumerate}
	Note that Iovita and Zaharescu end their article~\cite{IovitaZaharescu1999:1} with open problems concerning the Galois theory of \(\BdR^+\).
\end{examples}

\begin{remark} \label{intro:remark:previous_results}
	By the Ax\--Sen\--Tate theorem, Theorem~\ref{intro:theorem:main} and Corollary~\ref{intro:corollary:main} holds for any perfectoid field \(\hat{L}\) in the case \(n=1\).
	In particular, the point~\ref{intro:corollary:main:2} of the Corollary~\ref{intro:corollary:main} in the case \(n=1\) is the main result of the article~\cite{Ponsinet2024:1} and a generalisation of the aforementioned theorem by Coates and Greenberg~\cite{CoatesGreenberg1996} for abelian varieties. Indeed, recall~\cite{Tate1967,Fontaine1982} that the rational \(p\)\=/adic Tate module \(V_p(A)\) associated with an abelian variety \(A\) defined over \(K\) is a de Rham \(p\)\=/adic representation of \(G_K\) whose Hodge\--Tate weights are all in \([0,1]\), and thus, the point~\ref{intro:corollary:main:2} of the Corollary~\ref{intro:corollary:main} applies to \(V_p(A)\).
	The new computations of the exponential Bloch\--Kato groups obtained in Theorem~\ref{intro:theorem:main} and Corollary~\ref{intro:corollary:main} thereby answer the question raised by Coates and Greenberg~\cite[131]{CoatesGreenberg1996} in new cases.
\end{remark}

\begin{remark}
	If \(L\) is the cyclotomic \(\Zp\)\=/extension of \(K\), then \(L\) is not dense in \(\Bb_2^{G_L}\) by the aforementioned results of Berger and Colmez, hence Theorem~\ref{intro:theorem:main} and Corollary~\ref{intro:corollary:main} apply only in the case \(n = 1\).
	However, Berger~\cite{Berger2005}, generalising results by Perrin-Riou~\cite{Perrin-Riou2000,Perrin-Riou2001}, has computed the Bloch\--Kato groups associated with a de Rham Galois representation over the cyclotomic extension without any restriction on the Hodge\--Tate weights of the representation.
	Berger has proved that if \(V\) is de Rham and if \(L\) is the cyclotomic \(\Zp\)\=/extension of \(K\), then the map \(\pi_0\) induces a surjective morphism
	\[
		\HH^1(L,V/T)/\HH^1_e(L,V/T) \rightarrow \HH^1(L,V^{\leq 0}/T^{\leq 0}) \rightarrow 0,
	\]
	of which the Pontryagin dual of its kernel is a finitely generated \(\Zp\)\=/module.
\end{remark}

\begin{remark}
	Additionally to Coates and Greenberg's open question~\cite[131]{CoatesGreenberg1996} to compute the Bloch\--Kato groups over perfectoid fields, Büyükboduk~\cite[Conjectures~2.5, 2.6, and~2.7]{Büyükboduk2015} has conjectured that the structure of the Bloch\--Kato groups over the anticyclotomic \(\Zp\)\=/extension should be similar to the structure of the Bloch\--Kato groups over the cyclotomic \(\Zp\)\=/extension computed by Berger~\cite{Berger2005} .
\end{remark}

\begin{remark}
	We precise the role of perfectoid fields in Theorem~\ref{intro:theorem:main} and Corollary~\ref{intro:corollary:main}.
	Let \(G\) be a connected \(p\)\=/divisible group of height \(\height(G)\) and dimension \(\dim(G)\) defined over the ring of integers \(\Oc_K\) of \(K\).
	Let \(T_p(G)\) be the \(p\)\=/adic Tate module of \(G\), and let \(V_p(G) = \Qp \otimes_{\Zp} T_p(G)\) be the rational \(p\)\=/adic Tate module of \(G\), hence, \(V_p(G)/T_p(G) = G[p^\infty]\) is the group of \(p\)\=/power torsion points of \(G\).
	Recall~\cite[Example~3.10]{BlochKato1990} that the exponential Bloch\--Kato group \(\HH^1_e(L,G[p^\infty])\) then coincides with the image of the Kummer map
	\[
		G(\Oc_L) \otimes_{\Zp} \Qp/\Zp \rightarrow \HH^1(L,G[p^\infty]).
	\]
	Recall~\cite{Tate1967,Fontaine1982} that the representation \(V_p(G)\) is de Rham with Hodge\--Tate weights in \([0,1]\) and that the quotient representation \(V_p(G)^{\leq 0}\) is the rational \(p\)\=/adic Tate module \(V_p(G^{\et})\) associated with the maximal étale quotient \(G^\et\) of \(G\) which is therefore trivial since \(G\) is assumed to be connected.
	The point~\ref{intro:corollary:main:2} of Corollary~\ref{intro:corollary:main} in the case \(n=1\) then applies to \(G\) and thus the following statement holds.
	\begin{itemize}
		\item If \(\hat{L}\) is a perfectoid field, then we have \(\HH^1_e(L,G[p^\infty]) = \HH^1(L,G[p^\infty])\).
	\end{itemize}

	On the one hand, if \(\height(G) = \dim(G)\), Coates and Greenberg~\cite[Proposition~4.7]{CoatesGreenberg1996} have proved that if \(L/K\)  is an infinitely wildly ramified, then \(\HH^1_e(L,G[p^\infty]) = \HH^1(L,G[p^\infty])\).
	(Coates and Greenberg state this result for abelian varieties, their proof is valid for \(p\)\=/divisible groups.)
	Note that if \(\hat{L}\) is perfectoid, then \(L/K\) is infinitely wildly ramified~\cite[Lemma~2.12]{CoatesGreenberg1996}.
	Thus, for specific representations, there exists a wider class of fields than perfectoid fields for which the equality \(\HH^1_e(L,V/T) = \HH^1(L,V/T)\) holds.

	On the other hand, if \(\height(G) > \dim(G)\), then Bondarko~\cite{Bondarko2003}, generalising a result by Coates and Greenberg~\cite[Proposition~5.4]{CoatesGreenberg1996}, has proved that the following statements are equivalent.
	\begin{enumerate}
		\item The field \(\hat{L}\) is perfectoid.
		\item The valuation of \(L\) is non-discrete and \(\HH^1_e(L,G[p^\infty]) = \HH^1(L,G[p^\infty])\).
	\end{enumerate}
\end{remark}

\begin{remark}
	Perfectoid fields are ubiquitous in Iwasawa theory.
	Indeed, the fields presented in Examples~\ref{intro:examples:density} are perfectoid.
	Moreover, recall that if the extension \(L/K\) is Galois with Galois group a \(p\)\=/adic Lie group in which the inertia subgroup is infinite, then the field \(\hat{L}\) is perfectoid by Sen (see~\cite{Sen1972} and~\cite[Theorem~2.13]{CoatesGreenberg1996}).
	All such fields are studied in Iwasawa theory~\cite{CoatesFukayaKatoSujathaVenjakob2005,FukayaKato2006}.
\end{remark}

\begin{remark}
	We briefly mention applications of our results in Iwasawa theory.
	Results such as the main results of the present article, Theorem~\ref{intro:theorem:main} and Corollary~\ref{intro:corollary:main}, and Berger's result~\cite{Berger2005} for the cyclotomic extension allow to precisely compare the Bloch\--Kato Selmer groups to Selmer groups à la Greenberg~\cite{Greenberg1989}.
	The Bloch\--Kato Selmer groups are involved in the Iwasawa main conjecture while Selmer groups à la Greenberg are more accessible to study.
\end{remark}

\subsection{Outline of the proof}
\label{subsec:outline}

The method of the present article to study the Bloch\--Kato groups over perfectoid fields generalises and improves on the approach developed in~\cite{Ponsinet2024:1}.
It relies on:
\begin{itemize}
	\item the theory of almost \(\Cp\)\=/representations introduced by Fontaine~\cite{Fontaine2003},
	\item the classification of coherent sheaves over the Fargues\--Fontaine curve due to Fargues and Fontaine~\cite{FarguesFontaine2018}, and
	\item the relation between almost \(\Cp\)\=/representations and coherent sheaves over the Fargues\--Fontaine curve established by Fontaine~\cite{Fontaine2020}.
\end{itemize}

The category \(\Cc(G_K)\) of almost \(\Cp\)\=/representations of \(G_K\) is an abelian full subcategory of the category of \(p\)\=/adic Banach representations of \(G_K\) introduced by Fontaine~\cite{Fontaine2003} which contains as full subcategories the category of \(p\)\=/adic representations of \(G_K\) and the category of torsion \(\BdR^+\)\=/representations of \(G_K\).

Let \(n \geq 1\) be an integer.
Fontaine has associated with \(V\) a short exact sequence of almost \(\Cp\)\=/representations of \(G_K\)
\begin{equation} \label{intro:eq:E+n}
		0 \rightarrow V \rightarrow E_+^n(V) \rightarrow \Fil^{-n} \hat{t}_V(\Qpbar) \rightarrow 0,
\end{equation}
such that
\begin{itemize}
	\item \(\Fil^{-n} \hat{t}_V(\Qpbar)\) is a trivial \(\Bb_n\)\=/representation of \(G_K\), that is, there exists an isomorphism in \(\Cc(G_K)\)
	\[
		\Fil^{-n} \hat{t}_V(\Qpbar) \similarrightarrow  \bigoplus_{i \in [1,n]} \Bb_i^{\oplus m_i(V)},
	\]
	for some integers \(m_i(V) \in \N\).
	Moreover, if \(V\) is de Rham, then \(m_i(V)\) is the multiplicity of \(i\) as a Hodge\--Tate weight of \(V\).
\item \(E_+^n(V)\) is the universal extension of \(V\) by a trivial \(\Bb_n\)\=/representation in \(\Cc(G_K)\).
\end{itemize}

Let \(E_\delta^n(V/T)\) be the maximal discrete \(G_K\)\=/submodule of \(E_+^n(V)/T\).
Then, we will prove that there is an isomorphism
\begin{equation} \label{intro:eq:BK_delta}
	\HH^1(L,V/T)/\Fil^{-n} \HH^1_e(L,V/T) \similarrightarrow \HH^1(L,E_\delta^n(V/T)).
\end{equation}
Moreover, if \(\hat{L}\) is a perfectoid field such that \(L\) is dense in \(\Bb_n^{G_L}\), then we will prove that the computation of \(\HH^1(L,E_\delta^n(V/T))\) reduces to the computation of \(\HH^1(L,E_+^n(V))\) which we will then carry out as follows.

Let \(\XFF\) be the Fargues\--Fontaine curve~\cite{FarguesFontaine2018}, and let \(\Coh_{\XFF}(G_K)\) be the category of \(G_K\)\=/equivariant coherent sheaves over \(\XFF\).
Fontaine~\cite{Fontaine2020} has proved that the global section functor on \(\XFF\) induces an equivalence of triangulated categories between the bounded derived categories
\begin{equation} \label{intro:eq:equiv_derived}
	\DD^b(\Coh_{\XFF}(G_K)) \similarrightarrow \DD^b(\Cc(G_K)).
\end{equation}
Moreover, if \(\hat{L}\) is a perfectoid field, then Fargues and Fontaine have classified the \(G_L\)\=/equivariant coherent sheaves over \(\XFF\).
We will deduce from these results the following.

\begin{proposition} \label{intro:cohomology_almost_perfectoid}
	Let \(X\) be an almost \(\Cp\)\=/representation of \(G_K\).
	Let \(X^0\) be the maximal quotient \(p\)\=/adic representation of \(X\).
	If \(\hat{L}\) is a perfectoid field, then the quotient map \(X \rightarrow X^0\) induces an isomorphism
	\[
		\HH^1(L,X) \similarrightarrow \HH^1(L,X^0).
	\]
\end{proposition}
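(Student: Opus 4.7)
The plan is to transfer the statement to the Fargues\--Fontaine curve via the equivalence~\eqref{intro:eq:equiv_derived} and to extract the required vanishing from Fargues and Fontaine's classification of $G_L$\=/equivariant coherent sheaves over a perfectoid field.

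Set $Y = \Ker(X \rightarrow X^0)$, so that there is a short exact sequence $0 \rightarrow Y \rightarrow X \rightarrow X^0 \rightarrow 0$ in $\Cc(G_K)$. The long exact sequence in continuous $G_L$\=/cohomology reduces the desired isomorphism to the vanishing $\HH^1(L,Y) = 0$ together with the vanishing of the connecting map $\HH^1(L,X^0) \rightarrow \HH^2(L,Y)$. Both will ultimately follow from a concrete description of $Y$ on the Fargues\--Fontaine side combined with the perfectoid hypothesis on $\hat{L}$.

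Under~\eqref{intro:eq:equiv_derived}, the $p$\=/adic representations of $G_K$ are identified with $G_K$\=/equivariant coherent sheaves on $\XFF$ of a very specific trivial shape, built from the structure sheaf $\Oc_{\XFF}$ with trivial equivariance on the sheaf factor. By the universal property of $X^0$ as the maximal $p$\=/adic representation quotient of $X$, the complex of equivariant coherent sheaves associated with $Y$ admits no such trivial-type quotient. Restricting scalars to $G_L$ and invoking the Fargues\--Fontaine classification of $G_L$\=/equivariant coherent sheaves on $\XFF$ when $\hat{L}$ is perfectoid, this complex decomposes into elementary bricks: torsion sheaves supported at the canonical point (which correspond to $\Bb_n$\=/representations for various $n$) and line bundles $\Oc_{\XFF}(\lambda)$ with appropriate equivariance. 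Each such brick contributes trivially to $\HH^1(L,-)$, either by the geometric vanishing of the sheaf cohomology on $\XFF$ in the relevant slope range or by the perfectoid behaviour of the cohomology of the $\Bb_n$\=/representations.

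The main obstacle I anticipate is the transfer of the $G_K$\=/level maximality of $X^0$ to the $G_L$\=/level geometric picture, since additional $p$\=/adic representation quotients could potentially arise after restriction of scalars from $G_K$ to $G_L$. This requires careful bookkeeping on the descent of the triviality condition on the equivariance along $G_K \rightarrow G_L$, after which the Fargues\--Fontaine classification of equivariant coherent sheaves over a perfectoid field furnishes the cohomological vanishing essentially term by term.
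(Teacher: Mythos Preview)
Your skeleton is correct: set \(Y = X^{\neq 0} = \Ker(X \to X^0)\), reduce to \(\HH^1(L,Y)=0\) and to surjectivity of \(\HH^1(L,X) \to \HH^1(L,X^0)\). But two genuine gaps remain.

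\textbf{Surjectivity.} You never say why the connecting map into \(\HH^2(L,Y)\) vanishes. The paper's input here is separate from the Fargues\--Fontaine classification: a perfectoid field of residue characteristic \(p\) has \(p\)\=/cohomological dimension \(\leq 1\) (tilt to characteristic \(p\)), and a short argument with a \(G_K\)\=/stable lattice in the Banach space \(Y\) then kills \(\HH^n(L,Y)\) for all \(n>1\). Nothing about line bundles or torsion sheaves is relevant at this step.

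\textbf{The vanishing \(\HH^1(L,Y)=0\).} Here your plan is too loose, and the anticipated obstacle is not the real one. The equivalence~\eqref{intro:eq:equiv_derived} is between \emph{derived} categories, so \(Y\) does not correspond to a single coherent sheaf; in particular one cannot simply ``split the Harder\--Narasimhan filtration after restriction to \(G_L\)''. Moreover, your phrase ``geometric vanishing of the sheaf cohomology on \(\XFF\)'' suggests a conflation of \(\HH^i(\XFF,\Fc)\) with the continuous Galois cohomology \(\HH^i(L,-)\) that is actually at stake. The paper avoids both problems by never restricting to \(G_L\) at the categorical level. It uses the torsion pair on \(\Cc(G_K)\) to produce, functorially in \(G_K\), a short exact sequence
\[
0 \rightarrow X^{<0} \rightarrow X^{\neq 0} \rightarrow X^{>0} \rightarrow 0
\]
with \(X^{>0}\in\Cc^{>0}(G_K)\) and \(X^{<0}\in\Cc^{<0}(G_K)\), and then, via a Harder\--Narasimhan twist, a second short exact sequence
\[
0 \rightarrow Y^{>0} \rightarrow Z^{+\infty} \rightarrow X^{<0} \rightarrow 0
\]
with \(Y^{>0}\in\Cc^{>0}(G_K)\) and \(Z^{+\infty}\in\Cc^{+\infty}(G_K)\). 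The perfectoid hypothesis enters only through the Fargues\--Fontaine theorem that \(\HH^1(L,\HH^0(\XFF,\Fc))=0\) whenever all Harder\--Narasimhan slopes of \(\Fc\) are \(>0\); this kills \(\HH^1(L,X^{>0})\) and \(\HH^1(L,Z^{+\infty})\) directly, and the two long exact sequences (together with the \(\HH^2\)\=/vanishing above) finish the job. No comparison of ``maximal \(p\)\=/adic quotients'' at the \(G_K\)\=/ and \(G_L\)\=/levels is ever needed.
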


Therefore, in order to compute \(\HH^1(L,E_+^n(V))\), it remains to identify the maximal quotient \(p\)\=/adic representation \(E_+^n(V)^0\) of \(E_+^n(V)\) which we will do by considering \(E_+^n(V)\) in terms of vector bundles over the Fargues\--Fontaine curve.

By the equivalence~\eqref{intro:eq:equiv_derived}, the short exact sequence~\eqref{intro:eq:E+n} is isomorphic to the global sections of a short exact sequence of \(G_K\)\=/equivariant coherent sheaves over \(\XFF\)
\[
	0 \rightarrow \Ec(V) \xrightarrow{\eta} \Ec_+^n(V) \rightarrow \Fc_+^n(V) \rightarrow 0,
\]
where \(\Ec(V) = \Oc_{\XFF} \otimes_{\Qp} V\) is the vector bundle associated with \(V\) over \(\XFF\).
If \(V\) is de Rham, then \(\Ec(V)\) and \(\Ec_+^n(V)\) are de Rham vector bundles over \(\XFF\), and we will prove that \(\eta: \Ec(V) \rightarrow \Ec_+^n(V)\) is then solution of the following universal problem.

Let \(\Bun_{\XFF}(G_K)_\dR\) be the category of de Rham \(G_K\)\=/equivariant vector bundles over \(\XFF\), and let \(\Bun_{\XFF}(G_K)_\dR^{\leq 0,>n}\) be the full subcategory of \(\Bun_{\XFF}(G_K)_\dR\) of de Rham \(G_K\)\=/equivariant vector bundles over \(\XFF\) whose Hodge\--Tate weights are all in the set \(\Z \setminus [1,n]\).
The forgetful functor from \(\Bun_{\XFF}(G_K)_\dR^{\leq 0,>n}\) to \(\Bun_{\XFF}(G_K)_\dR\) admits a left adjoint
\[
	\begin{split}
		\tau_\dR^{\leq 0,>n} : \Bun_{\XFF}(G_K)_\dR & \rightarrow \Bun_{\XFF}(G_K)_\dR^{\leq 0,>n} \\
		\Ec & \mapsto \Ec_+^n.
	\end{split}
\]
If \(V\) is de Rham, then \(\Ec_+^n(V) = \tau_\dR^{\leq 0,>n}(\Ec(V))\) and \(\eta: \Ec(V) \rightarrow \Ec_+^n(V)\) is the universal morphism from \(\Ec(V)\) to the forgetful functor from \(\Bun_{\XFF}(G_K)_\dR^{\leq 0,>n}\) to \(\Bun_{\XFF}(G_K)_\dR\).
We will then easily deduce from the universal property of \(\Ec_+^n(V)\) that the maximal quotient \(p\)\=/adic representation \(E_+^n(V)^0\) of \(E_+^n(V)\) is the representation \(V^{\leq 0,> n}\).

\subsection{Organisation of the article}
\label{subsec:organisation}

In section~\ref{sec:almost}, we review the definition and some properties of almost \(\Cp\)\=/representations due to Fontaine~\cite{Fontaine2003,Fontaine2020}.
We also briefly review the \(p\)\=/adic period rings defined by Fontaine~\cite{Fontaine1994:II}.

In section~\ref{sec:XFF}, we review the properties of coherent sheaves over the Fargues\--Fontaine curve~\cite{FarguesFontaine2018} which will be needed.
We recall the relation~\eqref{intro:eq:equiv_derived} between almost \(\Cp\)\=/representations and coherent sheaves over the Fargues\--Fontaine curve established by Fontaine~\cite{Fontaine2020}.
We study the maximal quotient \(p\)\=/adic representation of an almost \(\Cp\)\=/representation.

In section~\ref{sec:perfectoid}, we establish Proposition~\ref{intro:cohomology_almost_perfectoid}.
Then, given an almost \(\Cp\)\=/representation \(X\), an almost \(\Cp\)\=/subrepresentation \(Z\) of \(X\), and a Galois stable lattice \(\Zc\) in \(Z\), we study the cohomology of the maximal discrete Galois submodule \((X/\Zc)_\delta\) of \(X/\Zc\), which will allow us to relate the cohomology of \(E_\delta^n(V/T)\) to the cohomology of \(E_+^n(V)\).

In section~\ref{sec:truncation}, we define and study the functor \(\tau_\dR^{\leq 0,>n}\).
We then study the vector bundle \(\Ec_+^n(V)\).

In section~\ref{sec:BK}, we review the definition of the Bloch\--Kato groups and we define the filtration on the exponential Bloch\--Kato group.
We then review the definition and properties of the almost \(\Cp\)\=/representation \(E_+^n(V)\), and of the discrete Galois module \(E_\delta^n(V/T)\) associated with \(V/T\) by Fontaine.
We then establish the relation~\eqref{intro:eq:BK_delta} between the cohomology of \(E_\delta^n(V/T)\) and the filtered part of the exponential Bloch\--Kato group, and the relation between \(E_+^n(V)\) and \(\Ec_+^n(V)\).
Finally, we prove the main results stated in the introduction: Proposition~\ref{intro:proposition:comparison}, Theorem~\ref{intro:theorem:main}, and Corollary~\ref{intro:corollary:main}.

\subsection{Acknowledgements}
\label{subsec:acknowledgements}

This work has been supported by:
\begin{itemize}
	\item the grant \enquote{assegni di collaborazione ad attività di ricerca \enquote{Ing. Giorgio Schirillo}} from the Istituto Nazionale di Alta Matematica \enquote{Francesco Severi} which has allowed me to conduct research at the Università degli studi di Genova,
	\item the grant ANR-18-CE40-0029 \enquote{Représentations galoisiennes, formes automorphes et leurs fonctions \(L\)} which has allowed me to conduct research at the Université de Bordeaux,
	\item the Deutsche Forschungsgemeinschaft (DFG, German Research Foundation) through the Collaborative Research Centre TRR 326 \enquote{Geometry and Arithmetic of Uniformized Structures}, project number 444845124, which has allowed me to conduct research at the Universität Heidelberg, and
	\item the Institut des Hautes Études Scientifiques.
\end{itemize}
I thank Stefano Vigni, Denis Benois, and Otmar Venjakob at these respective institutions.

I also thank Laurent Berger for discussions about his article~\cite{Berger2024} and the present article.

\subsection{Notation}
\label{subsec:notation}

We adopt the convention that the set of natural numbers \(\N\) contains \(0\).

We fix a prime number \(p\), and an algebraic closure \(\Qpbar\) of the field \(\Qp\) of \(p\)\=/adic numbers.
We denote by \(\Cp\) the completion of \(\Qpbar\) for the \(p\)\=/adic valuation topology.
Every algebraic extension of \(\Qp\) considered is contained in \(\Qpbar\).
We denote by \(\Qp^\ur\) the maximal unramified extension of \(\Qp\).
If \(L\) is an algebraic extension of \(\Qp\), then we denote by \(G_L = \Gal(\Qpbar/L)\) the absolute Galois group of \(L\), by \(L_0 = L \cap \Qp^\ur\) the maximal unramified extension of \(\Qp\) contained in \(L\), and by \(\hat{L}\) the completion of \(L\) for the \(p\)\=/adic valuation topology.
We also fix a finite extension \(K\) of \(\Qp\).

If \(G\) is a topological group, then a topological \(G\)\=/module is a topological abelian group \(M\) equipped with a continuous action of \(G\) compatible with the group structure of \(M\), and a discrete \(G\)\=/module is a topological \(G\)\=/module whose underlying topological space is discrete.
If \(G\) is a topological group, and if \(M\) is a topological \(G\)\=/module, then, for each \(n \in \N\), we denote by \(\HH^n(G,M)\) the \(n\)\=/th group of continuous group cohomology of \(G\) with coefficients in \(M\) (see~\cite[\S 2]{Tate1976} or~\cite[Appendix~A]{Ponsinet2024:1}).
Recall that if
\begin{equation} \label{eq:ses}
	0 \rightarrow M^\prime \rightarrow M \rightarrow M^\dprime \rightarrow 0
\end{equation}
is a short exact sequence of topological \(G\)\=/modules such that the topology of \(M^\prime\) is the subspace topology from \(M\), and the topology of \(M^\dprime\) is the quotient topology from \(M\), then the short exact sequence~\eqref{eq:ses} induces an exact sequence
\begin{equation} \label{eq:esh0h1}
	\begin{tikzcd}
		0 \ar{r} & \HH^0(G,M^\prime) \ar{r} & \HH^0(G,M) \ar{r} \ar[phantom, ""{coordinate, name=Z}]{d} & \HH^0(G,M^\dprime) \ar[rounded corners, to path={ -- ([xshift=2em]\tikztostart.east) |- (Z) [near end]\tikztonodes -| ([xshift=-2em]\tikztotarget.west) -- (\tikztotarget)}]{dll} \\
		& \HH^1(G,M^\prime) \ar{r} & \HH^1(G,M) \ar{r} & \HH^1(G,M^\dprime).
	\end{tikzcd}
\end{equation}
Moreover, if there exists a continuous section of the projection of \(M\) on \(M^\dprime\) as topological space, then the exact sequence~\eqref{eq:esh0h1} extends into a long exact sequence
\[
	\cdots \rightarrow \HH^n(G,M^\prime) \rightarrow \HH^n(G,M) \rightarrow  \HH^n(G,M^\dprime) \rightarrow \HH^{n+1}(G,M^\prime) \rightarrow \cdots.
\]
In particular, such a continuous section exists if \(M^\dprime\) is a discrete \(G\)\=/module.

If \(G_k\) is the absolute Galois group of a field \(k\), and if \(M\) is a topological \(G_k\)\=/module, then, for each \(n \in \N\), we write \(\HH^n(k,M)\) instead of \(\HH^n(G_k,M)\), and \(\HH^n(k,M)\) is the \(n\)\=/th group of Galois cohomology of \(k\) with coefficients in \(M\).

We denote by \(\Zp(1)\) the free \(\Zp\)\=/module of rank \(1\) whose elements are sequences \((\zeta_{p^n})_{n \in \N}\) of \(p\)\=/power roots of unity in \(\Qpbar\) such that \(\zeta_1 = 1\) and \(\zeta_{p^{n+1}}^p = \zeta_{p^n}\) for each \(n \in \N\), endowed with the natural action of \(G_{\Qp}\) by the cyclotomic character \(\chi\).
We fix a generator \(t\) of \(\Zp(1)\) with group law written additively.
For each \(n \in \N\), we set
\[
	\begin{split}
		\Zp(n)  & = \Sym^n_{\Zp}(\Zp(1)), \\
		\Zp(-n) & = \Hom_{\Zp}(\Zp(n),\Zp).
	\end{split}
\]
Then, for each \(n \in \Z\), the Galois group \(G_{\Qp}\) acts on \(\Zp(n)\) by \(\chi^{n}\).

If \(L\) is an algebraic extension of \(\Qp\) and if \(M\) is a \(\Zp\)\=/module equipped with a linear action of \(G_L\), then, for each \(n \in \Z\), then the \emph{\(n\)\=/th Tate twist} \(M(n)\) of \(M\) is the \(\Zp\)\=/module
\[
	M(n) = M \otimes_{\Zp} \Zp(n)
\]
on which \(G_L\) acts by \(g(m \otimes z) = g(m) \otimes g(z)= \chi^n(g) (g(m)\otimes z)\), for all \(g \in G_L\), \(m \in M\) and \(z \in \Zp(n)\).

\section{Almost \(\Cp\)-representations}
\label{sec:almost}

\subsection{\(p\)-adic Banach representations and almost \(\Cp\)-representations}
\label{subsec:almost}

We recall the definition of the category of almost \(\Cp\)\=/representations of \(G_K\) introduced by Fontaine~\cite{Fontaine2003}.

A \emph{\(p\)\=/adic Banach space} is a topological \(\Qp\)\=/vector space whose topology is the one of a Banach space over \(\Qp\).
Thus, we consider Banach spaces over \(\Qp\) up to norm equivalence.

A \emph{\(p\)\=/adic Banach representation of \(G_K\)} is a \(p\)\=/adic Banach space equipped with a continuous and \(\Qp\)\=/linear action of \(G_K\).
A morphism of \(p\)\=/adic Banach representations of \(G_K\) is a \(G_K\)\=/equivariant continuous and \(\Qp\)\=/linear map.
We denote by \(\Bc(G_K)\) the category of \(p\)\=/adic Banach representations of \(G_K\).

A \emph{\(G_K\)\=/stable lattice} in a \(p\)\=/adic Banach representation \(X\) of \(G_K\) is an open \(\Zp\)\=/submodule of \(X\) which is complete and separated for the \(p\)\=/adic topology, spans \(X\) over \(\Qp\), and is stable under the action of \(G_K\).

Note that the field \(\Cp\) endowed with its natural topology and its natural action of \(G_K\) is a \(p\)\=/adic Banach representation of \(G_K\).
A \emph{trivial \(\Cp\)\=/representation of \(G_K\)} is a \(p\)\=/adic Banach representation of \(G_K\) isomorphic to \(\Cp^{\oplus d}\) for some \(d \in \N\).

If \(X\) and \(Y\) are two \(p\)\=/adic Banach representations of \(G_K\), then an \emph{almost isomorphism} from \(X\) to \(Y\) is a triple \((V,U,\alpha)\) composed of:
\begin{enumerate}
	\item a finite dimensional \(G_K\)\=/stable \(\Qp\)\=/vector subspace \(V\) of \(X\),
	\item a finite dimensional \(G_K\)\=/stable \(\Qp\)\=/vector subspace \(U\) of \(Y\), and
	\item an isomorphism in \(\Bc(G_K)\)
	\[
		\alpha: X/V \similarrightarrow Y/U.
	\]
\end{enumerate}
Almost isomorphisms form an equivalence relation on \(p\)\=/adic Banach representations of \(G_K\).
Two \(p\)\=/adic Banach representations of \(G_K\) are almost isomorphic if there exists an almost isomorphism between them.

An \emph{almost \(\Cp\)\=/representation of \(G_K\)} is a \(p\)\=/adic Banach representation of \(G_K\) which is almost isomorphic to a trivial \(\Cp\)\=/representation of \(G_K\).
We denote by \(\Cc(G_K)\) the full subcategory of \(\Bc(G_K)\) of almost \(\Cp\)\=/representations of \(G_K\).

\begin{theorem}[Fontaine] \label{theorem:almost_abelian}
	The category \(\Cc(G_K)\) is abelian.
\end{theorem}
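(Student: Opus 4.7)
The plan is to show that $\Cc(G_K)$ is a full additive subcategory of $\Bc(G_K)$ that is stable under kernels and cokernels computed in the ambient category, and that for every morphism in $\Cc(G_K)$ the canonical map from coimage to image is an isomorphism. Stability under direct sums is immediate from the definition, since a direct sum of almost isomorphisms to trivial $\Cp$-representations is an almost isomorphism to a trivial $\Cp$-representation. The real content is thus divided into three parts: (i) every morphism in $\Cc(G_K)$ has closed image in the target; (ii) every closed $G_K$-stable $\Qp$-subspace $Z$ of an almost $\Cp$-representation $X$, and the quotient $X/Z$ with the quotient topology, both belong to $\Cc(G_K)$; (iii) the map coimage $\to$ image is a topological isomorphism.

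For part (i), given a morphism $f: X \to Y$ in $\Cc(G_K)$, I would use the defining almost isomorphisms $X/V \similarrightarrow \Cp^a / U$ and $Y/V^\prime \similarrightarrow \Cp^b / U^\prime$ to reduce the analysis of $f$, modulo finite-dimensional $\Qp$-subrepresentations, to a $G_K$-equivariant $\Qp$-linear map $\Cp^a \to \Cp^b$. Tate's computation of $\Hom_{G_K}(\Cp,\Cp)$ and, more generally, Sen's theory of $\Cp$-semilinear representations constrain such maps enough to guarantee that their image is closed. Finite-dimensional corrections only change the image by a finite-dimensional $\Qp$-subspace, which is automatically closed; hence $\Img(f)$ is closed in $Y$. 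Once the image is closed, part (iii) is a direct application of the $p$-adic Banach open mapping theorem to the continuous $\Qp$-linear $G_K$-equivariant bijection $X/\Ker(f) \to \Img(f)$.

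For part (ii), the strategy is to reduce, via the definition of almost isomorphism and the additivity afforded by finite-dimensional $\Qp$-subrepresentations, to the case $X = \Cp^d$. There I would analyse closed $G_K$-stable $\Qp$-subspaces $Z \subseteq \Cp^d$ by first passing to the $\Cp$-span $W = \Cp \cdot Z$, a closed $G_K$-stable $\Cp$-subspace of $\Cp^d$. By Sen's theory and Ax–Sen–Tate, $W$ is itself a trivial $\Cp$-representation $\Cp^e$, and $Z$ differs from $W$ by a finite-dimensional $\Qp$-discrepancy, which produces an explicit almost isomorphism between $Z$ and $\Cp^e$. The quotient $X/Z$ is handled dually: it fits in a short exact sequence with $\Cp^d/W \cong \Cp^{d-e}$ and a finite-dimensional correction, exhibiting it as almost isomorphic to a trivial $\Cp$-representation. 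Once (i) and (ii) are in hand, kernels and cokernels of morphisms in $\Cc(G_K)$ are constructed in $\Bc(G_K)$ and lie back in $\Cc(G_K)$, and the coimage-to-image isomorphism from (iii) upgrades the category to an abelian one.

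The main obstacle I expect is the structural analysis in part (ii): identifying closed $G_K$-stable $\Qp$-subspaces of $\Cp^d$ up to a finite-dimensional $\Qp$-subspace as trivial $\Cp$-representations requires nontrivial input from $p$-adic Hodge theory, namely Sen's classification and Tate's computation of Galois cohomology of $\Cp$. Controlling the interplay between the $\Qp$-linear structure of $Z$ and the $\Cp$-linear structure of its $\Cp$-span, without losing track of the finite-dimensional corrections that encode the almost-$\Cp$ property, is the delicate point; the topological conclusion in part (iii) is then a formal consequence of the open mapping theorem.
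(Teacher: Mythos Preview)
The paper does not prove this theorem; it is quoted as a result of Fontaine~\cite{Fontaine2003} and used as a black box. So there is no argument in the paper to compare against, and your proposal must be judged on its own terms.

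Your outline has a genuine gap in part~(ii). You claim that if \(Z\) is a closed \(G_K\)\=/stable \(\Qp\)\=/subspace of \(\Cp^d\) and \(W=\Cp\cdot Z\) is its \(\Cp\)\=/span, then \(Z\) ``differs from \(W\) by a finite\-/dimensional \(\Qp\)\=/discrepancy'', yielding an almost isomorphism \(Z\sim\Cp^e\) with \(e=\dim_{\Cp}W\). This is false already for \(d=1\): take \(Z=\Qp\subset\Cp\). Then \(W=\Cp\), so \(e=1\), but \(\Cp/\Qp\) is infinite\-/dimensional over \(\Qp\), and there is no almost isomorphism between \(\Qp\) and \(\Cp\). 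Indeed \(\Qp\) is almost isomorphic to \(\Cp^0=0\), not to \(\Cp^1\); the invariant ``dimension of the approximating trivial \(\Cp\)\=/representation'' is not computed by the \(\Cp\)\=/span. The same example shows that the strategy of controlling \(Z\) through its \(\Cp\)\=/linear hull cannot work without further input: closed \(G_K\)\=/stable \(\Qp\)\=/subspaces of \(\Cp^d\) can be very far, in the Banach sense, from any \(\Cp\)\=/subspace.

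More structurally, part~(ii) as stated aims to prove that \emph{every} closed \(G_K\)\=/stable \(\Qp\)\=/subspace of an almost \(\Cp\)\=/representation lies in \(\Cc(G_K)\); this is stronger than what abelianness requires, which is only that \emph{kernels and images of morphisms already in \(\Cc(G_K)\)} lie in \(\Cc(G_K)\). Fontaine's actual proof does not proceed via \(\Cp\)\=/spans; it introduces numerical invariants (the dimension \(d(X)\) and the height \(h(X)\)) and a dévissage that reduces the analysis of a morphism \(f:X\to Y\) to the case where \(X\) and \(Y\) are either \(p\)\=/adic representations or trivial \(\Cp\)\=/representations, using Tate's computation of \(\HH^i(K,\Cp(n))\) and the internal \(\Hom\)/\(\Ext\) structure of the category rather than Sen theory applied to a putative \(\Cp\)\=/linear shadow of \(f\). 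Your part~(i) has a related soft spot: a morphism in \(\Cc(G_K)\) is only \(\Qp\)\=/linear, and passing through the almost isomorphisms produces a map between \emph{quotients} \(\Cp^a/U\to\Cp^b/U'\) that need not lift to anything \(\Cp\)\=/linear, so ``Sen's theory constrains such maps'' does not apply directly.
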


Let \(\Rep_{\Qp}(G_K)\) be the category of \emph{\(p\)\=/adic representations of \(G_K\)}, that is, the category of finite dimensional \(\Qp\)\=/vector spaces equipped with a continuous and \(\Qp\)\=/linear action of \(G_K\).
The category \(\Rep_{\Qp}(G_K)\) defines a Serre subcategory of \(\Cc(G_K)\) which we denote by \(\Cc^0(G_K)\).

\subsection{\(p\)-adic period rings}
\label{subsec:period}

We briefly review the \(p\)\=/adic period rings defined by Fontaine~\cite{Fontaine1994:II}.

The \emph{ring of \(p\)\=/adic periods} \(\BdR^+\) is a complete discrete valuation ring endowed with an action of \(G_{\Qp}\), whose residue field is \(\Cp\), and of which \(t\) is a uniformiser.

The \emph{field of \(p\)\=/adic periods} \(\BdR\) is the field of fractions of \(\BdR^+\).
There is a natural filtration on \(\BdR\) by the fractional ideals
\[
	\Fil^n \BdR = \BdR^+ \cdot t^n, n \in \Z,
\]
which is stable under the action of \(G_{\Qp}\).
For each \(n \in \N\), we set
\[
	\Bb_n = \BdR^+ / \Fil^n \BdR.
\]
In particular, there is an isomorphism \(\Bb_1 \similarrightarrow \Cp\).

The field \(\BdR\) is equipped with a topology, the so-called \emph{canonical} topology (see~\cite[\S 1.5.3]{Fontaine1994:II} or~\cite{Colmez2012}), which is coarser than the valuation topology from \(\BdR^+\).
The action of \(G_{\Qp}\) on \(\BdR\) endowed with the canonical topology is continuous, and, we have
\begin{equation} \label{eq:FilGalois}
	\begin{aligned}
		(\Fil^n \BdR)^{G_K} & = \BdR^{G_K} = K & \text{if } n \leq 0, \\
		(\Fil^n \BdR)^{G_K} & = 0	       & \text{if } n > 0.
	\end{aligned}
\end{equation}
Unless otherwise stated, we will consider \(\BdR\) and its subquotient rings endowed with the canonical topology.
In particular, for each integer \(n >0\), \(\Bb_n\) is a \(p\)\=/adic Banach representation of \(G_{\Qp}\), and the isomorphism \(\Bb_1 \similarrightarrow \Cp\) is an isomorphism of \(p\)\=/adic Banach representations of \(G_{\Qp}\).

Moreover, the map \(\BdR^+ \rightarrow \Cp\) induces an isomorphism between the separable closure of \(\Qp\) in \(\BdR^+\) and the field of the \(p\)\=/adic algebraic numbers \(\Qpbar\), which we use to identify these two fields.
The field \(\Qpbar\) is then dense in \(\BdR^+\) (\cite{Colmez1994,Colmez2012}).

The ring \(\BdR^+\) contains a subring \(\Bcris^+\), stable under the action of \(G_{\Qp}\), containing \(t\), and equipped with an endomorphism \(\varphi\) commuting with the action of \(G_{\Qp}\).
The \emph{ring of crystalline periods} is \(\Bcris=\Bcris^+[1/t]\).
We have
\[
	\Bcris^{G_K} = K_0.
\]
Moreover, we have \(\varphi(t) = p t\), and the endomorphism \(\varphi\) extends uniquely to \(\Bcris\).

Let
\[
	\Be = \Bcris^{\varphi = 1} = \{ b \in \Bcris, \varphi(b)=b \},
\]
and, for each \(n \in \N\), let
\[
	(\Bcris^+)^{\varphi = p^n} = \{ b \in \Bcris^+, \varphi(b)=p^n b \}.
\]
The ring \(\Be\) is a principal ideal domain~\cite[Théorème~6.5.2]{FarguesFontaine2018}.
Moreover, the ring \(\Be\) inherits a filtration from \(\BdR\), and we have
\[
	\Fil^n \Be = \Fil^n \BdR \cap \Be =
	\begin{cases}
		(\Bcris^+)^{\varphi = p^{-n}} \cdot t^n, & \text{if } n \leq 0, \\
		0,					& \text{if } n > 0.
	\end{cases}
\]
Furthermore, we have
\[
	\Fil^0 \Be = (\Bcris^+)^{\varphi = 1} = \Qp,
\]
and there exists \(G_{\Qp}\)\=/equivariant short exact sequences of topological \(\Qp\)\=/algebras, the so-called \emph{fundamental exact sequences},
\begin{equation} \label{eq:funda}
	0 \rightarrow \Qp \rightarrow \Be \rightarrow \BdR/\BdR^+ \rightarrow 0,
\end{equation}
 and, for each \(n \in \N\),
\begin{equation} \label{eq:funda_fil}
	0 \rightarrow \Qp \rightarrow \Fil^{-n} \Be \rightarrow \Fil^{-n} \BdR/\BdR^+ \rightarrow 0.
\end{equation}

\subsection{Almost \(\Cp\)-representations and \(p\)-adic periods}
\label{subsec:almost_periods}

We recall the relation between \(\BdR^+\)\=/representations of \(G_K\) and almost \(\Cp\)\=/representations of \(G_K\) established by Fontaine~\cite{Fontaine2003}.

Let \(\Rep_{\BdR^+}^\tor(G_K)\) be the category of finitely generated torsion \(\BdR^+\)\=/modules equipped with a continuous and \(\BdR^+\)\=/semilinear action of \(G_K\).
We call an object of \(\Rep_{\BdR^+}^\tor(G_K)\) a \emph{torsion \(\BdR^+\)\=/representation of \(G_K\)}.

Note that the \(\BdR^+\)\=/module underlying a torsion \(\BdR^+\)\=/representation of \(G_K\) is a \(\Bb_n\)\=/module for some sufficiently large integer \(n\).
Since \(\Bb_n\) is a \(\Qp\)\=/Banach space, a torsion \(\BdR^+\)\=/representation of \(G_K\) is a \(p\)\=/adic Banach representation of \(G_K\).
Hence, there is a forgetful functor from \(\Rep_{\BdR^+}^\tor(G_K)\) to \(\Bc(G_K)\) whose essential image we denote by \(\Cc^{+\infty}(G_K)\).

\begin{theorem}[Fontaine]
	The forgetful functor from \(\Rep_{\BdR^+}^\tor(G_K)\) to \(\Bc(G_K)\) is fully faithful.
	Moreover, its essential image \(\Cc^{+\infty}(G_K)\) is a weak Serre subcategory of \(\Cc(G_K)\).
\end{theorem}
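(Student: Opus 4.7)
The statement splits naturally into two pieces: full faithfulness of the forgetful functor, and closure of its essential image under kernels, cokernels, and extensions inside \(\Cc(G_K)\). Faithfulness of the forgetful functor is immediate from the definitions, since a \(\BdR^+\)\=/linear \(G_K\)\=/equivariant morphism is determined by its underlying map of \(\Qp\)\=/vector spaces. The content of the first statement is therefore fullness: any continuous \(\Qp\)\=/linear \(G_K\)\=/equivariant map \(f\colon M\to N\) between torsion \(\BdR^+\)\=/representations is automatically \(\BdR^+\)\=/linear.

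\textbf{Fullness via dévissage and Sen theory.} Since \(f\) is continuous in \(m\) and multiplication by \(b\) depends continuously on \(b\in\BdR^+\), and since \(\Qpbar\) is dense in \(\BdR^+\) (by Colmez), the identity \(f(bm)=bf(m)\) need only be verified for \(b\in\Qpbar\). The plan is then a dévissage along the \(t\)\=/adic filtration \(M\supseteq tM\supseteq\cdots\supseteq t^nM=0\) (and likewise on \(N\)), whose successive quotients \(t^iM/t^{i+1}M\) are finite-dimensional \(\Cp\)\=/representations of \(G_K\). The reduction proceeds by induction on \(n\), the base case being the assertion that any continuous \(\Qp\)\=/linear \(G_K\)\=/equivariant map between finite-dimensional \(\Cp\)\=/representations is automatically \(\Cp\)\=/linear. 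This last is a classical Sen-theoretic rigidity input, following from Sen's decomposition theorem together with the Tate vanishing \(\HH^0(K,\Cp(i))=0\) for \(i\neq 0\).

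\textbf{Weak Serre property.} Closure under kernels and cokernels is a formal consequence of fullness: any morphism \(f\) in \(\Cc(G_K)\) between objects of \(\Cc^{+\infty}(G_K)\) is in fact a morphism in \(\Rep_{\BdR^+}^\tor(G_K)\), whose kernel and cokernel there are again torsion \(\BdR^+\)\=/representations; a direct check using the exactness of the forgetful functor on underlying \(\Qp\)\=/Banach spaces identifies these with the kernel and cokernel computed in \(\Cc(G_K)\). For closure under extensions, given \(0\to M'\to X\to M''\to 0\) exact in \(\Cc(G_K)\) with \(M',M''\in\Cc^{+\infty}(G_K)\), one must equip \(X\) with a \(\BdR^+\)\=/module structure inducing the given ones on \(M'\) and \(M''\). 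Choosing \(n\) so that \(t^n\) annihilates both \(M'\) and \(M''\), I would construct the action of each \(b\in\BdR^+\) on \(X\) by lifting the actions on subobject and quotient, using the fully faithful part to ensure uniqueness of the lift and a cohomological argument (vanishing of obstructions in \(\Ext^1_{\Cc(G_K)}\) between torsion \(\BdR^+\)\=/representations) to ensure existence.

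\textbf{Main obstacle.} The technical heart of the argument is the Sen-theoretic rigidity for \(\Cp\)\=/representations; once this is secured, both the dévissage for fullness and the construction of the \(\BdR^+\)\=/structure on extensions reduce to formal categorical manipulations. The closure under extensions is conceptually the most delicate point, as one must \emph{construct}, rather than merely recognise, the \(\BdR^+\)\=/action on the middle term, requiring full faithfulness to be invoked in both existence and uniqueness of the required lifts.
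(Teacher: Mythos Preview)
The paper does not give its own proof of this theorem: it is stated as a result of Fontaine and attributed to \cite{Fontaine2003}, with no argument supplied. So there is nothing in the paper to compare your proposal against directly; what follows is an assessment of your sketch on its own terms, with Fontaine's original argument as the implicit benchmark.

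Your overall architecture is the right one, and the identification of Sen-theoretic rigidity for \(\Cp\)-representations as the key input is correct. However, the d\'evissage step for fullness has a genuine gap. You write that the reduction ``proceeds by induction on \(n\)'' along the filtration \(M\supseteq tM\supseteq\cdots\), but a morphism \(f\colon M\to N\) in \(\Bc(G_K)\) is not known a priori to respect the \(t\)-adic filtrations, so there is no induced map on graded pieces to which the \(\Cp\)-case can be applied. One must first prove, using the Sen-theoretic input, a statement such as: any \(\Bc(G_K)\)-morphism from a \(\Cp\)-representation into a torsion \(\BdR^+\)-representation has image contained in the \(t\)-torsion. Only with such a result in hand can an induction be organised, and even then it is the length of \(M\) or \(N\) rather than the nilpotency index that drives it cleanly.

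The extension argument is also too loose. Your appeal to ``vanishing of obstructions in \(\Ext^1_{\Cc(G_K)}\)'' is not a statement you have available; the whole point is to compare \(\Ext^1\) in \(\Cc(G_K)\) with \(\Ext^1\) in \(\Rep_{\BdR^+}^\tor(G_K)\), and no vanishing of the former is known in advance. Fontaine's construction of the \(\BdR^+\)-action on the middle term \(X\) is carried out directly, again by a \(t\)-adic induction that leans on the full faithfulness already proved, and this is where most of the work in his proof lies. Your sketch correctly locates the difficulty but does not indicate a mechanism that would resolve it.
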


\subsection{Torsion pairs}
\label{subsec:torsion_pairs}

We recall the definition and properties of torsion pairs on a abelian category~\cite[\S 1.12]{Borceux1994}.

\begin{definition} \label{definition:torsion_pair}
	Let \(\Ac\) be an abelian category.
	A \emph{torsion pair} on \(\Ac\) is a tuple \((\Bc,\Cc)\) of strictly full subcategories of \(\Ac\) such that
	\begin{enumerate}
		\item for each object \(B\) of \(\Bc\) and each object \(C\) of \(\Cc\), we have
		\[
			\Hom_{\Ac}(B,C) = 0,
		\]
		\item for each object \(A\) of \(\Ac\), there exists a short exact sequence
		\[
			0 \rightarrow B \rightarrow A \rightarrow C \rightarrow 0,
		\]
		with \(B\) an object of \(\Bc\), and \(C\) an object of \(\Cc\).
	\end{enumerate}
\end{definition}

If \((\Bc,\Cc)\) is a torsion pair on an abelian category \(\Ac\), then the definition of a torsion pair implies that for each object \(A\) of \(\Ac\), there exists a unique, up to isomorphism, short exact sequence
\[
	0 \rightarrow B \rightarrow A \rightarrow C \rightarrow 0,
\]
with \(B\) an object of \(\Bc\), and \(C\) an object of \(\Cc\) (see~\cite[Proposition~1.12.4]{Borceux1994}).

\subsection{Effective and coeffective almost \(\Cp\)-representations}
\label{subsec:effective}

We recall the definition of effective and coeffective almost \(\Cp\)\=/representations of \(G_K\) due to Fontaine~\cite[\S 3L, \S 6C]{Fontaine2020}.

An almost \(\Cp\)\=/representation \(X\) of \(G_K\) is \emph{effective} if there exists an object \(Y\) of \(\Cc^{+\infty}(G_K)\) such that \(X\) is isomorphic to a subobject of \(Y\).
We denote by \(\Cc^{\geq 0}(G_K)\) the full subcategory of \(\Cc(G_K)\) of effective almost \(\Cp\)\=/representations of \(G_K\).

An almost \(\Cp\)\=/representation \(X\) of \(G_K\) is \emph{coeffective} if for all object \(Y\) of \(\Cc^{+\infty}(G_K)\), we have
\[
	\Hom_{\Cc(G_K)}(X,Y) = 0.
\]
We denote by \(\Cc^{< 0}(G_K)\) the full subcategory of \(\Cc(G_K)\) of coeffective almost \(\Cp\)\=/representations of \(G_K\).

\begin{proposition}[Fontaine] \label{proposition:effective_coeffective}
	The subcategories \(\Cc^{< 0}(G_K)\) and \(\Cc^{\geq 0}(G_K)\) of \(\Cc(G_K)\) satisfy the following properties.
	\begin{enumerate}
		\item The categories \(\Cc^{< 0}(G_K)\) and \(\Cc^{\geq 0}(G_K)\) are exact subcategories of \(\Cc(G_K)\).
		\item The category \(\Cc^{\geq 0}(G_K)\) is the smallest strictly full subcategory of \(\Cc(G_K)\) containing \(\Cc^0(G_K)\) and \(\Cc^{+\infty}(G_K)\) and stable under taking extensions and direct summands.
		\item The tuple \((\Cc^{< 0}(G_K),\Cc^{\geq 0}(G_K))\) is a torsion pair on \(\Cc(G_K)\).
	\end{enumerate}
\end{proposition}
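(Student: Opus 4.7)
My plan is to prove the three parts in the order (1), (3), (2), since the exactness statements of (1) are needed to set up the torsion pair in (3), while the characterisation (2) is the deepest result and benefits from already knowing $\mathcal{C}^{\geq 0}(G_K)$ is well-behaved under extensions. Throughout I will lean on the preceding theorem that $\mathcal{C}^{+\infty}(G_K)$ is a weak Serre subcategory, which provides the closure properties of the ``torsion'' objects.

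For item (1), I would first treat $\mathcal{C}^{\geq 0}(G_K)$. Closure under subobjects is immediate from the definition (a subobject of a subobject). For quotients: if $X \hookrightarrow Y$ with $Y \in \mathcal{C}^{+\infty}(G_K)$ and $X \twoheadrightarrow Q$ is given, push out the inclusion along the quotient map to obtain a monomorphism $Q \hookrightarrow Z$ with $Z$ a quotient of $Y$; being a quotient of an object of $\mathcal{C}^{+\infty}(G_K)$, the weak Serre property forces $Z \in \mathcal{C}^{+\infty}(G_K)$, whence $Q$ is effective. Closure under extensions requires more care: given $0 \to X' \to X \to X'' \to 0$ with $X' \hookrightarrow Y'$ and $X'' \hookrightarrow Y''$ in $\mathcal{C}^{+\infty}(G_K)$, push the extension out along $X' \hookrightarrow Y'$ to obtain $0 \to Y' \to X_1 \to X'' \to 0$ with $X \hookrightarrow X_1$; then iterate with $X'' \hookrightarrow Y''$ to embed $X_1$ into an extension of $Y''$ by $Y'$, which lies in $\mathcal{C}^{+\infty}(G_K)$ by its closure under extensions. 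For $\mathcal{C}^{<0}(G_K)$, the vanishing $\Hom_{\mathcal{C}(G_K)}(-, Y) = 0$ for $Y \in \mathcal{C}^{+\infty}(G_K)$ is preserved by subobjects, quotients, and extensions via the usual left-exactness of $\Hom(-, Y)$ together with the observation that a morphism from a quotient is determined by its composition with the projection.

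For item (3), the Hom-vanishing axiom is essentially formal: if $X \in \mathcal{C}^{<0}(G_K)$ and $Y \in \mathcal{C}^{\geq 0}(G_K)$ embeds into some $Z \in \mathcal{C}^{+\infty}(G_K)$, any morphism $X \to Y$ composed with $Y \hookrightarrow Z$ vanishes by definition of coeffective, and monicity of $Y \hookrightarrow Z$ forces the original morphism to be zero. For the short exact sequence axiom, given $X \in \mathcal{C}(G_K)$ I will take $B \subseteq X$ to be the (necessarily unique) maximal effective subobject, whose existence rests on a noetherianity argument: the height/length invariants in $\mathcal{C}(G_K)$ are bounded, and the sum of effective subobjects is again effective thanks to the closure under extensions established in (1). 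Then $X/B$ is coeffective by a contradiction argument: a nonzero morphism $X/B \to W$ with $W \in \mathcal{C}^{+\infty}(G_K)$ would pull back to $X \to W$ whose image, being a subobject of $W$, is effective; its preimage in $X$ would then be an effective subobject strictly larger than $B$, contradicting maximality.

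The main obstacle is item (2), where the subtlety is that effectivity is defined as embeddability into $\mathcal{C}^{+\infty}(G_K)$, while the characterisation asks for a constructive description as the closure of $\mathcal{C}^0(G_K) \cup \mathcal{C}^{+\infty}(G_K)$ under extensions and direct summands. One inclusion is automatic: the closure is contained in $\mathcal{C}^{\geq 0}(G_K)$ since $\mathcal{C}^0(G_K), \mathcal{C}^{+\infty}(G_K) \subseteq \mathcal{C}^{\geq 0}(G_K)$ and $\mathcal{C}^{\geq 0}(G_K)$ is closed under extensions by (1) and trivially under direct summands. For the reverse inclusion, given effective $X$ I would produce a functorial short exact sequence $0 \to T(X) \to X \to P(X) \to 0$ with $T(X) \in \mathcal{C}^{+\infty}(G_K)$ and $P(X) \in \mathcal{C}^0(G_K)$. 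The cleanest route is to invoke the equivalence of derived categories with $G_K$-equivariant coherent sheaves on the Fargues--Fontaine curve developed later in the paper: under this equivalence, $\mathcal{C}^{+\infty}(G_K)$ corresponds to torsion sheaves and the maximal torsion subsheaf of any coherent sheaf has vector bundle quotient, whose global sections give a $p$-adic representation precisely when the vector bundle has non-negative Harder--Narasimhan slopes; the latter holds exactly for effective objects. Extracting $T(X)$ and $P(X)$ this way then expresses $X$ as an extension of an object of $\mathcal{C}^0(G_K)$ by one of $\mathcal{C}^{+\infty}(G_K)$, completing (2).
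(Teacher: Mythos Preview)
The paper does not supply a proof of this proposition; it is attributed to Fontaine with a reference and stated without argument, so there is no proof in the paper to compare against.

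Your sketch nonetheless contains a genuine error in part~(3). The torsion pair is \((\Cc^{<0}(G_K),\Cc^{\geq 0}(G_K))\), so the required decomposition of an object \(X\) is \(0 \to X^{<0} \to X \to X^{\geq 0} \to 0\) with the \emph{coeffective} part as the subobject and the \emph{effective} part as the quotient; you have the roles reversed. Your own verification of the Hom-vanishing axiom, namely \(\Hom_{\Cc(G_K)}(\Cc^{<0},\Cc^{\geq 0}) = 0\), already fixes this orientation, and the decomposition must match it. Even taken on its own terms your contradiction argument does not go through: from a nonzero map \(X/B \to W\) you obtain \(g \colon X \to W\) with effective image, but the preimage in \(X\) of \(\Img(g)\) is all of \(X\), not a proper effective subobject containing \(B\). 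There is also a smaller gap in part~(2): the torsion-free quotient \(P(X)\) of an effective object corresponds under the Fargues--Fontaine equivalence to a vector bundle with Harder--Narasimhan slopes \(\geq 0\), and its global sections lie in \(\Cc^0(G_K)\) only when all slopes are exactly \(0\). For strictly positive slopes one needs a further modification, or the full Harder--Narasimhan filtration, to exhibit \(X\) as an iterated extension of objects of \(\Cc^0(G_K)\) and \(\Cc^{+\infty}(G_K)\).
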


\section{Coherent sheaves over the Fargues--Fontaine curve}
\label{sec:XFF}

\subsection{The Fargues--Fontaine curve}
\label{subsec:XFF}

We review properties of coherent sheaves over the Fargues\--Fontaine curve~\cite{FarguesFontaine2018}.

Let
\[
	\XFF = \Proj\left(\bigoplus_{n \in \N} (\Bcris^+)^{\varphi = p^n} \right)
\]
be the Fargues\--Fontaine curve.
Recall that the scheme \(\XFF\) is regular, Noetherian, separated, connected, and one-dimensional.
Moreover, the curve \(\XFF\) is complete.

We recall the description à la Beauville\--Laszlo of coherent sheaves over \(\XFF\).

Let \(L\) be an algebraic extension of \(\Qp\).
For a ring \(R \in \{\Be,\BdR^+,\BdR\}\), we denote by \(\Rep_R(G_L)\) the category of finitely generated \(R\)\=/modules equipped with a continuous and \(R\)\=/semilinear action of \(G_L\), and an object of \(\Rep_R(G_L)\) is called a \(R\)\=/representation of \(G_L\).

Let \(\Mc(G_L)\) be the category whose objects are triple \((M_\e,M_\dR^+,\iota_M)\)  composed of:
\begin{enumerate}
	\item a \(\Be\)\=/representation \(M_\e\) of \(G_L\),
	\item a \(\BdR^+\)\=/representation \(M_\dR^+\) of \(G_L\), and
	\item a \(G_L\)\=/equivariant morphism of \(\BdR^+\)\=/modules
	\[
		\iota_M : M_\dR^+ \rightarrow \BdR \otimes_{\Be} M_\e
	\]
	which induces an isomorphism of \(\BdR\)\=/representations of \(G_L\)
	\[
		\BdR \otimes_{\BdR^+} M_\dR^+ \similarrightarrow \BdR \otimes_{\Be} M_\e,
	\]
\end{enumerate}
and whose maps are tuple \(f = (f_\e,f_\dR^+)\) composed of:
\begin{enumerate}
	\item a morphism \(f_\e\) of \(\Be\)\=/representations of \(G_L\), and
	\item a morphism \(f_\dR^+\) of \(\BdR^+\)\=/representations of \(G_L\),
\end{enumerate}
such that the diagram
\[
	\begin{tikzcd}
		M_\dR^+ \ar{d}{f_\dR^+} \ar{r}{\iota_M} & \BdR \otimes_{\Be} M_\e \ar{d}{1 \otimes f_\e} \\
		N_\dR^+ \ar{r}{\iota_N} & \BdR \otimes_{\Be} N_\e
	\end{tikzcd}
\]
commutes.

The Galois group \(G_{\Qp}\) acts on the Fargues\--Fontaine curve \(\XFF\) via its action on the period rings defining \(\XFF\).
We denote by \(\Coh_{\XFF}(G_L)\) the category of \(G_L\)\=/equivariant coherent sheaves over \(\XFF\).

Recall that there exists a unique closed point of \(\XFF\), denoted by \(\infty\), fixed by the action of \(G_{\Qp}\) on \(\XFF\).
The completion of the stalk \(\Oc_{\XFF,\infty}\) is isomorphic to \(\BdR^+\), and \(\XFF \setminus \{\infty\} = \Spec(\Be)\).
Therefore, there is a functor
\begin{equation} \label{eq:functor_BL}
	\Coh_{\XFF}(G_L) \rightarrow \Mc(G_L)
\end{equation}
which associates with a coherent sheaf \(\Fc\) the triple:
\begin{itemize}
	\item \(\Fc_\e = \HH^0(\XFF \setminus \{\infty\}, \Fc)\),
	\item \(\Fc_\dR^+\) the completion of the stalk \(\Fc_\infty\), and
	\item \(\iota_{\Fc}\) the glueing data.
\end{itemize}

\begin{proposition}[Fargues\--Fontaine] \label{theorem:BL}
	The functor~\eqref{eq:functor_BL} induces an equivalence of categories
	\[
		\Coh_{\XFF}(G_L) \similarrightarrow \Mc(G_L).
	\]
\end{proposition}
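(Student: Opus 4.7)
The plan is to regard this statement as an equivariant version of the classical Beauville--Laszlo glueing theorem applied to the pair $(\XFF,\infty)$, and then to upgrade from plain coherent sheaves to $G_L$-equivariant ones by invoking naturality of the glueing construction.

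First, I would drop the Galois action and establish the underlying equivalence of abelian categories between $\Coh_{\XFF}$ and the category of triples $(M_\e, M_\dR^+, \iota_M)$ (without the continuity and equivariance decorations). The geometric input is already recalled just above the statement: the unique closed point $\infty \in \XFF$ has affine complement $\Spec(\Be)$, and the completed local ring $\widehat{\Oc_{\XFF,\infty}}$ is isomorphic to $\BdR^+$, with uniformiser $t$. Because $\XFF$ is Noetherian and $\Oc_{\XFF,\infty} \to \BdR^+$ is a flat completion which becomes faithfully flat after inverting $t$ and is an isomorphism modulo every power of $t$, the Beauville--Laszlo theorem supplies a quasi-inverse: given $(M_\e, M_\dR^+, \iota_M)$, glue $\widetilde{M_\e}$ on $\Spec(\Be)$ and $\widetilde{M_\dR^+}$ on the formal neighbourhood of $\infty$ along the $\BdR$-linear isomorphism induced by $\iota_M$. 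Full faithfulness of the functor in the statement and essential surjectivity of the glueing construction are then the standard descent verifications.

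Second, I would upgrade to $G_L$-equivariance. The $G_{\Qp}$-action on $\XFF$ fixes the point $\infty$, hence preserves both the affine open $\Spec(\Be)$ and the completed local ring at $\infty$; the induced actions on $\Be$ and on $\BdR^+$ are the usual Galois actions. For a $G_L$-equivariant coherent sheaf $\Fc$, functoriality of global sections over $\XFF \setminus \{\infty\}$ and of completion at $\infty$ endows $\Fc_\e$ and $\Fc_\dR^+$ with continuous $G_L$-semilinear actions, with respect to which the glueing datum $\iota_\Fc$ is automatically equivariant. Conversely, starting from an object of $\Mc(G_L)$, the glued sheaf inherits a canonical $G_L$-equivariant structure because the Beauville--Laszlo construction is functorial in its input. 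Composing these two observations with the non-equivariant equivalence yields the $G_L$-equivariant equivalence claimed.

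The only point requiring some care, and the main potential obstacle in writing out a complete proof, is matching the continuity hypotheses built into the definitions of $\Rep_{\Be}(G_L)$, $\Rep_{\BdR^+}(G_L)$, and $\Coh_{\XFF}(G_L)$: one must verify that the natural topology on $\Fc_\e = \HH^0(\XFF \setminus \{\infty\}, \Fc)$ and the $t$-adic topology on the completed stalk $\Fc_\dR^+$ correspond to the topologies imposed in the definition of $\Mc(G_L)$, and conversely that the glueing of continuous semilinear data produces a sheaf whose $G_L$-action is continuous. These compatibilities follow from the continuity of the $G_L$-action on $\Be$ and on $\BdR^+$, together with the fact that any finitely generated module over these rings carries a canonical topology for which the semilinear $G_L$-action is automatically continuous.
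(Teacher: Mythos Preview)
The paper does not prove this proposition: it is stated as a result of Fargues and Fontaine and cited without proof, the surrounding text merely recalling that this is the description ``à la Beauville--Laszlo'' of coherent sheaves over $\XFF$. Your sketch is the standard argument and is consistent with that attribution; there is nothing to compare against in the paper itself.
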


We denote by \(\Bun_{\XFF}(G_L)\) the full subcategory of \(\Coh_{\XFF}(G_L)\) of \(G_L\)\=/equivariant vector bundles over \(\XFF\).

Under the equivalence of Theorem~\ref{theorem:BL}, the category \(\Bun_{\XFF}(G_L)\) is equivalent to the full subcategory of \(\Mc(G_L)\) whose objects are triple \((M_\e,M_\dR^+,\iota_M)\) such that
\begin{itemize}
	\item the \(\Be\)\=/module underlying \(M_\e\) is a free, and
	\item the \(\BdR^+\)\=/module underlying \(M_\dR^+\) is a free.
\end{itemize}

Moreover, if \(L\) is a finite extension of \(\Qp\), then Fontaine~\cite[Proposition~3.1]{Fontaine2020} has proved that the \(\Be\)\=/module underlying any \(\Be\)\=/representation of \(G_L\) is free.
Therefore, we have the following characterisation of \(G_K\)\=/equivariant vector bundles and \(G_K\)\=/equivariant torsion coherent sheaves over \(\XFF\).

\begin{proposition} \label{proposition:char}
	Let \(\Fc\) be \(G_K\)\=/equivariant coherent sheaf over \(\XFF\).
	\begin{enumerate}
		\item The sheaf \(\Fc\) is a vector bundle if and only if the \(\BdR^+\)\=/module underlying \(\Fc_\dR^+\) is free.
		\item The following statements are equivalent.
		\begin{enumerate}
			\item The sheaf \(\Fc\) is torsion.
			\item The \(\Be\)\=/representation \(\Fc_\e\) is trivial.
			\item The \(\BdR^+\)\=/representation \(\Fc_\dR^+\) is torsion.
		\end{enumerate}
	\end{enumerate}
\end{proposition}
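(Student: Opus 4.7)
The plan is to translate both statements, via the Beauville\--Laszlo equivalence of Proposition~\ref{theorem:BL}, into conditions on the triple $(\Fc_\e,\Fc_\dR^+,\iota_\Fc)$, and then to combine this translation with the freeness result cited just above, namely that the $\Be$-module underlying any $\Be$-representation of $G_K$ is free (since $K/\Qp$ is finite).

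\textbf{First part.} I would exploit that $\XFF$ is Noetherian and regular of dimension one, so that a coherent sheaf is a vector bundle if and only if its stalk at every closed point is a free module over the local ring. On the open complement $\XFF\setminus\{\infty\}=\Spec(\Be)$, the sheaf $\Fc$ is the quasi-coherent sheaf associated with $\Fc_\e$, which is free over $\Be$ by Fontaine's theorem and hence locally free at every closed point. At $\infty$, the stalk $\Fc_\infty$ is a finitely generated module over the discrete valuation ring $\Oc_{\XFF,\infty}$, whose completion is $\BdR^+$; since freeness of finitely generated modules over a Noetherian local ring is preserved and reflected by completion, $\Fc_\infty$ is free exactly when $\Fc_\dR^+$ is free.

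\textbf{Second part.} I would establish the cyclic implications (a)$\Rightarrow$(b)$\Rightarrow$(c)$\Rightarrow$(a). For (a)$\Rightarrow$(b), if $\Fc$ is torsion, then the finitely generated $\Be$-module $\Fc_\e$ is torsion; being free by Fontaine's theorem, it must vanish. For (b)$\Rightarrow$(c), the compatibility isomorphism
\[
\BdR\otimes_{\BdR^+}\Fc_\dR^+ \similarrightarrow \BdR\otimes_{\Be}\Fc_\e
\]
forces the left-hand side to vanish, so $\Fc_\dR^+$ has trivial localization at the fraction field of the discrete valuation ring $\BdR^+$, hence is torsion. For (c)$\Rightarrow$(a), if $\Fc_\dR^+$ is torsion, the same isomorphism gives $\BdR\otimes_{\Be}\Fc_\e=0$; since $\Fc_\e$ is free over $\Be$ by Fontaine's theorem, this forces $\Fc_\e=0$, so that $\Fc$ is supported at the closed point $\{\infty\}$ and is therefore torsion.

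I do not expect a substantive obstacle: the argument is essentially a dictionary translation between $G_K$-equivariant coherent sheaves on $\XFF$ and their Beauville\--Laszlo data, the one nontrivial ingredient being Fontaine's freeness result for $\Be$-representations of $G_K$ — without which the identification of the torsion and vector bundle conditions with data at $\infty$ alone would fail.
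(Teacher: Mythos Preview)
Your proof is correct and follows exactly the approach the paper intends: the paper does not give an explicit proof of this proposition but presents it as an immediate consequence (``Therefore'') of the Beauville\--Laszlo description of \(\Bun_{\XFF}(G_L)\) together with Fontaine's result that the \(\Be\)-module underlying any \(\Be\)-representation of \(G_K\) is free. Your argument is precisely the natural expansion of that ``Therefore''.
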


\begin{notation}
	We will write \(\Fc = (\Fc_\e,\Fc_\dR^+,\iota_{\Fc})\) a coherent sheaf over \(\XFF\).
	We will omit the map \(\iota_{\Fc}\) when there is no ambiguity.
\end{notation}

\begin{notation}
	If \(M_\dR^+\) denotes a \(\BdR^+\)\=/module, then we set
	\[
		M_\dR = \BdR \otimes_{\BdR^+} M_\dR^+.
	\]
\end{notation}

\subsection{The Harder--Narasimhan filtration}
\label{subsec:HN_Fil}

We denote by \(\KK_0(\Coh_{\XFF})\) the Grothendieck group associated with the category \(\Coh_{\XFF}\) of coherent sheaves over \(\XFF\).
There exists group homomorphisms \emph{degree} and \emph{rank}
\[
	\begin{split}
		\deg : \KK_0(\Coh_{\XFF})  & \rightarrow \Z, \\
		\rank : \KK_0(\Coh_{\XFF}) & \rightarrow \Z,
	\end{split}
\]
characterised by the following properties.
\begin{itemize}
	\item If \(\Fc\) is a coherent sheaf over \(\XFF\), then \(\rank(\Fc)\) is the rank of \(\Fc\) as an \(\Oc_{\XFF}\)\=/module.
	\item If \(\Lc\) is an invertible sheaf over \(\XFF\), then \(\deg(\Lc)\) is the degree of the divisor associated with \(\Lc\).
	\item If \(\Ec\) is a vector bundle over \(\XFF\) of rank \(r\), and if \(\bigwedge^r \Ec\) is the determinant line bundle associated with \(\Ec\), then
	\[
		\deg(\Ec) = \deg\left(\bigwedge^r \Ec\right).
	\]
	\item If \(\Fc\) is a torsion coherent sheaf over \(\XFF\), then
	\[
		\deg(\Fc) = \sum_{\text{closed point } x \in \XFF} \length_{\Oc_{\XFF,x}}( \Fc_x ).
	\]
\end{itemize}

The \emph{slope} of a coherent sheaf \(\Fc\), denoted by \(\mu(\Fc)\), is the element of \({\Q \cup \{+\infty\}}\) defined by
\[
	\mu(\Fc) =
	\begin{cases}
		+\infty,                       & \text{if } \Fc \text{ is torsion}, \\
		\dfrac{\deg(\Fc)}{\rank(\Fc)}, & \text{otherwise}.
	\end{cases}
\]
A coherent sheaf \(\Fc\) is \emph{semistable} if for each non-zero subsheaf \(\Fc^\prime \subseteq \Fc\), we have \(\mu(\Fc^\prime) \leq \mu(\Fc)\).

If \(\Fc\) is a coherent sheaf over \(\XFF\), there exists a unique filtration of \(\Fc\) by subsheaves
\begin{equation} \label{eq:HN}
	0 = \Fc_0 \subseteq \Fc_1 \subseteq \cdots \subseteq \Fc_{n-1} \subseteq \Fc_n = \Fc
\end{equation}
such that
\begin{enumerate}
	\item the sheaf \(\Fc_i/\Fc_{i-1}\) is semistable for each \(i \in \{1,\ldots,n\}\), and
	\item \(\mu(\Fc_i/\Fc_{i-1}) > \mu(\Fc_{i+1}/\Fc_i)\)  for each \(i \in \{1,\ldots,n-1\}\).
\end{enumerate}
The filtration~\eqref{eq:HN} is the \emph{Harder\--Narasimhan filtration} of \(\Fc\), and the slopes \((\mu(\Fc_i/\Fc_{i-1}))_{i \in \{1,\ldots,n\}}\) are the \emph{Harder\--Narasimhan slopes} of \(\Fc\).

\begin{proposition}[Fargues\--Fontaine] \label{proposition:cohomology_HN}
	Let \(\Fc\) be a coherent sheaf over \(\XFF\).
	For each integer \(n > 1\), the group \(\HH^n(\XFF,\Fc)\) is trivial.
	There is an exact sequence functorial in \(\Fc\)
	\[
		0 \rightarrow \HH^0(\XFF,\Fc) \rightarrow \Fc_\e \oplus \Fc_\dR^+ \xrightarrow{\delta_{\Fc}} \Fc_\dR \rightarrow \HH^1(\XFF,\Fc) \rightarrow 0,
	\]
	where \(\delta_{\Fc}(x,y) = x - \iota_{\Fc}(y)\).
	Moreover,
	\begin{enumerate}
		\item the group \(\HH^0(\XFF,\Fc)\) is trivial if and only if the Harder\--Narasimhan slopes of \(\Fc\) are all \(< 0\), and
		\item the group \(\HH^1(\XFF,\Fc)\) is trivial if and only if the Harder\--Narasimhan slopes of \(\Fc\) are all \(\geq 0\).
	\end{enumerate}
\end{proposition}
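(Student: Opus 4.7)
The plan is to establish the three assertions by reducing to local data via the Beauville\--Laszlo description (Proposition~\ref{theorem:BL}) and invoking the classification of semistable coherent sheaves on $\XFF$. First, the vanishing $\HH^n(\XFF,\Fc) = 0$ for $n > 1$ is Grothendieck vanishing, since $\XFF$ is a Noetherian separated scheme of Krull dimension one.

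For the four-term exact sequence, I would view $\Fc$ as glued via $\iota_\Fc$ from its restriction $\Fc_\e$ to the affine open $U = \XFF \setminus \{\infty\} = \Spec(\Be)$ and from the completion $\Fc_\dR^+$ of its stalk at the unique closed point $\infty$. The complex $0 \to \Fc_\e \oplus \Fc_\dR^+ \xrightarrow{\delta_\Fc} \Fc_\dR \to 0$ is then precisely the \v{C}ech-type complex attached to the Beauville\--Laszlo ``cover'' of $\XFF$ by $U$ and the formal disk $\Spec(\BdR^+)$ at $\infty$: since $\Spec(\Be)$ is affine and $\Spec(\BdR^+)$ is local, both pieces contribute only in degree zero, so this complex computes $R\Gamma(\XFF,\Fc)$, yielding the asserted sequence, with functoriality built in. The main subtlety here is that $\Spec(\BdR^+)$ is not a Zariski open of $\XFF$ but only a formal neighbourhood, so standard Mayer\--Vietoris does not apply verbatim and one must appeal to the Beauville\--Laszlo flat descent formalism of Proposition~\ref{theorem:BL}.

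For the Harder\--Narasimhan criteria, I would argue by induction along the filtration $0 = \Fc_0 \subseteq \Fc_1 \subseteq \cdots \subseteq \Fc_n = \Fc$ using the long exact sequences in cohomology attached to $0 \to \Fc_{i-1} \to \Fc_i \to \Fc_i/\Fc_{i-1} \to 0$. The essential input is the classification of semistable coherent sheaves on $\XFF$ due to Fargues and Fontaine: up to isomorphism, a semistable sheaf of slope $\mu$ is either torsion (when $\mu = +\infty$, with cohomology concentrated in degree zero, as follows from Proposition~\ref{proposition:char} and the four-term sequence) or isomorphic to $\Oc_\XFF(\lambda)^{\oplus m}$ for $\mu = \lambda \in \Q$, for which a direct computation gives $\HH^0 = 0$ exactly when $\lambda < 0$ and $\HH^1 = 0$ exactly when $\lambda \geq 0$. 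Since the HN slopes decrease along the filtration, so that $\Fc_1$ carries the largest slope and $\Fc/\Fc_{n-1}$ the smallest, the implications ``all slopes $< 0 \Rightarrow \HH^0(\Fc) = 0$'' and ``all slopes $\geq 0 \Rightarrow \HH^1(\Fc) = 0$'' follow by induction from vanishing on each graded piece. Conversely, if some slope is $\geq 0$ then $\HH^0(\Fc_1)$ is non-zero and injects into $\HH^0(\Fc)$, while if some slope is $< 0$ then $\HH^1(\Fc/\Fc_{n-1})$ is non-zero and receives a surjection from $\HH^1(\Fc)$. The remaining technical point is the cohomology of the basic line bundles $\Oc_\XFF(\lambda)$, which rests on the identification $\HH^0(\XFF,\Oc_\XFF(n)) = (\Bcris^+)^{\varphi = p^n}$ coming from the graded ring defining $\XFF$, together with a Serre duality argument on the complete curve $\XFF$ to handle $\HH^1$.
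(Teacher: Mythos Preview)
The paper does not actually supply a proof of this proposition: it is stated with the attribution ``Fargues\--Fontaine'' and no proof environment follows, so it functions as a citation to~\cite{FarguesFontaine2018} rather than as something established in the text. There is therefore nothing in the paper to compare your argument against.

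That said, your sketch is a correct outline of the standard proof. Grothendieck vanishing handles $n>1$; the four-term sequence is exactly the Beauville\--Laszlo/\v{C}ech computation you describe (and indeed this is how Fargues and Fontaine set things up); and the slope criteria follow by d\'evissage along the Harder\--Narasimhan filtration once one knows the cohomology of the semistable pieces. One small point of precision: for non-integral slope $\lambda = s/r$ the basic object $\Oc_{\XFF}(\lambda)$ is already a rank-$r$ bundle rather than a line bundle, so the Serre duality you invoke for $\HH^1$ must be formulated accordingly (or replaced by the direct computation via the four-term sequence and the identification of $\Fil^{-n}\Be$); but this does not affect the validity of the argument.
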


Let \(L\) be an algebraic extension of \(\Qp\).

\begin{remark} \label{remark:HN_equiv}
	The uniqueness of the Harder\--Narasimhan filtration implies that the Harder\--Narasimhan filtration of a \(G_L\)\=/equivariant coherent sheaf is composed of \(G_L\)\=/equivariant coherent sheaves.
\end{remark}

If \(\hat{L}\) is a perfectoid field~\cite[\S 3]{Scholze2012}, then Fargues and Fontaine~\cite[Théorème~9.3.1 and Théorème~9.4.1]{FarguesFontaine2018} have classified \(G_L\)\=/equivariant sheaves over \(\XFF\).
Part of the classification is the following.

\begin{theorem}[Fargues\--Fontaine] \label{theorem:FF_classification_perfectoid}
	If \(\hat{L}\) is a perfectoid field, then the Harder--Narasimhan filtration of a \(G_L\)\=/equivariant coherent sheaf over \(\XFF\) is split in \(\Coh_{\XFF}(G_L)\).
\end{theorem}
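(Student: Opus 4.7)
The plan is to reduce the splitting of the Harder--Narasimhan filtration to a vanishing of Galois cohomology for certain Banach representations arising from global sections of semistable vector bundles on \(\XFF\), and then to invoke the classification of \(G_L\)\=/equivariant vector bundles over \(\XFF\) for perfectoid \(\hat{L}\) due to Fargues and Fontaine.

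First, I would dispose of the torsion part. Since \(\XFF\) is a regular Noetherian scheme of dimension one, the torsion subsheaf \(\Fc_\tor\) of a coherent sheaf \(\Fc\) is canonical, hence \(G_L\)\=/stable, and the quotient \(\Ec = \Fc/\Fc_\tor\) is a vector bundle. The sheaf \(\Ec^\vee \otimes \Fc_\tor\) is still torsion, so Proposition~\ref{proposition:cohomology_HN} gives \(\Ext^1_{\XFF}(\Ec, \Fc_\tor) = \HH^1(\XFF, \Ec^\vee \otimes \Fc_\tor) = 0\). Hence \(0 \to \Fc_\tor \to \Fc \to \Ec \to 0\) admits a splitting already non-equivariantly, and by canonicity the splitting is \(G_L\)\=/equivariant. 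Since \(\Fc_\tor\) has slope \(+\infty\), it constitutes the top step of the HN filtration of \(\Fc\), and it suffices to split the HN filtration of the vector bundle \(\Ec\).

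By induction on the number of distinct Harder--Narasimhan slopes, I reduce to showing that every short exact sequence \(0 \to \Ec_1 \to \Ec \to \Ec_2 \to 0\) in \(\Bun_{\XFF}(G_L)\) with \(\Ec_1,\Ec_2\) semistable and \(\mu(\Ec_1) > \mu(\Ec_2)\) splits equivariantly, i.e.\ that \(\Ext^1_{\Coh_{\XFF}(G_L)}(\Ec_2, \Ec_1) = 0\). The Hochschild--Serre spectral sequence \(E_2^{p,q} = \HH^p(G_L, \Ext^q_{\XFF}(\Ec_2, \Ec_1)) \Rightarrow \Ext^{p+q}_{\Coh_{\XFF}(G_L)}(\Ec_2, \Ec_1)\) yields the short exact sequence
\[
0 \to \HH^1(G_L, \Hom_{\XFF}(\Ec_2, \Ec_1)) \to \Ext^1_{\Coh_{\XFF}(G_L)}(\Ec_2, \Ec_1) \to \HH^0(G_L, \Ext^1_{\XFF}(\Ec_2, \Ec_1)) \to 0.
\]
The assumption \(\mu(\Ec_1) > \mu(\Ec_2)\) forces all Harder--Narasimhan slopes of \(\Ec_1 \otimes \Ec_2^\vee\) to be strictly positive, so Proposition~\ref{proposition:cohomology_HN} yields \(\Ext^1_{\XFF}(\Ec_2, \Ec_1) = \HH^1(\XFF, \Ec_1 \otimes \Ec_2^\vee) = 0\), killing the right-hand term.

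The hard step is the vanishing of \(\HH^1(G_L, \Hom_{\XFF}(\Ec_2, \Ec_1))\), where \(\Hom_{\XFF}(\Ec_2, \Ec_1) = \HH^0(\XFF, \Ec_1 \otimes \Ec_2^\vee)\) is a \(p\)\=/adic Banach representation whose underlying bundle has only strictly positive Harder--Narasimhan slopes. This is precisely where the perfectoid hypothesis is used and constitutes the crux of the Fargues--Fontaine classification: one describes semistable \(G_L\)\=/equivariant vector bundles of a fixed positive slope over the relative curve of a perfectoid field via the tilting equivalence and the analysis of the Banach--Colmez spaces \(\HH^0(\XFF, \Oc_{\XFF}(\lambda))\) for \(\lambda > 0\), and extracts from this description the required Galois cohomological vanishing. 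Rather than redo this analysis, I would conclude by invoking~\cite[Théorème~9.3.1 and Théorème~9.4.1]{FarguesFontaine2018}, which is tailored precisely to this statement.
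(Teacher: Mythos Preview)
The paper does not prove this theorem at all: it is simply attributed to Fargues and Fontaine with a reference to \cite[Théorème~9.3.1 and Théorème~9.4.1]{FarguesFontaine2018}, and the same citation is what you ultimately invoke. In that sense your proposal and the paper agree on the substance.

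Your added scaffolding via the Hochschild--Serre reduction is a reasonable way to make explicit \emph{what} is being cited, but it contains a gap. In the torsion step you write that the sequence \(0 \to \Fc_\tor \to \Fc \to \Ec \to 0\) ``admits a splitting already non-equivariantly, and by canonicity the splitting is \(G_L\)-equivariant.'' The vanishing of \(\Ext^1_{\XFF}(\Ec,\Fc_\tor)\) only tells you the \emph{set} of non-equivariant splittings is non-empty; it says nothing about equivariance. Canonicity of \(\Fc_\tor\) as a subobject does not make a choice of splitting canonical. What you actually need is \(\HH^1(G_L,\Hom_{\XFF}(\Ec,\Fc_\tor))=0\), which is again the ``hard step'' (global sections of a sheaf with strictly positive slopes), so the torsion case should be folded into your inductive argument rather than treated separately by a spurious canonicity claim.

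A second, smaller point: when you assert that all Harder--Narasimhan slopes of \(\Ec_1 \otimes \Ec_2^\vee\) are strictly positive, you are implicitly using that the tensor product of semistable bundles on \(\XFF\) is semistable. This is true but is itself a non-trivial theorem of Fargues--Fontaine; it deserves an explicit citation rather than being asserted.
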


\subsection{Harder--Narasimhan twists}
\label{subsec:HN_twist}

We recall the definition and properties of the Harder\--Narasimhan twists of coherent sheaves due to Fontaine~\cite[\S 3H]{Fontaine2020}.
If \(\Fc\) is a coherent sheaf over \(\XFF\), then, for each \(n \in \Z\), the \emph{\(n\)\=/th Harder\--Narasimhan twist} of \(\Fc\), denoted by \(\Fc(n)_\HN\), is the coherent sheaf defined as the following modification of \(\Fc\) at the point \(\infty\):
\[
	\Fc(n)_\HN = (\Fc_\e,\Fc_\dR^+(-n),\iota_{\Fc}(-n)).
\]
We then have the following short exact sequences of coherent sheaves
\begin{align}
	0 \rightarrow \Fc \rightarrow \Fc(n)_\HN \rightarrow (0, t^{-n} \Fc_\dR^+/ \Fc_\dR^+) \rightarrow 0 & \quad \text{if } n \geq 0, \label{eq:HNtwistpositive} \\
	0 \rightarrow \Fc(n)_\HN \rightarrow \Fc \rightarrow (0, \Fc_\dR^+/t^{-n} \Fc_\dR^+) \rightarrow 0  & \quad \text{if } n < 0. \label{eq:HNtwistnegative}
\end{align}
Moreover, if \(L\) is an algebraic extension of \(\Qp\) and
if \(\Fc\) is \(G_L\)\=/equivariant, then \(\Fc(n)_\HN\) is \(G_L\)\=/equivariant, and the short exact sequences~\eqref{eq:HNtwistpositive} and~\eqref{eq:HNtwistnegative} are short exact sequences of \(G_L\)\=/equivariant coherent sheaves.

\begin{proposition}[Fontaine] \label{proposition:HNtwist}
	Let \(\Fc\) be a coherent sheaf over \(\XFF\).
	Let \(n \in \Z\).
	We have
	\[
		\mu(\Fc(n)_\HN) = \mu(\Fc) + n.
	\]
	Moreover, if \(\Fc\) is semistable, then \(\Fc(n)_\HN\) is semistable.
\end{proposition}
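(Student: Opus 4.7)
The plan is to derive both assertions directly from the defining short exact sequences~\eqref{eq:HNtwistpositive} and~\eqref{eq:HNtwistnegative}.

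For the slope identity, the key observation is that the torsion sheaf appearing in the third position of each sequence is supported at \(\infty\), so it has rank zero; hence \(\rank(\Fc(n)_\HN) = \rank(\Fc)\), and its degree equals the \(\BdR^+\)-length of its stalk. If \(\Fc\) is torsion, then \(\Fc_\e = 0\) by Proposition~\ref{proposition:char}, so \(\Fc(n)_\HN\) is also torsion and both sides of the claimed equality equal \(+\infty\). Otherwise, setting \(r = \rank(\Fc) > 0\), the \(\BdR^+\)-module \(\Fc_\dR^+\) is free of rank \(r\), so the quotient (respectively submodule) \(t^{-n}\Fc_\dR^+/\Fc_\dR^+\) for \(n \geq 0\), or \(\Fc_\dR^+/t^{-n}\Fc_\dR^+\) for \(n < 0\), has \(\BdR^+\)-length \(|n|r\). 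Additivity of degree on the short exact sequences then yields \(\deg(\Fc(n)_\HN) = \deg(\Fc) + nr\) in both cases, and dividing by \(r\) gives \(\mu(\Fc(n)_\HN) = \mu(\Fc) + n\).

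For the semistability assertion, I would observe that the assignment \(\Fc \mapsto \Fc(n)_\HN\) is an exact autoequivalence of \(\Coh_{\XFF}\), with inverse the \((-n)\)-twist. Both properties are transparent from the Beauville--Laszlo description of Proposition~\ref{theorem:BL}: the twist keeps the \(\Be\)-part unchanged and replaces the \(\BdR^+\)-lattice \(\Fc_\dR^+ \subseteq \Fc_\dR\) by the lattice \(t^{-n}\Fc_\dR^+ \subseteq \Fc_\dR\), and since multiplication by \(t^{-n}\) is an isomorphism of \(\BdR\)-vector spaces the operation is functorial, exact and visibly inverted by the \((-n)\)-twist. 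In particular, subsheaves of \(\Fc(n)_\HN\) correspond bijectively, via \(\Gc \mapsto \Gc(-n)_\HN\), to subsheaves of \(\Fc\). If \(\Fc\) is semistable and \(\Gc \subseteq \Fc(n)_\HN\) is a non-zero subsheaf, then semistability of \(\Fc\) applied to the inclusion \(\Gc(-n)_\HN \subseteq \Fc\) gives \(\mu(\Gc(-n)_\HN) \leq \mu(\Fc)\); applying the slope formula to both \(\Gc\) and \(\Fc(n)_\HN\) then yields \(\mu(\Gc) - n \leq \mu(\Fc(n)_\HN) - n\), whence \(\mu(\Gc) \leq \mu(\Fc(n)_\HN)\).

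The argument is essentially formal once the slope formula is in hand. The only steps requiring attention are verifying exactness and invertibility of the twist from its explicit Beauville--Laszlo description, and treating the torsion edge case separately (where both slopes are \(+\infty\)); I do not foresee a genuine obstacle.
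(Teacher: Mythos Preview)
The paper does not prove this proposition; it is quoted from Fontaine~\cite[\S 3H]{Fontaine2020}. Your argument is the standard one and is essentially correct, with one small slip in the degree computation: you assert that when $r = \rank(\Fc) > 0$ the module $\Fc_\dR^+$ is free of rank $r$, but by Proposition~\ref{proposition:char} this characterises vector bundles, not arbitrary positive-rank coherent sheaves. The cleanest repair is to observe that the Harder--Narasimhan twist is tensor product with the line bundle $\Oc_{\XFF}(n \cdot \infty)$, so that $\deg(\Fc(n)_\HN) = \deg(\Fc) + n \cdot \rank(\Fc)$ follows from the general formula for twisting by a line bundle, valid for any coherent sheaf; alternatively one may split $\Fc$ as its torsion subsheaf plus a vector bundle and run your length computation on the locally free summand. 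Your semistability argument via the exact autoequivalence is correct as written, and the line-bundle description makes both exactness and invertibility immediate.
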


\begin{corollary}
	Let \(\Fc\) be a coherent sheaf over \(\XFF\), and let \((\Fc_i)_{i \in \{0,\ldots ,n\}}\) be the Harder\--Narasimhan filtration of \(\Fc\).
	Let \(n \in \Z\).
	Then \((\Fc_i(n)_\HN)_{i \in \{0,\ldots ,n\}}\) is the Harder\--Narasimhan filtration of \(\Fc(n)_\HN\).
	In particular, the Harder\--Narasimhan slopes of \(\Fc(n)_\HN\) are \(\{\mu_i + n\}_{i \in \{1,\ldots,n\}}\), where \(\mu_i\) runs over the Harder\--Narasimhan slopes of \(\Fc\).
\end{corollary}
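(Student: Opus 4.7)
The plan is to exploit the uniqueness of the Harder--Narasimhan filtration, reducing the statement to showing that twisting by $(n)_\HN$ produces another filtration having the characterising properties (semistable successive quotients with strictly decreasing slopes).

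First, I would verify that the functor $\Fc \mapsto \Fc(n)_\HN$ is exact on $\Coh_{\XFF}$. This follows immediately from the Beauville--Laszlo description in Proposition~\ref{theorem:BL}: a short exact sequence $0 \to \Fc^\prime \to \Fc \to \Fc^{\prime\prime} \to 0$ is exact if and only if it is exact on both the $\Be$-part and the $\BdR^+$-part, and the twist is the identity on the $\Be$-part and multiplication of the $\BdR^+$-part by $t^{-n}$, which is an invertible operation on $\BdR$-modules and thus exact. In particular, inclusions are preserved, so $(\Fc_i(n)_\HN)_{i \in \{0,\ldots,n\}}$ is a filtration of $\Fc(n)_\HN$, and exactness gives
\[
	(\Fc_i/\Fc_{i-1})(n)_\HN \similarrightarrow \Fc_i(n)_\HN / \Fc_{i-1}(n)_\HN.
\]

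Next, I would apply Proposition~\ref{proposition:HNtwist} to each successive quotient $\Fc_i/\Fc_{i-1}$: since these are semistable, their $n$\=/th Harder--Narasimhan twists are semistable, and their slopes are shifted by $n$. Using the identification above, this shows that each $\Fc_i(n)_\HN/\Fc_{i-1}(n)_\HN$ is semistable of slope $\mu(\Fc_i/\Fc_{i-1}) + n$. The strict inequalities
\[
	\mu(\Fc_i/\Fc_{i-1}) > \mu(\Fc_{i+1}/\Fc_i)
\]
characterising the Harder--Narasimhan filtration of $\Fc$ are preserved under the common shift by $n$, yielding
\[
	\mu\bigl(\Fc_i(n)_\HN/\Fc_{i-1}(n)_\HN\bigr) > \mu\bigl(\Fc_{i+1}(n)_\HN/\Fc_i(n)_\HN\bigr).
\]
Hence $(\Fc_i(n)_\HN)_{i \in \{0,\ldots,n\}}$ satisfies both defining properties of the Harder--Narasimhan filtration, and by uniqueness it is the Harder--Narasimhan filtration of $\Fc(n)_\HN$. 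The statement on slopes in the second sentence is then an immediate consequence.

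No serious obstacle is expected: the argument is a formal consequence of Proposition~\ref{proposition:HNtwist} combined with the exactness of the twisting functor and uniqueness of the Harder--Narasimhan filtration. The only point requiring minor care is the exactness of $(-)(n)_\HN$, which is transparent from the modification-at-$\infty$ description.
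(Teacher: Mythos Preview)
Your proposal is correct and is precisely the intended argument: the paper states this corollary without proof, treating it as an immediate consequence of Proposition~\ref{proposition:HNtwist}, and your verification via exactness of the twist functor, preservation of semistability and slope shift on successive quotients, and uniqueness of the Harder--Narasimhan filtration is exactly how one unpacks that.
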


\subsection{Almost \(\Cp\)-representations and coherent sheaves}
\label{subsec:almost_FF}

We recall the relation between almost \(\Cp\)\=/representations of \(G_K\) and \(G_K\)\=/equivariant coherent sheaves over the Fargues\--Fontaine curve established by Fontaine~\cite{Fontaine2020}.

Let \(\Coh_{\XFF}(G_K)\) be the category of \(G_K\)\=/equivariant coherent sheaves over \(\XFF\).
We set the following subcategories of \(\Coh_{\XFF}(G_K)\).
\begin{itemize}
	\item Let \(\Coh_{\XFF}^{\geq 0}(G_K)\) be the full subcategory of \(\Coh_{\XFF}(G_K)\) of \(G_K\)\=/equivariant coherent sheaves whose Harder\--Narasimhan slopes are all \(\geq 0\).
	Note that the category \(\Coh_{\XFF}^{\geq 0}(G_K)\) is an exact subcategory of \(\Coh_{\XFF}(G_K)\).
	\item Let \(\Coh_{\XFF}^{< 0}(G_K)\) be the full subcategory of \(\Coh_{\XFF}(G_K)\) of \(G_K\)\=/equivariant coherent sheaves whose Harder\--Narasimhan slopes are all \(< 0\).
	Note that the category \(\Coh_{\XFF}^{< 0}(G_K)\) is an exact subcategory of \(\Coh_{\XFF}(G_K)\).
	\item Let \(\Coh_{\XFF}^{0}(G_K)\) be the full subcategory of \(\Coh_{\XFF}(G_K)\) composed of the \(G_K\)\=/equivariant coherent sheaves semistable of slope \(0\).
	Note that the category \(\Coh_{\XFF}^{0}(G_K)\) is an abelian subcategory of \(\Coh_{\XFF}(G_K)\).
	\item Let \(\Coh_{\XFF}^{+\infty}(G_K)\) be the full subcategory of \(\Coh_{\XFF}(G_K)\) of \(G_K\)\=/equivariant torsion coherent sheaves.
	Note that the category \(\Coh_{\XFF}^{+\infty}(G_K)\) is an abelian subcategory of \(\Coh_{\XFF}(G_K)\).
\end{itemize}

The classification of coherent sheaves over \(\XFF\), Theorem~\ref{theorem:FF_classification_perfectoid}, implies the following proposition.

\begin{proposition} \label{proposition:torsion_pair_coh}
	The tuple \((\Coh_{\XFF}^{\geq 0}(G_K),\Coh_{\XFF}^{< 0}(G_K))\) is a torsion pair on \(\Coh_{\XFF}(G_K)\).
\end{proposition}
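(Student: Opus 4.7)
The plan is to verify the two axioms of Definition~\ref{definition:torsion_pair}: the vanishing of Hom between objects of the two subcategories, and the existence, for every \(G_K\)\=/equivariant coherent sheaf, of a short exact sequence with first term in \(\Coh_{\XFF}^{\geq 0}(G_K)\) and last term in \(\Coh_{\XFF}^{<0}(G_K)\). The engine on both sides is the Harder\--Narasimhan filtration reviewed in \S\ref{subsec:HN_Fil}.

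For the Hom\=/vanishing, let \(B\) be an object of \(\Coh_{\XFF}^{\geq 0}(G_K)\), let \(C\) be an object of \(\Coh_{\XFF}^{<0}(G_K)\), and let \(f : B \to C\) be a morphism. I would argue that \(f\) must vanish by a standard slope comparison. Let \(\Img(f) \subseteq C\) be the image of \(f\). Since \(\Img(f)\) is a subsheaf of \(C\), every Harder\--Narasimhan slope of \(\Img(f)\) is bounded above by the largest Harder\--Narasimhan slope of \(C\), hence is \(<0\). Since \(\Img(f)\) is a quotient of \(B\), every Harder\--Narasimhan slope of \(\Img(f)\) is bounded below by the smallest Harder\--Narasimhan slope of \(B\), hence is \(\geq 0\). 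If \(\Img(f)\) were non-zero, it would have at least one Harder\--Narasimhan slope, contradicting the two inequalities; thus \(f = 0\). The key input here is the standard estimate on slopes of subsheaves and quotient sheaves, which follows from semistability of the successive quotients of the Harder\--Narasimhan filtration and the characterisation of morphisms between semistable sheaves of different slopes.

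For the existence of the torsion decomposition, let \(\Fc\) be an object of \(\Coh_{\XFF}(G_K)\) and let
\[
    0 = \Fc_0 \subseteq \Fc_1 \subseteq \cdots \subseteq \Fc_{n-1} \subseteq \Fc_n = \Fc
\]
be its Harder\--Narasimhan filtration. By Remark~\ref{remark:HN_equiv}, the uniqueness of the Harder\--Narasimhan filtration implies that each \(\Fc_j\) is \(G_K\)\=/equivariant. Let \(i \in \{0,\ldots,n\}\) be the largest index such that \(\mu(\Fc_j/\Fc_{j-1}) \geq 0\) for all \(j \leq i\), with the convention \(i = 0\) if \(\mu(\Fc_1) < 0\). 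Then \((\Fc_j)_{j \leq i}\) is the Harder\--Narasimhan filtration of \(\Fc_i\), so \(\Fc_i\) has all Harder\--Narasimhan slopes \(\geq 0\); and by the strict decrease of successive slopes in the Harder\--Narasimhan filtration of \(\Fc\), the induced filtration \((\Fc_j/\Fc_i)_{j \geq i}\) is the Harder\--Narasimhan filtration of \(\Fc/\Fc_i\) and has all slopes \(<0\). The short exact sequence
\[
    0 \rightarrow \Fc_i \rightarrow \Fc \rightarrow \Fc/\Fc_i \rightarrow 0
\]
is \(G_K\)\=/equivariant, with \(\Fc_i\) in \(\Coh_{\XFF}^{\geq 0}(G_K)\) and \(\Fc/\Fc_i\) in \(\Coh_{\XFF}^{<0}(G_K)\), which is the required decomposition.

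I do not expect a genuine obstacle: both parts are formal consequences of general properties of the Harder\--Narasimhan filtration recalled in \S\ref{subsec:HN_Fil}, together with the \(G_K\)\=/equivariance afforded by its uniqueness. The only point requiring minor care is the Hom\=/vanishing step, where one should make sure to reduce to the semistable case via the Harder\--Narasimhan filtration before applying the standard slope inequalities; once that reduction is in place the argument is automatic.
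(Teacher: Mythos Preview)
Your proof is correct. The paper does not give a detailed argument, only recording that the proposition follows from the Fargues\--Fontaine classification (Theorem~\ref{theorem:FF_classification_perfectoid}); your write-up makes the underlying Harder\--Narasimhan mechanism explicit, using only the existence and uniqueness of the filtration together with its \(G_K\)\=/equivariance (Remark~\ref{remark:HN_equiv}). In particular you do not need the splitting of the Harder\--Narasimhan filtration over perfectoid fields: the filtration step \(\Fc_i\) already gives the required \(G_K\)\=/equivariant short exact sequence, and the Hom\=/vanishing is the standard slope estimate, which holds already at the level of non-equivariant coherent sheaves.
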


\begin{theorem}[Fargues\--Fontaine, Fontaine] \label{theorem:almost_coh}
	There are functors
	\[
		\begin{split}
			\Coh_{\XFF}(G_K) & \rightarrow \Cc^{\geq 0}(G_K) \\
			\Fc & \mapsto \HH^0(\XFF,\Fc)
		\end{split}
	\]
	and
	\[
		\begin{split}
			\Coh_{\XFF}(G_K) & \rightarrow \Cc^{< 0}(G_K) \\
			\Fc & \mapsto \HH^1(\XFF,\Fc)
		\end{split}
	\]
	which induce the following equivalence of categories.
	\begin{enumerate}
		\item The functor \(\Fc \mapsto \HH^0(\XFF,\Fc)\) induces an equivalence of exact categories
		\[
			\Coh_{\XFF}^{\geq 0}(G_K) \similarrightarrow \Cc^{\geq 0}(G_K).
		\]
		\item The functor \(\Fc \mapsto \HH^0(\XFF,\Fc)\) induces an equivalence of abelian categories
		\[
			\Coh_{\XFF}^{0}(G_K) \similarrightarrow \Cc^{0}(G_K),
		\]
		of which the functor
		\[
			\begin{split}
				\Cc^{0}(G_K) & \similarrightarrow \Coh_{\XFF}^{0}(G_K) \\
				V & \mapsto \Ec(V) = \Oc_{\XFF} \otimes_{\Qp} V = (\Be \otimes_{\Qp} V, \BdR^+ \otimes_{\Qp} V)
			\end{split}
		\]
		is a quasi-inverse.
		\item The functor \(\Fc \mapsto \HH^0(\XFF,\Fc)\) induces an equivalence of abelian categories
		\[
			\Coh_{\XFF}^{+\infty}(G_K) \similarrightarrow \Cc^{+\infty}(G_K),
		\]
		of which the functor
		\[
			\begin{split}
				\Cc^{+\infty}(G_K) & \similarrightarrow \Coh_{\XFF}^{+\infty}(G_K) \\
				M_\dR^+ & \mapsto (0,M_\dR^+)
			\end{split}
		\]
		is a quasi-inverse.
		\item The functor \(\Fc \mapsto \HH^1(\XFF,\Fc)\) induces an equivalence of exact categories
		\[
			\Coh_{\XFF}^{< 0}(G_K) \similarrightarrow \Cc^{< 0}(G_K).
		\]
	\end{enumerate}
\end{theorem}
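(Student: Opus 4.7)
The plan is to bootstrap the four equivalences from the two abelian building blocks in parts~(2) and~(3) to the full subcategories in parts~(1) and~(4), using the torsion pair structures and the Harder--Narasimhan filtration on both sides. First, I would verify that the functors take values in the stated subcategories: by Proposition~\ref{proposition:cohomology_HN}, \(\HH^1(\XFF,\Fc)\) vanishes when \(\Fc \in \Coh_{\XFF}^{\geq 0}(G_K)\) and \(\HH^0(\XFF,\Fc)\) vanishes when \(\Fc \in \Coh_{\XFF}^{<0}(G_K)\), while continuity of the \(G_K\)\=/action together with the Banach structure on \(\HH^i\) follow from the description of \(\HH^i\) as kernel or cokernel of the continuous map \(\delta_{\Fc}\). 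Effectivity and coeffectivity of the images will propagate from the building block cases via the closure properties in Proposition~\ref{proposition:effective_coeffective}.

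For part~(2), given \(V \in \Cc^0(G_K)\), tensoring the fundamental exact sequence~\eqref{eq:funda} with \(V\) over \(\Qp\) recovers precisely the exact sequence of Proposition~\ref{proposition:cohomology_HN} applied to \(\Ec(V)\), giving \(\HH^0(\XFF,\Ec(V)) = V\) and confirming \(\Ec(V)\) is semistable of slope \(0\). Essential surjectivity then uses the Fargues--Fontaine classification of semistable slope \(0\) \(G_K\)\=/equivariant vector bundles: for any such \(\Fc\), the natural adjunction map \(\Ec(\HH^0(\XFF,\Fc)) \rightarrow \Fc\) is an isomorphism, verified by comparing the generic fibre and the completed stalk at \(\infty\). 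For part~(3), Proposition~\ref{theorem:BL} together with Proposition~\ref{proposition:char} identifies \(\Coh_{\XFF}^{+\infty}(G_K)\) with the full subcategory of triples \((0, M_\dR^+)\) for \(M_\dR^+\) a torsion \(\BdR^+\)\=/representation, and the exact sequence of Proposition~\ref{proposition:cohomology_HN} yields \(\HH^0(\XFF,(0,M_\dR^+)) = M_\dR^+\) since \(\Fc_\e = 0\) and \(\Fc_\dR = 0\) in this case. This gives the equivalence with \(\Rep_{\BdR^+}^{\tor}(G_K)\), which coincides with \(\Cc^{+\infty}(G_K)\) by definition.

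For parts~(1) and~(4), the key point is that both \(\Coh_{\XFF}^{\geq 0}(G_K)\) and \(\Cc^{\geq 0}(G_K)\) are generated under extensions and direct summands by their respective slope \(0\) and torsion subcategories. On the coherent side, this uses the HN filtration (Remark~\ref{remark:HN_equiv}) to reduce to semistable pieces of slope \(\geq 0\), combined with the HN twist exact sequence~\eqref{eq:HNtwistnegative} which realises any semistable slope \(d > 0\) bundle as an extension of a torsion sheaf by a semistable slope \(0\) bundle after suitable iterated twisting. On the almost \(\Cp\)\=/side, this is Proposition~\ref{proposition:effective_coeffective}. Since \(\HH^1\) vanishes on \(\Coh_{\XFF}^{\geq 0}(G_K)\), the functor \(\HH^0\) is exact there and hence preserves extensions; combined with the equivalences on the building blocks, a five-lemma induction along the HN filtration gives full faithfulness and essential surjectivity. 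Part~(4) is entirely analogous, with \(\HH^1\) replacing \(\HH^0\) and Proposition~\ref{proposition:torsion_pair_coh} supplying the relevant generation. The main obstacle is this induction step, which requires carefully comparing \(\Ext^1\) groups on both sides: for \(V \in \Cc^0(G_K)\) and \(M \in \Cc^{+\infty}(G_K)\), one must match \(\Ext^1_{\Coh_{\XFF}(G_K)}(\Ec(V), (0, M))\) with \(\Ext^1_{\Cc(G_K)}(V, M)\) via \(\HH^0\), using the long exact sequence of Galois cohomology and the exact sequence of Proposition~\ref{proposition:cohomology_HN}.
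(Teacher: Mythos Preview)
The paper does not give its own proof of Theorem~\ref{theorem:almost_coh}; it is quoted as a result of Fargues--Fontaine and Fontaine, and the Remark immediately following it indicates that Fontaine's actual argument proceeds via an equivalence of bounded derived categories \(\DD^b(\Coh_{\XFF}(G_K)) \similarrightarrow \DD^b(\Cc(G_K))\), recovering the two abelian categories as hearts of the \(t\)\=/structures induced by the torsion pairs of Propositions~\ref{proposition:effective_coeffective} and~\ref{proposition:torsion_pair_coh}. Your outline of parts~(2) and~(3) is correct, and a dévissage strategy for parts~(1) and~(4) is a plausible alternative route, but as written it has genuine gaps.

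First, the HN twist sequence~\eqref{eq:HNtwistnegative} does not reduce a semistable bundle of fractional positive slope to slope~\(0\): twisting a semistable bundle of slope \(1/2\) down by any integer overshoots to a negative slope, so it does not stay in \(\Coh_{\XFF}^{\geq 0}\). The dévissage that actually works uses the nonvanishing of \(\HH^0\) on positive-slope bundles to produce an embedding \(\Oc_{\XFF} \hookrightarrow \Fc\), and then inducts on rank. Second, part~(4) is not ``entirely analogous'': \(\Coh_{\XFF}^{<0}(G_K)\) contains neither slope-\(0\) nor torsion objects, so there are no building blocks to induct from, and Proposition~\ref{proposition:torsion_pair_coh} is a torsion-pair statement, not a generation statement for \(\Coh_{\XFF}^{<0}\). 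One must instead use~\eqref{eq:HNtwistpositive} to present each \(\Fc \in \Coh_{\XFF}^{<0}\) as a kernel between objects of \(\Coh_{\XFF}^{>0}\) and \(\Coh_{\XFF}^{+\infty}\) (parallel to Lemma~\ref{lemma:relation_coeffective_torsion}) and transport the equivalence across that presentation, which already presupposes part~(1). Third, the \(\Ext^1\)-matching you correctly identify as the main obstacle is not resolved by your sketch: identifying \(\Ext^1_{\Cc(G_K)}(V,M)\) with the corresponding \(\Ext^1\) of sheaves requires control over extensions of Banach representations in \(\Cc(G_K)\) that in Fontaine's treatment is established together with the theorem, not as an independent input.
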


\begin{remark}
	While the global sections functor does not extend to an equivalence of categories between \(\Coh_{\XFF}(G_K)\) and \(\Cc(G_K)\), Fontaine has proved that it induces an equivalence of triangulated categories between the bounded derived categories
	\begin{equation} \label{eq:equiv_derived}
		\DD^b(\Coh_{\XFF}(G_K)) \similarrightarrow \DD^b(\Cc(G_K)).
	\end{equation}
	Fontaine has also proved that the categories \(\Coh_{\XFF}(G_K)\) and \(\Cc(G_K)\) can be recovered from each other as follows.
	\begin{itemize}
		\item The torsion pair \((\Coh_{\XFF}^{\geq 0}(G_K),\Coh_{\XFF}^{< 0}(G_K))\) on \(\Coh_{\XFF}(G_K)\) induces a \(t\)\=/structure on \(\DD^b(\Coh_{\XFF}(G_K))\) whose abelian heart is naturally equivalent to \(\Cc(G_K)\) via the equivalence~\eqref{eq:equiv_derived}.
		\item The torsion pair \((\Cc^{< 0}(G_K),\Cc^{\geq 0}(G_K))\) on \(\Cc(G_K)\) induces a \(t\)\=/structure on \(\DD^b(\Cc(G_K))\) whose abelian heart is naturally equivalent to \(\Coh_{\XFF}(G_K)\) via the equivalence~\eqref{eq:equiv_derived}.
	\end{itemize}
\end{remark}

We also set the following subcategories of \(\Coh_{\XFF}(G_K)\) and \(\Cc(G_K)\).
\begin{itemize}
	\item Let \(\Coh_{\XFF}^{> 0}(G_K)\) be the full subcategory of \(\Coh_{\XFF}(G_K)\) of \(G_K\)\=/equivariant coherent sheaves whose Harder\--Narasimhan slopes are all \(> 0\).
	Note that the category \(\Coh_{\XFF}^{> 0}(G_K)\) is a full subcategory of \(\Coh_{\XFF}^{\geq 0}(G_K)\), and that the category \(\Coh_{\XFF}^{+\infty}(G_K)\) is a full subcategory of \(\Coh_{\XFF}^{> 0}(G_K)\).
	\item Let \(\Cc^{> 0}(G_K)\) be the subcategory of \(\Cc(G_K)\) equivalent to the category \(\Coh_{\XFF}^{> 0}(G_K)\) via the equivalence of categories \(\Coh_{\XFF}^{\geq 0}(G_K) \similarrightarrow \Cc^{\geq 0}(G_K)\) from Theorem~\ref{theorem:almost_coh}.
	Note that the category \(\Cc^{+\infty}(G_K)\) is a full subcategory of \(\Cc^{> 0}(G_K)\).
\end{itemize}

\subsection{Almost \(\Cp\)-representations and \(p\)-adic representations}
\label{subsec:almost_rep}

\begin{definition} \label{definition:neq0}
	Let \(\Cc^{\neq 0}(G_K)\) be the full subcategory of \(\Cc(G_K)\) of almost \(\Cp\)\=/representation \(X\) of \(G_K\) such that for all \(p\)\=/adic representation \(V\) of \(G_K\), we have
	\[
		\Hom_{\Cc(G_K)}(X,V) = 0.
	\]
\end{definition}

\begin{proposition} \label{proposition:almost_torsion_relation}
	The tuple \((\Cc^{\neq 0}(G_K),\Cc^0(G_K))\) is a torsion pair on \(\Cc(G_K)\).
	Moreover, if \(X\) is an almost \(\Cp\)\=/representation of \(G_K\), then there exists a commutative diagram in \(\Cc(G_K)\) whose columns and rows are exact which is unique up to isomorphism
	\[
		\begin{tikzcd}
				 &                                    & 0 \ar{d}                 & 0 \ar{d}                 &   \\
                        0 \ar{r} & \ar{r} X^{<0} \ar{r} \ar[equal]{d} & X^{\neq 0} \ar{r} \ar{d} & X^{> 0} \ar{r} \ar{d}    & 0 \\
                        0 \ar{r} & \ar{r} X^{<0} \ar{r}               & X \ar{r} \ar{d}          & X^{\geq 0} \ar{r} \ar{d} & 0 \\
                                 &                                    & X^0 \ar[equal]{r} \ar{d} & X^0 \ar{d}               &   \\
                                 &                                    & 0                        & 0                        & ,
		\end{tikzcd}
	\]
	with \(X^{< 0}\) an object of \(\Cc^{< 0}(G_K)\), \(X^{> 0}\) an object of \(\Cc^{> 0}(G_K)\), \(X^{\geq 0}\) an object of \(\Cc^{\geq 0}(G_K)\), \(X^0\) an object of \(\Cc^0(G_K)\), and \(X^{\neq 0}\) an object of \(\Cc^{\neq 0}(G_K)\).
\end{proposition}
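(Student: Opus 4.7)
The plan is to construct the torsion pair $(\Cc^{\neq 0}(G_K),\Cc^{0}(G_K))$ together with the full commutative diagram by composing the torsion pair $(\Cc^{<0}(G_K),\Cc^{\geq 0}(G_K))$ on $\Cc(G_K)$ from Proposition~\ref{proposition:effective_coeffective} with a secondary torsion pair $(\Cc^{>0}(G_K),\Cc^{0}(G_K))$ on the subcategory $\Cc^{\geq 0}(G_K)$. The secondary torsion pair is built on the coherent-sheaf side of the equivalence of Theorem~\ref{theorem:almost_coh}, where it amounts to isolating the strictly positive-slope part of the Harder--Narasimhan filtration from the slope-zero part.

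First I would prove that $(\Coh_{\XFF}^{>0}(G_K),\Coh_{\XFF}^{0}(G_K))$ is a torsion pair on $\Coh_{\XFF}^{\geq 0}(G_K)$. For the Hom-vanishing, given $\Fc\in\Coh_{\XFF}^{>0}(G_K)$ and $\Gc\in\Coh_{\XFF}^{0}(G_K)$, one reduces via the Harder--Narasimhan filtration of $\Fc$ to the case where $\Fc$ is semistable of slope $\mu>0$; the image of any non-zero morphism $\Fc\to\Gc$ would then be both a quotient of $\Fc$ (hence of slope $\geq\mu>0$ by semistability of $\Fc$) and a subsheaf of $\Gc$ (hence of slope $\leq 0$ by semistability of $\Gc$), which is impossible. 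The decomposition is obtained directly from the Harder--Narasimhan filtration of an object $\Fc\in\Coh_{\XFF}^{\geq 0}(G_K)$: one takes $\Fc^{>0}$ to be the largest step whose successive quotients have strictly positive slope, so that $\Fc/\Fc^{>0}$ is semistable of slope~$0$ (or zero). Equivariance of this decomposition is given by Remark~\ref{remark:HN_equiv}. Transporting through the equivalence of Theorem~\ref{theorem:almost_coh} and noting that $\HH^1(\XFF,\Fc^{>0})=0$ by Proposition~\ref{proposition:cohomology_HN}, so that the global sections functor preserves the exactness of the decomposition, one obtains the torsion pair $(\Cc^{>0}(G_K),\Cc^{0}(G_K))$ on $\Cc^{\geq 0}(G_K)$.

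Given $X\in\Cc(G_K)$, I would then assemble the diagram as follows. The middle row is the decomposition $0\to X^{<0}\to X\to X^{\geq 0}\to 0$ from the first torsion pair, and the right column is the decomposition $0\to X^{>0}\to X^{\geq 0}\to X^{0}\to 0$ from the secondary torsion pair applied to $X^{\geq 0}$. I would then define $X^{\neq 0}$ as the kernel in $\Cc(G_K)$, which is abelian by Theorem~\ref{theorem:almost_abelian}, of the composite surjection $X\to X^{\geq 0}\to X^{0}$, giving the middle column $0\to X^{\neq 0}\to X\to X^{0}\to 0$. A direct diagram chase shows that $X^{<0}\subseteq X^{\neq 0}$ and that the induced map $X^{\neq 0}/X^{<0}\to X^{\geq 0}$ identifies with the inclusion of $X^{>0}$, producing the top row $0\to X^{<0}\to X^{\neq 0}\to X^{>0}\to 0$ and completing the commutative diagram.

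It remains to check that $X^{\neq 0}\in\Cc^{\neq 0}(G_K)$: for $V\in\Cc^{0}(G_K)$, applying $\Hom_{\Cc(G_K)}(-,V)$ to the top row yields an exact sequence whose outer terms $\Hom(X^{>0},V)$ and $\Hom(X^{<0},V)$ both vanish, the former by the Hom-vanishing of the secondary torsion pair and the latter by Proposition~\ref{proposition:effective_coeffective} since $\Cc^{0}(G_K)\subseteq\Cc^{\geq 0}(G_K)$. Combined with the tautological vanishing $\Hom_{\Cc(G_K)}(X,V)=0$ for $X\in\Cc^{\neq 0}(G_K)$ and $V\in\Cc^{0}(G_K)$ coming from Definition~\ref{definition:neq0}, this shows that $(\Cc^{\neq 0}(G_K),\Cc^{0}(G_K))$ is a torsion pair on $\Cc(G_K)$, and uniqueness up to isomorphism of the whole diagram then follows from the uniqueness of torsion pair decompositions applied to both the first and the secondary torsion pairs. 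I expect the main obstacle to be the slope argument on the coherent-sheaf side underlying the secondary torsion pair, which then propagates cleanly through the equivalence of Theorem~\ref{theorem:almost_coh} to $\Cc(G_K)$.
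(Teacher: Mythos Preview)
Your proposal is correct and follows essentially the same approach as the paper: both construct the middle row from the torsion pair \((\Cc^{<0}(G_K),\Cc^{\geq 0}(G_K))\), obtain the right column by passing to \(\Coh_{\XFF}^{\geq 0}(G_K)\) via Theorem~\ref{theorem:almost_coh} and taking the first step of the Harder--Narasimhan filtration (using \(\HH^1(\XFF,\Fc^{>0})=0\) for exactness), define \(X^{\neq 0}\) as the preimage of \(X^{>0}\) in \(X\) (equivalently, the kernel of \(X\to X^0\)), and then verify \(X^{\neq 0}\in\Cc^{\neq 0}(G_K)\) by applying \(\Hom(-,V)\) to the top row. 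The only organisational difference is that you package the Harder--Narasimhan step as a ``secondary torsion pair'' \((\Cc^{>0}(G_K),\Cc^{0}(G_K))\) on \(\Cc^{\geq 0}(G_K)\), whereas the paper works directly with the filtration; since \(\Cc^{\geq 0}(G_K)\) is only an exact (not abelian) subcategory, this terminology is slightly informal, but the underlying argument is identical and unaffected.
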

\begin{proof}
	By Definition~\ref{definition:neq0} of the subcategory \(\Cc^{\neq 0}(G_K)\), the first property in the Definition~\ref{definition:torsion_pair} of a torsion pair is satisfied.
	We prove the second one.

	Let \(X\) be an almost \(\Cp\)\=/representation of \(G_K\).
	Since  \((\Cc^{< 0}(G_K),\Cc^{\geq 0}(G_K))\) is a torsion pair on \(\Cc(G_K)\) by Proposition~\ref{proposition:effective_coeffective}, there exits a short exact sequence
	\begin{equation} \label{eq:torsion_pair_0}
		0 \rightarrow X^{< 0} \rightarrow X \rightarrow X^{\geq 0} \rightarrow 0,
	\end{equation}
	with \(X^{< 0}\) a coeffective almost \(\Cp\)\=/representation of \(G_K\), and \(X^{\geq 0}\) an effective almost \(\Cp\)\=/representation of \(G_K\).
	By Theorem~\ref{theorem:almost_coh}, there exists a sheaf \(\Fc^{\geq 0}\) which is an object of \(\Coh_{\XFF}^{\geq 0}(G_K)\) and an isomorphism of almost \(\Cp\)\=/representations of \(G_K\)
	\[
		\HH^0(\XFF,\Fc^{\geq 0}) \similarrightarrow X^{\geq 0}.
	\]
	Let
	\begin{equation} \label{eq:HN0}
		0 \rightarrow \Fc^{> 0} \rightarrow \Fc^{\geq 0} \rightarrow \Fc^0 \rightarrow 0,
	\end{equation}
	be the first step of the Harder\--Narasimhan filtration of \(\Fc^{\geq 0}\), with  \(\Fc^{> 0}\) an object of \(\Coh_{\XFF}^{>0}(G_K)\), and \(\Fc^0\) an object of \(\Coh_{\XFF}^{0}(G_K)\).
	By Theorem~\ref{theorem:almost_coh}, we have the almost \(\Cp\)\=/representations of \(G_K\)
	\[
		\begin{split}
			X^{> 0} & = \HH^0(\XFF,\Fc^{> 0}), \\
			X^0     & = \HH^0(\XFF,\Fc^0),
		\end{split}
	\]
	where \(X^{>0}\) is an object of \(\Cc^{>0}(G_K)\), and \(X^0\) is a \(p\)\=/adic representation of \(G_K\).
	By Theorem~\ref{proposition:cohomology_HN}, the group \(\HH^1(\XFF,\Fc^{> 0})\) is trivial, and the cohomology of the short exact sequence~\eqref{eq:HN0} gives rise to the short exact sequence
	\begin{equation} \label{eq:almost_HN0}
		0 \rightarrow X^{> 0} \rightarrow X^{\geq 0} \rightarrow X^0 \rightarrow 0.
	\end{equation}
	Let \(X^{\neq 0}\) be the reciprocal image of \(X^{> 0}\) in \(X\) via the short exact sequence~\eqref{eq:torsion_pair_0}.
	The combination of the short exact sequences~\eqref{eq:torsion_pair_0} and~\eqref{eq:almost_HN0} yields the commutative diagram in \(\Cc(G_K)\) whose columns and rows are exact
	\begin{equation} \label{eq:almost_torsion_relation}
		\begin{tikzcd}
				 &                                    & 0 \ar{d}                 & 0 \ar{d}                 &   \\
                        0 \ar{r} & \ar{r} X^{<0} \ar{r} \ar[equal]{d} & X^{\neq 0} \ar{r} \ar{d} & X^{> 0} \ar{r} \ar{d}    & 0 \\
                        0 \ar{r} & \ar{r} X^{<0} \ar{r}               & X \ar{r} \ar{d}          & X^{\geq 0} \ar{r} \ar{d} & 0 \\
                                 &                                    & X^0 \ar[equal]{r} \ar{d} & X^0 \ar{d}               &   \\
                                 &                                    & 0                        & 0                        & .
		\end{tikzcd}
	\end{equation}

	We prove that \(X^{\neq 0}\) is an object of \(\Cc^{\neq 0}(G_K)\).
	Let \(V\) be a \(p\)\=/adic representation of \(G_K\).
	On the one hand, since \(X^{< 0}\) is an object of \(\Cc^{< 0}(G_K)\) and the category of \(p\)\=/adic representation \(\Cc^0(G_K)\) is a subcategory of \(\Cc^{\geq 0}(G_K)\) by Proposition~\ref{proposition:effective_coeffective}, and since the tuple \((\Cc^{< 0}(G_K),\Cc^{\geq 0}(G_K))\) is a torsion pair on \(\Cc(G_K)\) again by Proposition~\ref{proposition:effective_coeffective}, there is no non-trivial map from \(X^{< 0}\) to \(V\).
	On the other hand, the \(G_K\)\=/equivariant vector bundle \(\Ec(V)\) associated with \(V\) is semistable of slope \(0\) by Theorem~\ref{theorem:almost_coh}, therefore, by Proposition~\ref{proposition:torsion_pair_coh} there is no non-trivial map from \(\Fc^{> 0}\) to \(\Ec(V)\), and thus, by the equivalence of categories \(\Coh_{\XFF}^{\geq 0}(G_K) \similarrightarrow \Cc^{\geq 0}(G_K)\) from Theorem~\ref{theorem:almost_coh}, there is no non-trivial map from \(X^{> 0}=\HH^0(\XFF,\Fc^{> 0})\) to \(V = \HH^0(\XFF,\Ec(V))\).
	Therefore, using the short exact sequence
	\[
		0 \rightarrow X^{< 0} \rightarrow X^{\neq 0} \rightarrow X^{> 0} \rightarrow 0
	\]
	extracted from the diagram~\eqref{eq:almost_torsion_relation}, we conclude that there is no non-trivial map from \(X^{\neq 0}\) to \(V\), and hence, \(X^{\neq 0}\) is an object of \(\Cc^{\neq 0}(G_K)\).

	Finally, the existence of the short exact sequence
	\[
		0 \rightarrow X^{\neq 0} \rightarrow X \rightarrow X^0 \rightarrow 0
	\]
	extracted from the diagram~\eqref{eq:almost_torsion_relation} with \(X^{\neq 0}\) an object of \(\Cc^{\neq 0}(G_K)\) and \(X^0\) an object of \(\Cc^0(G_K)\) implies that the second property in the Definition~\ref{definition:torsion_pair} of a torsion pair is satisfied.
\end{proof}

We will also need the following.

\begin{lemma} \label{lemma:relation_coeffective_torsion}
	If \(X^{<0}\) is a coeffective almost \(\Cp\)\=/representation of \(G_K\), then there exists a short exact sequence in \(\Cc(G_K)\)
	\[
		0 \rightarrow Y^{> 0} \rightarrow Z^{+\infty} \rightarrow X^{< 0} \rightarrow 0,
	\]
	with \(Y^{>0}\) an object of \(\Cc^{> 0}(G_K)\), and \(Z^{+\infty}\) an object of \(\Cc^{+ \infty}(G_K)\).
\end{lemma}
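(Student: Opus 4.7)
The plan is to realize $X^{<0}$ as the first cohomology of a suitable $G_K$-equivariant vector bundle on the Fargues--Fontaine curve, and then use a Harder--Narasimhan twist to produce the desired short exact sequence in $\Cc(G_K)$ via the long exact sequence of cohomology on $\XFF$.

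More precisely, by Theorem~\ref{theorem:almost_coh} the equivalence $\Coh_{\XFF}^{<0}(G_K)\similarrightarrow \Cc^{<0}(G_K)$, $\Fc\mapsto \HH^1(\XFF,\Fc)$, yields a $G_K$-equivariant coherent sheaf $\Fc$ all of whose Harder--Narasimhan slopes are $<0$ together with an isomorphism $\HH^1(\XFF,\Fc)\similarrightarrow X^{<0}$. Since the Harder--Narasimhan slopes of $\Fc$ are all finite and rational, one may choose an integer $n\geq 0$ large enough so that the slopes of $\Fc(n)_{\HN}$, which by the corollary in \S\ref{subsec:HN_twist} are obtained by shifting the slopes of $\Fc$ by $n$, all become strictly positive. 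The short exact sequence~\eqref{eq:HNtwistpositive} of $G_K$-equivariant coherent sheaves then reads
\[
        0\rightarrow \Fc \rightarrow \Fc(n)_{\HN}\rightarrow \Tc\rightarrow 0,
\]
where $\Tc=(0,t^{-n}\Fc_\dR^+/\Fc_\dR^+)$ is a $G_K$-equivariant torsion coherent sheaf over $\XFF$.

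Applying the long exact sequence of cohomology on $\XFF$ (Proposition~\ref{proposition:cohomology_HN}) and using that $\HH^0(\XFF,\Fc)=0$ since the slopes of $\Fc$ are all $<0$ and $\HH^1(\XFF,\Fc(n)_{\HN})=0$ since the slopes of $\Fc(n)_{\HN}$ are all $>0$, I obtain the short exact sequence
\[
        0\rightarrow \HH^0(\XFF,\Fc(n)_{\HN})\rightarrow \HH^0(\XFF,\Tc)\rightarrow \HH^1(\XFF,\Fc)\rightarrow 0.
\]
By Theorem~\ref{theorem:almost_coh}, the almost $\Cp$-representation $Y^{>0}=\HH^0(\XFF,\Fc(n)_{\HN})$ is an object of $\Cc^{>0}(G_K)$, the almost $\Cp$-representation $Z^{+\infty}=\HH^0(\XFF,\Tc)$ is an object of $\Cc^{+\infty}(G_K)$, and the identification $\HH^1(\XFF,\Fc)\simeq X^{<0}$ identifies the quotient with $X^{<0}$. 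This produces the desired short exact sequence.

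Every ingredient is already in place in the excerpt, so I do not expect any serious obstacle: the only point deserving care is to check that the integer $n$ indeed makes every Harder--Narasimhan slope of the twist strictly positive (which is immediate from the shift formula applied to the finite list of slopes of $\Fc$), and the observation that $\Fc$ is automatically torsion-free --- hence a vector bundle --- because a non-trivial torsion subsheaf would contribute the slope $+\infty$ in the Harder--Narasimhan filtration, contradicting the assumption that all slopes are negative.
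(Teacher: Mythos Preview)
Your proof is correct and follows essentially the same approach as the paper's own proof: realize $X^{<0}$ as $\HH^1(\XFF,\Fc)$ for some $\Fc\in\Coh_{\XFF}^{<0}(G_K)$, apply a sufficiently large Harder--Narasimhan twist so that all slopes become $>0$, and read off the desired short exact sequence from the cohomology of~\eqref{eq:HNtwistpositive}. Your additional remark that $\Fc$ is automatically a vector bundle is correct but not needed, since~\eqref{eq:HNtwistpositive} holds for arbitrary coherent sheaves.
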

\begin{proof}
	By Theorem~\ref{theorem:almost_coh}, there exists an object \(\Fc\) of \(\Coh_{\XFF}^{< 0}(G_K)\) and an isomorphism of almost \(\Cp\)\=/representations
	\[
		\HH^1(\XFF,\Fc) \similarrightarrow X^{<0}.
	\]
	For \(n \in \N\), the Harder\--Narasimhan twist \(\Fc(n)_\HN\) of \(\Fc\) fits into the short exact sequence
	\begin{equation} \label{eq:HNtwist}
		0 \rightarrow \Fc \rightarrow \Fc(n)_\HN \rightarrow \Hc(n) \rightarrow 0,
	\end{equation}
	with \(\Hc(n)\) the torsion \(G_K\)\=/equivariant coherent sheaf \((0, t^{-n} \Fc_\dR^+/ \Fc_\dR^+)\).
	Moreover, for \(n\) sufficiently large, the Harder\--Narasimhan slopes of \(\Fc(n)_\HN\) are all \(>0\) by Proposition~\ref{proposition:HNtwist}.
	Hence, by Theorem~\ref{proposition:cohomology_HN} and Theorem~\ref{theorem:almost_coh}, the short exact sequence~\eqref{eq:HNtwist} induces a short exact sequence
	\[
		0 \rightarrow \HH^0(\XFF,\Fc(n)_\HN) \rightarrow \HH^0(\XFF,\Hc(n)) \rightarrow \HH^1(\XFF,\Fc) \rightarrow 0,
	\]
	and \(\HH^0(\XFF,\Fc(n)_\HN)\) is an object of \(\Cc^{> 0}(G_K)\), and \(\HH^0(\XFF,\Hc(n))\) is an object of \(\Cc^{+\infty}(G_K)\).
\end{proof}

\subsection{de Rham vector bundles}
\label{subsec:Bun_dR}

We briefly recall the definition of de Rham vector bundles over the Fargues\--Fontaine curve~\cite{FarguesFontaine2018}.

Let \(\Mod_K\) be the category of finite dimensional \(K\)\=/vector spaces.
There is a functor
\begin{equation} \label{eq:BdR_Vec}
	\begin{split}
		\Rep_{\BdR}(G_K) & \rightarrow \Mod_K \\
		M_\dR & \mapsto M_\dR^{G_K},
	\end{split}
\end{equation}
which admits a right adjoint
\begin{equation} \label{eq:Vec_BdR}
	\begin{split}
		\Mod_K & \rightarrow \Rep_{\BdR}(G_K) \\
		D & \mapsto \BdR \otimes_K D.
	\end{split}
\end{equation}

A \(\BdR\)\=/representation \(M_\dR\) of \(G_K\) is \emph{flat} if \(\dim_{\BdR} M_\dR = \dim_K M_\dR^{G_K}\).
Let \(\Rep_{\BdR}^\fl(G_K)\) be the full subcategory of \(\Rep_{\BdR}(G_K)\) of flat \(\BdR\)\=/representation of \(G_K\).
Then the functor~\eqref{eq:BdR_Vec} induces an equivalence of categories
\[
	\Rep_{\BdR}^\fl(G_K) \similarrightarrow \Mod_K,
\]
of which the functor~\eqref{eq:Vec_BdR} is a quasi-inverse.

Let \(\Fil_K\) be the category of filtered \(K\)\=/vector spaces, that is, the category of finite dimensional \(K\)\=/vector space equipped with a decreasing exhaustive and separated filtration by \(K\)\=/vector subspaces.
The weights of a filtered \(K\)\=/vector space \((D,\Fil D)\) are the integers \(n \in \Z\) such that \(\Fil^{-n} D/\Fil^{-n+1} D \neq 0\), and the multiplicity of a weight \(n\) is the dimension \(\dim_K \Fil^{-n} D/\Fil^{-n+1} D\).

Let \(\Rep_{\BdR^+}^\free(G_K)\) be the full subcategory of \(\Rep_{\BdR^+}(G_K)\) of \(\BdR^+\)\=/representation of \(G_K\) whose underlying \(\BdR^+\)\=/module is free.
There is a functor
\begin{equation} \label{eq:rep_fil}
	\begin{split}
		\Rep_{\BdR^+}^\free(G_K) & \rightarrow \Fil_K \\
		M_\dR^+ & \mapsto (M_\dR^{G_K}, \{(t^n M_\dR^+)^{G_K}\}_{n \in \Z})
	\end{split}
\end{equation}
which admits a right adjoint
\begin{equation} \label{eq:fil_rep}
	\begin{split}
		\Fil_K & \rightarrow \Rep_{\BdR^+}^\free(G_K) \\
		(D,\Fil D) & \mapsto \sum_{n \in \Z} \Fil^n \BdR \otimes_K \Fil^{-n} D.
	\end{split}
\end{equation}

A free \(\BdR^+\)\=/representation \(M_\dR^+\) of \(G_K\) is \emph{generically flat} if the \(\BdR\)\=/representation \(M_\dR\) is flat.
Let \(\Rep_{\BdR^+}^\gfl(G_K)\) be the full subcategory of \(\Rep_{\BdR^+}^\free(G_K)\) of generically flat \(\BdR^+\)\=/representation of \(G_K\).

\begin{theorem}[Fargues\--Fontaine] \label{theorem:genflatFil}
	The functor~\eqref{eq:rep_fil} induces an equivalence of categories
	\[
		\Rep_{\BdR^+}^\gfl(G_K) \similarrightarrow \Fil_K,
	\]
	of which the functor~\eqref{eq:fil_rep} is a quasi-inverse.
\end{theorem}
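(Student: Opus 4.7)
The plan is to verify that both the unit and the counit of the adjunction between the functor \(F\) of~\eqref{eq:rep_fil} and its right adjoint \(G\) of~\eqref{eq:fil_rep} are natural isomorphisms.

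For the well-definedness of \(G\) and the counit isomorphism, given \((D, \Fil D) \in \Fil_K\), I would pick a splitting \(D = \bigoplus_n \Gr^n D\) of the filtration and a \(K\)-basis \(e_1, \ldots, e_d\) adapted to it, with \(e_i \in \Gr^{-w_i} D\) of weight \(w_i\). A direct computation inside \(\BdR \otimes_K D\) then yields
\[
    G(D, \Fil D) = \sum_n \Fil^n \BdR \otimes_K \Fil^{-n} D = \bigoplus_{i=1}^d \BdR^+ \cdot t^{w_i} e_i,
\]
which is a free \(\BdR^+\)-module of rank \(d\) whose base change to \(\BdR\) is \(\BdR \otimes_K D\), hence flat. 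Using \(\BdR^{G_K} = K\) together with the consequence \(K \cap t^m \BdR^+ = 0\) for \(m > 0\) of~\eqref{eq:FilGalois}, one finds \((G(D, \Fil D)_\dR)^{G_K} = D\) and \((t^n G(D, \Fil D))^{G_K} = \bigoplus_{w_i \leq -n} K e_i = \Fil^n D\), so the counit \(FG \Rightarrow \id\) is an isomorphism; in particular \(G\) is fully faithful.

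The core of the argument is to show the unit \(\id \Rightarrow GF\) is an isomorphism. Given \(M_\dR^+\) generically flat, set \(D = M_\dR^{G_K}\) with the filtration \(\Fil^n D = (t^n M_\dR^+)^{G_K}\); the generic flatness hypothesis says precisely that the natural map \(\BdR \otimes_K D \to M_\dR\) is an isomorphism. Picking an adapted \(K\)-basis \(e_i\) of \(D\) with weights \(w_i\), the inclusion \(e_i \in \Fil^{-w_i} D \subseteq t^{-w_i} M_\dR^+\) puts each \(t^{w_i} e_i\) in \(M_\dR^+\); these elements are \(\BdR\)-linearly independent in \(M_\dR\) and span a free \(\BdR^+\)-sublattice
\[
    N := \bigoplus_{i=1}^d \BdR^+ \cdot t^{w_i} e_i \subseteq M_\dR^+,
\]
which is precisely the image of the unit map \(GF(M_\dR^+) \to M_\dR^+\). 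The main obstacle is to show \(N = M_\dR^+\), equivalently that the finite-length torsion \(\BdR^+\)-module \(Q := M_\dR^+/N\), carrying a continuous \(G_K\)-action, is trivial.

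To surmount this, I would reduce to the rank-one case. The rank-one case is immediate: a \(G_K\)-stable \(\BdR^+\)-lattice \(\BdR^+ \cdot v \subseteq \BdR \cdot e\) with \(v = c e\) and \(m\) the \(t\)-adic valuation of \(c \in \BdR^\times\) is necessarily equal to \(t^m \BdR^+ \cdot e\); a direct computation yields \(w_1 = m\), so that \(N = M_\dR^+\). In general, I would combine this with an analysis of the Tate twists \(t^n M_\dR^+/t^n N \cong Q(n)\). The equality \((t^n N)^{G_K} = (t^n M_\dR^+)^{G_K} = \Fil^n D\) for every \(n \in \Z\), together with the vanishing \((\Cp(n))^{G_K} = 0\) for \(n \neq 0\) from Tate's theorem applied to the successive \(\Cp\)-semilinear subquotients of the \(t\)-adic filtration on \(Q\), forces \(Q = 0\) by a Nakayama-type induction on the length of \(Q\); equivalently, one checks by this route that both \(F\) and \(G\) commute with formation of the top exterior power, so that the rank-one unit isomorphism applied to \(\det M_\dR^+\) gives \(\det M_\dR^+ = \det N\) inside \(\det M_\dR\), whence \(\length_{\BdR^+}(M_\dR^+/N) = \length_{\BdR^+}(\det M_\dR^+/\det N) = 0\) and so \(M_\dR^+ = N\).
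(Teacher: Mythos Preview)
The paper does not prove this theorem; it is recorded as a result of Fargues and Fontaine, with the proof deferred to \cite{FarguesFontaine2018}. Your overall strategy (verify that unit and counit are isomorphisms) and your counit computation are correct, and the reduction of the unit question to showing that the finite-length quotient \(Q = M_\dR^+/N\) vanishes is the right formulation. The gap is in the final step, where both proposed arguments fall short.

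The determinant route is circular. You need \(F\) to commute with the top exterior power, i.e.\ that the filtration \(\bigl(t^{n}\det M_\dR^+\bigr)^{G_K}\) on \(\det D\) agrees with \(\Fil^{n}(\det D)\). Writing \(\det M_\dR^+ = t^{m}\BdR^+\cdot e\), the former has unique weight \(m\) while the latter has unique weight \(\sum_i w_i\); the desired equality \(m=\sum_i w_i\) is exactly \(\length_{\BdR^+}(Q)=0\). Only the easy inclusion \(\Fil^{n}(\det D)\subseteq (t^{n}\det M_\dR^+)^{G_K}\) follows from what you have; the reverse does not.

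The ``Tate plus Nakayama'' route, as written, uses only the \(H^0\)-vanishing \((\Cp(n))^{G_K}=0\) for \(n\neq 0\), and this is not enough. The equality \((t^{n}N)^{G_K}=(t^{n}M_\dR^+)^{G_K}\) merely says that the composite \((t^{n}M_\dR^+)^{G_K}\to (t^{n}M_\dR^+/t^{n}N)^{G_K}\) is zero; it does not force the target to vanish, and there is no evident dévissage on \(Q\) that turns this into \(Q=0\). What the actual argument in \cite{FarguesFontaine2018} uses is Tate's \(H^1\)-vanishing \(\HH^1(K,\Cp(n))=0\) for \(n\neq 0\): one first observes that \(M_\dR^+/tM_\dR^+\) is Hodge--Tate (it is a subquotient of \(t^{-a}(\BdR^+\otimes_K D)/t^{a}(\BdR^+\otimes_K D)\), a successive extension of twists \(\Cp(n)\)), picks an eigenvector, and then uses the \(H^1\)-vanishing to lift it through the \(t\)-adic filtration to an element \(t^{m}e'\in M_\dR^+\) with \(e'\in D\); quotienting by the line \(\BdR^+\cdot t^{m}e'\) and inducting on the rank finishes the proof. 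Without the \(H^1\)-input (or an equivalent piece of Sen theory), the step from \(N\subseteq M_\dR^+\) to \(N=M_\dR^+\) does not go through.
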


There is a functor
\[
	\begin{split}
		\DDb_\dR: \Bun_{\XFF}(G_K) & \rightarrow \Fil_K \\
			\Ec & \mapsto (\DDb_\dR(\Ec),\Fil \DDb_\dR(\Ec)),
	\end{split}
\]
defined as the composition of the functor~\eqref{eq:rep_fil} with the functor
\[
	\begin{split}
		\Bun_{\XFF}(G_K) & \rightarrow \Rep_{\BdR^+}^\free(G_K) \\
		\Ec & \mapsto \Ec_\dR^+.
	\end{split}
\]

A \(G_K\)\=/equivariant vector bundle \(\Ec\) is \emph{de Rham} if \(\Ec_\dR^+\) is generically flat, or equivalently, if \(\dim_K \DDb_\dR(\Ec) = \rank \Ec\).
The \emph{Hodge\--Tate weights} of a de Rham vector bundle \(\Ec\) are the weights of \(\DDb_\dR(\Ec)\).
We denote by \(\Bun_{\XFF}(G_K)_\dR\) the full subcategory of \(\Bun_{\XFF}(G_K)\) of \(G_K\)\=/equivariant de Rham vector bundles over \(\XFF\).

\begin{remark} \label{remark:Bun_dR}
	The definition of de Rham vector bundles and Proposition~\ref{proposition:char} implies the following characterisation of de Rham vector bundles.
	A \(G_K\)\=/equivariant coherent sheaf \(\Ec\) is a de Rham vector bundle if and only if \(\Ec_\dR^+\) is generically flat.
\end{remark}

We have the following~\cite[Proposition~1.7.5]{Ponsinet2024:1}.

\begin{proposition} \label{proposition:dR}
	Let
	\[
		0 \rightarrow \Ec^\prime \rightarrow \Ec \rightarrow \Ec^\dprime \rightarrow 0
	\]
	be a short exact sequence of \(G_K\)\=/equivariant vector bundles over \(\XFF\).
	If \(\Ec\) is de Rham, then \(\Ec^\prime\) and \(\Ec^\dprime\) are de Rham.
	Moreover, the set of the Hodge\--Tate weights of \(\Ec\) is the union of the sets of the Hodge\--Tate weights of \(\Ec^\prime\) and \(\Ec^\dprime\).
\end{proposition}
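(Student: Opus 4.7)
The plan is to transport the short exact sequence to the associated free \(\BdR^+\)\=/representations at the closed point \(\infty\) and derive everything from the standard inequality \(\dim_K M^{G_K} \leq \dim_{\BdR} M\), valid for any finite-dimensional \(\BdR\)\=/representation \(M\) of \(G_K\), with equality exactly when \(M\) is flat (implicit in the equivalence \(\Rep_{\BdR}^\fl(G_K) \simeq \Mod_K\) recalled in \S\ref{subsec:Bun_dR}). First, by Proposition~\ref{theorem:BL} together with Proposition~\ref{proposition:char}, the given short exact sequence of vector bundles translates into a short exact sequence of free \(\BdR^+\)\=/representations
\[
	0 \rightarrow (\Ec^\prime)_\dR^+ \rightarrow \Ec_\dR^+ \rightarrow (\Ec^\dprime)_\dR^+ \rightarrow 0,
\]
which upon inverting \(t\) becomes a short exact sequence \(0 \to (\Ec^\prime)_\dR \to \Ec_\dR \to (\Ec^\dprime)_\dR \to 0\) of \(\BdR\)\=/representations; taking \(G_K\)\=/invariants yields a left-exact sequence of \(K\)\=/vector spaces \(0 \to \DDb_\dR(\Ec^\prime) \to \DDb_\dR(\Ec) \to \DDb_\dR(\Ec^\dprime)\).

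Next, the dimension inequality applied to \((\Ec^\prime)_\dR\) and \((\Ec^\dprime)_\dR\) gives \(\dim_K \DDb_\dR(\Ec^\prime) \leq \rank \Ec^\prime\) and \(\dim_K \DDb_\dR(\Ec^\dprime) \leq \rank \Ec^\dprime\). Combined with the de Rham hypothesis \(\dim_K \DDb_\dR(\Ec) = \rank \Ec = \rank \Ec^\prime + \rank \Ec^\dprime\) and the left-exactness above, this forces all inequalities to be equalities; hence \(\Ec^\prime\) and \(\Ec^\dprime\) are de Rham and the sequence \(0 \to \DDb_\dR(\Ec^\prime) \to \DDb_\dR(\Ec) \to \DDb_\dR(\Ec^\dprime) \to 0\) is short exact.

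To identify the multiset of Hodge\--Tate weights of \(\Ec\) with the disjoint union of those of \(\Ec^\prime\) and \(\Ec^\dprime\), I will verify that this short exact sequence is strict for the inherent filtrations. For the subobject side, I would check that \(t^n (\Ec^\prime)_\dR^+ = (\Ec^\prime)_\dR \cap t^n \Ec_\dR^+\) inside \(\Ec_\dR\) (a direct lattice-intersection argument using freeness of the three \(\BdR^+\)\=/modules) and then pass to \(G_K\)\=/invariants to get \(\Fil^n \DDb_\dR(\Ec^\prime) = \DDb_\dR(\Ec^\prime) \cap \Fil^n \DDb_\dR(\Ec)\). For the quotient side, the functor~\eqref{eq:rep_fil} is left adjoint to~\eqref{eq:fil_rep} and therefore preserves cokernels, so \(\DDb_\dR(\Ec^\dprime)\) equipped with its inherent filtration is the cokernel in \(\Fil_K\) of \(\DDb_\dR(\Ec^\prime) \to \DDb_\dR(\Ec)\)---and this cokernel carries the quotient filtration by construction. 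Consequently \(0 \to \Fil^n \DDb_\dR(\Ec^\prime) \to \Fil^n \DDb_\dR(\Ec) \to \Fil^n \DDb_\dR(\Ec^\dprime) \to 0\) is exact for every \(n \in \Z\), the induced sequence on graded pieces is exact, and summing dimensions over \(n\) gives the Hodge\--Tate weights statement. The main subtlety will be precisely this last compatibility of filtrations, which requires combining the concrete lattice description at \(\infty\) with the adjunction between~\eqref{eq:rep_fil} and~\eqref{eq:fil_rep}.
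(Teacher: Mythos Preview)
Your argument is correct and follows the standard route: pass to \(\BdR^+\)-lattices at \(\infty\), use the dimension inequality to force de Rham-ness of the sub and quotient together with exactness of the \(\DDb_\dR\) sequence, then check strictness of the filtrations to match Hodge--Tate weights. The paper does not give a proof here; it simply cites \cite[Proposition~1.7.5]{Ponsinet2024:1}, so there is no in-text argument to compare against.

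One small streamlining: once you have established that all three bundles are de Rham, you do not need the adjunction between~\eqref{eq:rep_fil} and~\eqref{eq:fil_rep} to handle the quotient filtration. By Theorem~\ref{theorem:genflatFil} the functor~\eqref{eq:rep_fil} is an \emph{equivalence} on the generically flat subcategory, so it automatically sends the kernel--cokernel pair \(0 \to (\Ec^\prime)_\dR^+ \to \Ec_\dR^+ \to (\Ec^\dprime)_\dR^+ \to 0\) to a kernel--cokernel pair in \(\Fil_K\); the latter is by definition a strict short exact sequence, which gives both the subspace and quotient filtration compatibilities at once. This also sidesteps the mild issue that the cokernel of a map in \(\Rep_{\BdR^+}^\free(G_K)\) need not exist in general (it does in your situation because the quotient is free, so your adjunction argument is valid, but invoking the equivalence avoids having to say this).
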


\begin{remark}
	The composition of functors
	\[
		\begin{split}
			\Rep_{\Qp}(G_K) & \rightarrow \Fil_K \\
			V & \mapsto \DDb_\dR(\Ec(V))
		\end{split}
	\]
	is the usual \(\DDb_\dR\) functor defined by Fontaine~\cite{Fontaine1994:III}.
	In particular, a \(p\)\=/adic representation \(V\) is de Rham if and only if the vector bundle \(\Ec(V)\) is de Rham.
	We will write \(\DDb_\dR(V)\) instead of \(\DDb_\dR(\Ec(V))\).
\end{remark}

\section{Cohomology of perfectoid fields}
\label{sec:perfectoid}

\subsection{Cohomology of almost \(\Cp\)-representations}
\label{subsec:cohom_almost}

Let \(L\) be an algebraic extension of \(K\).
If \(\hat{L}\) is a perfectoid field, as a consequence of the classification of \(G_L\)\=/equivariant coherent sheaves over \(\XFF\) already mentioned in Theorem~\ref{theorem:FF_classification_perfectoid}, Fargues and Fontaine~\cite[Remarque~9.4.2]{FarguesFontaine2018} have obtained the following.

\begin{theorem}[Fargues\--Fontaine] \label{theorem:FF_perfectoid}
	Let \(\Fc\) be a \(G_L\)\=/equivariant coherent sheaf over \(\XFF\) whose Harder\--Narasimhan slopes are all \(> 0\).
	If \(\hat{L}\) is a perfectoid field, then
	\[
		\Ext^1_{\Coh_{\XFF}(G_K)}(\Oc_{\XFF},\Fc) \similarrightarrow \HH^1(L,\HH^0(\XFF,\Fc)) = 0.
	\]
\end{theorem}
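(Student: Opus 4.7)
The plan is to establish the two assertions separately: first the isomorphism, then the vanishing.

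For the isomorphism, the strategy is to transport the \(\Ext^1\) across Fontaine's derived equivalence~\eqref{eq:equiv_derived} and recognise the result as continuous Galois cohomology. Since the Harder\--Narasimhan slopes of \(\Fc\) are all \(> 0\), Proposition~\ref{proposition:cohomology_HN} gives \(\HH^1(\XFF,\Fc) = 0\), and similarly \(\HH^1(\XFF,\Oc_{\XFF}) = 0\) since \(\Oc_{\XFF}\) is semistable of slope \(0\). Hence both sheaves are sent by~\eqref{eq:equiv_derived} to complexes concentrated in degree \(0\), namely \(\Qp\) and \(\HH^0(\XFF,\Fc)\), so
\[
	\Ext^1_{\Coh_{\XFF}(G_L)}(\Oc_{\XFF},\Fc) \similarrightarrow \Ext^1_{\Cc(G_L)}(\Qp,\HH^0(\XFF,\Fc)).
\]
I would then identify the right\=/hand side with \(\HH^1(L,\HH^0(\XFF,\Fc))\): any short exact sequence in \(\Cc(G_L)\) with quotient \(\Qp\) admits a continuous \(\Qp\)\=/linear section at the Banach space level (just lift \(1 \in \Qp\)), and the standard construction of a continuous \(1\)\=/cocycle from such a section produces the desired bijection.

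For the vanishing, the strategy is to show directly that every extension \(0 \to \Fc \to \Ec \to \Oc_{\XFF} \to 0\) in \(\Coh_{\XFF}(G_L)\) splits. By Theorem~\ref{theorem:FF_classification_perfectoid}, the Harder\--Narasimhan filtration of \(\Ec\) splits \(G_L\)\=/equivariantly, and I would write \(\Ec = \Ec^{>0} \oplus \Ec^{\leq 0}\) for the decomposition into HN pieces of positive (respectively nonpositive) slope, noting that any torsion piece of \(\Ec\) lands in \(\Ec^{>0}\) since torsion has slope \(+\infty\). The composition \(\Ec^{>0} \hookrightarrow \Ec \twoheadrightarrow \Oc_{\XFF}\) must vanish: its image is simultaneously a quotient of \(\Ec^{>0}\) (so if nonzero it has all its HN slopes \(> 0\), by the general fact that quotients do not decrease the minimal HN slope) and a subsheaf of the line bundle \(\Oc_{\XFF}\) (so if nonzero it is torsion\=/free of slope \(\leq 0\)), which is incompatible. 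Hence \(\Ec^{>0} \subseteq \Fc\), and the reverse inclusion is clear since \(\Fc\) has all slopes \(> 0\). Therefore \(\Fc = \Ec^{>0}\), the complementary summand \(\Ec^{\leq 0}\) has rank \(1\) and is torsion\=/free, and the induced surjection \(\Ec^{\leq 0} \twoheadrightarrow \Oc_{\XFF}\) of rank\=/one vector bundles is an isomorphism; this yields the splitting \(\Ec \cong \Fc \oplus \Oc_{\XFF}\).

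The main obstacle lies in part~(i): carefully verifying that Fontaine's derived equivalence transports \(\Ext^1\) cleanly and that the resulting Yoneda \(\Ext^1\) in \(\Cc(G_L)\) coincides with continuous Galois cohomology via the continuous\=/section correspondence. Part~(ii) is then essentially mechanical once Theorem~\ref{theorem:FF_classification_perfectoid} is in hand, with the slope bookkeeping being standard.
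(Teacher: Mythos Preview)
The paper does not prove this theorem at all: it is attributed to Fargues and Fontaine and cited as \cite[Remarque~9.4.2]{FarguesFontaine2018}, with the only indication being that it is ``a consequence of the classification of \(G_L\)\=/equivariant coherent sheaves over \(\XFF\) already mentioned in Theorem~\ref{theorem:FF_classification_perfectoid}''. Your argument for the vanishing (part~(ii)) is precisely this route, and it is correct: the splitting of the Harder\--Narasimhan filtration forces \(\Fc = \Ec^{>0}\) and the complementary summand recovers \(\Oc_{\XFF}\). The slope bookkeeping you invoke (no nonzero maps from slopes \(>0\) to slopes \(\leq 0\)) is standard.

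There is, however, a genuine gap in your part~(i). The derived equivalence~\eqref{eq:equiv_derived} is stated and proved by Fontaine only for \(G_K\) with \(K\) a \emph{finite} extension of \(\Qp\); the theory of almost \(\Cp\)\=/representations in \cite{Fontaine2003,Fontaine2020} is set up in that generality, and several ingredients (e.g.\ Proposition~\ref{proposition:char}, freeness of \(\Be\)\=/modules) use finiteness of \(K\). You are invoking it over \(G_L\) for \(L\) with \(\hat{L}\) perfectoid, which the paper does not supply. This can be repaired without the derived equivalence: since \(\HH^1(\XFF,\Fc)=0\), taking global sections sends an extension class to a short exact sequence \(0 \to \HH^0(\XFF,\Fc) \to \HH^0(\XFF,\Ec) \to \Qp \to 0\) of continuous \(G_L\)\=/modules, and lifting \(1\) produces a cocycle as you say; for the inverse, use that \(\HH^0(\XFF,\Fc) = \Hom_{\Oc_{\XFF}}(\Oc_{\XFF},\Fc)\), so a cocycle \(c\colon G_L \to \HH^0(\XFF,\Fc)\) lets you twist the \(G_L\)\=/action on the sheaf \(\Fc \oplus \Oc_{\XFF}\) by \(g\cdot(f,s) = (g f + c(g)(g s),\, g s)\), giving the desired extension of \(G_L\)\=/equivariant coherent sheaves directly. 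With that fix, your proof stands.
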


The combination of Theorem~\ref{theorem:almost_coh} and Theorem~\ref{theorem:FF_perfectoid} immediately implies the following.

\begin{corollary} \label{corollary:FF_perfectoid_almost}
	Let \(X^{>0}\) be an almost \(\Cp\)\=/representation which is an object of the subcategory \(\Cc^{>0}(G_K)\).
	If \(\hat{L}\) is a perfectoid field, then the group \(\HH^1(L,X^{>0})\) is trivial.
\end{corollary}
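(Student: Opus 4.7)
The plan is to directly combine the equivalence of categories from Theorem~\ref{theorem:almost_coh} with the vanishing result of Theorem~\ref{theorem:FF_perfectoid}. Given an object \(X^{>0}\) of \(\Cc^{>0}(G_K)\), by the definition of \(\Cc^{>0}(G_K)\) as the essential image of \(\Coh_{\XFF}^{>0}(G_K)\) under the equivalence \(\Coh_{\XFF}^{\geq 0}(G_K) \similarrightarrow \Cc^{\geq 0}(G_K)\) from Theorem~\ref{theorem:almost_coh}, I would first produce a \(G_K\)\=/equivariant coherent sheaf \(\Fc\) over \(\XFF\) whose Harder\--Narasimhan slopes are all \(>0\) together with an isomorphism of almost \(\Cp\)\=/representations
\[
    \HH^0(\XFF,\Fc) \similarrightarrow X^{>0}.
\]

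Next, I would restrict the \(G_K\)\=/action to \(G_L\): since \(L\) is an algebraic extension of \(K\), the sheaf \(\Fc\) acquires a \(G_L\)\=/equivariant structure, and its Harder\--Narasimhan filtration (viewed in \(\Coh_{\XFF}(G_L)\)) is exactly the one in \(\Coh_{\XFF}(G_K)\) by the uniqueness noted in Remark~\ref{remark:HN_equiv}. In particular, the Harder\--Narasimhan slopes of \(\Fc\) as a \(G_L\)\=/equivariant sheaf remain all \(>0\). The isomorphism above is also \(G_L\)\=/equivariant.

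Finally, assuming \(\hat{L}\) is a perfectoid field, Theorem~\ref{theorem:FF_perfectoid} applies to \(\Fc\) regarded as a \(G_L\)\=/equivariant coherent sheaf, yielding
\[
    \HH^1(L,X^{>0}) \similarrightarrow \HH^1(L,\HH^0(\XFF,\Fc)) = 0,
\]
which is the desired vanishing. There is no real obstacle here: the statement is essentially a translation, via the equivalence of Theorem~\ref{theorem:almost_coh}, of the cohomological input already provided by Fargues and Fontaine in Theorem~\ref{theorem:FF_perfectoid}.
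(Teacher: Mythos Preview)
Your proposal is correct and follows exactly the approach indicated in the paper, which simply states that the corollary is an immediate consequence of Theorem~\ref{theorem:almost_coh} and Theorem~\ref{theorem:FF_perfectoid}. Your added remark about restricting the \(G_K\)\=/equivariant structure to \(G_L\) and the invariance of the Harder\--Narasimhan slopes is a harmless elaboration of what the paper leaves implicit.
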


We will use the following repeatedly.
\begin{proposition} \label{proposition:cohom_dim_perfectoid}
	The \(p\)\=/cohomological dimension of a perfectoid field of residue characteristic \(p\) is \(\leq 1\).
\end{proposition}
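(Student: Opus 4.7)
The plan is to reduce to the characteristic \(p\) case via tilting. Let \(F\) be a perfectoid field of residue characteristic \(p\), and let \(F^\flat\) be its tilt, which is a perfectoid field of characteristic \(p\). By the theorem of Fontaine\--Wintenberger, generalised by Scholze (\cite[Theorem~3.7]{Scholze2012}) in the perfectoid setting, the absolute Galois groups \(G_F\) and \(G_{F^\flat}\) are canonically isomorphic as topological groups, since the tilting functor induces an equivalence between the respective categories of finite étale algebras.

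Once this reduction is in place, it suffices to observe that any field \(k\) of characteristic \(p\) satisfies \(\mathrm{cd}_p(k) \leq 1\): this is a classical theorem of Serre (\cite[II.\S 2.2, Proposition~3]{Serre1994}), whose proof proceeds by noting that every finite \(p\)\=/primary \(G_k\)\=/module admits a composition series with \(\F_p\)\=/quotients, and by the Artin\--Schreier short exact sequence
\[
	0 \rightarrow \F_p \rightarrow k^{\mathrm{sep}} \xrightarrow{x \mapsto x^p - x} k^{\mathrm{sep}} \rightarrow 0,
\]
combined with the vanishing \(\HH^i(k,k^{\mathrm{sep}}) = 0\) for \(i \geq 1\) (additive Hilbert~90), one obtains \(\HH^i(k,\F_p) = 0\) for all \(i \geq 2\), hence \(\mathrm{cd}_p(k) \leq 1\).

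Applying this to \(F^\flat\) and transporting along the isomorphism \(G_F \simeq G_{F^\flat}\) yields the desired bound \(\mathrm{cd}_p(F) \leq 1\). No step requires genuinely new input; the only point demanding care is to ensure that the appeal to tilting is done in the generality of arbitrary perfectoid fields (not merely the algebraic closures of finite extensions of \(\Qp\) or their completions), but this is precisely the content of Scholze's tilting equivalence for perfectoid fields, so there is no real obstacle.
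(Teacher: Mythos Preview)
Your proof is correct and follows exactly the same route as the paper: reduce to characteristic \(p\) via the tilting equivalence \(G_F \simeq G_{F^\flat}\) from \cite[\S 3]{Scholze2012}, then invoke Serre's bound \(\mathrm{cd}_p(k)\leq 1\) for fields of characteristic \(p\) \cite[II \S 2.2 Proposition~3]{Serre1994}. Your additional remarks on the Artin\--Schreier mechanism behind Serre's result are correct but go beyond what the paper records.
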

\begin{proof}
	Let \(k\) be a perfectoid field of residue characteristic \(p\).
	On the one hand, the tilt \(k^\flat\) of \(k\) is a perfectoid field of characteristic \(p\) whose absolute Galois group \(G_{k^\flat}\) is canonically isomorphic to \(G_k\) (\cite[\S 3]{Scholze2012}).
	On the other hand, the \(p\)\=/cohomological dimension of a field characteristic \(p\) is \(\leq 1\) (\cite[II \S 2.2 Proposition~3]{Serre1994}).
\end{proof}

\begin{lemma} \label{lemma:cohom_n}
	Let \(X\) be a \(p\)\=/adic Banach representation of \(G_K\), and let \(\Xc\) be a \(G_K\)\=/stable lattice in \(X\).
	If the \(p\)\=/cohomological dimension of \(L\) is \(\leq 1\), then, for each integer \(n > 1\), the groups \(\HH^n(L,\Xc)\) and \(\HH^n(L,X)\) are trivial.
\end{lemma}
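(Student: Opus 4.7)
My plan is to first establish the vanishing for the lattice \(\Xc\) by a \(p\)-adic approximation argument reducing to discrete coefficients, and then to deduce the vanishing for \(X\) from the short exact sequence \(0 \to \Xc \to X \to X/\Xc \to 0\).

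Since \(\Xc\) is complete and separated for the \(p\)-adic topology, \(\Xc = \varprojlim_m \Xc/p^m\Xc\) as topological \(G_L\)-modules, and each quotient \(\Xc/p^m\Xc\) is a discrete \(p\)-primary torsion \(G_L\)-module. The continuous cochain complex of \(G_L\) with values in \(\Xc\) identifies with the inverse limit of the continuous cochain complexes with values in \(\Xc/p^m\Xc\); because any continuous cochain into a discrete \(G_L\)-module factors through a finite quotient of \(G_L^n\), any set-theoretic section of \(\Xc/p^{m+1}\Xc \to \Xc/p^m\Xc\) induces a lift of cochains, so the transitions in this inverse system of complexes are surjective in every degree. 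The associated Milnor-type short exact sequence for inverse limits of complexes then reads, for each \(n \geq 1\),
\[
	0 \rightarrow {\varprojlim}^1_m \HH^{n-1}(L,\Xc/p^m\Xc) \rightarrow \HH^n(L,\Xc) \rightarrow \varprojlim_m \HH^n(L,\Xc/p^m\Xc) \rightarrow 0.
\]
The hypothesis that the \(p\)-cohomological dimension of \(L\) is \(\leq 1\) yields \(\HH^k(L,\Xc/p^m\Xc) = 0\) for all \(k \geq 2\) and all \(m\), so for \(n \geq 3\) both outer terms vanish, and therefore \(\HH^n(L,\Xc) = 0\).

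The delicate case is \(n = 2\), which reduces to checking \({\varprojlim}^1_m \HH^1(L,\Xc/p^m\Xc) = 0\). For this, I would use the short exact sequence of discrete \(G_L\)-modules
\[
	0 \rightarrow \Xc/p\Xc \xrightarrow{\cdot p^m} \Xc/p^{m+1}\Xc \rightarrow \Xc/p^m\Xc \rightarrow 0,
\]
whose associated long exact sequence in Galois cohomology, combined with the vanishing \(\HH^2(L,\Xc/p\Xc) = 0\) coming from the cohomological dimension hypothesis, shows that each transition map \(\HH^1(L,\Xc/p^{m+1}\Xc) \rightarrow \HH^1(L,\Xc/p^m\Xc)\) is surjective. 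An \(\N\)-indexed inverse system of abelian groups with surjective transition maps has trivial \({\varprojlim}^1\), whence \(\HH^2(L,\Xc) = 0\) and the vanishing for \(\Xc\) is established.

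To deduce the vanishing for \(X\), observe that \(X/\Xc \simeq \Xc \otimes_{\Zp} \Qp/\Zp\) is a discrete \(p\)-primary torsion \(G_L\)-module, hence \(\HH^k(L,X/\Xc) = 0\) for \(k \geq 2\) by the hypothesis on the \(p\)-cohomological dimension. The short exact sequence \(0 \to \Xc \to X \to X/\Xc \to 0\) admits a continuous set-theoretic section of the projection \(X \to X/\Xc\) since the target is discrete, so the associated long exact sequence in continuous cohomology applies; combined with the vanishing of \(\HH^n(L,\Xc)\) for \(n \geq 2\) proved above, it yields \(\HH^n(L,X) = 0\) for all \(n > 1\). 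The main obstacle is the \({\varprojlim}^1_m \HH^1(L,\Xc/p^m\Xc) = 0\) step in the case \(n = 2\); once this is handled by exhibiting a short exact sequence that makes the transitions surjective, everything else is a routine consequence of the cohomological dimension hypothesis and the standard long exact sequences.
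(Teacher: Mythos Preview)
Your proposal is correct and follows essentially the same approach as the paper: both use the Jannsen/Milnor \(\varprojlim^1\) exact sequence to reduce vanishing of \(\HH^n(L,\Xc)\) to the discrete quotients \(\Xc/p^m\Xc\), handle the critical \(n=2\) case by showing the transitions on \(\HH^1\) are surjective via the short exact sequence with kernel \(p^m\Xc/p^{m+1}\Xc \cong \Xc/p\Xc\), and then pass to \(X\) via the long exact sequence associated with \(0 \to \Xc \to X \to X/\Xc \to 0\). The only cosmetic difference is that the paper writes the kernel as \(p^i\Xc/p^{i+1}\Xc\) and cites Jannsen directly for the limit exact sequence, whereas you identify the kernel with \(\Xc/p\Xc\) and sketch the surjectivity of transitions on cochain complexes yourself.
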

\begin{proof}
	Recall~\cite[\S 2]{Jannsen1988} that, since \(\Xc\) is complete and separated for the \(p\)\=/adic topology, for each \(n \in \N\), there is a short exact sequence
	\[
		0 \rightarrow {\varprojlim}^1 \HH^{n-1}(L,\Xc/p^i \Xc) \rightarrow \HH^n(L,\Xc) \rightarrow \varprojlim \HH^n(L,\Xc/p^i \Xc) \rightarrow 0,
	\]
	where we set \(\HH^{-1}(L,\Xc/p^i \Xc) = 0\).
	Moreover, for each \(i \in \N\), the short exact sequence
	\[
		0 \rightarrow p^i \Xc/p^{i+1} \Xc \rightarrow \Xc/p^{i+1} \Xc \rightarrow \Xc/p^i \Xc \rightarrow 0
	\]
	induces an exact sequence
	\[
		\HH^1(L,\Xc/p^{i+1}\Xc) \rightarrow \HH^1(L,\Xc/p^i \Xc) \rightarrow \HH^{2}(L,p^i \Xc/p^{i+1} \Xc).
	\]
	By hypothesis, for each integers \(n > 1\) and \(i \in \N\), the groups \(\HH^n(L,\Xc/p^i \Xc)\) and \(\HH^n(L,p^i\Xc/p^{i+1} \Xc)\) are trivial, which implies that:
	\begin{itemize}
		\item for each integer \(n > 1\), the group \(\varprojlim \HH^n(L,\Xc/p^i \Xc)\) is trivial,
		\item for each integer \(n > 2\), the group \({\varprojlim}^1 \HH^{n-1}(L,\Xc/p^i \Xc)\) is trivial,
		\item for each \(i \in \N\), the map \(\HH^1(L,\Xc/p^{i+1}\Xc) \rightarrow \HH^1(L,\Xc/p^i \Xc)\) is surjective, and thus, the group \({\varprojlim}^1 \HH^1(L,\Xc/p^i \Xc)\) is also trivial.
	\end{itemize}
	Therefore, for each integer \(n > 1\), the group \(\HH^n(L,\Xc)\) is trivial.

	Since \(X/\Xc\) is discrete, the short exact sequence
	\[
		0 \rightarrow \Xc \rightarrow X \rightarrow X/\Xc \rightarrow 0
	\]
	induces a long exact sequence
	\[
		\cdots \rightarrow \HH^n(L,\Xc) \rightarrow \HH^n(L,X) \rightarrow \HH^n(L,X/\Xc) \rightarrow \HH^{n+1}(L,\Xc) \rightarrow \cdots.
	\]
	For each integer \(n > 1\), we have proved that the group \(\HH^n(L,\Xc)\) is trivial, and by hypothesis, the group \(\HH^n(L,X/\Xc)\) is trivial.
	Therefore, for each integer \(n > 1\), the group \(\HH^n(L,X)\) is trivial.
\end{proof}

\begin{remark} \label{remark:les_almost}
	A short exact sequence in \(\Cc(G_K)\)
	\begin{equation} \label{eq:XYZ_top}
		0 \rightarrow Z \rightarrow X \rightarrow Y \rightarrow 0
	\end{equation}
	induces a long exact sequence
	\[
		\cdots \rightarrow \HH^n(L,Z) \rightarrow \HH^n(L,X) \rightarrow \HH^n(L,Y) \rightarrow \HH^{n+1}(L,Z) \rightarrow \cdots.
	\]
	Indeed, by Theorem~\ref{theorem:almost_abelian}, the category \(\Cc(G_K)\) is an abelian subcategory of \(\Bc(G_K)\).
	In particular, each morphism of \(p\)\=/adic Banach spaces in the sequence~\eqref{eq:XYZ_top} is strict, and hence, there exists a \(\Qp\)\=/linear and continuous section of the surjective morphism \(X \rightarrow Y\) (see for instance~\cite[Proposition~I.1.5~(iii)]{Colmez1998}).
	Therefore, the short exact sequence~\eqref{eq:XYZ_top} induces long exact sequences in Galois cohomology~\cite[\S 2]{Tate1976}.
\end{remark}

\begin{proposition} \label{proposition:cohomology_almost_perfectoid}
	Let \(X\) be an almost \(\Cp\)\=/representation of \(G_K\).
	If \(\hat{L}\) is a perfectoid field, then the short exact sequence
	\[
		0 \rightarrow X^{\neq 0} \rightarrow X \rightarrow X^0 \rightarrow 0
	\]
	induces an isomorphism
	\[
		\HH^1(L,X) \similarrightarrow \HH^1(L,X^0).
	\]
\end{proposition}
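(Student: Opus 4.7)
The plan is to deduce the isomorphism from the portion
\[
	\HH^1(L,X^{\neq 0}) \rightarrow \HH^1(L,X) \rightarrow \HH^1(L,X^0) \rightarrow \HH^2(L,X^{\neq 0})
\]
of the long exact sequence attached to the given short exact sequence, which is valid by Remark~\ref{remark:les_almost}. It therefore suffices to prove that the two outer groups vanish, and the proof reduces to two applications of cohomological dimension plus one application of the torsion-pair decomposition from Proposition~\ref{proposition:almost_torsion_relation}.

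For the vanishing of \(\HH^2(L,X^{\neq 0})\), the first observation is that \(\hat{L}\) is perfectoid of residue characteristic \(p\) and that \(G_L\) is canonically identified with \(G_{\hat{L}}\) (Krasner), so Proposition~\ref{proposition:cohom_dim_perfectoid} yields that \(L\) has \(p\)\=/cohomological dimension \(\leq 1\). Lemma~\ref{lemma:cohom_n}, applied to the almost \(\Cp\)\=/representation \(X^{\neq 0}\) together with any \(G_K\)\=/stable lattice therein, then forces \(\HH^n(L,X^{\neq 0}) = 0\) for every \(n > 1\), and in particular for \(n=2\).

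For the vanishing of \(\HH^1(L,X^{\neq 0})\), the strategy is to invoke the short exact sequence
\[
	0 \rightarrow X^{<0} \rightarrow X^{\neq 0} \rightarrow X^{> 0} \rightarrow 0
\]
from Proposition~\ref{proposition:almost_torsion_relation} and show both \(\HH^1(L,X^{<0})\) and \(\HH^1(L,X^{> 0})\) vanish. The vanishing of \(\HH^1(L,X^{> 0})\) is immediate from Corollary~\ref{corollary:FF_perfectoid_almost}. For \(X^{<0}\), Lemma~\ref{lemma:relation_coeffective_torsion} provides a short exact sequence
\[
	0 \rightarrow Y^{>0} \rightarrow Z^{+\infty} \rightarrow X^{< 0} \rightarrow 0
\]
with \(Y^{>0} \in \Cc^{>0}(G_K)\) and \(Z^{+\infty} \in \Cc^{+\infty}(G_K) \subset \Cc^{>0}(G_K)\); the associated long exact sequence then contains
\[
	\HH^1(L,Z^{+\infty}) \rightarrow \HH^1(L,X^{<0}) \rightarrow \HH^2(L,Y^{>0}),
\]
in which the left term vanishes by Corollary~\ref{corollary:FF_perfectoid_almost} applied to \(Z^{+\infty}\) and the right term by the cohomological-dimension argument above applied to \(Y^{>0}\).

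All the technical ingredients — the torsion-pair decomposition of \(X\), the \(\HH^1\)\=/vanishing on \(\Cc^{>0}(G_K)\), and the \(\HH^{\geq 2}\)\=/vanishing coming from cohomological dimension — are already available in the excerpt, so the argument is essentially a formal diagram chase through two long exact sequences in continuous Galois cohomology. There is no serious obstacle; the only conceptual point is the combined use of the torsion pair \((\Cc^{\neq 0}(G_K),\Cc^0(G_K))\) to \emph{separate} the part \(X^0\) of \(X\) that genuinely contributes \(\HH^1\) from the part \(X^{\neq 0}\) whose \(\HH^1\) is killed by the perfectoidness hypothesis.
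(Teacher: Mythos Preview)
Your proof is correct and follows essentially the same approach as the paper's: both reduce to showing \(\HH^1(L,X^{\neq 0})=0\) via the decomposition \(0 \to X^{<0} \to X^{\neq 0} \to X^{>0} \to 0\) from Proposition~\ref{proposition:almost_torsion_relation}, handle \(X^{>0}\) directly by Corollary~\ref{corollary:FF_perfectoid_almost}, and handle \(X^{<0}\) via the resolution of Lemma~\ref{lemma:relation_coeffective_torsion} combined with the \(\HH^{\geq 2}\)-vanishing from cohomological dimension. The only cosmetic difference is that the paper absorbs the \(\HH^2\)-vanishing into a ``\(\to 0\)'' at the end of each exact sequence, whereas you write the \(\HH^2\)-terms explicitly and kill them.
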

\begin{proof}
	The combination of Proposition~\ref{proposition:cohom_dim_perfectoid}, Lemma~\ref{lemma:cohom_n} and Remark~\ref{remark:les_almost} yields an exact sequence
	\[
		\HH^1(L,X^{\neq 0}) \rightarrow \HH^1(L,X) \rightarrow \HH^1(L,X^0) \rightarrow 0.
	\]
	We prove that the group \(\HH^1(L,X^{\neq 0})\) is trivial.
	By Proposition~\ref{proposition:almost_torsion_relation}, there exists a short exact sequence
	\begin{equation} \label{eq:neq0bis}
		0 \rightarrow X^{< 0} \rightarrow X^{\neq 0} \rightarrow X^{> 0} \rightarrow 0,
	\end{equation}
	with \(X^{< 0}\) an object of \(\Cc^{< 0}(G_K)\), and \(X^{> 0}\) an object of \(\Cc^{> 0}(G_K)\).
	By Proposition~\ref{proposition:cohom_dim_perfectoid}, Lemma~\ref{lemma:cohom_n} and Remark~\ref{remark:les_almost}, the short exact sequence~\eqref{eq:neq0bis} induces an exact sequence
	\begin{equation} \label{eq:cohom_neq0}
		\HH^1(L,X^{< 0}) \rightarrow \HH^1(L,X^{\neq 0}) \rightarrow \HH^1(L,X^{>0}) \rightarrow 0.
	\end{equation}
	Moreover, by Lemma~\ref{lemma:relation_coeffective_torsion}, there exists a short exact sequence
	\begin{equation} \label{eq:coeffective_torsion}
		0 \rightarrow Y^{> 0} \rightarrow Z^{+\infty} \rightarrow X^{< 0} \rightarrow 0,
	\end{equation}
	with \(Y^{>0}\) an object of \(\Cc^{> 0}(G_K)\), and \(Z^{+\infty}\) an object of \(\Cc^{+ \infty}(G_K)\).
	Again by Proposition~\ref{proposition:cohom_dim_perfectoid}, Lemma~\ref{lemma:cohom_n} and Remark~\ref{remark:les_almost}, the short exact sequence~\eqref{eq:coeffective_torsion} induces an exact sequence
	\begin{equation} \label{eq:cohom_coeffective_torsion}
		\HH^1(L,Y^{> 0}) \rightarrow \HH^1(L,Z^{+\infty}) \rightarrow \HH^1(L,X^{<0}) \rightarrow 0.
	\end{equation}
	Finally, by Corollary~\ref{corollary:FF_perfectoid_almost}, the groups \(\HH^1(L,Z^{+\infty})\) and \(\HH^1(L,X^{>0})\) are trivial.
	Thus, using the exact sequences~\eqref{eq:cohom_neq0} and~\eqref{eq:cohom_coeffective_torsion}, we conclude that the groups \(\HH^1(L,X^{< 0})\) and \(\HH^1(L,X^{\neq 0})\) are trivial.
\end{proof}

\begin{lemma} \label{lemma:surjection}
	Let
	\[
		0 \rightarrow Z \rightarrow X \rightarrow Y \rightarrow 0
	\]
	be a short exact sequence in \(\Cc(G_K)\), and let \(\Zc\) be a \(G_K\)\=/stable lattice in \(Z\).
	If the \(p\)\=/cohomological dimension of \(L\) is \(\leq 1\), then the short exact sequence of topological \(G_K\)\=/modules
	\[
		0 \rightarrow \Zc \rightarrow X \rightarrow X/\Zc \rightarrow 0
	\]
	induces a surjective map
	\[
		\HH^1(L,X) \rightarrow \HH^1(L,X/\Zc) \rightarrow 0
	\]
\end{lemma}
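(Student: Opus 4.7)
The plan is to reduce the surjectivity to the vanishing $\HH^2(L,\Zc) = 0$ supplied by Lemma~\ref{lemma:cohom_n}, via a diagram chase that sidesteps the need to produce a continuous section of the projection $X \to X/\Zc$ directly. The point is to work with three auxiliary short exact sequences of topological $G_K$\=/modules that do admit continuous sections and therefore yield long exact sequences in continuous Galois cohomology.

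The three sequences I use are, first, the given sequence (a) $0 \to Z \to X \to Y \to 0$, which admits a continuous $\Qp$\=/linear section because $\Cc(G_K)$ is an abelian subcategory of $\Bc(G_K)$ by Theorem~\ref{theorem:almost_abelian} and all its morphisms are therefore strict (cf.\ Remark~\ref{remark:les_almost}); second, (b) $0 \to \Zc \to Z \to Z/\Zc \to 0$, where $Z/\Zc$ is discrete since $\Zc$ is open in $Z$, so any set\=/theoretic lift is automatically continuous; and third, (c) $0 \to Z/\Zc \to X/\Zc \to Y \to 0$, for which composing the section of (a) with $X \to X/\Zc$ supplies a continuous section of $X/\Zc \to Y$. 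The inclusion $Z \hookrightarrow X$ together with the quotient $Z \twoheadrightarrow Z/\Zc$ assembles into a morphism of short exact sequences from (a) to (c) that restricts to the identity on $Y$.

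Applying Lemma~\ref{lemma:cohom_n} to the $p$\=/adic Banach representation $Z$ and its lattice $\Zc$ gives $\HH^n(L,Z) = \HH^n(L,\Zc) = 0$ for every $n > 1$. The chase then runs as follows: pick $\xi \in \HH^1(L, X/\Zc)$ with image $\eta \in \HH^1(L,Y)$; since $\HH^2(L,Z) = 0$, the long exact sequence for (a) lifts $\eta$ to some $\zeta \in \HH^1(L,X)$, and I write $\zeta'$ for its image in $\HH^1(L, X/\Zc)$. The difference $\xi - \zeta'$ vanishes in $\HH^1(L,Y)$, so by exactness of the long exact sequence for (c) it is the image of some $\alpha' \in \HH^1(L, Z/\Zc)$. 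Since $\HH^2(L,\Zc) = 0$, the long exact sequence for (b) lifts $\alpha'$ to $\alpha \in \HH^1(L, Z)$; pushing $\alpha$ to $\HH^1(L,X)$ and adding $\zeta$ produces the desired preimage of $\xi$.

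I do not expect a substantial obstacle: the only nontrivial ingredient is Lemma~\ref{lemma:cohom_n}, which is already in hand, and the three continuous sections are forced by the topological data at hand. The one mild subtlety is precisely that I avoid the short exact sequence $0 \to \Zc \to X \to X/\Zc \to 0$ in isolation, because $X/\Zc$ is neither discrete nor a Banach space and producing a continuous section of $X \to X/\Zc$ directly is not straightforward; the three\=/way chase bypasses this issue.
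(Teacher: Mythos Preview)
Your argument is correct, but the paper takes a more direct route. Rather than avoiding the sequence $0 \to \Zc \to X \to X/\Zc \to 0$, the paper observes that a continuous section of $X \to X/\Zc$ \emph{does} exist and is easy to write down: the Banach-space splitting $X \simeq Z \oplus Y$ coming from Remark~\ref{remark:les_almost} induces a topological isomorphism $X/\Zc \simeq (Z/\Zc) \oplus Y$, and since $Z/\Zc$ is discrete the quotient $Z \oplus Y \to (Z/\Zc) \oplus Y$ admits an obvious continuous section. One then applies the long exact sequence directly and concludes from $\HH^2(L,\Zc)=0$. Your three-sequence chase reaches the same endpoint using both $\HH^2(L,Z)=0$ and $\HH^2(L,\Zc)=0$; it is perfectly valid and would generalise to settings where the splitting is less transparent, but here the very ingredients you assemble for sequences~(a) and~(b)---the section $Y \to X$ and a section $Z/\Zc \to Z$---already combine into the single section the paper uses, so the detour is unnecessary.
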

\begin{proof}
	The quotient map \(X \rightarrow X/Z\) admits a continuous section.
	Indeed, by Remark~\ref{remark:les_almost}, there exists an isomorphism of \(p\)\=/adic Banach spaces \(X \similarrightarrow Z \oplus Y\), which induces an isomorphism of topological groups \(X/\Zc \similarrightarrow Z/\Zc \oplus Y\).
	Since the group \(Z/\Zc\) is discrete, the quotient map \(Z \oplus Y \rightarrow Z/\Zc \oplus Y\) admits a continuous section.

	Therefore, the short exact sequence of topological \(G_K\)\=/modules
	\[
		0 \rightarrow \Zc \rightarrow X \rightarrow X/\Zc \rightarrow 0
	\]
	induces an exact sequence
	\[
		\HH^1(L,X) \rightarrow \HH^1(L,X/\Zc) \rightarrow \HH^2(L,\Zc),
	\]
	and the statement follows from Lemma~\ref{lemma:cohom_n}.
\end{proof}

The combination of Proposition~\ref{proposition:cohom_dim_perfectoid}, Proposition~\ref{proposition:cohomology_almost_perfectoid} and Lemma~\ref{lemma:surjection} implies the following.

\begin{corollary} \label{corollary:cohomology_almost_perfectoid_lattice}
	Let
	\[
		0 \rightarrow Z \rightarrow X \rightarrow Y \rightarrow 0
	\]
	be a short exact sequence in \(\Cc(G_K)\), and let \(\Zc\) be a \(G_K\)\=/stable lattice in \(Z\).
	Let \(\Zc^{(0)}\) be the image of \(\Zc\) in \(X^0\), and let \(\Zc^{(\neq 0)} = \Zc \cap X^{\neq 0}\).
	If \(\hat{L}\) is a perfectoid field, then the short exact sequence of topological \(G_K\)\=/modules
	\[
		0 \rightarrow X^{\neq 0}/\Zc^{(\neq 0)} \rightarrow X/\Zc \rightarrow X^0/\Zc^{(0)} \rightarrow 0
	\]
	induces an isomorphism
	\[
		\HH^1(L,X/\Zc) \similarrightarrow \HH^1(L,X^0/\Zc^{(0)}).
	\]
	Moreover, for each integer \(n > 1\), the group \(\HH^n(L,X/\Zc)\) is trivial.
\end{corollary}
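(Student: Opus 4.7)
The plan is to establish the displayed short exact sequence by a snake-lemma argument, and then to reduce the computation of \(\HH^1(L, X/\Zc)\) to that of \(\HH^1(L, X^0/\Zc^{(0)})\) by killing the positive cohomology of the kernel \(X^{\neq 0}/\Zc^{(\neq 0)}\) using the three tools highlighted just before the statement. Concretely, I would first apply the snake lemma to the map of short exact sequences obtained by intersecting the torsion-pair decomposition \(0 \to X^{\neq 0} \to X \to X^0 \to 0\) from Proposition~\ref{proposition:almost_torsion_relation} with the lattice \(\Zc \subset X\); by construction \(\Zc^{(\neq 0)} = \Zc \cap X^{\neq 0}\) and \(\Zc^{(0)}\) is the image of \(\Zc\) in \(X^0\), so this yields the displayed sequence. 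A continuous set-theoretic section of the projection \(X/\Zc \to X^0/\Zc^{(0)}\) is then obtained by composing the continuous \(\Qp\)-linear section of \(X \to X^0\) provided by Remark~\ref{remark:les_almost} with any set-theoretic lift \(X^0/\Zc^{(0)} \to X^0\) (automatically continuous because \(X^0/\Zc^{(0)}\) is discrete), and hence the displayed short exact sequence induces a long exact sequence in continuous Galois cohomology.

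The crux is to show that \(\HH^n(L, X^{\neq 0}/\Zc^{(\neq 0)}) = 0\) for every \(n \geq 1\). I would first verify that \(\Zc^{(\neq 0)}\) is a \(G_K\)-stable lattice in the almost \(\Cp\)-representation \(Z \cap X^{\neq 0}\): it is \(G_K\)-stable and closed in \(\Zc\), hence \(p\)-adically complete and separated, and it spans \(Z \cap X^{\neq 0}\) over \(\Qp\) because \(\Zc[1/p] = Z\). Lemma~\ref{lemma:surjection}, applied to the short exact sequence \(0 \to Z \cap X^{\neq 0} \to X^{\neq 0} \to X^{\neq 0}/(Z \cap X^{\neq 0}) \to 0\) in \(\Cc(G_K)\) together with this lattice, then produces a surjection \(\HH^1(L, X^{\neq 0}) \twoheadrightarrow \HH^1(L, X^{\neq 0}/\Zc^{(\neq 0)})\). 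Since \((X^{\neq 0})^0 = 0\) by the very definition of \(\Cc^{\neq 0}(G_K)\), Proposition~\ref{proposition:cohomology_almost_perfectoid} applied to \(X^{\neq 0}\) forces \(\HH^1(L, X^{\neq 0}) = 0\), and hence \(\HH^1(L, X^{\neq 0}/\Zc^{(\neq 0)}) = 0\). For \(n > 1\), the combination of Proposition~\ref{proposition:cohom_dim_perfectoid} and Lemma~\ref{lemma:cohom_n} annihilates both \(\HH^n(L, \Zc^{(\neq 0)})\) and \(\HH^n(L, X^{\neq 0})\), so the long exact sequence attached to \(0 \to \Zc^{(\neq 0)} \to X^{\neq 0} \to X^{\neq 0}/\Zc^{(\neq 0)} \to 0\) delivers the vanishing in all degrees \(\geq 1\).

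Feeding this vanishing into the long exact sequence of the first paragraph immediately produces the claimed isomorphism \(\HH^1(L, X/\Zc) \similarrightarrow \HH^1(L, X^0/\Zc^{(0)})\). For the \emph{moreover} clause, \(X^0/\Zc^{(0)}\) is a discrete \(p\)-primary \(G_L\)-module, so its cohomology vanishes in degrees \(> 1\) by Proposition~\ref{proposition:cohom_dim_perfectoid}; combined with the vanishing of \(\HH^n(L, X^{\neq 0}/\Zc^{(\neq 0)})\) already proved, this forces \(\HH^n(L, X/\Zc) = 0\) for all \(n > 1\). The only genuinely delicate point in this plan is the verification that \(\Zc^{(\neq 0)}\) is really a lattice in the ambient almost \(\Cp\)-representation \(Z \cap X^{\neq 0}\) so that Lemma~\ref{lemma:surjection} is legitimately available; once that topological bookkeeping is settled, everything else is a clean assembly of the three ingredients named in the hint.
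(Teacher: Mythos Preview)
Your proposal is correct and is precisely the intended argument: the paper offers no proof beyond the sentence ``The combination of Proposition~\ref{proposition:cohom_dim_perfectoid}, Proposition~\ref{proposition:cohomology_almost_perfectoid} and Lemma~\ref{lemma:surjection} implies the following'', and you have faithfully unpacked that combination. Your identification of the one genuinely delicate point—checking that \(\Zc^{(\neq 0)}\) is a \(G_K\)-stable lattice in the almost \(\Cp\)-representation \(Z \cap X^{\neq 0}\) so that Lemma~\ref{lemma:surjection} applies—is apt, and your treatment of it is sound.
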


\subsection{Cohomology of maximal discrete Galois submodules}
\label{subsec:delta}

\begin{notation}
	If \(M\) is a topological \(G_K\)\=/module, then we denote by \(M_\delta\) the discrete \(G_K\)\=/module
	\[
		M_\delta = \varinjlim_{\res,K^\prime} \HH^0(K^\prime,M),
	\]
	where \(K^\prime\) runs over all the finite extensions of \(K\), and the transition morphisms are the restriction maps.
\end{notation}

Let
\[
	0 \rightarrow Z \rightarrow X \rightarrow Y \rightarrow 0
\]
be a short exact sequence in \(\Cc(G_K)\), and let \(\Zc\) be a \(G_K\)\=/stable lattice in \(Z\).

\begin{lemma} \label{lemma:delta_ses}
	The short exact sequence of topological \(G_K\)\=/modules
	\[
		0 \rightarrow Z/\Zc \rightarrow X/\Zc \rightarrow Y \rightarrow 0
	\]
	induces a short exact sequence of discrete \(G_K\)\=/modules
	\[
		0 \rightarrow Z/\Zc \rightarrow (X/\Zc)_\delta \rightarrow Y_\delta \rightarrow 0.
	\]
	In particular, there is a commutative diagram of topological \(G_K\)\=/modules with exact rows
	\[
		\begin{tikzcd}
			0 \ar{r} & Z/\Zc \ar{r} & X/\Zc \ar{r} & Y \ar{r} & 0 \\
			0 \ar{r} & Z/\Zc \ar{r} \ar[equal]{u} & (X/\Zc)_\delta \ar{r} \ar{u} & Y_\delta \ar{r} \ar{u} & 0.
		\end{tikzcd}
	\]
\end{lemma}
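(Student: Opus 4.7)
The plan is to verify exactness of the second row at each of the three positions using the general left\-exactness of $M \mapsto M_\delta = \varinjlim_{K^\prime} \HH^0(K^\prime, M)$ and a cocycle argument for surjectivity. First, this functor is left exact, being a filtered colimit of left exact functors, and $Z/\Zc$ is already discrete since $\Zc$ is open in $Z$, hence $(Z/\Zc)_\delta = Z/\Zc$. Applying $(-)_\delta$ to the hypothesised short exact sequence $0 \to Z/\Zc \to X/\Zc \to Y \to 0$ therefore immediately yields left\-exactness of
\[
	0 \to Z/\Zc \to (X/\Zc)_\delta \to Y_\delta,
\]
and only the surjectivity of the right\-hand map remains to be established.

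For surjectivity, the idea is to lift through $X$ rather than through $X/\Zc$, since $\Zc$ need not be open in $X$ but is open in $Z$. Given $y \in Y_\delta$, choose a finite extension $K^\prime/K$ with $y \in Y^{G_{K^\prime}}$ and lift $y$ to some $x \in X$; the surjection $X \to Y$ in $\Cc(G_K)$ even admits a continuous $\Qp$\-linear section by Remark~\ref{remark:les_almost}. Because $y$ is $G_{K^\prime}$\-invariant, the continuous map
\[
	c : G_{K^\prime} \to Z, \quad g \mapsto g x - x,
\]
is a well\-defined continuous $1$\-cocycle with values in the closed subrepresentation $Z$.

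The key step is to show that $H = c^{-1}(\Zc)$ is an open subgroup of $G_{K^\prime}$. Openness is immediate, as $\Zc$ is open in $Z$ and $c$ is continuous. Stability of $H$ under products and inverses follows from the cocycle identity $c(gh) = c(g) + g \cdot c(h)$ together with the $G_K$\-stability of $\Zc$. The subgroup $H$ therefore corresponds to some finite extension $L/K^\prime$, and for every $g \in G_L$ one has $gx - x \in \Zc$; equivalently, the image $\bar{x} \in X/\Zc$ of $x$ is fixed by $G_L$, so that $\bar{x} \in (X/\Zc)^{G_L} \subseteq (X/\Zc)_\delta$ and projects to $y$. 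The commutative diagram at the end of the statement is then a formal consequence, its vertical arrows being the canonical inclusions. I do not foresee any serious obstacle; the only subtlety is to avoid working directly in $X/\Zc$, where $\Zc$ itself need not be open, and instead perform the cocycle analysis upstairs in $X$.
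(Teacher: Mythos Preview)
Your proof is correct and rests on the same idea as the paper's: the discreteness of \(Z/\Zc\) forces the obstruction to lifting an element of \(Y_\delta\) to vanish after passing to a finite extension. The paper packages this as the vanishing \(\varinjlim_{K^\prime} \HH^1(K^\prime, Z/\Zc) = 0\) for discrete modules (Serre, I~\S2.2 Proposition~8) applied to the long exact sequence, while you unwind that vanishing by hand via the cocycle argument; note that your detour through \(X\) is not really needed, since one can lift \(y\) to \(\tilde{x} \in X/\Zc\) and observe that the resulting cocycle \(g \mapsto g\tilde{x} - \tilde{x}\) already takes values in the discrete module \(Z/\Zc\), so \(c^{-1}(0)\) is an open subgroup by the same reasoning.
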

\begin{proof}
	The short exact sequence of topological \(G_K\)\=/modules
	\[
		0 \rightarrow Z/\Zc \rightarrow X/\Zc \rightarrow Y \rightarrow 0
	\]
	induces a long exact sequence for each finite extension \(K^\prime\) of \(K\)
	\begin{equation} \label{eq:les_XZ}
		0 \rightarrow \HH^0(K^\prime,Z/\Zc) \rightarrow \HH^0(K^\prime,X/\Zc) \rightarrow \HH^0(K^\prime,Y) \rightarrow \HH^1(K^\prime,Z/\Zc) \rightarrow \cdots.
	\end{equation}
	The long exact sequences~\eqref{eq:les_XZ} yields the long exact sequence
	\[
		0 \rightarrow (Z/\Zc)_\delta \rightarrow (X/\Zc)_\delta \rightarrow Y_\delta \rightarrow \varinjlim_{\res,K^\prime} \HH^1(K^\prime,Z/\Zc) \rightarrow \cdots,
	\]
	where \(K^\prime\) runs over all the finite extensions of \(K\), and the transition morphisms are the restriction maps.
	Since \(Z/\Zc\) is a discrete \(G_K\)\=/module, we have~\cite[I \S 2.2 Proposition~8]{Serre1994}
	\[
		\varinjlim_{\res,K^\prime} \HH^n(K^\prime,Z/\Zc) =
		\begin{cases}
			(Z/\Zc)_\delta = Z/\Zc, & \text{if } n = 0, \\
			0, & \text{if } n > 0.
		\end{cases}
	\]
\end{proof}

Let \(L\) be an algebraic extension of \(K\).

\begin{lemma} \label{lemma:delta_les}
	The short exact sequence of discrete \(G_K\)\=/modules
	\[
		0 \rightarrow Z/\Zc \rightarrow (X/\Zc)_\delta \rightarrow Y_\delta \rightarrow 0
	\]
	induces an exact sequence
	\[
		\begin{tikzcd}
			0 \ar{r} & \HH^0(L,Z/\Zc) \ar{r} & \HH^0(L,(X/\Zc)_\delta) \ar[phantom, ""{coordinate, name=Z}]{d} \ar{r} & \HH^0(L,Y_\delta) \ar[rounded corners, to path={ -- ([xshift=2em]\tikztostart.east) |- (Z) [near end]\tikztonodes -| ([xshift=-2em]\tikztotarget.west) -- (\tikztotarget)}]{dll} \\
			& \HH^1(L,Z/\Zc) \ar{r} & \HH^1(L,(X/\Zc)_\delta) \ar{r} & 0                                        & ,
		\end{tikzcd}
	\]
	and, for each integer \(n > 1\), an isomorphism
	\[
		\HH^n(L,Z/\Zc) \similarrightarrow \HH^n(L,(X/\Zc)_\delta).
	\]
\end{lemma}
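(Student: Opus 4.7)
The plan is to apply the long exact sequence in Galois cohomology to the short exact sequence of discrete \(G_K\)-modules furnished by Lemma~\ref{lemma:delta_ses}, namely
\[
	0 \rightarrow Z/\Zc \rightarrow (X/\Zc)_\delta \rightarrow Y_\delta \rightarrow 0,
\]
which yields
\[
	\cdots \rightarrow \HH^n(L,Z/\Zc) \rightarrow \HH^n(L,(X/\Zc)_\delta) \rightarrow \HH^n(L,Y_\delta) \rightarrow \HH^{n+1}(L,Z/\Zc) \rightarrow \cdots.
\]
Comparing this with the 6-term exact sequence and the isomorphisms asserted in the statement, the entire lemma reduces to establishing the single vanishing \(\HH^n(L,Y_\delta) = 0\) for every integer \(n \geq 1\).

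The key observation for this vanishing is that \(Y_\delta\) is naturally a \(\Q_p\)-vector space. Indeed, \(Y\) is a \(p\)-adic Banach representation of \(G_K\), hence a \(\Q_p\)-vector space on which \(G_K\) acts \(\Q_p\)-linearly, so for each finite extension \(K^\prime/K\) the fixed subspace \(Y^{G_{K^\prime}}\) is a \(\Q_p\)-subspace of \(Y\), and \(Y_\delta = \varinjlim_{K^\prime} Y^{G_{K^\prime}}\) inherits a \(\Q_p\)-vector space structure.

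I would then invoke the standard direct limit description of the continuous cohomology of a profinite group with discrete coefficients to write
\[
	\HH^n(L,Y_\delta) = \varinjlim_{F/L} \HH^n(\Gal(F/L),(Y_\delta)^{G_F}),
\]
where \(F\) ranges over the finite Galois extensions of \(L\) contained in \(\Qpbar\). For each such \(F\), the group \(\Gal(F/L)\) is finite and the coefficient module \((Y_\delta)^{G_F}\) is a \(\Q_p\)-subspace of \(Y_\delta\), hence uniquely divisible; therefore multiplication by \(|\Gal(F/L)|\) acts as an automorphism on the coefficients but annihilates \(\HH^n(\Gal(F/L),-)\) for \(n \geq 1\), forcing every term in the direct limit to vanish.

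I do not anticipate any substantial obstacle: the argument ultimately reduces to the classical fact that the Galois cohomology of a discrete \(\Q\)-vector space vanishes in positive degrees, combined with the elementary observation that the maximal discrete Galois submodule of a \(p\)-adic Banach representation is automatically a \(\Q_p\)-vector space.
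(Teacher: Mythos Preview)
Your proposal is correct and follows essentially the same approach as the paper: both take the long exact sequence associated with the short exact sequence of discrete modules and reduce to showing \(\HH^n(L,Y_\delta)=0\) for \(n\geq 1\) via the fact that \(Y_\delta\) is a \(\Qp\)-vector space. The paper phrases this last step slightly more tersely, invoking that \(\HH^n(L,Y_\delta)\) is torsion (citing \cite[I \S 2.2 Corollaire~3]{Serre1994}) and uniquely divisible, whereas you unpack the same argument through the direct limit over finite Galois quotients; these are the same idea.
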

\begin{proof}
	The short exact sequence of discrete \(G_K\)\=/modules
	\[
		0 \rightarrow Z/\Zc \rightarrow (X/\Zc)_\delta \rightarrow Y_\delta \rightarrow 0
	\]
	induces a long exact sequence
	\[
		\cdots \rightarrow \HH^n(L,Z/\Zc) \rightarrow \HH^n(L,(X/\Zc)_\delta) \rightarrow \HH^n(L,Y_\delta) \rightarrow \HH^{n+1}(L,Z/\Zc) \rightarrow \cdots.
	\]
	Furthermore, since \(Y_\delta\) is a uniquely divisible discrete \(G_K\)\=/module, for each integer \(n \geq 1\), the group \(\HH^n(L,Y_\delta)\) is trivial since it is torsion~\cite[I \S 2.2 Corollaire 3]{Serre1994} and uniquely divisible.
\end{proof}

By Lemma~\ref{lemma:delta_ses} and Lemma~\ref{lemma:delta_les}, there exists a morphism
\[
	\xi : \HH^1(L,(X/\Zc)_\delta) \rightarrow \HH^1(L,X/\Zc),
\]
and a commutative diagram with exact rows
\begin{equation} \label{eq:cohom_comparison_delta}
	\begin{tikzcd}
		Y^{G_L} \ar{r} & \HH^1(L,Z/\Zc) \ar{r} & \HH^1(L,X/\Zc) \ar{r} & \HH^1(L,Y) \\
		Y_\delta^{G_L} \ar{r} \ar{u} & \HH^1(L,Z/\Zc) \ar{r} \ar[equal]{u} & \HH^1(L,(X/\Zc)_\delta) \ar{r} \ar{u}{\xi} & 0 \ar{u} .
	\end{tikzcd}
\end{equation}
In particular, the map \(\xi\) induces a morphism
\[
	\HH^1(L,(X/\Zc)_\delta) \rightarrow \Ker\left(\HH^1(L,X/\Zc) \rightarrow \HH^1(L,Y)\right).
\]
Note that \(\Ker\left(\HH^1(L,X/\Zc) \rightarrow \HH^1(L,Y)\right)\) is the torsion subgroup of \(\HH^1(L,X/\Zc)\).

We consider \(Y^{G_L}\) endowed with the subspace topology from \(Y\).

\begin{proposition} \label{proposition:density}
	If \((Y_\delta)^{G_L}\) is dense in \(Y^{G_L}\), then the map \(\xi\) induces an isomorphism
	\[
		\HH^1(L,(X/\Zc)_\delta) \similarrightarrow \Ker\left(\HH^1(L,X/\Zc) \rightarrow \HH^1(L,Y)\right).
	\]
\end{proposition}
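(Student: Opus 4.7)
The plan is to translate the claim into an equality of two subgroups of \(\HH^1(L,Z/\Zc)\) and then to deploy the density hypothesis via a topological argument. Write \(\delta_1 : Y^{G_L} \to \HH^1(L,Z/\Zc)\) and \(\delta_2 : Y_\delta^{G_L} \to \HH^1(L,Z/\Zc)\) for the two connecting maps appearing in diagram~\eqref{eq:cohom_comparison_delta}. The top row identifies \(\Ker(\HH^1(L,X/\Zc) \to \HH^1(L,Y))\) with \(\HH^1(L,Z/\Zc)/\Img(\delta_1)\), and the bottom row identifies \(\HH^1(L,(X/\Zc)_\delta)\) with \(\HH^1(L,Z/\Zc)/\Img(\delta_2)\); since the diagram commutes and \(Y_\delta^{G_L} \subseteq Y^{G_L}\), one has \(\Img(\delta_2) \subseteq \Img(\delta_1)\), and \(\xi\) is precisely the induced surjection between these two quotients. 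Therefore \(\xi\) is an isomorphism onto the kernel if and only if \(\Img(\delta_1) = \Img(\delta_2)\), which is in turn equivalent to the decomposition \(Y^{G_L} = \Ker(\delta_1) + Y_\delta^{G_L}\).

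The central step will be to prove that \(\Ker(\delta_1)\) is an \emph{open} subgroup of \(Y^{G_L}\). Granted this, \(\Ker(\delta_1) + Y_\delta^{G_L}\) is an open subgroup of \(Y^{G_L}\) containing the dense subgroup \(Y_\delta^{G_L}\), and hence equals \(Y^{G_L}\). To establish the openness, I would use the continuous \(\Qp\)\=/linear section \(s : Y \to X\) of the projection \(X \to Y\) provided by Remark~\ref{remark:les_almost}, composed with \(X \to X/\Zc\) to give a continuous section \(\bar s : Y \to X/\Zc\) of \(X/\Zc \to Y\). For \(y \in Y^{G_L}\), the class \(\delta_1(y)\) is represented by the cocycle \(g \mapsto g\bar s(y) - \bar s(y)\), which takes values in \(Z/\Zc \subseteq X/\Zc\) precisely because \(gy = y\).

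The key observation is then that this cocycle vanishes identically on \(G_L\) for \(y\) in a sufficiently small neighborhood of \(0\) in \(Y^{G_L}\). Using the topological decomposition \(X/\Zc \cong Z/\Zc \oplus Y\) induced by \(\bar s\), the subgroup \(U = \bar s(Y) \subseteq X/\Zc\) is open (because \(Z/\Zc\) is discrete) and satisfies \(U \cap (Z/\Zc) = \{0\}\). A standard compactness argument applied to the continuous map \(G_L \times X/\Zc \to X/\Zc\), \((g,\xi) \mapsto g\xi - \xi\), which vanishes on \(G_L \times \{0\}\), produces an open neighborhood \(W\) of \(0\) in \(X/\Zc\) such that \(g\xi - \xi \in U\) for all \(g \in G_L\) and all \(\xi \in W\). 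For \(y \in Y^{G_L}\) with \(\bar s(y) \in W\) one then obtains \(g\bar s(y) - \bar s(y) \in U \cap (Z/\Zc) = \{0\}\) for every \(g \in G_L\), so \(\delta_1(y) = 0\), and \(\Ker(\delta_1)\) is open as claimed. The only delicate step is the uniformity in \(g \in G_L\) of this smallness argument, made possible by the compactness of \(G_L\) together with the discreteness of \(Z/\Zc\) (equivalently, the openness of the lattice \(\Zc\) in \(Z\)); every other ingredient is a diagram chase or the standard topological fact that an open subgroup containing a dense subgroup is the whole group.
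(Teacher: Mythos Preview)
Your proof is correct and follows essentially the same strategy as the paper: both reduce via diagram~\eqref{eq:cohom_comparison_delta} to showing that \(\Img(\delta_1)=\Img(\delta_2)\), and both exploit that \(\HH^1(L,Z/\Zc)\) is discrete so that a dense subset of \(Y^{G_L}\) has the same image as \(Y^{G_L}\) itself. The only difference is in how this last point is justified: the paper invokes the general framework (from~\cite[Appendix~A]{Ponsinet2024:1}) that equips the long exact sequence with topologies and makes the connecting map \(\delta_1\) continuous into the discrete group \(\HH^1(L,Z/\Zc)\); you instead prove directly, via the continuous section \(\bar s\) and a tube-lemma compactness argument, that \(\Ker(\delta_1)\) is open. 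These are two packagings of the same phenomenon—your route is more self-contained, the paper's is shorter once the cited machinery is in place.
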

\begin{proof}
	Recall~\cite[Appendix~A]{Ponsinet2024:1} that if \(G\) is a locally compact and separated topological group, and if \(M\) is a topological \(G\)\=/module, then the compact\--open topology on the continuous cochains induces a structure of topological groups on each abelian group \(\HH^n(G,M)\), \(n \in \N\), which satisfy the following properties.
	\begin{enumerate}
		\item Let \(M^G\) be the submodule of \(G\)\=/invariant elements of \(M\) equipped with the subspace topology from \(M\).
		The canonical isomorphism
		\[
			\HH^0(G,M) \similarrightarrow M^G
		\]
		is an isomorphism of topological abelian groups.
		\item If
		\[
			0 \rightarrow M^\prime \rightarrow M \rightarrow M^\dprime \rightarrow 0
		\]
		is a short exact sequence of topological \(G\)\=/modules such that the topology of \(M^\prime\) is the subspace topology from \(M\), the topology of \(M^\dprime\) is the quotient topology from \(M\), and there exists a continuous section of the projection of \(M\) on \(M^\dprime\) as topological space, then it induces a sequence of topological groups
		\[
			\cdots \rightarrow \HH^n(G,M^\prime) \rightarrow \HH^n(G,M) \rightarrow  \HH^n(G,M^\dprime) \rightarrow \HH^{n+1}(G,M^\prime) \rightarrow \cdots
		\]
		whose underlying sequence of abelian groups is exact.
		\item If \(G\) is compact and \(M\) is a discrete \(G\)\=/module, then, for each \(n \in \N\), the topological group \(\HH^n(L,M)\) is discrete.
	\end{enumerate}

	Therefore, we can consider the top exact sequence
	\[
		Y^{G_L} \rightarrow \HH^1(L,Z/\Zc) \rightarrow \HH^1(L,X/\Zc) \rightarrow \HH^1(L,Y)
	\]
	of the diagram~\eqref{eq:cohom_comparison_delta} as a sequence of topological abelian groups in which the group \(\HH^1(L,Z/\Zc)\) is discrete.
	If \(Y_\delta^{G_L}\) is dense in \(Y^{G_L}\), then, by continuity, the images of \(Y_\delta^{G_L}\) and \(Y^{G_L}\) in the discrete group \(\HH^1(L,Z/\Zc)\) coincides.
	We conclude using the commutativity of the diagram~\eqref{eq:cohom_comparison_delta}.
\end{proof}

The combination of Corollary~\ref{corollary:cohomology_almost_perfectoid_lattice} and Proposition~\ref{proposition:density} yields the following.

\begin{corollary} \label{corollary:delta_perfectoid}
	If \(\hat{L}\) is a perfectoid field and if \((Y_\delta)^{G_L}\) is dense in \(Y^{G_L}\), then there exists a short exact sequence
	\[
		0 \rightarrow \HH^1(L,(X/\Zc)_\delta) \rightarrow \HH^1(L,X^0/\Zc^{(0)}) \rightarrow \HH^1(L,Y^0) \rightarrow 0.
	\]
\end{corollary}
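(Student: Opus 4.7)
The plan is to chain the three previously established inputs that each require exactly one of the hypotheses: Proposition~\ref{proposition:density} (density), Corollary~\ref{corollary:cohomology_almost_perfectoid_lattice} (perfectoid, for $X/\Zc$), and Proposition~\ref{proposition:cohomology_almost_perfectoid} (perfectoid, for $Y$). First, Proposition~\ref{proposition:density} supplies an identification
\[
    \HH^1(L,(X/\Zc)_\delta) \similarrightarrow \Ker\left(\HH^1(L,X/\Zc) \to \HH^1(L,Y)\right).
\]
It will therefore suffice to identify this kernel with $\Ker(\HH^1(L,X^0/\Zc^{(0)}) \to \HH^1(L,Y^0))$ and to prove that the latter map is surjective.

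The identification will come from a commutative square
\[
    \begin{tikzcd}
        \HH^1(L,X/\Zc) \ar{r} \ar{d}{\sim} & \HH^1(L,Y) \ar{d}{\sim} \\
        \HH^1(L,X^0/\Zc^{(0)}) \ar{r} & \HH^1(L,Y^0),
    \end{tikzcd}
\]
in which the left vertical arrow is the isomorphism of Corollary~\ref{corollary:cohomology_almost_perfectoid_lattice} and the right vertical arrow is the isomorphism of Proposition~\ref{proposition:cohomology_almost_perfectoid}. Commutativity is inherited from the commutativity, at the level of topological $G_K$\=/modules, of
\[
    \begin{tikzcd}
        X/\Zc \ar{r} \ar{d} & Y \ar{d} \\
        X^0/\Zc^{(0)} \ar{r} & Y^0.
    \end{tikzcd}
\]
The bottom horizontal arrow exists because, by Definition~\ref{definition:neq0} and Proposition~\ref{proposition:almost_torsion_relation}, the composite $X \to Y \to Y^0$ vanishes on $X^{\neq 0}$ and hence factors through $X^0$; moreover $\Zc \subseteq Z = \Ker(X\to Y)$ maps to $0$ in $Y$, so $\Zc^{(0)}$ maps to $0$ in $Y^0$.

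For the surjectivity of the top horizontal map $\HH^1(L,X/\Zc) \to \HH^1(L,Y)$, I would use the long exact sequence attached to the short exact sequence $0 \to Z/\Zc \to X/\Zc \to Y \to 0$ of topological $G_K$\=/modules: the obstruction lies in $\HH^2(L,Z/\Zc)$. Since $\Zc$ is a $G_K$\=/stable lattice in the $p$\=/adic Banach space $Z$, every element of $Z/\Zc$ is $p$\=/power torsion and $Z/\Zc$ is discrete, so $\HH^2(L,Z/\Zc) = 0$ follows from Proposition~\ref{proposition:cohom_dim_perfectoid}. Combined with the square, this transfers surjectivity to the bottom arrow, while the previously established identification of the kernel yields the claimed short exact sequence.

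The only delicate step, and hence the main obstacle, is checking that the lower square of topological $G_K$\=/modules is well defined and commutes; this reduces entirely to the functoriality of the torsion pair $(\Cc^{\neq 0}(G_K),\Cc^0(G_K))$ from Proposition~\ref{proposition:almost_torsion_relation} and its compatibility with passage to the lattice quotient, both of which are formal.
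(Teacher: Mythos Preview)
Your proof is correct and follows the same route as the paper, which simply states that the corollary is the combination of Corollary~\ref{corollary:cohomology_almost_perfectoid_lattice} and Proposition~\ref{proposition:density}. You have unpacked this combination exactly as intended: the commutative square with vertical isomorphisms coming from the perfectoid hypothesis, the identification of the kernel via the density hypothesis, and the surjectivity from the vanishing of $\HH^2(L,Z/\Zc)$; the only point you leave implicit is that the long exact sequence for $0 \to Z/\Zc \to X/\Zc \to Y \to 0$ exists because $X/\Zc \to Y$ admits a continuous section (as in the proof of Lemma~\ref{lemma:surjection}), which is harmless.
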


\section{Truncation of the Hodge--Tate filtration}
\label{sec:truncation}

Let \(\Bun_{\XFF}(G_K)_\dR^{\leq 0}\) be the full subcategory of \(\Bun_{\XFF}(G_K)_\dR\) of de Rham \(G_K\)\=/equivariant vector bundles over \(\XFF\) whose Hodge\--Tate weights are all \(\leq 0\).
In~\cite[\S 2]{Ponsinet2024:1}, we have defined and studied the functor
\[
	\begin{split}
		\tau_\dR^{\leq 0} : \Bun_{\XFF}(G_K)_\dR & \rightarrow \Bun_{\XFF}(G_K)_\dR^{\leq 0} \\
		\Ec & \mapsto \Ec_+
	\end{split}
\]
which is a left adjoint to the forgetful functor from \(\Bun_{\XFF}(G_K)_\dR^{\leq 0}\) to \(\Bun_{\XFF}(G_K)_\dR\), and which associates with a vector bundle \(\Ec = (\Ec_\e, \Ec_\dR^+, \iota_{\Ec})\) the vector bundle
\[
	\Ec_+ = (\Ec_\e, \Ec_\dR^+ + \BdR^+ \otimes_K \DDb_\dR(\Ec), \iota_{\Ec}).
\]

Let \(n \geq 1\) be an integer.
Let \(\Bun_{\XFF}(G_K)_\dR^{\leq 0,>n}\) be the full subcategory of \(\Bun_{\XFF}(G_K)_\dR\) of de Rham \(G_K\)\=/equivariant vector bundles over \(\XFF\) whose Hodge\--Tate weights are all in the set \(\Z \setminus [1,n]\).
In this section~\ref{sec:truncation}, we will define and study the functor
\[
	\begin{split}
		\tau_\dR^{\leq 0,>n} : \Bun_{\XFF}(G_K)_\dR & \rightarrow \Bun_{\XFF}(G_K)_\dR^{\leq 0,>n} \\
		\Ec & \mapsto \Ec_+^n
	\end{split}
\]
which is a left adjoint to the forgetful functor from \(\Bun_{\XFF}(G_K)_\dR^{\leq 0,>n}\) to \(\Bun_{\XFF}(G_K)_\dR\), and which associates with a vector bundle \(\Ec = (\Ec_\e, \Ec_\dR^+, \iota_{\Ec})\) the vector bundle
\[
	\Ec_+^n = (\Ec_\e, \Ec_\dR^+ + \BdR^+ \otimes_K \Fil^{-n} \DDb_\dR(\Ec), \iota_{\Ec}).
\]

The construction of the functor \(\tau_\dR^{\leq 0,>n}\) is similar to the one of \(\tau_\dR^{\leq 0}\).
Therefore, we will omit some details and refer the reader to~\cite{Ponsinet2024:1}.
In particular, this section~\ref{sec:truncation} purposefully follows the same structure as~\cite[\S 2]{Ponsinet2024:1} for the reader's convenience.

\subsection{Modification of filtered vector spaces}
\label{subsec:modif_Fil}

Let \(n \geq 1\) be an integer.
Let \(\Fil_K^{\leq 0,>n}\) be the full subcategory of \(\Fil_K\) of filtered \(K\)\=/vector spaces whose weights are all in the set \(\Z \setminus [1,n]\).

We define the functor
\[
	\begin{split}
		\tau_{\Fil}^{\leq 0,>n} : \Fil_K & \rightarrow \Fil_K^{\leq 0,>n} \\
		(D,\Fil D) & \mapsto (D,\Fil_{+,n} D)
	\end{split}
\]
which associates with a filtered \(K\)\=/vector space \((D,\Fil D)\) the filtered  \(K\)\=/vector space \((D,\Fil_{+,n} D)\) where
\[
	\Fil_{+,n}^i D = \begin{cases}
		\Fil^i D, & \text{if } i \leq -n, \\
		\Fil^{-n} D,  & \text{if } i \in [-n,0], \\
		\Fil^i D, & \text{if } i > 0.
	\end{cases}
\]
Note that the identity map on \(D\) induces a morphism of filtered \(K\)\=/vector spaces \(\eta_D : (D,\Fil D) \rightarrow (D,\Fil_{+,n} D)\).

\begin{proposition} \label{proposition:trunc_fil}
	The functor \(\tau_{\Fil}^{\leq 0,>n}\) is exact and left adjoint to the forgetful functor from \(\Fil_K^{\leq 0,>n}\) to \(\Fil_K\).
	Moreover, we have the following properties.
	\begin{enumerate}
		\item Let \((D,\Fil D)\) be a filtered \(K\)\=/vector space.
		The morphism \(\eta_D\) is the universal morphism from \((D,\Fil D)\) to the forgetful functor from \(\Fil_K^{\leq 0,>n}\) to \(\Fil_K\).
		\item There is a commutative diagram
		\[
			\begin{tikzcd}
				\Fil_K \ar{r}{\tau_{\Fil}^{\leq 0,>n}} \ar{d} & \Fil_K^{\leq 0,>n} \ar{d} \\
				\Mod_K \ar{r}{\id} & \Mod_K,
			\end{tikzcd}
		\]
		where the vertical arrows are the forgetful functor \((D,\Fil D) \mapsto D\), and the bottom arrow is the identity functor.
	\end{enumerate}
\end{proposition}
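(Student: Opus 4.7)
The plan is direct verification, since $\tau_{\Fil}^{\leq 0,>n}$ only modifies the filtration at indices in $[-n,0]$ and leaves the underlying vector space untouched. First I would check that $(D,\Fil_{+,n} D)$ lies in $\Fil_K^{\leq 0,>n}$: the sequence $(\Fil_{+,n}^i D)_{i \in \Z}$ is decreasing (the only non-trivial transitions are at $i=-n-1 \to -n$ and $i=0 \to 1$, both of which reduce to the monotonicity of $\Fil$), and it is separated and exhaustive because $\Fil$ is. For the weight condition, observe that for every $w \in [1,n]$ both $-w$ and $-w+1$ lie in $[-n,0]$, so
\[
	\Fil_{+,n}^{-w} D = \Fil_{+,n}^{-w+1} D = \Fil^{-n} D,
\]
and the graded piece at weight $w$ vanishes. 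Functoriality is then automatic: a morphism $f : (D,\Fil D) \to (D',\Fil D')$ already sends $\Fil^{-n} D$ into $\Fil^{-n} D'$, which handles $i \in [-n,0]$, and coincides with $\Fil$ outside that range.

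The key step, from which both the universal property (point~1) and hence left adjointness follow, is the observation that if $(E,\Fil E)$ belongs to $\Fil_K^{\leq 0,>n}$, then the absence of weights in $[1,n]$ forces $\Fil^{-n} E = \Fil^{-n+1} E = \cdots = \Fil^0 E$. Consequently, for any morphism $f : (D,\Fil D) \to (E,\Fil E)$ in $\Fil_K$ and any $i \in [-n,0]$,
\[
	f(\Fil_{+,n}^i D) = f(\Fil^{-n} D) \subseteq \Fil^{-n} E = \Fil^i E,
\]
so $f$ factors uniquely through $\eta_D$ as a morphism $(D,\Fil_{+,n} D) \to (E,\Fil E)$. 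Together with the inclusion $\Fil^i D \subseteq \Fil_{+,n}^i D$ for all $i$ (which guarantees that $\eta_D$ is itself a morphism in $\Fil_K$, so that precomposition with $\eta_D$ is well-defined), this provides the natural bijection defining the adjunction and identifies $\eta_D$ as the universal arrow.

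For exactness, I would use the fact that a short exact sequence in $\Fil_K$ is obtained by endowing the subobject with the induced filtration $\Fil^i D' = D' \cap \Fil^i D$ and the quotient with the image filtration $\Fil^i D'' = \Img(\Fil^i D \to D'')$. Since $\Fil_{+,n}$ agrees with $\Fil$ outside $[-n,0]$ and equals the constant filtration $\Fil^{-n}$ on $[-n,0]$, both intersection with $D'$ and projection to $D''$ commute with the modification; hence $\tau_{\Fil}^{\leq 0,>n}$ preserves subobjects and quotients, and is therefore exact. The commutative diagram in point~2 is immediate because $\tau_{\Fil}^{\leq 0,>n}$ does not alter the underlying vector space. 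No real obstacle arises; the only moment demanding attention is recognising that objects of $\Fil_K^{\leq 0,>n}$ are precisely those whose filtration is constant on $[-n,0]$, which is what powers the universal property.
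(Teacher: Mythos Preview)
Your proof is correct and complete; the paper itself does not spell out the argument but simply refers to the analogous result \cite[Proposition~2.1.4, Remark~2.1.3, and Corollary~2.1.5]{Ponsinet2024:1}, so your direct verification is exactly the kind of argument the reference would contain. The key point you isolate---that objects of \(\Fil_K^{\leq 0,>n}\) are precisely those whose filtration is constant on \([-n,0]\)---is what drives the universal property, and your treatment of exactness and of the commutative square is accurate.
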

\begin{proof}
	The statement is proved similarly to~\cite[Proposition~2.1.4, Remark~2.1.3, and Corollary 2.1.5]{Ponsinet2024:1}.
\end{proof}

\begin{lemma} \label{lemma:module_modification_filtration}
	Let \((D,\Fil D)\) be a filtered \(K\)\=/vector space.
	Then
	\[
		\sum_{i \in \Z} \Fil^i \BdR^+ \otimes_K \Fil_{+,n}^{-i} D = \left(\sum_{i \in \Z} \Fil^i \BdR^+ \otimes_K \Fil^{-i} D \right) + \BdR^+ \otimes_K \Fil^{-n} D.
	\]
\end{lemma}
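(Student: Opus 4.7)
The proof plan is to unfold the piecewise definition of $\Fil_{+,n}^{-i} D$ and show both containments by straightforward inclusions.

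First I would split the left-hand sum according to the three cases in the definition of $\tau_{\Fil}^{\leq 0,>n}$. Writing $\Fil_{+,n}^{-i} D$ explicitly gives
\[
	\sum_{i \in \Z} \Fil^i \BdR^+ \otimes_K \Fil_{+,n}^{-i} D = \sum_{i < 0} \Fil^i \BdR^+ \otimes_K \Fil^{-i} D + \sum_{i=0}^{n} \Fil^i \BdR^+ \otimes_K \Fil^{-n} D + \sum_{i > n} \Fil^i \BdR^+ \otimes_K \Fil^{-i} D.
\]
The middle sum collapses to \(\BdR^+ \otimes_K \Fil^{-n} D\), since \(\Fil^i \BdR^+ \subseteq \BdR^+ = \Fil^0 \BdR^+\) for $i \geq 0$, with equality achieved at $i=0$. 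This gives the inclusion \enquote{$\subseteq$} of the desired equality, because the outer two sums are subsums of \(\sum_{i \in \Z} \Fil^i \BdR^+ \otimes_K \Fil^{-i} D\).

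For the reverse inclusion \enquote{$\supseteq$}, the only terms of \(\sum_{i \in \Z} \Fil^i \BdR^+ \otimes_K \Fil^{-i} D\) not already appearing in the LHS are those indexed by \(i \in [0,n]\). For such $i$, since the filtration on $D$ is decreasing and $-i \geq -n$, one has \(\Fil^{-i} D \subseteq \Fil^{-n} D\), and hence
\[
	\Fil^i \BdR^+ \otimes_K \Fil^{-i} D \subseteq \BdR^+ \otimes_K \Fil^{-n} D,
\]
which is the \(i=0\) term of the middle sum above and therefore lies in the LHS. The summand \(\BdR^+ \otimes_K \Fil^{-n} D\) on the RHS is clearly in the LHS for the same reason. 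Combining yields equality.

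No step presents a real obstacle: the identity is a formal manipulation of the piecewise definition together with the decreasing property of the filtration and the containment \(\Fil^i \BdR^+ \subseteq \BdR^+\) for \(i \geq 0\). The only thing to be careful about is the index bookkeeping between \(\Fil^i\) on \(\BdR^+\) (indexed positively) and \(\Fil^{-i}\) on \(D\).
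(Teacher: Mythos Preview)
Your proof is correct and follows essentially the same approach as the paper: split the left-hand side according to the three cases defining \(\Fil_{+,n}^{-i} D\), collapse the middle range \(i\in[0,n]\) to \(\BdR^+\otimes_K\Fil^{-n}D\), and use \(\Fil^{-i}D\subseteq\Fil^{-n}D\) for \(i\in[0,n]\) to recover the missing terms of the full sum. The only difference is cosmetic: the paper organises the argument as a chain of equalities rather than two separate inclusions.
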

\begin{proof}
	The statement follows from computations similar to~\cite[Lemma~2.1.6]{Ponsinet2024:1}.
	By definition, we have
	\[
		\begin{split}
			\sum_{i \in \Z} \Fil^i \BdR^+ \otimes_K \Fil_{+,n}^{-i} D = & \sum_{i < 0} \Fil^i \BdR^+ \otimes_K \Fil^{-i} D \\
			& + \sum_{i \in [0,n]} \Fil^i \BdR^+ \otimes_K \Fil^{-n} D \\
			& + \sum_{i \geq n} \Fil^i \BdR^+ \otimes_K \Fil^{-i} D
		\end{split}
	\]
	On the one hand, since \(\Fil^i \BdR^+ \otimes_K \Fil^{-i} D \subseteq \Fil^i \BdR^+ \otimes_K \Fil^{-n} D\) for each \(i \in [0,n]\), we have
	\[
		\begin{split}
			\sum_{i \in \Z} \Fil^i \BdR^+ \otimes_K \Fil_{+,n}^{-i} D & = \sum_{i \in \Z} \Fil^i \BdR^+ \otimes_K \Fil^{-i} D \\
			& + \sum_{i \in [0,n]} \Fil^i \BdR^+ \otimes_K \Fil^{-n} D.
		\end{split}
	\]
	On the other hand, we have
	\[
		\sum_{i \in [0,n]} \Fil^i \BdR^+ \otimes_K \Fil^{-n} D = \BdR^+ \otimes_K \Fil^{-n} D.
	\]
\end{proof}

Let \(\Rep_{\BdR^+}^\gfl(G_K)^{\leq 0,>n}\) be the subcategory of \(\Rep_{\BdR^+}^\gfl(G_K)\) equivalent to the subcategory \(\Fil^{\leq 0,>n}_K\) of \(\Fil_K\) via the equivalence \(\Rep_{\BdR^+}^\gfl(G_K) \similarrightarrow \Fil_K\) from Theorem~\ref{theorem:genflatFil}.
By Lemma~\ref{lemma:module_modification_filtration}, the composition of the functor \(\tau_{\Fil}^{\leq 0,>n}\) with the equivalences of categories \(\Rep_{\BdR^+}^\gfl(G_K) \similarrightarrow \Fil_K\) and \(\Rep_{\BdR^+}^\gfl(G_K)^{\leq 0,>n} \similarrightarrow \Fil_K^{\leq 0,>n}\) then yields a functor
\[
	\begin{split}
		\tau_{\dR}^{\leq 0,>n} : \Rep_{\BdR^+}^\gfl(G_K) & \rightarrow \Rep_{\BdR^+}^\gfl(G_K)^{\leq 0,>n} \\
		M_\dR^+ & \mapsto M_\dR^+ + \BdR^+ \otimes_K (t^{-n} M_\dR^+)^{G_K}.
	\end{split}
\]

Proposition~\ref{proposition:trunc_fil} implies the following.

\begin{proposition} \label{proposition:trunc_BdR+}
	The functor \(\tau_{\dR}^{\leq 0,>n}\) is exact and left adjoint to the forgetful functor from \(\Rep_{\BdR^+}^\gfl(G_K)^{\leq 0,>n}\) to \(\Rep_{\BdR^+}^\gfl(G_K)\).
	Moreover, we have the following properties.
	\begin{enumerate}
		\item Let \(M_\dR^+\) be a generically flat \(\BdR^+\)\=/representation of \(G_K\).
		The inclusion morphism \(M_\dR^+ \subset M_\dR^+ + \BdR^+ \otimes_K (t^{-n} M_\dR^+)^{G_K}\) is the universal morphism from \(M_\dR^+\) to the forgetful functor from \(\Rep_{\BdR^+}^\gfl(G_K)^{\leq 0,>n}\) to \(\Rep_{\BdR^+}^\gfl(G_K)\).
		\item There is a commutative diagram
		\[
			\begin{tikzcd}
				\Rep_{\BdR^+}^\gfl(G_K) \ar{r}{\tau_{\dR}^{\leq 0,>n}} \ar{d} & \Rep_{\BdR^+}^\gfl(G_K)^{\leq 0,>n} \ar{d} \\
				\Rep_{\BdR}^\fl(G_K) \ar{r}{\id} & \Rep_{\BdR}^\fl(G_K),
			\end{tikzcd}
		\]
		where the vertical arrows are the functor of extension of scalars, and the bottom arrow is the identity functor.
	\end{enumerate}
\end{proposition}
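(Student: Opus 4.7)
The plan is to deduce all assertions from Proposition~\ref{proposition:trunc_fil} by transporting properties through the equivalence of categories from Theorem~\ref{theorem:genflatFil}.

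First I would verify that the functor defined by the explicit formula \(M_\dR^+ \mapsto M_\dR^+ + \BdR^+ \otimes_K (t^{-n} M_\dR^+)^{G_K}\) coincides, up to natural isomorphism, with the composition
\[
	\Rep_{\BdR^+}^\gfl(G_K) \similarrightarrow \Fil_K \xrightarrow{\tau_{\Fil}^{\leq 0,>n}} \Fil_K^{\leq 0,>n} \similarrightarrow \Rep_{\BdR^+}^\gfl(G_K)^{\leq 0,>n}.
\]
Given \(M_\dR^+\) corresponding to \((D,\Fil D)\) under Theorem~\ref{theorem:genflatFil}, so that \(D = M_\dR^{G_K}\) and \(\Fil^{-i} D = (t^{-i} M_\dR^+)^{G_K}\), Lemma~\ref{lemma:module_modification_filtration} identifies the image of \((D,\Fil_{+,n} D)\) under the quasi-inverse functor~\eqref{eq:fil_rep} with \(M_\dR^+ + \BdR^+ \otimes_K (t^{-n} M_\dR^+)^{G_K}\), as required. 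The same identification shows that the natural inclusion morphism corresponds to \(\eta_D\), which is the universal morphism to the forgetful functor by Proposition~\ref{proposition:trunc_fil}; this yields the first additional property.

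Exactness and the left adjointness to the forgetful functor from \(\Rep_{\BdR^+}^\gfl(G_K)^{\leq 0,>n}\) to \(\Rep_{\BdR^+}^\gfl(G_K)\) then follow from the corresponding statements for \(\tau_{\Fil}^{\leq 0,>n}\) in Proposition~\ref{proposition:trunc_fil}, since equivalences of categories preserve both exactness and adjunctions. For the commutative diagram, the vertical functor of extension of scalars \(M_\dR^+ \mapsto M_\dR\) corresponds, under the equivalence of Theorem~\ref{theorem:genflatFil} combined with the equivalence \(\Rep_{\BdR}^\fl(G_K) \similarrightarrow \Mod_K\) recalled in \S\ref{subsec:Bun_dR}, to the forgetful functor \((D,\Fil D) \mapsto D\); commutativity then reduces to the analogous diagram of Proposition~\ref{proposition:trunc_fil}.

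No serious obstacle is expected: the argument is a direct transposition of~\cite[\S 2]{Ponsinet2024:1} from the truncation \(\tau_\dR^{\leq 0}\) to the more general \(\tau_\dR^{\leq 0,>n}\), and the only nontrivial computation, namely the coincidence between the two descriptions of the functor, is already performed in Lemma~\ref{lemma:module_modification_filtration}.
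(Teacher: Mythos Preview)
Your proposal is correct and matches the paper's approach exactly: the paper simply states that ``Proposition~\ref{proposition:trunc_fil} implies the following'' after having used Lemma~\ref{lemma:module_modification_filtration} to identify the composite functor through the equivalence of Theorem~\ref{theorem:genflatFil} with the explicit formula. Your write-up spells out the details the paper leaves implicit, but the argument is the same transport-of-structure through the equivalence $\Rep_{\BdR^+}^\gfl(G_K) \similarrightarrow \Fil_K$.
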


\subsection{Modification of de Rham vector bundles}
\label{subsec:modif_BundR}

Let \(n \geq 1\) be an integer.
Let \(\Bun_{\XFF}(G_K)_\dR^{\leq 0,>n}\) be the full subcategory of \(\Bun_{\XFF}(G_K)_\dR\) of \(G_K\)\=/equivariant de Rham vector bundles over \(\XFF\) whose Hodge\--Tate weights are all in the set \(\Z \setminus [1,n]\).

By Remark~\ref{remark:Bun_dR} and Proposition~\ref{proposition:trunc_BdR+}, the functor \(\tau_{\dR}^{\leq 0,>n}\) together with the identity functor on \(\Rep_{\Be}(G_K)\) induces a functor
\[
	\begin{split}
		\tau_{\HT}^{\leq 0,>n} : \Bun_{\XFF}(G_K)_\dR & \rightarrow \Bun_{\XFF}(G_K)_\dR^{\leq 0,>n} \\
		\Ec & \mapsto \Ec_+^n
	\end{split}
\]
which associates with a de Rham vector bundle \(\Ec = (\Ec_\e,\Ec_\dR^+,\iota_{\Ec})\) the de Rham vector bundle
\[
	\Ec_+^n = (\Ec_\e,\Ec_\dR^+ + \BdR^+ \otimes_K \Fil^{-n} \DDb_\dR(\Ec),\iota_{\Ec}).
\]

\begin{proposition} \label{proposition:truncation_HT}
	The functor \(\tau_{\HT}^{\leq 0,>n}\) is exact and left adjoint to the forgetful functor from \(\Bun_{\XFF}(G_K)_\dR^{\leq 0,>n}\) to \(\Bun_{\XFF}(G_K)_\dR\).
	Moreover, we have the following properties.
	\begin{enumerate}
		\item Let \(\Ec\) be a \(G_K\)\=/equivariant de Rham vector bundle.
		The inclusion map \(\Ec \rightarrow \Ec_+^n\) is the universal morphism from \(\Ec\) to the forgetful functor from \(\Bun_{\XFF}(G_K)_\dR^{\leq 0,>n}\) to \(\Bun_{\XFF}(G_K)_\dR\).
		\item The composition of the functor \(\tau_{\HT}^{\leq 0,>n}\) with the forgetful functor
		\[
			\Bun_{\XFF}(G_K)_\dR^{\leq 0,>n} \rightarrow \Bun_{\XFF}(G_K)_\dR^{\leq 0,>n} \xrightarrow{\tau_{\HT}^{\leq 0,>n}} \Bun_{\XFF}(G_K)_\dR^{\leq 0,>n}
		\]
		is isomorphic to the identity functor on \(\Bun_{\XFF}(G_K)_\dR^{\leq 0,>n}\).
	\end{enumerate}
\end{proposition}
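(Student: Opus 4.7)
The overall strategy is to reduce every assertion to the corresponding statement for the functor \(\tau_\dR^{\leq 0,>n}\) on generically flat \(\BdR^+\)\=/representations already established in Proposition~\ref{proposition:trunc_BdR+}, using the Beauville\--Laszlo description of \(G_K\)\=/equivariant coherent sheaves over \(\XFF\) recalled in Proposition~\ref{theorem:BL}. The pattern follows exactly the \(n=0\) case treated in~\cite[\S 2]{Ponsinet2024:1}, so the work consists in checking that each step of that argument carries over.

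First I would check that \(\Ec \mapsto \Ec_+^n\) is well defined as a functor into \(\Bun_{\XFF}(G_K)_\dR^{\leq 0,>n}\). Writing \(\Ec = (\Ec_\e,\Ec_\dR^+,\iota_{\Ec})\), the \(\Be\)\=/component of \(\Ec_+^n\) equals \(\Ec_\e\) and the \(\BdR^+\)\=/component equals \(\tau_\dR^{\leq 0,>n}(\Ec_\dR^+)\); by the second part of Proposition~\ref{proposition:trunc_BdR+} this latter module becomes equal to \(\Ec_\dR = \BdR \otimes_{\Be} \Ec_\e\) after inverting \(t\), so the glueing datum \(\iota_{\Ec}\) extends uniquely to \(\Ec_+^n\). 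Since \(\tau_\dR^{\leq 0,>n}\) lands in \(\Rep_{\BdR^+}^\gfl(G_K)^{\leq 0,>n}\), Remark~\ref{remark:Bun_dR} then implies \(\Ec_+^n \in \Bun_{\XFF}(G_K)_\dR^{\leq 0,>n}\), and functoriality is inherited from that of \(\tau_\dR^{\leq 0,>n}\) together with the identity functor on the \(\Be\)\=/component.

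Exactness would then follow component-wise: a sequence of vector bundles over \(\XFF\) is exact if and only if its \(\Be\)\=/ and \(\BdR^+\)\=/components are each exact, and both the identity functor and \(\tau_\dR^{\leq 0,>n}\) are exact by Proposition~\ref{proposition:trunc_BdR+}. For the second assertion of the proposition, I would observe that if all Hodge\--Tate weights of \(\Ec\) lie in \(\Z \setminus [1,n]\) then the filtered \(K\)\=/vector space \(\DDb_\dR(\Ec)\) satisfies \(\Fil^{-i} \DDb_\dR(\Ec) = \Fil^0 \DDb_\dR(\Ec)\) for every \(i \in [0,n]\); combined with the explicit description in Lemma~\ref{lemma:module_modification_filtration}, this yields \(\Ec_\dR^+ + \BdR^+ \otimes_K \Fil^{-n} \DDb_\dR(\Ec) = \Ec_\dR^+\), hence \(\Ec_+^n = \Ec\).

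Finally, to establish the universal property, let \(\Fc\) be an object of \(\Bun_{\XFF}(G_K)_\dR^{\leq 0,>n}\) and let \(f = (f_\e, f_\dR^+) : \Ec \rightarrow \Fc\) be a morphism in \(\Bun_{\XFF}(G_K)_\dR\). Proposition~\ref{proposition:trunc_BdR+} provides a unique morphism \(\tilde{f}_\dR^+ : \tau_\dR^{\leq 0,>n}(\Ec_\dR^+) \rightarrow \Fc_\dR^+\) extending \(f_\dR^+\), and pairing it with \(f_\e\) yields a candidate morphism \(\Ec_+^n \rightarrow \Fc\); the adjunction is then a formal consequence of this universal property. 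The one point that requires verification, which I expect to be the main (though mild) technical obstacle, is the compatibility of the pair \((f_\e, \tilde{f}_\dR^+)\) with the glueing data \(\iota_{\Ec_+^n}\) and \(\iota_{\Fc}\). This compatibility holds because, by the second part of Proposition~\ref{proposition:trunc_BdR+}, both \(\Ec_\dR^+\) and \(\tau_\dR^{\leq 0,>n}(\Ec_\dR^+)\) have the same underlying \(\BdR\)\=/representation and \(\tilde{f}_\dR^+\) induces \(f_\dR^+\) there, so the commutativity of the square defining compatibility with \(\iota\) is inherited directly from the compatibility already satisfied by \((f_\e, f_\dR^+)\).
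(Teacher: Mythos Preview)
Your proposal is correct and follows essentially the paper's strategy: reduce everything to Proposition~\ref{proposition:trunc_BdR+} via the Beauville--Laszlo description, spelling out the glueing-compatibility check that the paper leaves implicit.

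The one genuine difference is your treatment of statement~(2). You argue directly: if \(\Ec\) has Hodge--Tate weights in \(\Z\setminus[1,n]\), then \(\Fil^{-n}\DDb_\dR(\Ec)=\Fil^0\DDb_\dR(\Ec)\), whence \(\BdR^+\otimes_K\Fil^{-n}\DDb_\dR(\Ec)\subseteq\Ec_\dR^+\) and \(\Ec_+^n=\Ec\) literally. The paper instead deduces~(2) abstractly from the adjunction just established together with the full faithfulness of the forgetful functor, citing~\cite[IV \S 3 Theorem~1]{MacLane1998} (a left adjoint to a fully faithful functor has its counit an isomorphism). Your route is more concrete and in fact yields the slightly sharper statement that the unit is the identity, not merely an isomorphism; the paper's route is shorter and avoids unpacking the filtration. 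Both are entirely valid.
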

\begin{proof}
	All statements except the last follow from Proposition~\ref{proposition:trunc_BdR+}.
	The last statement is a formal consequence of the adjunction property of \(\tau_{\HT}^{\leq 0,>n}\) and the fully faithfulness of the forgetful functor (see \cite[IV \S 3 Theorem~1]{MacLane1998}).
\end{proof}

\subsection{Hodge\--Tate and Harder--Narasimhan filtrations}
\label{subsec:HT_HN}

Let \(n \geq 1\) be an integer.
Let \(V\) be a de Rham \(p\)\=/adic representation of \(G_K\).
The vector bundle \(\Ec(V)\) associated with \(V\) is \(G_K\)\=/equivariant and de Rham, and we denote its modification by \(\tau_\dR^{\leq 0,>n}\) by
\[
	\Ec_+^n(V) = (\Be \otimes_{\Qp} V, \BdR^+ \otimes_{\Qp} V + \BdR^+ \otimes_K \Fil^{-n} \DDb_\dR(V), \iota_{\Ec(V)}).
\]
The inclusion map of \(\Ec(V)\) in \(\Ec_+^n(V)\) induces a short exact sequence in \(\Coh_{\XFF}(G_K)\)
\begin{equation} \label{eq:EV}
	0 \rightarrow \Ec(V) \rightarrow \Ec_+^n(V) \rightarrow \Fc_+^n(V) \rightarrow 0,
\end{equation}
with
\[
	\Fc_+^n(V)=\left(0,\frac{\BdR^+ \otimes_{\Qp} V + \BdR^+ \otimes_K \Fil^{-n} \DDb_\dR(V)}{\BdR^+ \otimes_{\Qp} V}\right).
\]

\begin{lemma} \label{lemma:HN_geq0}
	The Harder\--Narasimhan slopes of \(\Ec^n_+(V)\) are all \(\geq 0\).
\end{lemma}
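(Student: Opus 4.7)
The plan is to deduce the lemma directly from the short exact sequence~\eqref{eq:EV}
\[
	0 \rightarrow \Ec(V) \rightarrow \Ec_+^n(V) \rightarrow \Fc_+^n(V) \rightarrow 0
\]
by showing that both outer terms already belong to the full subcategory $\Coh_{\XFF}^{\geq 0}(G_K)$ and invoking the fact, recalled in \S\ref{subsec:almost_FF}, that this subcategory is an exact subcategory of $\Coh_{\XFF}(G_K)$, hence stable under extensions.

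First, I would observe that $\Ec(V) = \Oc_{\XFF} \otimes_{\Qp} V$ is semistable of slope $0$. This is part of Theorem~\ref{theorem:almost_coh}: the functor $V \mapsto \Ec(V)$ is a quasi-inverse to the equivalence $\Coh_{\XFF}^{0}(G_K) \similarrightarrow \Cc^{0}(G_K)$, so $\Ec(V)$ lies in $\Coh_{\XFF}^{0}(G_K) \subset \Coh_{\XFF}^{\geq 0}(G_K)$, and all its Harder\--Narasimhan slopes are equal to $0$.

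Second, I would note that the definition of $\Fc_+^n(V)$ gives $\Fc_+^n(V)_\e = 0$, so by Proposition~\ref{proposition:char} the sheaf $\Fc_+^n(V)$ is torsion, i.e.~an object of $\Coh_{\XFF}^{+\infty}(G_K)$. Torsion coherent sheaves over $\XFF$ have slope $+\infty$ by definition, hence lie in $\Coh_{\XFF}^{\geq 0}(G_K)$.

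Finally, since $\Coh_{\XFF}^{\geq 0}(G_K)$ is stable under extensions in $\Coh_{\XFF}(G_K)$ (as recalled at the beginning of \S\ref{subsec:almost_FF}), the short exact sequence~\eqref{eq:EV} forces $\Ec_+^n(V)$ to be an object of $\Coh_{\XFF}^{\geq 0}(G_K)$, which is exactly the statement that its Harder\--Narasimhan slopes are all $\geq 0$. There is no real obstacle here; the only point one might wish to make explicit is the slope bookkeeping, namely that an extension of a semistable sheaf of slope $0$ by a torsion sheaf cannot acquire a subsheaf of negative slope, and this is precisely what the exactness of $\Coh_{\XFF}^{\geq 0}(G_K)$ encodes.
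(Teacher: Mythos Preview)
Your proof is correct and follows essentially the same route as the paper: both arguments use the short exact sequence~\eqref{eq:EV} and the observation that the outer terms \(\Ec(V)\) and \(\Fc_+^n(V)\) are semistable of slopes \(0\) and \(+\infty\) respectively. The only cosmetic difference is in the final step: you invoke directly that \(\Coh_{\XFF}^{\geq 0}(G_K)\) is extension-closed, whereas the paper deduces the vanishing of \(\HH^1(\XFF,\Ec_+^n(V))\) from the long exact sequence and then applies the cohomological criterion of Proposition~\ref{proposition:cohomology_HN}.
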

\begin{proof}
	The short exact sequence~\eqref{eq:EV} induces the cohomological exact sequence
	\begin{equation} \label{eq:cohomHN0}
		\HH^1(\XFF,\Ec(V)) \rightarrow \HH^1(\XFF,\Ec^n_+(V)) \rightarrow \HH^1(\XFF,\Fc^n_+(V)) \rightarrow 0.
	\end{equation}
	Since \(\Ec(V)\) and \(\Fc^n_+(V)\) are semi-stable of slopes \(0\) and \(+\infty\) respectively, the groups \(\HH^1(\XFF,\Ec(V))\) and \(\HH^1(\XFF,\Fc^n_+(V))\) are trivial by Proposition~\ref{proposition:cohomology_HN}.
	Therefore, the exact sequence~\eqref{eq:cohomHN0} forces the group \(\HH^1(\XFF,\Ec^n_+(V))\) to vanish, and the lemma follows from another application of Proposition~\ref{proposition:cohomology_HN}.
\end{proof}

\begin{notation}
	We denote by
	\[
		0 \rightarrow V_{0,n} \rightarrow V \rightarrow V^{\leq 0,>n} \rightarrow 0
	\]
	the short exact sequence of \(p\)\=/adic representations of \(G_K\) where \(V^{\leq 0,> n}\) is the maximal quotient representation of \(V\) whose Hodge\--Tate weights are all in the set \(\Z \setminus [1,n]\).
\end{notation}

\begin{lemma} \label{lemma:V0n}
	There exists no non-trivial quotient of \(V_{0,n}\) whose Hodge\--Tate weights all in \(\Z \setminus [1,n]\).
\end{lemma}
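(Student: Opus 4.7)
The plan is to argue by contradiction, exploiting the universal/maximality property built into the definition of $V^{\leq 0,>n}$ as the maximal Hodge--Tate-weight-avoiding $[1,n]$ quotient of $V$.

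Suppose there exists a non-trivial quotient $q : V_{0,n} \twoheadrightarrow Q$ whose Hodge--Tate weights all lie in $\Z \setminus [1,n]$. Set $W = \Ker(q)$; this is a $p$-adic subrepresentation of $V_{0,n}$, and hence of $V$. Forming the quotient $V/W$, the inclusion $V_{0,n} \subset V$ induces a short exact sequence of $p$-adic representations of $G_K$
\[
0 \rightarrow Q \rightarrow V/W \rightarrow V^{\leq 0,>n} \rightarrow 0.
\]

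The first step is to show that $V/W$ itself has all Hodge--Tate weights in $\Z \setminus [1,n]$. Passing to the associated $G_K$-equivariant vector bundles over $\XFF$ via $V \mapsto \Ec(V)$, the displayed short exact sequence yields a short exact sequence of de Rham vector bundles
\[
0 \rightarrow \Ec(Q) \rightarrow \Ec(V/W) \rightarrow \Ec(V^{\leq 0,>n}) \rightarrow 0.
\]
Proposition~\ref{proposition:dR} asserts that the set of Hodge--Tate weights of $\Ec(V/W)$ is the union of those of $\Ec(Q)$ and $\Ec(V^{\leq 0,>n})$. Both sets lie in $\Z \setminus [1,n]$ by assumption, so the same holds for $V/W$.

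The final step is to apply the defining property of $V^{\leq 0,>n}$: any quotient of $V$ whose Hodge--Tate weights avoid $[1,n]$ factors through the projection $V \twoheadrightarrow V^{\leq 0,>n}$. Applied to the quotient $V \twoheadrightarrow V/W$, this means the kernel $W$ of the latter contains the kernel $V_{0,n}$ of the former, i.e.\ $V_{0,n} \subseteq W$. Hence $Q = V_{0,n}/W = 0$, contradicting non-triviality. No step presents a serious obstacle; the only content is the additivity of the Hodge--Tate weight set under extensions, which is exactly Proposition~\ref{proposition:dR}.
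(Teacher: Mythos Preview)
Your proof is correct and follows essentially the same approach as the paper's own proof: both set up the short exact sequence $0 \rightarrow V_{0,n}/W \rightarrow V/W \rightarrow V^{\leq 0,>n} \rightarrow 0$, invoke Proposition~\ref{proposition:dR} to see that $V/W$ has Hodge--Tate weights in $\Z \setminus [1,n]$, and conclude by maximality of $V^{\leq 0,>n}$. The only cosmetic difference is that you phrase it as a contradiction, while the paper directly shows $W = V_{0,n}$.
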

\begin{proof}
	Let \(U\) be a subrepresentation of \(V_{0,n}\) such that the Hodge\--Tate weights of \(V_{0,n}/U\) are all in \(\Z \setminus [1,n]\).
	Then there is a short exact sequence of \(p\)\=/adic representation of \(G_K\)
	\[
		0 \rightarrow V_{0,n}/U \rightarrow V/U \rightarrow V^{\leq 0,>n} \rightarrow 0.
	\]
	By Proposition~\ref{proposition:dR}, the representation \(V/U\) and its Hodge\--Tate weights are all in \(\Z \setminus [1,n]\).
	By maximality of \(V^{\leq 0,>n}\), we then have \(U=V_{0,n}\).
\end{proof}

By Theorem~\ref{theorem:almost_coh}, the short exact sequence of de Rham representations
\[
	0 \rightarrow V_{0,n} \rightarrow V \rightarrow V^{\leq 0,>n} \rightarrow 0
\]
induces a short exact sequence in \(\Bun_{\XFF}(G_K)_\dR\)
\[
	0 \rightarrow \Ec(V_{0,n}) \rightarrow \Ec(V) \rightarrow \Ec(V^{\leq 0,>n}) \rightarrow 0,
\]
which in turns, by exactness of \(\tau_\HT^{\leq 0,>n}\) from Proposition~\ref{proposition:truncation_HT}, induces a short exact sequence in \(\Bun_{\XFF}(G_K)_\dR^{\leq 0,>n}\)
\[
	0 \rightarrow \Ec_+^n(V_{0,n}) \rightarrow \Ec_+^n(V) \rightarrow \Ec_+^n(V^{\leq 0,>n}) \rightarrow 0.
\]

\begin{proposition} \label{proposition:HN_E+n}
	The short exact sequence
	\[
		0 \rightarrow \Ec_+^n(V_{0,n}) \rightarrow \Ec_+^n(V) \rightarrow \Ec_+^n(V^{\leq 0,>n}) \rightarrow 0
	\]
	is the first step of the Harder\--Narasimhan filtration of \(\Ec_+^n(V)\) with
	\[
		\Ec_+^n(V^{\leq 0,>n}) = \Ec(V^{\leq 0,>n})
	\]
	semistable of slope \(0\).
\end{proposition}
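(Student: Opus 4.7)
The plan is to split the statement into three sub-claims and dispatch each. First, since $V^{\leq 0,>n}$ has by definition Hodge--Tate weights in $\Z \setminus [1,n]$, the bundle $\Ec(V^{\leq 0,>n})$ already lies in $\Bun_{\XFF}(G_K)_\dR^{\leq 0,>n}$, and the second item of Proposition~\ref{proposition:truncation_HT} (asserting that $\tau_\HT^{\leq 0,>n}$ restricted to this subcategory is isomorphic to the identity) gives $\Ec_+^n(V^{\leq 0,>n}) = \Ec(V^{\leq 0,>n})$. Second, $V^{\leq 0,>n}$ is a $p$\=/adic representation of $G_K$, so Theorem~\ref{theorem:almost_coh} implies $\Ec(V^{\leq 0,>n})$ is semistable of slope $0$. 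The remaining and essential task is to show that every Harder--Narasimhan slope of $\Ec_+^n(V_{0,n})$ is strictly positive; together with the slope-$0$ quotient just identified, this exhibits the displayed sequence as the step of the Harder--Narasimhan filtration separating the slope-$0$ part of $\Ec_+^n(V)$, in the sense used in the proof of Proposition~\ref{proposition:almost_torsion_relation}.

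I would establish this positivity by contradiction. By Lemma~\ref{lemma:HN_geq0} applied to $V_{0,n}$, all Harder--Narasimhan slopes of $\Ec_+^n(V_{0,n})$ are already $\geq 0$; the question is whether slope $0$ can occur. Were it to occur, $\Ec_+^n(V_{0,n})$ would admit a nonzero semistable slope-$0$ quotient, which by Theorem~\ref{theorem:almost_coh} must be of the form $\Ec(U)$ for some nonzero $p$\=/adic representation $U$ of $G_K$. Because $\Ec(U)$ is a quotient of the de Rham bundle $\Ec_+^n(V_{0,n})$ whose Hodge--Tate weights all lie in $\Z \setminus [1,n]$, Proposition~\ref{proposition:dR} forces $U$ to be de Rham with Hodge--Tate weights in $\Z \setminus [1,n]$, so $\Ec(U) \in \Bun_{\XFF}(G_K)_\dR^{\leq 0,>n}$. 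Applying the universal property in Proposition~\ref{proposition:truncation_HT} to the surjection $\pi\colon \Ec_+^n(V_{0,n}) \twoheadrightarrow \Ec(U)$ yields a morphism $\Ec(V_{0,n}) \to \Ec(U)$, namely the composition of $\pi$ with the canonical inclusion $\Ec(V_{0,n}) \hookrightarrow \Ec_+^n(V_{0,n})$. This composition is nonzero, for otherwise $\pi$ would factor through the torsion cokernel $\Fc_+^n(V_{0,n})$, and hence vanish since there are no nonzero morphisms from a torsion sheaf to a vector bundle, contradicting the surjectivity of $\pi$ onto the nonzero $\Ec(U)$. Via the equivalence $\Rep_{\Qp}(G_K) \similarrightarrow \Coh_{\XFF}^{0}(G_K)$ of Theorem~\ref{theorem:almost_coh}, this produces a nonzero morphism $V_{0,n} \to U$ of $p$\=/adic representations, whose image is a nonzero quotient of $V_{0,n}$ and a subrepresentation of $U$; its Hodge--Tate weights therefore lie in $\Z \setminus [1,n]$ by Proposition~\ref{proposition:dR}, which contradicts Lemma~\ref{lemma:V0n}.

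The main obstacle is the very last argument: one must ensure that the hypothetical slope-$0$ quotient $\Ec(U)$ actually gives rise to a \emph{nonzero} morphism $V_{0,n} \to U$ of $p$\=/adic representations, for without nonvanishing Lemma~\ref{lemma:V0n} cannot be invoked. The nonvanishing rests on the contrast between the torsion sheaf $\Fc_+^n(V_{0,n})$ and the torsion-free target $\Ec(U)$, so that any putative factorisation of $\pi$ through the cokernel of $\Ec(V_{0,n}) \hookrightarrow \Ec_+^n(V_{0,n})$ would force $\pi$ itself to be zero.
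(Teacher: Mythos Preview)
Your proof is correct and follows essentially the same route as the paper's: reduce to showing that \(\Ec_+^n(V_{0,n})\) has only strictly positive slopes, assume a nonzero semistable slope-\(0\) quotient \(\Ec(U)\), use Proposition~\ref{proposition:dR} to see that \(U\) is de Rham with Hodge--Tate weights in \(\Z\setminus[1,n]\), and contradict Lemma~\ref{lemma:V0n}. The one stylistic difference is in how nonvanishing of the induced map \(V_{0,n}\to U\) is obtained: the paper invokes the adjunction of Proposition~\ref{proposition:truncation_HT} as a \emph{bijection} \(\Hom(\Ec_+^n(V_{0,n}),\Ec(U))\similarrightarrow\Hom(\Ec(V_{0,n}),\Ec(U))\), so that the nonzero \(\pi\) automatically corresponds to a nonzero map, whereas you argue directly that \(\pi\circ\eta\neq 0\) via the torsion cokernel. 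Your argument is a valid unwinding of that bijection, but you could shorten it by simply citing the adjunction isomorphism.
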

\begin{proof}
	The statement is proved similarly to~\cite[Proposition~2.3.5 and Corollary~2.3.6]{Ponsinet2024:1}.
	By Lemma~\ref{lemma:HN_geq0}, the Harder\--Narasimhan slopes of each vector bundle in the short exact sequences are all \(\geq 0\).
	Moreover, since the Hodge\--Tate weights of \(V^{\leq 0,>n}\) are all in \(\Z \setminus [1,n]\), by Proposition~\ref{proposition:truncation_HT}, we have \(\Ec_+^n(V^{\leq 0,>n}) = \Ec(V^{\leq 0,>n})\) which is semistable of slope \(0\).
	By uniqueness of the Harder\--Narasimhan filtration, it remains to prove that the Harder\--Narasimhan slopes of \(\Ec_+^n(V_{0,n})\) are all \(>0\).

	Assume that \(0\) is a Harder\--Narasimhan slope of \(\Ec_+^n(V_{0,n})\), and let \(\Hc\) be the first step of the Harder\--Narasimhan filtration of \(\Ec_+^n(V_{0,n})\), that is, the vector bundle \(\Hc\) is semistable of slope \(0\) and there exists a surjective map
	\[
		f: \Ec_+^n(V_{0,n}) \rightarrow \Hc \rightarrow 0.
	\]
	By Remark~\ref{remark:HN_equiv}, the vector bundle \(\Hc\) and the map \(f\) are \(G_K\)\=/equivariant.
	Since \(\Hc\) is semistable of slope \(0\), there exists a \(p\)\=/adic representation \(W\) of \(G_K\) such that \(\Hc \similarrightarrow \Ec(W)\) by Theorem~\ref{theorem:almost_coh}.
	Moreover, by Proposition~\ref{proposition:dR}, the surjection \(f\) implies that \(\Hc\), and thus \(W\), is de Rham with Hodge\--Tate weights all in \(\Z \setminus [1,n]\).
	Therefore, we have
	\[
		\begin{split}
			f \in  \Hom_{\Bun_{\XFF}(G_K)_\dR^{\leq 0,>n}}(\Ec_+^n(V_{0,n}),\Hc) & \similarrightarrow \Hom_{\Bun_{\XFF}(G_K)_\dR}(\Ec(V_{0,n}),\Hc) \\
			& \similarrightarrow \Hom_{\Rep_{\Qp}(G_K)}(V_{0,n},W),
		\end{split}
	\]
	where the first isomorphism follows from the adjunction from Proposition~\ref{proposition:truncation_HT}, the second from the equivalence from Theorem~\ref{theorem:almost_coh}.

	By Lemma~\ref{lemma:V0n} and Proposition~\ref{proposition:dR}, the group \(\Hom_{\Rep_{\Qp}(G_K)}(V_{0,n},W)\) is trivial.
	In particular, the map \(f\) is trivial, which contradicts the assumption that \(0\) is a Harder\--Narasimhan slope of \(\Ec_+^n(V_{0,n})\).
\end{proof}

\section{Bloch--Kato groups over perfectoid fields}
\label{sec:BK}

\subsection{The Bloch--Kato groups}
\label{subsec:BK}

We recall the definition of the Bloch\--Kato groups~\cite[\S 3]{BlochKato1990}, and we define a filtration on the exponential Bloch\--Kato groups induced by the filtration of \(\BdR\).

Let \(V\) be a \(p\)\=/adic representation of \(G_K\).
For each finite extension \(K^\prime\) of \(K\), the \emph{exponential}, \emph{finite} and \emph{geometric} \emph{Bloch\--Kato groups} are respectively defined by
\[
	\begin{split}
			\HH^1_e(K^\prime,V) = & \Ker\left(\HH^1(K^\prime,V) \rightarrow \HH^1(K^\prime,\Be \otimes_{\Qp} V) \right), \\
			\HH^1_f(K^\prime,V) = & \Ker\left(\HH^1(K^\prime,V) \rightarrow \HH^1(K^\prime,\Bcris \otimes_{\Qp} V) \right), \\
			\HH^1_g(K^\prime,V) = & \Ker\left(\HH^1(K^\prime,V) \rightarrow \HH^1(K^\prime,\BdR \otimes_{\Qp} V) \right).
	\end{split}
\]

Recall that the filtration of \(\BdR\) induces a filtration on \(\Be\).
We define a filtration on the exponential Bloch\--Kato groups as follows.

\begin{definition}
	For each \(n \in \Z\), we set
	\[
		\Fil^n \HH^1_e(K^\prime,V) =
		\begin{cases}
			\Ker\left(\HH^1(K^\prime,V) \rightarrow \HH^1(K^\prime,\Fil^n \Be \otimes_{\Qp} V) \right), & \text{if } n \leq 0, \\
			0, & \text{if } n \geq 0.
		\end{cases}
	\]
\end{definition}

Let \(T\) be a \(G_K\)\=/stable lattice in \(V\).
The short exact sequence of topological \(G_K\)\=/modules
\[
	0 \rightarrow T \rightarrow V \rightarrow V/T \rightarrow 0
\]
induces an exact sequence
\[
	\HH^1(K^\prime,T) \xrightarrow{\alpha} \HH^1(K^\prime,V) \xrightarrow{\beta} \HH^1(K^\prime,V/T).
\]
For \(\ast \in \{e,f,g\}\), the Bloch\--Kato subgroups of \(\HH^1(K^\prime,T)\) and \(\HH^1(K^\prime,V/T)\) are respectively defined by
\[
	\begin{split}
		\HH^1_\ast(K^\prime,T) = & \alpha^{-1}\left(\HH^1_\ast(K^\prime,V)\right), \\
		\HH^1_\ast(K^\prime,V/T) = & \beta\left(\HH^1_\ast(K^\prime,V)\right).
	\end{split}
\]
Moreover, the exponential Bloch\--Kato groups are equipped with the induced filtrations
\[
	\begin{split}
		\Fil^n \HH^1_e(K^\prime,T) = & \alpha^{-1}\left(\Fil^n \HH^1_e(K^\prime,V)\right), \\
		\Fil^n \HH^1_e(K^\prime,V/T) = & \beta\left(\Fil^n \HH^1_e(K^\prime,V)\right).
	\end{split}
\]

Let \(L\) be an algebraic extension of \(K\).
Recall that there is a natural isomorphism
\[
	\HH^1(L,V/T) \similarrightarrow \varinjlim_{\res, K^\prime} \HH^1(K^\prime,V/T),
\]
where \(K^\prime\) runs over all the finite extensions of \(K\) contained in \(L\), and the transition morphisms are the restriction maps~\cite[I \S 2.2 Proposition~8]{Serre1994}.
For each \(\ast \in \{e,f,g\}\), the groups \(\HH^1_\ast(K^\prime,V/T)\) and the filtration \(\Fil^n \HH^1_e(K^\prime,V/T)\) are compatible under the restriction maps, and the Bloch\--Kato subgroups of \(\HH^1(L,V/T)\) are then defined by
\[
	\begin{split}
		\HH^1_\ast(L,V/T) = & \varinjlim_{\res, K^\prime} \HH^1_\ast(K^\prime, V/T), \\
		\Fil^n \HH^1_e(L,V/T) = & \varinjlim_{\res, K^\prime} \Fil^n \HH^1_e(K^\prime, V/T),
	\end{split}
\]
where \(K^\prime\) runs over all the finite extensions of \(K\) contained in \(L\), and the transition morphisms are the restriction maps.

\subsection{Universal norms}
\label{subsec:universal_norms}

Let \(V\) be a \(p\)\=/adic representation of \(G_K\).
Let \(T\) be a \(G_K\)\=/stable lattice in \(V\).
Let \(L\) be an algebraic extension of \(K\).

For \(i \in \N\), the \emph{\(i\)\=/th Iwasawa cohomology group} \(\HH^i_\Iw(K,L,T)\) of the extension \(L/K\) with coefficients in \(T\) is defined by
\[
	\HH^i_\Iw(K,L,T) = \varprojlim_{\cores, K^\prime} \HH^i(K^\prime,T),
\]
where \(K^\prime\) runs over all the finite extensions of \(K\) contained in \(L\), and the transition morphisms are the corestriction maps.

For each \(\ast \in \{e,f,g\}\), the Bloch\--Kato groups \(\HH^1_\ast(K^\prime,T)\) are compatible under the corestriction maps.
The modules of \emph{\(\ast\)\=/universal norms} associated with \(T\) in the extension \(L/K\) are defined by
\[
		\HH^1_{\Iw,\ast}(K,L,T) = \varprojlim_{\cores, K^\prime} \HH^1_\ast(K^\prime, T),
\]
where \(K^\prime\) runs over all the finite extensions of \(K\) contained in \(L\), and the transition morphisms are the corestriction maps.

Let \(V^\ast(1) = \Hom_{\Qp}(V,\Qp(1))\) be the Tate dual representation of \(V\), and let \(T^\ast(1) = \Hom_{\Zp}(T,\Zp(1))\) be the \(G_K\)\=/stable lattice in \(V^\ast(1)\) Tate dual of \(T\).
Recall that local Tate duality~\cite{Serre1994}, for each finite extension \(K^\prime\) of \(K\),
\[
	\HH^1(K^\prime,V^\ast(1)/T^\ast(1)) \times \HH^1(K^\prime,T) \rightarrow \HH^2(K^\prime,\Qp(1)/\Zp(1)) \cong \Qp/\Zp
\]
induces a perfect pairing
\[
	\HH^1(L,V^\ast(1)/T^\ast(1)) \times \HH^1_\Iw(K,L,T) \rightarrow \Qp/\Zp.
\]

Bloch and Kato~\cite[Proposition~3.8]{BlochKato1990} have proved the following duality properties for the Bloch\--Kato groups.

\begin{proposition}[Bloch\--Kato] \label{proposition:BK_duality}
	If \(V\) is de Rham, then, under local Tate duality,
	\begin{enumerate}
		\item the orthogonal complement of \(\HH^1_e(L,V^\ast(1)/T^\ast(1))\) is \(\HH^1_{\Iw,g}(K,L,T)\),
		\item the orthogonal complement of \(\HH^1_f(L,V^\ast(1)/T^\ast(1))\) is \(\HH^1_{\Iw,f}(K,L,T)\),
		\item the orthogonal complement of \(\HH^1_g(L,V^\ast(1)/T^\ast(1))\) is \(\HH^1_{\Iw,e}(K,L,T)\).
	\end{enumerate}
\end{proposition}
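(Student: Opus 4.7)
The strategy is to bootstrap Bloch and Kato's duality at the level of finite extensions to the algebraic extension \(L/K\) by a limit argument. For each finite extension \(K'\) of \(K\) contained in \(L\), local Tate duality provides a perfect pairing
\[
\HH^1(K', V^\ast(1)/T^\ast(1)) \times \HH^1(K', T) \to \Qp/\Zp,
\]
and Bloch and Kato's Proposition~3.8 in~\cite{BlochKato1990}, which applies since \(V\) is de Rham, identifies the orthogonal complement of \(\HH^1_\ast(K', V^\ast(1)/T^\ast(1))\) with \(\HH^1_{\ast'}(K', T)\), where the pairing of subscripts sends \((e,f,g)\) to \((g,f,e)\). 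Thus the finite-level assertions are already known, and only the passage to the limit remains.

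Next, I would assemble the finite-level pairings into the limit pairing recorded in the excerpt. Local Tate duality is adjoint with respect to restriction and corestriction: for finite \(K'' \supseteq K' \supseteq K\) and classes \(x \in \HH^1(K', V^\ast(1)/T^\ast(1))\), \(y \in \HH^1(K'', T)\), one has \(\langle \res(x), y\rangle_{K''} = \langle x, \cores(y)\rangle_{K'}\). Taking the direct limit along restrictions on the first factor and the inverse limit along corestrictions on the second therefore yields a well-defined bilinear pairing
\[
\HH^1(L, V^\ast(1)/T^\ast(1)) \times \HH^1_\Iw(K, L, T^\ast(1)\text{ replaced by }T) \to \Qp/\Zp,
\]
which coincides with the perfect duality between a discrete torsion \(\Zp\)-module and a compact \(\Zp\)-module recalled in the excerpt.

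Finally, I would invoke the following standard Pontryagin-duality principle: given a compatible system of perfect pairings with subgroups \(S_{K'} \subseteq \HH^1(K', V^\ast(1)/T^\ast(1))\) stable under restriction and their orthogonal complements \(D_{K'} \subseteq \HH^1(K', T)\) (necessarily stable under corestriction by the adjunction above), the orthogonal complement of \(\varinjlim_{K'} S_{K'}\) in \(\varprojlim_{K'} \HH^1(K', T)\) equals \(\varprojlim_{K'} D_{K'}\); indeed an element \((y_{K'})\) of the inverse limit annihilates \(\varinjlim_{K'} S_{K'}\) if and only if each component \(y_{K'}\) annihilates \(S_{K'}\), which by the finite-level case is equivalent to \(y_{K'} \in D_{K'}\). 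Applied to \(S_{K'} = \HH^1_\ast(K', V^\ast(1)/T^\ast(1))\), this delivers the three orthogonality statements simultaneously. The main point requiring care is the topological bookkeeping for the limit pairing: one must verify that the \(\HH^1_\ast(K', V^\ast(1)/T^\ast(1))\) indeed form a directed system under restriction and that the \(\HH^1_\ast(K', T)\) form an inverse system under corestriction, and that the resulting limit pairing is genuinely perfect as a duality of a discrete and a compact \(\Zp\)-module; these are standard but must be spelled out.
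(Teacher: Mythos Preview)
The paper does not supply its own proof of this proposition: it is stated with attribution to Bloch and Kato and prefaced by a direct citation of \cite[Proposition~3.8]{BlochKato1990}, with the passage from finite extensions to an arbitrary algebraic extension \(L\) left implicit. Your proposal correctly spells out that passage. The finite-level orthogonality is exactly Bloch and Kato's result, and your limit argument via the restriction--corestriction adjunction, together with the elementary observation that an element of \(\varprojlim_{K^\prime} \HH^1(K^\prime,T)\) annihilates \(\varinjlim_{K^\prime} S_{K^\prime}\) if and only if each of its components lies in the finite-level orthogonal complement, is the standard and correct way to conclude. The only blemish is the garbled display involving \(T^\ast(1)\text{ replaced by }T\), which should simply read \(\HH^1_\Iw(K,L,T)\).
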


\subsection{Comparison of the Bloch--Kato groups}
\label{subsec:comparison}

Let \(V\) be a \(p\)\=/adic representation of \(G_K\), and let \(T\) be a \(G_K\)\=/stable lattice in \(V\).
For each finite extension \(K^\prime\) of \(K\), let
\[
	\DDb_{\cris,K^\prime}(V) = \HH^0(K^\prime, \Bcris \otimes_{\Qp} V).
\]
Recall that \(\DDb_{\cris,K^\prime}(V)\) is a finite dimensional \(K^\prime_0\)\=/vector space equipped with a map \(\varphi\) semilinear with respect to the absolute Frobenius on \(K^\prime_0\) (see~\cite[\S 5.1]{Fontaine1994:III}).
For \(i \in \Z\), we set the finite dimensional \(\Qp\)\=/vector space
\[
	\DDb_{\cris,K^\prime}(V)^{\varphi=p^i} = \{x \in \DDb_{\cris,K^\prime}(V), \varphi(x)=p^i\cdot x\}.
\]

\begin{proposition} \label{proposition:dim_phi_invariant}
	Let \(i \in \Z\).
	The dimension of the \(\Qp\)\=/vector space \(\DDb_{\cris,K^\prime}(V)^{\varphi=p^i}\) is bounded independently of \(K^\prime\).
\end{proposition}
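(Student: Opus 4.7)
The plan is to establish the chain of inequalities
\[
	\dim_{\Qp} \DDb_{\cris,K^\prime}(V)^{\varphi=p^i} \leq \dim_{K^\prime_0} \DDb_{\cris,K^\prime}(V) \leq \dim_{\Qp} V,
\]
yielding a uniform bound \(\dim_{\Qp} V\) that manifestly depends only on \(V\), not on \(K^\prime\).

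The right-hand inequality is a standard comparison due to Fontaine: for any \(p\)\=/adic representation \(V\) of \(G_K\), the natural multiplication map
\[
	\Bcris \otimes_{K^\prime_0} \DDb_{\cris,K^\prime}(V) \rightarrow \Bcris \otimes_{\Qp} V
\]
is injective, whence comparing free \(\Bcris\)\=/ranks on the two sides gives \(\dim_{K^\prime_0} \DDb_{\cris,K^\prime}(V) \leq \dim_{\Qp} V\).

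The left-hand inequality I would prove by pure semilinear algebra, applied to any finite-dimensional \(K^\prime_0\)\=/vector space \(D\) equipped with a bijective \(\sigma\)\=/semilinear endomorphism \(\varphi\), where \(\sigma\) denotes the absolute Frobenius of \(K^\prime_0\). Since \(K^\prime_0\) is unramified over \(\Qp\), the subfield of \(\sigma\)\=/fixed elements equals \(\Qp\), and hence \(D^{\varphi=p^i}\) is naturally a \(\Qp\)\=/vector space. It suffices to show that any \(\Qp\)\=/linearly independent family \(v_1, \ldots, v_n \in D^{\varphi = p^i}\) remains \(K^\prime_0\)\=/linearly independent in \(D\), or equivalently that the natural map \(K^\prime_0 \otimes_{\Qp} D^{\varphi=p^i} \rightarrow D\) is injective. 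Supposing otherwise, I would choose a nontrivial relation \(\sum_j \lambda_j v_j = 0\) with \(\lambda_j \in K^\prime_0\) of minimal length. Applying \(\varphi\) and dividing by \(p^i\) gives \(\sum_j \sigma(\lambda_j) v_j = 0\); if some \(\lambda_j\) lies outside \(\Qp\), then subtracting a suitable multiple of the original relation produces a strictly shorter one, contradicting minimality; if instead every \(\lambda_j\) lies in \(\Qp\), then the relation directly contradicts the \(\Qp\)\=/linear independence of the \(v_j\).

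I do not anticipate any real obstacle: the proposition reduces cleanly to two classical ingredients, namely Fontaine's injectivity of the comparison map and a Dedekind\--style independence argument for Frobenius eigenvectors in a semilinear \(\varphi\)\=/module, each of which supplies one factor of the sharp uniform bound \(\dim_{\Qp} V\).
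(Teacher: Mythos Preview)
Your proof is correct and takes a genuinely different, more elementary route than the paper. The paper passes to the limit \(\DDb_{\pcris}(V) = \varinjlim_{K^\prime} \DDb_{\cris,K^\prime}(V)\), extends scalars to \(\hat{\Q}_p^{\ur}\), and invokes the Dieudonné\--Manin classification to identify \(\widehat{\DDb}_{\pcris}(V)^{\varphi = p^i}\) as a fixed finite-dimensional \(\Qp\)\=/vector space containing every \(\DDb_{\cris,K^\prime}(V)^{\varphi=p^i}\). Your argument avoids Dieudonné\--Manin entirely: the Dedekind-style linear independence argument (which, incidentally, does not even require bijectivity of \(\varphi\)) together with Fontaine's standard inequality \(\dim_{K^\prime_0} \DDb_{\cris,K^\prime}(V) \leq \dim_{\Qp} V\) immediately yields the explicit uniform bound \(\dim_{\Qp} V\). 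The paper's approach gives a potentially sharper bound (the multiplicity \(m_i\) of the slope-\(i\) isoclinic piece) and situates the result within the structure theory of \(\varphi\)\=/modules, while yours is shorter, self-contained, and uses only the injectivity of the comparison map, which is exactly what the proposition requires.
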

\begin{proof}
	Let
	\[
		\DDb_{\pcris}(V) = \varinjlim_{\res,K^\prime} \HH^0(K^\prime,\Bcris \otimes_{\Qp} V),
	\]
	where \(K^\prime\) runs over all the finite extensions of \(K\), and the transition morphisms are the restriction maps.
	Then \(\DDb_{\pcris}(V)\) is a finite dimensional discrete \((\varphi,G_K)\)\=/module over \(\Qp^{\ur}\), that is, \(\DDb_{\pcris}(V)\) is a finite dimensional \(\Qp^{\ur}\)\=/vector space equipped with a map \(\varphi\) semilinear with respect to the absolute Frobenius on \(\Qp^{\ur}\) and a discrete action of \(G_K\) commuting with \(\varphi\) (see~\cite[\S 5.6]{Fontaine1994:III}).
	We set
	\[
		\widehat{\DDb}_{\pcris}(V) = \hat{\Q}_p^{\ur} \otimes_{\Qp^\ur} \DDb_{\pcris}(V).
	\]
	Then (see~\cite[Remarque 4.4.10]{Fontaine1994:III}), \(\widehat{\DDb}_{\pcris}(V)\) is a finite dimensional \((\varphi,G_K)\)\=/module over \(\hat{\Q}_p^{\ur}\), and for each finite extension \(K^\prime\) of \(K\), we have
	\[
		\DDb_{\cris,K^\prime}(V) = \DDb_{\pcris}(V)^{G_{K^\prime}} = \widehat{\DDb}_{\pcris}(V)^{G_{K^\prime}}.
	\]
	Hence, since the action of \(G_K\) and \(\varphi\) commute, we have
	\[
		\DDb_{\cris,K^\prime}(V)^{\varphi=p^i} = (\widehat{\DDb}_{\pcris}(V)^{\varphi=p^i})^{G_{K^\prime}}.
	\]
	By the Dieudonné\--Manin theorem~\cite[IV \S 4]{Demazure1972}, there is an isomorphism of \(\varphi\)\=/modules over \(\hat{\Q}_p^\ur\)
	\[
		\widehat{\DDb}_{\pcris}(V) \similarrightarrow \bigoplus_{v \in \Q} E(v)^{\oplus m_v},
	\]
	where \(E(v)\) runs over the simple objects in the category of the \(\varphi\)\=/modules over \(\hat{\Q}_p^\ur\), that is, if \(v=s/r\) with \(s,r \in \N\), \(r>0\), and \((r,s)=1\), then \(E(v) = \hat{\Q}_p^\ur \otimes_{\Qp} \Qp[T]/(T^r - p^s)\) on which \(\varphi\) acts by multiplication by \(T\) semilinear with respect to the absolute Frobenius on \(\hat{\Q}_p^\ur\).
	We then have
	\[
		\widehat{\DDb}_{\pcris}(V)^{\varphi=p^i} \similarrightarrow (E(i)^{\varphi=p^i})^{\oplus m_i},
	\]
	and \(E(i)^{\varphi=p^i}\) is a finite dimensional \(\Qp\)\=/vector space~\cite[IV \S 2 and \S 3]{Demazure1972}.
\end{proof}

\begin{proposition}[Bloch\--Kato] \label{proposition:BK_quotient_dim}
	Let \(K^\prime\) be a finite extension of \(K\).
	If \(V\) is de Rham, then we have
	\[
		\begin{split}
			\dim_{\Qp} \HH^1_f(K^\prime,V)/\HH^1_e(K^\prime,V) & = \dim_{\Qp}  \DDb_{\cris,K^\prime}(V)^{\varphi = 1}, \\
			\dim_{\Qp} \HH^1_g(K^\prime,V)/\HH^1_f(K^\prime,V) & = \dim_{\Qp} \DDb_{\cris,K^\prime}(V)^{\varphi = p^{-1}}.
		\end{split}
	\]
\end{proposition}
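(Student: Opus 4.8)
The plan is to pin down the quotients \(\HH^1_f(K^\prime,V)/\HH^1_e(K^\prime,V)\) and \(\HH^1_g(K^\prime,V)/\HH^1_f(K^\prime,V)\) directly from the fundamental exact sequences of \(p\)\=/adic Hodge theory. Since the natural inclusion \(V = \Qp \otimes_\Qp V \hookrightarrow \Be \otimes_\Qp V\) (through \(\Qp = \Fil^0 \Be\)) is followed by \(\Be \otimes_\Qp V \hookrightarrow \Bcris \otimes_\Qp V \hookrightarrow \BdR \otimes_\Qp V\), the three maps defining the Bloch--Kato subgroups factor through \(\alpha \colon \HH^1(K^\prime,V) \to \HH^1(K^\prime, \Be \otimes_\Qp V)\); in particular \(\HH^1_e(K^\prime,V) \subseteq \HH^1_f(K^\prime,V) \subseteq \HH^1_g(K^\prime,V)\), and writing \(\gamma\) for the map \(\HH^1(K^\prime, \Be \otimes_\Qp V) \to \HH^1(K^\prime, \Bcris \otimes_\Qp V)\), the map \(\alpha\) induces an isomorphism \(\HH^1_f(K^\prime,V)/\HH^1_e(K^\prime,V) \similarrightarrow \alpha(\HH^1(K^\prime,V)) \cap \Ker \gamma\). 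So for the first identity I must compute \(\Ker \gamma\) and show that it is contained in \(\alpha(\HH^1(K^\prime,V))\).

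For the computation of \(\Ker\gamma\), I would use the \(G_{\Qp}\)\=/equivariant exact sequence \(0 \to \Be \to \Bcris \xrightarrow{1-\varphi} \Bcris \to 0\) (recall \(\Be = \Bcris^{\varphi = 1}\), and that \(1-\varphi\) is surjective on \(\Bcris\)). Tensoring with \(V\) over \(\Qp\) and applying the exact sequence~\eqref{eq:esh0h1}, together with \(\HH^0(K^\prime, \Be \otimes_\Qp V) = \DDb_{\cris,K^\prime}(V)^{\varphi=1}\) and \(\HH^0(K^\prime, \Bcris \otimes_\Qp V) = \DDb_{\cris,K^\prime}(V)\), identifies \(\Ker \gamma\) with the cokernel of \(1-\varphi\) acting on \(\DDb_{\cris,K^\prime}(V)\); as \(1-\varphi\) is a \(\Qp\)\=/linear endomorphism of a finite-dimensional \(\Qp\)\=/vector space, this cokernel has dimension \(\dim_\Qp \DDb_{\cris,K^\prime}(V)^{\varphi=1}\). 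To see that \(\Ker\gamma\) lies in \(\alpha(\HH^1(K^\prime,V))\), I would use the fundamental exact sequence~\eqref{eq:funda}, namely \(0 \to V \to \Be \otimes_\Qp V \xrightarrow{\pi} (\BdR/\BdR^+) \otimes_\Qp V \to 0\): by~\eqref{eq:esh0h1} one has \(\alpha(\HH^1(K^\prime,V)) = \Ker \pi_\ast\), so it suffices to check \(\pi_\ast(\Ker \gamma) = 0\). A class of \(\Ker\gamma\) is, via the connecting map, represented by a cocycle \(g \mapsto (g-1)b\) with values in \(\Be \otimes_\Qp V\), where \(b \in \Bcris \otimes_\Qp V\) has \(G_{K^\prime}\)\=/invariant image under \(1-\varphi\). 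Since \(\pi\) is the restriction to \(\Be \otimes_\Qp V\) of the \(G_{K^\prime}\)\=/equivariant reduction \(\Bcris \otimes_\Qp V \twoheadrightarrow (\BdR/\BdR^+) \otimes_\Qp V\) induced by \(\Bcris \subseteq \BdR\), the image cocycle \(g \mapsto (g-1)\bar b\) is a coboundary in \((\BdR/\BdR^+) \otimes_\Qp V\). Hence \(\pi_\ast(\Ker\gamma) = 0\), which gives the first identity.

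For the second identity I would pass to the Tate dual representation \(V^\ast(1)\), which is again de Rham. By local Tate duality and the Bloch--Kato orthogonality relations~\cite[Proposition~3.8]{BlochKato1990} (applied to \(\HH^1(K^\prime,-)\); compare Proposition~\ref{proposition:BK_duality}), \(\HH^1_g(K^\prime,V)\) is the annihilator of \(\HH^1_e(K^\prime,V^\ast(1))\) and \(\HH^1_f(K^\prime,V)\) is the annihilator of \(\HH^1_f(K^\prime,V^\ast(1))\); therefore
\[
	\dim_\Qp \bigl(\HH^1_g(K^\prime,V)/\HH^1_f(K^\prime,V)\bigr) = \dim_\Qp \bigl(\HH^1_f(K^\prime,V^\ast(1))/\HH^1_e(K^\prime,V^\ast(1))\bigr),
\]
which by the first identity equals \(\dim_\Qp \DDb_{\cris,K^\prime}(V^\ast(1))^{\varphi=1}\). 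It then remains to identify this dimension with \(\dim_\Qp \DDb_{\cris,K^\prime}(V)^{\varphi=p^{-1}}\). Since \(\DDb_{\cris,K^\prime}(\Qp(1))\) is free of rank one over \(K^\prime_0\) with \(\varphi\) acting as \(p^{-1}\) times the absolute Frobenius, one has \(\DDb_{\cris,K^\prime}(V^\ast(1)) \cong \DDb_{\cris,K^\prime}(V)^\vee \otimes_{K^\prime_0} \DDb_{\cris,K^\prime}(\Qp(1))\); extending scalars to \(\hat{\Q}_p^{\ur}\) and using the Dieudonné--Manin decomposition as in the proof of Proposition~\ref{proposition:dim_phi_invariant}, the \(\Qp\)\=/vector spaces \(\widehat{\DDb}_{\pcris}(V^\ast(1))^{\varphi=1}\) and \(\widehat{\DDb}_{\pcris}(V)^{\varphi=p^{-1}}\) are \(G_{K^\prime}\)\=/equivariantly dual under the pairing induced by \(V^\ast(1) \otimes V \to \Qp(1)\), and \(G_{K^\prime}\) acts on each through a finite quotient; since \(\Qp\) has characteristic \(0\), their spaces of \(G_{K^\prime}\)\=/invariants then have equal dimension, which yields the second identity.

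I expect the main obstacle to be the second half of the second paragraph: showing that \emph{every} element of \(\Ker\gamma\) lifts to \(\HH^1(K^\prime,V)\), which is what makes \(\HH^1_f/\HH^1_e\) as large as \(\DDb_{\cris,K^\prime}(V)^{\varphi=1}\) rather than merely embedding into it. The cocycle manipulation above closes this gap but relies on the compatibility of the two fundamental exact sequences with the chain of inclusions \(\Be \subseteq \Bcris \subseteq \BdR\). Everything else — the six-term cohomology sequences, rank--nullity, and the Dieudonné--Manin bookkeeping of the \(\varphi\)\=/eigenspaces (already carried out in the excerpt) — is routine.
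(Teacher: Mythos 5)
Your argument follows the paper's route exactly: the paper cites Bloch--Kato's Corollary~3.8.4 for the isomorphism \(\HH^1_f(K^\prime,V)/\HH^1_e(K^\prime,V)\similarrightarrow\DDb_{\cris,K^\prime}(V)/(1-\varphi)\DDb_{\cris,K^\prime}(V)\) (which you reprove from the sequences \(0\to\Be\to\Bcris\xrightarrow{1-\varphi}\Bcris\to 0\) and~\eqref{eq:funda}), and then deduces the second equality from the first applied to \(V^\ast(1)\) via Proposition~\ref{proposition:BK_duality} and the duality of \(\varphi\)\=/modules, just as you do. The only divergence is cosmetic and occurs in your final step: instead of your Dieudonné--Manin descent over \(\hat{\Q}_p^{\ur}\) (where the assertion that \(G_{K^\prime}\) acts on the \(\varphi\)\=/eigenspaces through a finite quotient would itself need justification), the paper directly invokes the pairing \(\DDb_{\cris,K^\prime}(V)\otimes_{K^\prime_0}\DDb_{\cris,K^\prime}(V^\ast(1))\to\DDb_{\cris,K^\prime}(\Qp(1))\) to identify \(\dim_{\Qp}\DDb_{\cris,K^\prime}(V^\ast(1))^{\varphi=1}\) with \(\dim_{\Qp}\DDb_{\cris,K^\prime}(V)^{\varphi=p^{-1}}\).
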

\begin{proof}
	Bloch and Kato~\cite[Corollary~3.8.4]{BlochKato1990} have proved that there exists an isomorphism of \(\Qp\)\=/vector spaces
	\[
		\HH^1_f(K^\prime,V)/\HH^1_e(K^\prime,V) \similarrightarrow \DDb_{\cris,K^\prime}(V)/(1 - \varphi)\DDb_{\cris,K^\prime}(V),
	\]
	which implies the first statement.
	By the duality from Proposition~\ref{proposition:BK_duality}, the first statement yields
	\[
		\begin{split}
			\dim_{\Qp} \HH^1_g(K^\prime,V)/\HH^1_f(K^\prime,V) & = \dim_{\Qp} \HH^1_f(K^\prime,V^\ast(1))/\HH^1_e(K^\prime,V^\ast(1)) \\
			& = \dim_{\Qp} \DDb_{\cris,K^\prime}(V^\ast(1))^{\varphi = 1},
		\end{split}
	\]
	and the duality of \(\varphi\)\=/modules~\cite[\S 5.1]{Fontaine1994:III}
	\[
		\DDb_{\cris,K^\prime}(V) \otimes_{K^\prime_0} \DDb_{\cris,K^\prime}(V^\ast(1)) \similarrightarrow \DDb_{\cris,K^\prime}(\Qp(1)),
	\]
	implies the equality
	\[
		\dim_{\Qp} \DDb_{\cris,K^\prime}(V^\ast(1))^{\varphi = 1} = \dim_{\Qp}  \DDb_{\cris,K^\prime}(V)^{\varphi = p^{-1}}.
	\]
\end{proof}

\begin{proposition} \label{proposition:comparison}
	Let \(L\) be an algebraic extension of \(K\).
	If \(V\) is de Rham, then the Pontryagin dual of the quotient
	\[
		\HH^1_g(L,V/T)/\HH^1_e(L,V/T)
	\]
	is a free \(\Zp\)\=/module of finite rank bounded independently of \(L\).
\end{proposition}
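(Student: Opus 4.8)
The plan is to pin down the quotient $\HH^1_g(K^\prime,V/T)/\HH^1_e(K^\prime,V/T)$ exactly for every finite extension $K^\prime/K$ inside $L$, observe that it is a divisible $p$-primary group whose corank is bounded uniformly in $K^\prime$, and then pass to the direct limit.

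\textbf{Step 1 (finite level).} Fix a finite extension $K^\prime/K$ and use the exact sequence $\HH^1(K^\prime,T)\xrightarrow{\alpha}\HH^1(K^\prime,V)\xrightarrow{\beta}\HH^1(K^\prime,V/T)$ of \S\ref{subsec:BK}, together with the definition $\HH^1_\ast(K^\prime,V/T)=\beta(\HH^1_\ast(K^\prime,V))$ for $\ast\in\{e,g\}$. Since $\HH^1(K^\prime,T)$ is a finitely generated $\Zp$-module and $\HH^1(K^\prime,V)=\Qp\otimes_{\Zp}\HH^1(K^\prime,T)$, the image $\Lambda_{K^\prime}:=\ker\beta=\alpha(\HH^1(K^\prime,T))$ is a $\Zp$-lattice in the finite-dimensional $\Qp$-vector space $\HH^1(K^\prime,V)$. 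As $\HH^1_e(K^\prime,V)$ is a $\Qp$-subspace of $\HH^1_g(K^\prime,V)$, the modular law identifies
\[
\HH^1_g(K^\prime,V/T)/\HH^1_e(K^\prime,V/T)\;\cong\;\HH^1_g(K^\prime,V)\big/\big(\HH^1_e(K^\prime,V)+\big(\Lambda_{K^\prime}\cap\HH^1_g(K^\prime,V)\big)\big).
\]
Here $\Lambda_{K^\prime}\cap\HH^1_g(K^\prime,V)$ is a $\Zp$-lattice in $\HH^1_g(K^\prime,V)$, so its image in the $\Qp$-vector space $\HH^1_g(K^\prime,V)/\HH^1_e(K^\prime,V)$ is a $\Zp$-lattice there; the quotient of a finite-dimensional $\Qp$-vector space by a lattice is (non-canonically) $(\Qp/\Zp)^{\dim}$. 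Hence
\[
\HH^1_g(K^\prime,V/T)/\HH^1_e(K^\prime,V/T)\;\cong\;(\Qp/\Zp)^{d_{K^\prime}},\qquad d_{K^\prime}:=\dim_{\Qp}\HH^1_g(K^\prime,V)/\HH^1_e(K^\prime,V).
\]

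\textbf{Step 2 (uniform bound and direct limit).} Since $V$ is de Rham, Proposition~\ref{proposition:BK_quotient_dim} gives $d_{K^\prime}=\dim_{\Qp}\DDb_{\cris,K^\prime}(V)^{\varphi=1}+\dim_{\Qp}\DDb_{\cris,K^\prime}(V)^{\varphi=p^{-1}}$, and Proposition~\ref{proposition:dim_phi_invariant} bounds each summand independently of $K^\prime$; fix an integer $d$, depending only on $V$, with $d_{K^\prime}\le d$ for all finite $K^\prime/K$. By exactness of filtered direct limits,
\[
M:=\HH^1_g(L,V/T)/\HH^1_e(L,V/T)=\varinjlim_{K^\prime}\big(\HH^1_g(K^\prime,V/T)/\HH^1_e(K^\prime,V/T)\big)\cong\varinjlim_{K^\prime}(\Qp/\Zp)^{d_{K^\prime}},
\]
the transition maps being restriction. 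Thus $M$ is a $p$-primary torsion abelian group that is $p$-divisible, hence divisible, hence a direct sum of copies of $\Qp/\Zp$; moreover $M[p]=\varinjlim_{K^\prime}(\Z/p)^{d_{K^\prime}}$ has $\dim_{\Fp}M[p]\le d$, since any $\Fp$-linear relation among finitely many elements of a filtered colimit already holds at a finite stage. Therefore $M\cong(\Qp/\Zp)^{r}$ with $r\le d$, and its Pontryagin dual $\Hom_{\Zp}(M,\Qp/\Zp)\cong\Zp^{r}$ is a free $\Zp$-module of rank $r\le d$; since $d$ depends only on $V$, this bound is independent of $L$.

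\textbf{On the difficulty.} There is no essential obstacle here once Propositions~\ref{proposition:dim_phi_invariant} and~\ref{proposition:BK_quotient_dim} are in hand: the genuine input is the $K^\prime$-uniform finiteness of the Frobenius eigenspaces of $\DDb_{\cris,K^\prime}(V)$ supplied by the Dieudonné--Manin argument of Proposition~\ref{proposition:dim_phi_invariant}. The only point in the passage to $L$ requiring attention is checking that the colimit preserves $p$-divisibility and does not enlarge the $p$-torsion beyond the bound $d$; both follow at once from the structure of divisible $p$-groups and the fact that filtered colimits commute with kernels.
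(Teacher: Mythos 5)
Your argument is correct, but it follows a genuinely different route from the paper's. The paper dualizes first: by Proposition~\ref{proposition:BK_duality}, the Pontryagin dual of \(\HH^1_g(L,V/T)/\HH^1_e(L,V/T)\) is \(\HH^1_{\Iw,g}(K,L,T^\ast(1))/\HH^1_{\Iw,e}(K,L,T^\ast(1))\), which injects into \(\varprojlim_{\cores,K^\prime}\HH^1_g(K^\prime,V^\ast(1))/\HH^1_e(K^\prime,V^\ast(1))\); since the terms are \(\Qp\)\=/vector spaces of dimension bounded independently of \(K^\prime\) (Propositions~\ref{proposition:dim_phi_invariant} and~\ref{proposition:BK_quotient_dim}), that inverse limit is finite dimensional over \(\Qp\), and a compact \(\Zp\)\=/submodule of such a space is free of bounded finite rank. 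You instead stay entirely on the discrete side: you identify the finite\-/level quotient as a \(d_{K^\prime}\)\=/dimensional \(\Qp\)\=/vector space modulo a lattice, hence \((\Qp/\Zp)^{d_{K^\prime}}\), and pass to the filtered colimit, checking that divisibility and the bound \(\dim_{\Fp}M[p]\le d\) persist, so that \(M\cong(\Qp/\Zp)^{r}\) with dual \(\Zp^{r}\). Both proofs rest on the same essential input, namely the uniform bound of Proposition~\ref{proposition:dim_phi_invariant} fed through Proposition~\ref{proposition:BK_quotient_dim}; yours avoids invoking Proposition~\ref{proposition:BK_duality} at the level of \(L\) and yields the marginally more precise conclusion that the quotient itself is divisible, while the paper's is shorter because the duality formalism is already set up in \S\ref{subsec:universal_norms}. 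One small imprecision: to bound \(\dim_{\Fp}M[p]\) you actually need that any \(d+1\) elements of the colimit lift to a common finite stage, where a nontrivial relation must hold and then pushes forward to the colimit; the fact you cite (that a relation in the colimit descends to a finite stage) is the converse and is not the one used. The conclusion is unaffected.
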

\begin{proof}
	By Proposition~\ref{proposition:BK_duality}, the Pontryagin dual of the discrete \(\Zp\)\=/module
	\[
		\HH^1_g(L,V/T)/\HH^1_e(L,V/T)
	\]
	is the compact and Hausdorff \(\Zp\)\=/module
	\[
		\HH^1_{\Iw,g}(K,L,T^\ast(1))/\HH^1_{\Iw,e}(K,L,T^\ast(1)).
	\]
	By definition, there exists an injective map
	\begin{equation} \label{eq:BK_comparison_limit}
		0 \rightarrow \HH^1_{\Iw,g}(K,L,T^\ast(1))/\HH^1_{\Iw,e}(K,L,T^\ast(1)) \rightarrow  \varprojlim_{\cores, K^\prime} \HH^1_g(K^\prime,V^\ast(1))/\HH^1_e(K^\prime,V^\ast(1)),
	\end{equation}
	where \(K^\prime\) runs over all the finite extensions of \(K\) contained in \(L\), and the transition morphisms are the corestriction maps.

	By Proposition~\ref{proposition:dim_phi_invariant} and Proposition~\ref{proposition:BK_quotient_dim}, the dimension of the \(\Qp\)\=/vector space \(\HH^1_g(K^\prime,V^\ast(1))/\HH^1_e(K^\prime,V^\ast(1))\) is bounded independently of \(K^\prime\).
	Therefore, the \(\Qp\)\=/vector space \(\varprojlim \HH^1_g(K^\prime,V^\ast(1))/\HH^1_e(K^\prime,V^\ast(1))\) is finite dimensional, and we conclude using the map~\eqref{eq:BK_comparison_limit}.
\end{proof}

\subsection{Universal extensions and groups of points}
\label{subsec:universal_ext_grps_pts}

We recall the definition and properties of universal objects in \(\Cc(G_K)\) and of the groups of points both associated with a \(p\)\=/adic representation by Fontaine~\cite[\S 8]{Fontaine2003}.

Let \(V\) be a \(p\)\=/adic representation of \(G_K\).
The tangent space \(t_V\) associated with \(V\) is the \(K\)\=/vector space
\[
	t_V = ((\BdR/\BdR^+) \otimes_{\Qp} V)^{G_K},
\]
which is equipped with the filtration by \(K\)\=/vector subspaces
\[
	\Fil^n t_V =
	\begin{cases}
		((\Fil^n \BdR/\BdR^+) \otimes_{\Qp} V)^{G_K}, & \text{if } n \leq 0, \\
		0, & \text{if } n > 0.
	\end{cases}
\]
If \(V\) is de Rham, then they are isomorphisms
\[
	\DDb_\dR(V)/\Fil^0 \DDb_\dR(V) \similarrightarrow t_V,
\]
and, for each \(n \in \N\),
\[
	\Fil^{-n} \DDb_\dR(V)/\Fil^0 \DDb_\dR(V) \similarrightarrow \Fil^{-n} t_V.
\]

We set
\[
	\begin{split}
		t_V(\Qpbar) & = ((\BdR/\BdR^+) \otimes_{\Qp} V)_\delta, \\
		\Fil^{-n} t_V(\Qpbar) & = ((\Fil^{-n} \BdR/\BdR^+) \otimes_{\Qp} V)_\delta.
	\end{split}
\]
Let \(\hat{t}_V(\Qpbar)\) be the topological closure of the image of \(t_V(\Qpbar)\) in \({(\BdR/\BdR^+) \otimes_{\Qp} V}\), and, for each \(n \in \N\), let \(\Fil^{-n} \hat{t}_V(\Qpbar)\) be the topological closure of the image of \(\Fil^{-n} t_V(\Qpbar)\) in \({(\Fil^{-n} \BdR/\BdR^+) \otimes_{\Qp} V}\).

Let \(t_V(\BdR^+) = t_V \otimes_K \BdR^+\), and, for each \(n \in \N\), let \(\Fil^{-n} t_V(\BdR^+) = \Fil^{-n} t_V \otimes_K \BdR^+\).
Note that there are natural morphisms of \(\BdR^+\)\=/modules by extension of scalars
\[
	t_V(\BdR^+) \rightarrow (\BdR/\BdR^+) \otimes_{\Qp} V
\]
and, for each \(n \in \N\),
\[
	\Fil^{-n} t_V(\BdR^+) \rightarrow (\Fil^{-n} \BdR/\BdR^+) \otimes_{\Qp} V.
\]

A \emph{trivial torsion \(\BdR^+\)\=/representation of \(G_K\)} (respectively a \emph{trivial \(\Bb_n\)\=/representation of \(G_K\)}) is a torsion \(\BdR^+\)\=/representation of \(G_K\) isomorphic to \(\bigoplus_{i \in \N} \Bb_i^{\oplus m_i}\) (respectively \(\bigoplus_{i \in [1,n]} \Bb_i^{\oplus m_i}\)), for some integers \(m_i\).

\begin{proposition} \label{proposition:t_V}
	The modules associated with the tangent space of \(V\) satisfy the following properties.
	\begin{enumerate}
		\item
		\begin{enumerate}
			\item There is an isomorphism of discrete \(G_K\)\=/modules
			\[
				t_V(\Qpbar) \similarrightarrow t_V \otimes_K \Qpbar
			\]
			\item The module \(\hat{t}_V(\Qpbar)\) is the maximal trivial torsion \(\BdR^+\)\=/subrepresentation of \((\BdR/\BdR^+) \otimes_{\Qp} V\).
			\item There is an isomorphism of torsion \(\BdR^+\)\=/representations of \(G_K\)
			\[
				\hat{t}_V(\Qpbar) \similarrightarrow \Img\left(t_V(\BdR^+) \rightarrow (\BdR/\BdR^+) \otimes_{\Qp} V\right).
			\]
			\item If \(V\) is de Rham, then there exists an isomorphism of \(\BdR^+\)\=/representations of \(G_K\)
	\[
		\hat{t}_V(\Qpbar) \similarrightarrow \bigoplus_{i \in \N} \Bb_i^{\oplus m_i(V)},
	\]
	where \(m_i(V)\) is the multiplicity of \(i\) as a Hodge\--Tate weight of \(V\).
		\end{enumerate}
		\item
		\begin{enumerate}
			\item There is an isomorphism of discrete \(G_K\)\=/modules
			\[
				\Fil^{-n} t_V(\Qpbar) \similarrightarrow \Fil^{-n} t_V \otimes_K \Qpbar
			\]
			\item The module \(\Fil^{-n} \hat{t}_V(\Qpbar)\) is the maximal trivial torsion \(\Bb_n\)\=/subrepresentation of \((\Fil^{-n} \BdR/\BdR^+) \otimes_{\Qp} V\).
			\item There is an isomorphism of torsion \(\BdR^+\)\=/representations of \(G_K\)
			\[
				\Fil^{-n} \hat{t}_V(\Qpbar) \similarrightarrow \Img\left(\Fil^{-n} t_V(\BdR^+) \rightarrow (\Fil^{-n} \BdR/\BdR^+) \otimes_{\Qp} V\right).
			\]
			\item If \(V\) is de Rham, then there exists an isomorphism of \(\BdR^+\)\=/representations of \(G_K\)
			\[
				\Fil^{-n} \hat{t}_V(\Qpbar) \similarrightarrow \bigoplus_{i \in [1,n]} \Bb_i^{\oplus m_i(V)},
			\]
			where \(m_i(V)\) is the multiplicity of \(i\) as a Hodge\--Tate weight of \(V\).
		\end{enumerate}
	\end{enumerate}
\end{proposition}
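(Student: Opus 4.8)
The plan is to deduce all eight assertions from one structural fact about the $\BdR^+$-semilinear $G_K$-modules $M := (\BdR/\BdR^+)\otimes_{\Qp}V$ and, for each $n\geq 1$, $M_n := (\Fil^{-n}\BdR/\BdR^+)\otimes_{\Qp}V$, together with a single local computation at the point $\infty$ of the Fargues--Fontaine curve. Since $\Qpbar$ is the separable closure of $\Qp$ in $\BdR^+$ (\S\ref{subsec:period}), each of $M$, $M_n$ is in particular a $\Qpbar$-vector space; its maximal discrete submodule $M_\delta = t_V(\Qpbar)$ (resp.\ $(M_n)_\delta = \Fil^{-n}t_V(\Qpbar)$) is a $\Qpbar$-stable $G_K$-submodule, and $(M_\delta)^{G_K} = M^{G_K} = t_V$ (resp.\ $(M_n)_\delta^{G_K} = M_n^{G_K} = \Fil^{-n}t_V$). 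As a discrete $\Qpbar$-semilinear representation of $G_K$ it is a union of finite dimensional subrepresentations, so Galois descent for semilinear representations (the vanishing of $\HH^1$ of $G_K$ with values in $\GL_d(\Qpbar)$) yields a canonical $G_K$-equivariant isomorphism $\Qpbar\otimes_K M_\delta^{G_K}\similarrightarrow M_\delta$; applied to $M$ and to $M_n$ this is precisely 1(a) and 2(a). Here one uses that $t_V$, hence every $\Fil^{-n}t_V$, is finite dimensional over $K$ — equivalently that the filtration $\Fil^{-\bullet}t_V$ has finitely many jumps — which follows from Sen's theory, the generalized Hodge--Tate weights of $V$ being insensitive to replacing $K$ by a finite extension.

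Next I would treat 1(b), 1(c) and 2(b), 2(c) in parallel. By the previous step $M_\delta = \Qpbar\cdot t_V$ inside $M$; since the $\BdR^+$-action on $M$ is continuous, $\Qpbar$ is dense in $\BdR^+$, and a finitely generated $\BdR^+$-submodule of $M$ is finite length hence closed, one obtains
$\hat t_V(\Qpbar) = \overline{M_\delta} = \BdR^+\cdot t_V = \Img\bigl(t_V\otimes_K\BdR^+\to M\bigr) = \Img\bigl(t_V(\BdR^+)\to M\bigr)$, which is 1(c) (and 2(c) for $M_n$, where everything is in addition killed by $t^n$). This module is a finitely generated torsion $\BdR^+$-module generated by the $G_K$-fixed elements of a $K$-basis of $t_V$ (resp.\ $\Fil^{-n}t_V$); the local computation below will show it is a direct sum of copies of the $\Bb_i$, hence a trivial torsion $\BdR^+$-representation — and inside $(\Fil^{-n}\BdR/\BdR^+)\otimes V$ a trivial $\Bb_n$-representation. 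For maximality I note conversely that any trivial torsion $\BdR^+$-subrepresentation $N$ of $M$ satisfies $N = \overline{N^{G_K}\cdot\BdR^+} = \overline{N_\delta}$ — because $\Qpbar$ is dense in $\BdR^+$ and each $\Bb_i$ is generated by its $G_K$-fixed vectors — while $N_\delta\subseteq M_\delta$; hence $N\subseteq\hat t_V(\Qpbar)$. This gives 1(b) and 2(b).

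It remains to pin down the isomorphism type of the trivial torsion representation $\hat t_V(\Qpbar)=\BdR^+\cdot t_V$, which is the only genuinely computational point and where I expect to spend the most effort. When $V$ is de Rham, $t_V\cong\DDb_\dR(V)/\Fil^0\DDb_\dR(V)$ with $\Fil^{-n}t_V\cong\Fil^{-n}\DDb_\dR(V)/\Fil^0\DDb_\dR(V)$ (\S\ref{subsec:universal_ext_grps_pts}), so one may choose a $K$-basis $(v_j)$ of $t_V$ adapted to the filtration $\Fil^{-\bullet}t_V$: each $v_j$ lies in $\Fil^{-n_j}t_V\setminus\Fil^{-n_j+1}t_V$ with $n_j\geq 1$, and $\#\{j:n_j=i\}=\dim_K\Gr^{-i}t_V=m_i(V)$ for every $i\geq 1$. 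Since multiplication by $t^{n-1}$ on $\Fil^{-n}\BdR/\BdR^+$ has kernel exactly $\Fil^{-n+1}\BdR/\BdR^+$, one reads off that $t^{n_j}v_j=0$ while $t^{n_j-1}v_j\neq 0$, i.e.\ $\BdR^+v_j\cong\Bb_{n_j}$; I would then check that the resulting surjection $\bigoplus_j\Bb_{n_j}\twoheadrightarrow\BdR^+\cdot t_V$ is an isomorphism by a dimension count over $\Cp$ of the graded pieces for the $t$-adic filtration, equivalently by computing the relative position of the two $\BdR^+$-lattices $\BdR^+\otimes_{\Qp}V$ and $\sum_m\Fil^m\BdR\otimes_K\Fil^{-m}\DDb_\dR(V)$ in $\BdR\otimes_{\Qp}V$ under the de Rham comparison isomorphism, whose elementary divisors are exactly the Hodge--Tate weights of $V$ with their multiplicities. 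This gives $\hat t_V(\Qpbar)\cong\bigoplus_{i\in\N}\Bb_i^{\oplus m_i(V)}$, which is 1(d); running the same argument inside $(\Fil^{-n}\BdR/\BdR^+)\otimes V$, where the adapted basis of $\Fil^{-n}t_V$ only involves levels $n_j\in[1,n]$, gives 2(d). For the non-de-Rham cases of 1(b)/2(b), triviality still follows from the same adapted-basis argument applied to the merely exhaustive (but finite-jump) filtration $\Fil^{-\bullet}t_V$, the recurring input being that $K$-linearly independent $G_K$-invariant vectors in a $\Cp$-semilinear representation are $\Cp$-linearly independent. The hard part will be this last local computation of the Jordan type, and for its details — as for the whole proposition — one can ultimately appeal to Fontaine's analysis in \cite[\S 8]{Fontaine2003}.
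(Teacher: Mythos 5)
The paper does not actually prove this proposition: points (a)--(c) of both parts are quoted from Fontaine~\cite[Proposition~8.1]{Fontaine2003}, and the de Rham points (d) from~\cite[Corollary~3.3.4]{Ponsinet2024:1}, the filtered case being declared \enquote{similar}. Your proposal reconstructs that argument from scratch, and the overall strategy is sound and is essentially Fontaine's: Galois descent for smooth semilinear \(\Qpbar\)\=/representations (finite dimensionality of \(t_V\), plus \(M^{G_{K^\prime}} = K^\prime \otimes_K M^{G_K}\) by Hilbert~90 over finite subextensions) gives (a); density of \(\Qpbar\) in \(\BdR^+\) together with strictness/closedness of finite\-/length \(\BdR^+\)\=/submodules gives \(\hat{t}_V(\Qpbar) = \BdR^+ \cdot t_V = \Img(t_V(\BdR^+) \to (\BdR/\BdR^+)\otimes_{\Qp}V)\), i.e.\ (c); and your observation that any trivial torsion subrepresentation \(N\) satisfies \(N = \overline{\Qpbar\cdot N^{G_K}} \subseteq \overline{M_\delta}\) cleanly yields the maximality half of (b).

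Two caveats. First, a concrete slip in the computational step: the two lattices you propose to compare, \(\BdR^+\otimes_{\Qp}V\) and \(\sum_m \Fil^m\BdR\otimes_K\Fil^{-m}\DDb_\dR(V)\), \emph{coincide} for de Rham \(V\) --- the second is exactly the reconstruction of \(\Ec(V)_\dR^+\) from its filtered fibre functor (Theorem~\ref{theorem:genflatFil}) --- so that comparison is vacuous. The computation you want is the relative position of \(\BdR^+\otimes_{\Qp}V\) and the \emph{unfiltered} de Rham lattice \(\BdR^+\otimes_K\DDb_\dR(V)\) (resp.\ \(\BdR^+\otimes_K\Fil^{-n}\DDb_\dR(V)\)), whose elementary divisors over the discrete valuation ring \(\BdR^+\) are the Hodge\--Tate weights; this is precisely the chain of isomorphisms carried out in the proof of Proposition~\ref{proposition:universal_extension_vector_bundles}, and it yields (d). Second, the assertion in (b) that \(\hat{t}_V(\Qpbar)\) is itself a \emph{trivial} torsion representation must hold for arbitrary \(V\), not only de Rham \(V\); your adapted\-/basis argument (injectivity of \(\bigoplus_j \Bb_{n_j} \twoheadrightarrow \BdR^+\cdot t_V\) via \(\Cp\)\=/linear independence of Galois\-/invariant vectors on graded pieces) is the right idea but is exactly the point you leave to Fontaine, so the proposal is a correct and well\-/organised reduction rather than a complete self\-/contained proof.
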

\begin{proof}
	The first three points in both cases are due to Fontaine~\cite[Proposition~8.1]{Fontaine2003}.
	For last points under the assumption that \(V\) is de Rham, the statement for \(\hat{t}_V(\Qpbar)\) is \cite[Corollary~3.3.4]{Ponsinet2024:1}, while the statement for \(\Fil^n \hat{t}_V(\Qpbar)\) is proved similarly.
\end{proof}

We fix an integer \(n \geq 1\).
We set \(E_\e(V) = {\Be \otimes_{\Qp} V}\), and \(E_n(V) = {\Fil^{-n} \Be \otimes_{\Qp} V}\).
The tensor product of \(V\) with the fundamental exact sequences~\eqref{eq:funda} and~\eqref{eq:funda_fil} yields a commutative diagram with exact rows
\begin{equation} \label{eq:funda_V}
	\begin{tikzcd}
		0 \ar{r} & V \ar{r} & E_e(V) \ar{r} & (\BdR/\BdR^+) \otimes_{\Qp} V \ar{r} & 0 \\
		0 \ar{r} & V \ar{r} \ar[equal]{u} & E_n(V) \ar{r} \ar{u} & (\Fil^{-n} \BdR/\BdR^+) \otimes_{\Qp} V \ar{r} \ar{u} & 0.
	\end{tikzcd}
\end{equation}

Let \(E_+(V)\) be the reciprocal image of \(\hat{t}_V(\Qpbar)\) in \(E_\e(V)\), and let \(E_+^n(V)\) be the reciprocal image of \(\Fil^{-n} \hat{t}_V(\Qpbar)\) in \(E_n(V)\).
The diagram~\eqref{eq:funda_V} induces a commutative diagram with exact rows
\begin{equation} \label{eq:funda_+}
	\begin{tikzcd}
		0 \ar{r} & V \ar{r} & E_+(V) \ar{r} & \hat{t}_V(\Qpbar) \ar{r} & 0 \\
		0 \ar{r} & V \ar{r} \ar[equal]{u} & E^n_+(V) \ar{r} \ar{u} & \Fil^{-n} \hat{t}_V(\Qpbar) \ar{r} \ar{u} & 0.
	\end{tikzcd}
\end{equation}

Fontaine~\cite[\S 8.2]{Fontaine2003} has established the following properties of \(E_+(V)\) and \(E_+^n(V)\).

\begin{proposition}[Fontaine]
	The topological \(G_K\)\=/modules \(E_+(V)\) and \(E_+^n(V)\) are almost \(\Cp\)\=/representations of \(G_K\).
	Moreover, they satisfy the following universal properties.
	\begin{enumerate}
		\item The almost \(\Cp\)\=/representation \(E_+(V)\) is the universal extension of \(V\) by a trivial torsion \(\BdR^+\)\=/representation in \(\Bc(G_K)\).
		\item The almost \(\Cp\)\=/representation \(E_+^n(V)\) is the universal extension of \(V\) by a trivial \(\Bb_n\)\=/representation in \(\Bc(G_K)\).
	\end{enumerate}
\end{proposition}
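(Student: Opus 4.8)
The plan is to derive both assertions from the two short exact sequences of topological \(G_K\)-modules in diagram~\eqref{eq:funda_+}, namely
\[
	0 \rightarrow V \rightarrow E_+(V) \rightarrow \hat{t}_V(\Qpbar) \rightarrow 0
	\qquad\text{and}\qquad
	0 \rightarrow V \rightarrow E_+^n(V) \rightarrow \Fil^{-n}\hat{t}_V(\Qpbar) \rightarrow 0 .
\]
For the first assertion, I would first observe that these are short exact sequences in \(\Bc(G_K)\): by construction \(E_+(V)\) and \(E_+^n(V)\) carry the subspace topology from \(E_\e(V)\) and \(E_n(V)\), while by Proposition~\ref{proposition:t_V} the quotients \(\hat{t}_V(\Qpbar)\) and \(\Fil^{-n}\hat{t}_V(\Qpbar)\) are finitely generated torsion \(\BdR^+\)-representations of \(G_K\), hence \(\Qp\)-Banach spaces; since \(V\) is finite dimensional over \(\Qp\) the underlying sequences of \(\Qp\)-Banach spaces split, so \(E_+(V)\) and \(E_+^n(V)\) are \(p\)-adic Banach representations of \(G_K\). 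As \(V\) is an object of \(\Cc^0(G_K)\) and \(\hat{t}_V(\Qpbar)\), \(\Fil^{-n}\hat{t}_V(\Qpbar)\) are objects of \(\Cc^{+\infty}(G_K)\) (Proposition~\ref{proposition:t_V}), and since \(\Cc(G_K)\) is closed under extensions inside \(\Bc(G_K)\) (Fontaine~\cite{Fontaine2003}), both \(E_+(V)\) and \(E_+^n(V)\) are almost \(\Cp\)-representations of \(G_K\).

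For the universal properties I would treat \(E_+(V)\); the argument for \(E_+^n(V)\) is identical, with the fundamental exact sequence~\eqref{eq:funda} replaced by~\eqref{eq:funda_fil} and "trivial torsion \(\BdR^+\)-representation" by "trivial \(\Bb_n\)-representation" throughout. Unwinding the definition, and using that \(\Hom_{\Bc(G_K)}(W, V) = 0\) for a torsion \(\BdR^+\)-representation \(W\) (a torsion \(\BdR^+\)-representation admits no non-zero morphism to a \(p\)-adic representation — for instance by passing to coherent sheaves, where a torsion sheaf has no non-zero morphism to a vector bundle), the claim amounts to the bijectivity, for every trivial torsion \(\BdR^+\)-representation \(W\), of the map
\[
	\Hom_{\Bc(G_K)}\!\big(W, \hat{t}_V(\Qpbar)\big) \longrightarrow \Ext^1_{\Bc(G_K)}(W, V), \qquad \psi \longmapsto \psi^\ast[E_+(V)]
\]
sending a morphism to the pullback of the extension class of the lower row of~\eqref{eq:funda_+}. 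Both sides being additive in \(W\), and every trivial torsion \(\BdR^+\)-representation being a finite direct sum of copies of the \(\Bb_i\), one reduces to \(W = \Bb_i\). I would then identify this map with the connecting homomorphism obtained by applying \(\Hom_{\Bc(G_K)}(W, -)\) to the fundamental exact sequence~\eqref{eq:funda} tensored with \(V\),
\[
	0 \rightarrow V \rightarrow E_\e(V) \rightarrow (\BdR/\BdR^+) \otimes_{\Qp} V \rightarrow 0 ,
\]
of which \(E_+(V)\) is the preimage of \(\hat{t}_V(\Qpbar)\). Two inputs make the connecting map bijective: first, the full faithfulness of the forgetful functor \(\Rep_{\BdR^+}^\tor(G_K) \rightarrow \Bc(G_K)\) together with the fact that \(W\), being trivial, is generated over \(\BdR^+\) by \(W^{G_K}\), so that — any such morphism being automatically \(\BdR^+\)-linear — the image of any morphism \(W \rightarrow (\BdR/\BdR^+)\otimes_{\Qp} V\) lies in the \(\BdR^+\)-span of \(\big((\BdR/\BdR^+)\otimes_{\Qp}V\big)^{G_K}\), which is precisely \(\hat{t}_V(\Qpbar)\) by Proposition~\ref{proposition:t_V}; second, the vanishing of \(\Hom_{\Bc(G_K)}(W, \Be\otimes_{\Qp}V)\) (giving injectivity) and of \(\Ext^1_{\Bc(G_K)}(W, \Fil^{-m}\Be\otimes_{\Qp}V)\) for \(m\) sufficiently large (giving surjectivity — here one uses that an extension of \(W\) by \(V\) in \(\Bc(G_K)\) has Banach middle term, so any lift to \(E_\e(V)\) automatically factors through some \(\Fil^{-m}\Be\otimes_{\Qp}V\)).

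The hard part is the last vanishing. Since \(E_\e(V) = \Be\otimes_{\Qp}V\) is not a \(\Qp\)-Banach space (only an \(LF\)-space, the increasing union of the \(\Fil^{-m}\Be\otimes_{\Qp}V\)), it must be handled on the Fargues--Fontaine curve. There \(\Ec(V) = \Oc_{\XFF}\otimes_{\Qp}V\) is a vector bundle, semistable of slope \(0\), and, under the dictionary of Theorem~\ref{theorem:almost_coh}, \(\Fil^{-m}\Be\otimes_{\Qp}V\) corresponds for \(m \geq 1\) to a \(G_K\)-equivariant vector bundle all of whose Harder--Narasimhan slopes are \(>0\). A torsion \(G_K\)-equivariant coherent sheaf admits no non-zero morphism to such a bundle, whence the vanishing of the \(\mathrm{Hom}\)-group; and, using the spectral sequence relating \(\mathrm{Ext}\) in \(\Coh_{\XFF}(G_K)\) to the \(G_K\)-cohomology of the \(\mathrm{Ext}\)-sheaves on \(\XFF\), together with \(\Hom_{\BdR^+}(\text{torsion},\text{free}) = 0\) and the Tate twist occurring in \(\Ext^1_{\BdR^+}\) of a torsion \(\BdR^+\)-module into a free one, one reduces the \(\mathrm{Ext}^1\)-group to the \(G_K\)-invariants of a Tate twist of a fixed finitely generated torsion \(\BdR^+\)-representation, which vanish once \(m\) exceeds all the Hodge--Tate--Sen weights of \(V\). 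A secondary technical point, to be checked carefully, is that \(\mathrm{Hom}\) and \(\mathrm{Ext}^1\) may be computed indifferently in \(\Bc(G_K)\), \(\Cc(G_K)\), or \(\Coh_{\XFF}(G_K)\) for the objects involved, which follows since \(\Cc(G_K)\) is a full subcategory of \(\Bc(G_K)\) closed under extensions and \(\Coh_{\XFF}^{\geq 0}(G_K) \similarrightarrow \Cc^{\geq 0}(G_K)\) is an equivalence of exact categories (Theorem~\ref{theorem:almost_coh}).
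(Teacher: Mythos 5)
The paper offers no proof of this proposition: it is quoted verbatim from Fontaine~\cite[\S 8.2]{Fontaine2003}, so there is no in\-/paper argument to compare yours against. Your reconstruction follows Fontaine's strategy and is sound. The first assertion is correctly reduced to the stability of \(\Cc(G_K)\) under extensions inside \(\Bc(G_K)\) applied to the rows of~\eqref{eq:funda_+} (the splitting of the underlying sequences of topological \(\Qp\)\=/vector spaces uses that a finite\-/dimensional subspace of a Hausdorff locally convex space over the spherically complete field \(\Qp\) is complemented, which is worth saying explicitly). For the universal property, the reformulation as bijectivity of \(\psi \mapsto \psi^\ast[E_+(V)]\), the reduction to \(W = \Bb_i\), and the identification with the connecting map of the fundamental exact sequence are all correct, as is your use of Proposition~\ref{proposition:t_V} to see that any morphism from \(W\) lands in \(\hat{t}_V(\Qpbar)\). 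The one step that remains a sketch rather than a proof is the vanishing of \(\Ext^1(\Bb_i,\Fil^{-m}\Be\otimes_{\Qp}V)\) for \(m\gg 0\): your reduction to \(\HH^0(G_K,\Ext^1_{\BdR^+}(\Bb_i,t^{-m}\BdR^+\otimes_{\Qp}V))\), a \(G_K\)\=/invariant of a Tate twist killed once \(m\) clears the Hodge\--Tate\--Sen weights of \(V\), is the right computation, but you should spell out why this is the only surviving term (the sheaf \(\Hom\) from a torsion sheaf to a vector bundle vanishes, so both \(\HH^1(G_K,\Hom)\) and the \(\HH^1(\XFF,-)\) contribution die), and your chain of identifications of Yoneda \(\Ext^1\) across \(\Bc(G_K)\), \(\Cc(G_K)\), \(\Cc^{\geq 0}(G_K)\) and \(\Coh_{\XFF}^{\geq 0}(G_K)\) — which you rightly flag — is exactly what licenses moving the computation to the curve. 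One wording slip: for surjectivity it is the \emph{pushout} of a given extension of \(W\) by \(V\) along \(V \rightarrow \Fil^{-m}\Be\otimes_{\Qp}V\) that must split, not a \enquote{lift to \(E_\e(V)\)}; read that way, your argument is the intended one.
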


\begin{lemma} \label{lemma:E+_Fil}
	Assume that \(V\) is de Rham.
	\begin{enumerate}
		\item If the Hodge\--Tate weights of \(V\) are all \(\leq 0\), then
		\[
			V = E_+^n(V) = E_+(V).
		\]
		\item If the Hodge\--Tate weights of \(V\) are all \(\leq n\), then
		\[
			E_+^n(V)=E_+(V).
		\]
	\end{enumerate}
\end{lemma}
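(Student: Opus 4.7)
The plan is to deduce both assertions from the commutative diagram~\eqref{eq:funda_+} combined with the explicit description of \(\hat{t}_V(\Qpbar)\) and \(\Fil^{-n}\hat{t}_V(\Qpbar)\) provided by Proposition~\ref{proposition:t_V}. Since the left column of~\eqref{eq:funda_+} is the identity on \(V\), it is enough to analyse the right column \(\Fil^{-n}\hat{t}_V(\Qpbar) \to \hat{t}_V(\Qpbar)\); the five lemma applied to~\eqref{eq:funda_+} will then transfer the conclusion to the middle column \(E_+^n(V) \to E_+(V)\).

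For the first assertion, the hypothesis is that the Hodge\--Tate weights of \(V\) are all \(\leq 0\), so that the multiplicity \(m_i(V)\) of \(i\) as a Hodge\--Tate weight of \(V\) vanishes for every \(i \geq 1\). Since \(V\) is de Rham, Proposition~\ref{proposition:t_V} then yields
\[
	\hat{t}_V(\Qpbar) \similarrightarrow \bigoplus_{i \in \N} \Bb_i^{\oplus m_i(V)} = 0, \qquad \Fil^{-n}\hat{t}_V(\Qpbar) \similarrightarrow \bigoplus_{i \in [1,n]} \Bb_i^{\oplus m_i(V)} = 0,
\]
where we have used that \(\Bb_0 = 0\). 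The two short exact sequences of the diagram~\eqref{eq:funda_+} then collapse and give \(V = E_+^n(V) = E_+(V)\).

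For the second assertion, the hypothesis is that the Hodge\--Tate weights of \(V\) are all \(\leq n\), so \(m_i(V) = 0\) for every \(i > n\). Again by Proposition~\ref{proposition:t_V}, both \(\hat{t}_V(\Qpbar)\) and \(\Fil^{-n}\hat{t}_V(\Qpbar)\) are then isomorphic to \(\bigoplus_{i \in [1,n]} \Bb_i^{\oplus m_i(V)}\), and the natural inclusion \(\Fil^{-n}\hat{t}_V(\Qpbar) \hookrightarrow \hat{t}_V(\Qpbar)\) appearing as the right column of~\eqref{eq:funda_+} becomes an isomorphism. The five lemma applied to~\eqref{eq:funda_+} then forces \(E_+^n(V) \similarrightarrow E_+(V)\).

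The only point that merits verification is that the right column in~\eqref{eq:funda_+} is indeed the inclusion \(\Fil^{-n}\hat{t}_V(\Qpbar) \hookrightarrow \hat{t}_V(\Qpbar)\) induced by \(\Fil^{-n}\BdR/\BdR^+ \hookrightarrow \BdR/\BdR^+\), but this is immediate from the construction of the two modules as the topological closures of the respective discrete images in Proposition~\ref{proposition:t_V}. No substantive obstacle is anticipated; the lemma is essentially a direct consequence of the de Rham decomposition of the filtered tangent space.
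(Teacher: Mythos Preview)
Your proof is correct and follows the same approach as the paper, which simply records that both statements follow from Proposition~\ref{proposition:t_V}; you have merely spelled out the details of how the vanishing or coincidence of the filtered tangent spaces, read off from the multiplicities \(m_i(V)\), feeds through the diagram~\eqref{eq:funda_+}.
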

\begin{proof}
	Both statements follow from Proposition~\ref{proposition:t_V}.
\end{proof}

\begin{proposition} \label{proposition:universal_extension_vector_bundles}
	If \(V\) is de Rham, then there exists an isomorphism of short exact sequences of almost \(\Cp\)\=/representations of \(G_K\)
	\[
		\begin{tikzcd}
			0 \ar{r} & V \ar{r} \ar{d}[sloped]{\sim} & E_+^n(V) \ar{r} \ar{d}[sloped]{\sim} & \Fil^{-n} \hat{t}_V(\Qpbar) \ar{r} \ar{d}[sloped]{\sim} & 0 \\
			0 \ar{r} & \HH^0(\XFF,\Ec(V)) \ar{r} & \HH^0(\XFF,\Ec_+^n(V)) \ar{r} & \HH^0(\XFF,\Fc_+^n(V)) \ar{r} & 0.
		\end{tikzcd}
	\]
\end{proposition}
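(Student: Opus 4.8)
The plan is to realise both rows of the claimed diagram, inside $\BdR \otimes_{\Qp} V$, as literally the same sequence of submodules and subquotients, following the proof of the corresponding statement for the functor $\tau_\dR^{\leq 0}$ in~\cite{Ponsinet2024:1}.

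First I would compute $\HH^0(\XFF,-)$ of the three sheaves in~\eqref{eq:EV} by means of Proposition~\ref{proposition:cohomology_HN}, each time as a submodule (resp.\ subquotient) of $\BdR \otimes_{\Qp} V$. Since $\Ec(V)$ is semistable of slope $0$ by Theorem~\ref{theorem:almost_coh}, one has $\HH^1(\XFF,\Ec(V)) = 0$ and $\HH^0(\XFF,\Ec(V)) = (\Be \otimes_{\Qp} V) \cap (\BdR^+ \otimes_{\Qp} V) = \Qp \otimes_{\Qp} V = V$, using $\Be \cap \BdR^+ = \Fil^0 \Be = \Qp$. With $\Ec_+^n(V)_\dR^+ = \BdR^+ \otimes_{\Qp} V + \BdR^+ \otimes_K \Fil^{-n} \DDb_\dR(V)$ regarded inside $\BdR \otimes_{\Qp} V$, this gives $\HH^0(\XFF,\Ec_+^n(V)) = (\Be \otimes_{\Qp} V) \cap \Ec_+^n(V)_\dR^+$; and since $\Fc_+^n(V) = (0,\Fc_+^n(V)_\dR^+)$ is torsion with $\Fc_+^n(V)_\dR^+ = \Ec_+^n(V)_\dR^+/(\BdR^+ \otimes_{\Qp} V)$, one gets $\HH^0(\XFF,\Fc_+^n(V)) = \Fc_+^n(V)_\dR^+$ and $\HH^1(\XFF,\Fc_+^n(V)) = 0$. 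Because $\HH^1(\XFF,\Ec(V)) = 0$, the long exact sequence attached to~\eqref{eq:EV} then shows that the bottom row of the claimed diagram is a short exact sequence.

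Next I would use that $V$ is de Rham to identify $\BdR \otimes_{\Qp} V$ with $\BdR \otimes_K \DDb_\dR(V)$ compatibly with filtrations and the $G_K$-action; under this identification $\BdR^+ \otimes_{\Qp} V$ becomes $\sum_{i \in \Z} \Fil^i \BdR \otimes_K \Fil^{-i} \DDb_\dR(V)$ (the quasi-inverse~\eqref{eq:fil_rep}), and, by Lemma~\ref{lemma:module_modification_filtration}, the modification defining $\Ec_+^n(V)_\dR^+$ changes the filtration only in degrees $[-n,0]$; hence $\Fc_+^n(V)_\dR^+$ is annihilated by $t^n$ and $\BdR^+ \otimes_{\Qp} V \subseteq \Ec_+^n(V)_\dR^+ \subseteq \Fil^{-n}\BdR \otimes_{\Qp} V$. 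Therefore $\Fc_+^n(V)_\dR^+ = \Ec_+^n(V)_\dR^+/(\BdR^+ \otimes_{\Qp} V)$ embeds into $(\Fil^{-n}\BdR/\BdR^+) \otimes_{\Qp} V$ with image equal to the image of $\BdR^+ \otimes_K \Fil^{-n}\DDb_\dR(V)$. Since $V$ is de Rham, $\Fil^{-n} t_V = \Fil^{-n}\DDb_\dR(V)/\Fil^0\DDb_\dR(V)$ and $\BdR^+ \otimes_K \Fil^0\DDb_\dR(V) \subseteq \BdR^+ \otimes_{\Qp} V$ maps to zero in that quotient, so by Proposition~\ref{proposition:t_V} this image is exactly $\Fil^{-n}\hat{t}_V(\Qpbar)$, whence $\HH^0(\XFF,\Fc_+^n(V)) \similarrightarrow \Fil^{-n}\hat{t}_V(\Qpbar)$.

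Finally I would match the middle terms: I claim $\HH^0(\XFF,\Ec_+^n(V)) = (\Be \otimes_{\Qp} V) \cap \Ec_+^n(V)_\dR^+$ equals $E_+^n(V)$. By definition $E_+^n(V)$ is the preimage of $\Fil^{-n}\hat{t}_V(\Qpbar)$ in $E_n(V) = \Fil^{-n}\Be \otimes_{\Qp} V$ under the map of~\eqref{eq:funda_V}; and, using $\Be \cap \Fil^{-n}\BdR = \Fil^{-n}\Be$, an element of $\BdR \otimes_{\Qp} V$ lies in $(\Be \otimes_{\Qp} V) \cap \Ec_+^n(V)_\dR^+$ precisely when it lies in $\Be \otimes_{\Qp} V$ and its class in $(\Fil^{-n}\BdR/\BdR^+) \otimes_{\Qp} V$ lies in the image of $\BdR^+ \otimes_K \Fil^{-n}\DDb_\dR(V)$, that is, by the previous paragraph, in $\Fil^{-n}\hat{t}_V(\Qpbar)$ — which is exactly the condition defining $E_+^n(V)$. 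Under all these identifications the inclusion $\HH^0(\XFF,\Ec(V)) \hookrightarrow \HH^0(\XFF,\Ec_+^n(V))$ and the surjection $\HH^0(\XFF,\Ec_+^n(V)) \twoheadrightarrow \HH^0(\XFF,\Fc_+^n(V))$ become the maps $V \hookrightarrow E_+^n(V)$ and $E_+^n(V) \twoheadrightarrow \Fil^{-n}\hat{t}_V(\Qpbar)$ of diagram~\eqref{eq:funda_+}, all of them being the obvious maps on submodules and subquotients of $\BdR \otimes_{\Qp} V$, so the resulting diagram commutes on the nose. I expect the only real work to be the bookkeeping of the chain of $\BdR^+$-lattices $\BdR^+ \otimes_{\Qp} V \subseteq \Ec_+^n(V)_\dR^+ \subseteq \Fil^{-n}\BdR \otimes_{\Qp} V$ inside $\BdR \otimes_{\Qp} V$, together with the routine but essential use of the de Rham hypothesis to pass between $\Fil^{-n} t_V$ and $\Fil^{-n}\DDb_\dR(V)$; once this is in place, commutativity and exactness are automatic.
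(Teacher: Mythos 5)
Your proposal is correct and follows essentially the same route as the paper's proof: compute the global sections of the sequence~\eqref{eq:EV} via the exact sequence of Proposition~\ref{proposition:cohomology_HN} (exactness coming from the vanishing of \(\HH^1(\XFF,-)\) on sheaves of slopes \(\geq 0\)), identify \(\HH^0(\XFF,\Fc_+^n(V))\) with \(\Fil^{-n}\hat{t}_V(\Qpbar)\) through Proposition~\ref{proposition:t_V} and the de Rham identification \(\Fil^{-n} t_V \cong \Fil^{-n}\DDb_\dR(V)/\Fil^0\DDb_\dR(V)\), and conclude that the middle term is the preimage defining \(E_+^n(V)\). Your explicit verification that \((\Be \otimes_{\Qp} V) \cap \Ec_+^n(V)_\dR^+\) coincides with the preimage taken inside \(\Fil^{-n}\Be \otimes_{\Qp} V\) is a detail the paper leaves implicit, but the argument is the same.
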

\begin{proof}
	The statement is proved similarly to~\cite[Proposition~3.3.1 and Lemma~3.3.2]{Ponsinet2024:1}.
	By Lemma~\ref{lemma:HN_geq0}, the Harder\--Narasimhan slopes of the sheaves
	\[
		0 \rightarrow \Ec(V) \rightarrow \Ec_+^n(V) \rightarrow \Fc_+^n(V) \rightarrow 0
	\]
	are all \(\geq 0\).
	Thus, by Proposition~\ref{proposition:cohomology_HN}, there exists a commutative diagram of topological \(G_K\)\=/modules
	\[
		\begin{tikzcd}
			0 \ar{r} & V \ar{r} & E_\e(V) \ar{r} & (\BdR/\BdR^+)\otimes_{\Qp} V \ar{r} & 0 \\
			0 \ar{r} & \HH^0(\XFF,\Ec(V)) \ar{r} \ar{u}[sloped]{\sim} & \HH^0(\XFF,\Ec_+^n(V)) \ar{r} \ar{u} & \HH^0(\XFF,\Fc_+^n(V)) \ar{r} \ar{u} & 0.
		\end{tikzcd}
	\]
	where the bottom rows is a short exact sequence of almost \(\Cp\)\=/representations of \(G_K\) by Theorem~\ref{theorem:almost_coh}.
	Since \(V\) is de Rham, there are isomorphisms of torsion \(\BdR^+\)\=/representations
	\[
		\begin{split}
			\Fil^{-n} \hat{t}_V(\Qpbar) & \similarrightarrow \Img\left( \Fil^{-n} t_V(\BdR^+) \rightarrow (\BdR/\BdR^+)\otimes_{\Qp} V \right) \\
			& \similarrightarrow \Img\left( \BdR^+ \otimes_K (\Fil^{-n} \DDb_\dR(V)/\Fil^0 \DDb_\dR(V)) \rightarrow (\BdR/\BdR^+)\otimes_{\Qp} V \right) \\
			& \similarrightarrow \Img\left( \frac{\BdR^+ \otimes_K \Fil^{-n} \DDb_\dR(V)}{\BdR^+ \otimes_K \Fil^0 \DDb_\dR(V)} \rightarrow \frac{\BdR \otimes_{\Qp} V}{\BdR^+\otimes_{\Qp} V} \right) \\
			& \similarrightarrow \frac{\BdR^+\otimes_{\Qp} V + \BdR^+ \otimes_K \Fil^{-n} \DDb_\dR(V)}{\BdR^+\otimes_{\Qp} V} \\
			& \similarrightarrow \HH^0(\XFF,\Fc_+^n(V)),
		\end{split}
	\]
	where the first isomorphism is from Proposition~\ref{proposition:t_V} and the last one from Theorem~\ref{theorem:almost_coh}.
\end{proof}

The combination of Proposition~\ref{proposition:universal_extension_vector_bundles} and Proposition~\ref{proposition:HN_E+n} yields the following.

\begin{corollary} \label{corollary:E0}
	If \(V\) is de Rham, then there is a natural isomorphism of \(p\)\=/adic representation of \(G_K\)
	\[
		E_+^n(V)^0 \similarrightarrow V^{\leq 0,> n}.
	\]
\end{corollary}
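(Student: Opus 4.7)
The plan is to combine the two preceding propositions directly: Proposition~\ref{proposition:universal_extension_vector_bundles} identifies the almost \(\Cp\)\=/representation \(E_+^n(V)\) with the global sections \(\HH^0(\XFF,\Ec_+^n(V))\), and Proposition~\ref{proposition:HN_E+n} computes the first step of the Harder\--Narasimhan filtration of the vector bundle \(\Ec_+^n(V)\). Reading the construction of \(X^0\) inside the proof of Proposition~\ref{proposition:almost_torsion_relation}, we see that the maximal quotient \(p\)\=/adic representation is obtained from precisely this first step, so the result will essentially fall out.

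In more detail, first I would observe that by Lemma~\ref{lemma:HN_geq0} the Harder--Narasimhan slopes of \(\Ec_+^n(V)\) are all \(\geq 0\), which together with the equivalence of exact categories \(\Coh_{\XFF}^{\geq 0}(G_K)\similarrightarrow \Cc^{\geq 0}(G_K)\) from Theorem~\ref{theorem:almost_coh} and Proposition~\ref{proposition:universal_extension_vector_bundles} shows that \(E_+^n(V)\) is effective, i.e.\ \(E_+^n(V)^{<0}=0\) and \(E_+^n(V)=E_+^n(V)^{\geq 0}\). Hence the sheaf \(\Fc^{\geq 0}\) appearing in the proof of Proposition~\ref{proposition:almost_torsion_relation} can be taken to be \(\Ec_+^n(V)\) itself.

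Next I would invoke Proposition~\ref{proposition:HN_E+n}, which furnishes the first step of the Harder\--Narasimhan filtration of \(\Ec_+^n(V)\),
\[
    0\rightarrow \Ec_+^n(V_{0,n})\rightarrow \Ec_+^n(V)\rightarrow \Ec(V^{\leq 0,>n})\rightarrow 0,
\]
with \(\Ec_+^n(V_{0,n})\) having all HN slopes \(>0\) and \(\Ec(V^{\leq 0,>n})\) semistable of slope~\(0\). By the construction of \(X^0\) in the proof of Proposition~\ref{proposition:almost_torsion_relation}, we have \(E_+^n(V)^0\similarrightarrow \HH^0(\XFF,\Ec(V^{\leq 0,>n}))\). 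Applying the equivalence \(\Cc^0(G_K)\similarrightarrow \Coh_{\XFF}^0(G_K)\) from Theorem~\ref{theorem:almost_coh}, which sends \(W\) to \(\Ec(W)\), yields the identification \(\HH^0(\XFF,\Ec(V^{\leq 0,>n}))\similarrightarrow V^{\leq 0,>n}\), giving the desired natural isomorphism \(E_+^n(V)^0\similarrightarrow V^{\leq 0,>n}\).

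There is no real obstacle: the entire argument is an application of the machinery already established. The only point requiring any care is to make sure that the quotient \(\Ec_+^n(V)\rightarrow \Ec(V^{\leq 0,>n})\) extracted by Proposition~\ref{proposition:HN_E+n} genuinely is the semistable\--of\--slope\--\(0\) piece at the top of the HN filtration (so that the construction in the proof of Proposition~\ref{proposition:almost_torsion_relation} produces this very quotient), and that naturality in \(V\) is preserved throughout\--both of which are direct from the statements cited.
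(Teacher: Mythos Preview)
Your proposal is correct and follows exactly the paper's approach: the paper simply states that the corollary is the combination of Proposition~\ref{proposition:universal_extension_vector_bundles} and Proposition~\ref{proposition:HN_E+n}, and you have unpacked this combination by tracing through the construction of \(X^0\) in the proof of Proposition~\ref{proposition:almost_torsion_relation}. The only difference is level of detail, not substance.
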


Let \(T\) be a \(G_K\)\=/stable lattice in \(V\).
We set \(E_+(V/T) = E_+(V)/T\) and \(E_+^n(V/T) = E_+^n(V)/T\), and thus the diagram~\eqref{eq:funda_+} induces a commutative diagram of topological \(G_K\)\=/modules with exact rows
\begin{equation} \label{eq:funda_+_T}
	\begin{tikzcd}
		0 \ar{r} & V/T \ar{r} & E_+(V/T) \ar{r} & \hat{t}_V(\Qpbar) \ar{r} & 0 \\
		0 \ar{r} & V/T \ar{r} \ar[equal]{u} & E^n_+(V/T) \ar{r} \ar{u} & \Fil^{-n} \hat{t}_V(\Qpbar) \ar{r} \ar{u} & 0.
	\end{tikzcd}
\end{equation}
We set \(E_\delta(V/T) = (E_+(V/T))_\delta\) and \(E_\delta^n(V/T) = (E_+^n(V/T))_\delta\), and thus, by Lemma~\ref{lemma:delta_ses}, the diagram~\eqref{eq:funda_+_T} induces a commutative diagram of discrete \(G_K\)\=/modules with exact rows
\begin{equation} \label{eq:grp_pts}
	\begin{tikzcd}
		0 \ar{r} & V/T \ar{r} & E_\delta(V/T) \ar{r} & t_V(\Qpbar) \ar{r} & 0 \\
		0 \ar{r} & V/T \ar{r} \ar[equal]{u} & E^n_\delta(V/T) \ar{r} \ar{u} & \Fil^{-n} t_V(\Qpbar) \ar{r} \ar{u} & 0
	\end{tikzcd}
\end{equation}

\begin{remark}
	Fontaine has defined the \emph{group of points} \(E_\disc(V/T)\) associated with \(V/T\) as the image of \(E_\delta(V/T)\) in \(E_+(V/T)\).
	As in the article~\cite{Ponsinet2024:1}, we use different notation to highlight the different topologies: \(E_\delta(V/T)\) is a discrete \(G_K\)\=/module, while \(E_\disc(V/T)\) is endowed with the subspace topology from \(E_+(V/T)\).
\end{remark}

\begin{proposition} \label{proposition:BK_delta}
	Let \(L\) be an algebraic extension of \(K\).
	The commutative diagram of discrete \(G_K\)\=/modules~\eqref{eq:grp_pts} induces a commutative diagram whose rows are exact
	\[
	\begin{tikzcd}
		0 \ar{r} & \HH^1_e(L,V/T) \ar{r} & \HH^1(L,V/T) \ar{r} & \HH^1(L,E_\delta(V/T)) \ar{r} & 0 \\
		0 \ar{r} & \Fil^{-n} \HH^1_e(L,V/T) \ar{r} \ar{u} & \HH^1(L,V/T) \ar{r} \ar[equal]{u} & \HH^1(L,E_\delta^n(V/T)) \ar{r} \ar{u} & 0.
	\end{tikzcd}
	\]
\end{proposition}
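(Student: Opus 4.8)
The plan is to work first over each finite extension $K'$ of $K$, identifying $\Fil^{-n}\HH^1_e(K',V/T)$ with $\Ker\bigl(\HH^1(K',V/T)\to\HH^1(K',E_\delta^n(V/T))\bigr)$ for the map induced by the bottom row of~\eqref{eq:grp_pts}, and then passing to the direct limit over the finite extensions of $K$ contained in $L$; the top row of the asserted diagram is obtained in the same way from the top row of~\eqref{eq:grp_pts} and the fundamental exact sequence~\eqref{eq:funda} in place of~\eqref{eq:funda_fil}. To begin, set $E_n(V)=\Fil^{-n}\Be\otimes_{\Qp}V$; the tensor product of~\eqref{eq:funda_fil} with $V$ and its quotient by $T$ form a morphism of short exact sequences of topological $G_K$-modules, with kernels $V$ and $V/T$, with middle terms $E_n(V)$ and $E_n(V)/T$, and with the same right-hand term $(\Fil^{-n}\BdR/\BdR^+)\otimes_{\Qp}V$. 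By~\eqref{eq:esh0h1} the top row is exact through $\HH^1$, so the definition of $\Fil^{-n}\HH^1_e(K',V)$ identifies it with the image of the connecting map $\delta_{K'}\colon\HH^0(K',(\Fil^{-n}\BdR/\BdR^+)\otimes V)\to\HH^1(K',V)$; composing with $\HH^1(K',V)\to\HH^1(K',V/T)$ and using naturality of connecting maps shows that $\Fil^{-n}\HH^1_e(K',V/T)$ is the image of the connecting map $\delta'_{K'}$ of the bottom row. Note also that $\HH^0(K',(\Fil^{-n}\BdR/\BdR^+)\otimes V)=\HH^0(K',\Fil^{-n}t_V(\Qpbar))$: any $G_{K'}$-invariant element of $(\Fil^{-n}\BdR/\BdR^+)\otimes V$ is fixed by an open subgroup, hence lies in $\bigl((\Fil^{-n}\BdR/\BdR^+)\otimes V\bigr)_\delta=\Fil^{-n}t_V(\Qpbar)$, and $\Fil^{-n}t_V(\Qpbar)\subseteq\Fil^{-n}\hat{t}_V(\Qpbar)\subseteq(\Fil^{-n}\BdR/\BdR^+)\otimes V$.

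Next I would compare $\delta'_{K'}$ with the connecting map $\partial_{K'}$ of the bottom row $0\to V/T\to E_\delta^n(V/T)\to\Fil^{-n}t_V(\Qpbar)\to 0$ of~\eqref{eq:grp_pts}. Since $E_+^n(V)$ is by construction the preimage of $\Fil^{-n}\hat{t}_V(\Qpbar)$ in $E_n(V)$, the inclusion $E_+^n(V)/T\hookrightarrow E_n(V)/T$ together with the inclusion $E_\delta^n(V/T)\subseteq E_+^n(V)/T$ of the maximal discrete submodule yields a morphism of short exact sequences from that row of~\eqref{eq:grp_pts} to the bottom row of the diagram above which is the identity on $V/T$ and which on the quotients is the inclusion $\Fil^{-n}t_V(\Qpbar)\subseteq(\Fil^{-n}\BdR/\BdR^+)\otimes V$ — the latter inducing an isomorphism on $\HH^0(K',-)$ by the previous paragraph (here one uses that the maximal discrete submodule of $\Fil^{-n}\hat{t}_V(\Qpbar)$ is $\Fil^{-n}t_V(\Qpbar)$, which follows from the displayed chain of inclusions). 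Naturality of connecting maps then gives $\Img\partial_{K'}=\Img\delta'_{K'}=\Fil^{-n}\HH^1_e(K',V/T)$. On the other hand, the long exact sequence of $0\to V/T\to E_\delta^n(V/T)\to\Fil^{-n}t_V(\Qpbar)\to 0$ together with the vanishing $\HH^1(K',\Fil^{-n}t_V(\Qpbar))=0$ — which holds because $\Fil^{-n}t_V(\Qpbar)\similarrightarrow\Fil^{-n}t_V\otimes_K\Qpbar$ by Proposition~\ref{proposition:t_V} is a $\Qp$-vector space, hence uniquely divisible, while the higher Galois cohomology of a discrete module is torsion~\cite{Serre1994} — identifies $\Img\partial_{K'}$ with $\Ker\bigl(\HH^1(K',V/T)\to\HH^1(K',E_\delta^n(V/T))\bigr)$ and shows this last map is surjective. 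Hence $0\to\Fil^{-n}\HH^1_e(K',V/T)\to\HH^1(K',V/T)\to\HH^1(K',E_\delta^n(V/T))\to 0$ is exact.

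Finally I would pass to the colimit over the finite extensions $K'$ of $K$ contained in $L$. Since $V/T$ and $E_\delta^n(V/T)$ are discrete $G_K$-modules, $\HH^1(L,-)$ of each is the colimit of the $\HH^1(K',-)$, the group $\Fil^{-n}\HH^1_e(L,V/T)$ is by definition the colimit of the $\Fil^{-n}\HH^1_e(K',V/T)$, and filtered colimits are exact, so the finite-level sequences combine into the exact bottom row of the asserted diagram. The top row is proved verbatim, replacing $\Fil^{-n}\Be$, $\Fil^{-n}\BdR/\BdR^+$, $\Fil^{-n}\hat{t}_V(\Qpbar)$, $\Fil^{-n}t_V(\Qpbar)$, $E_\delta^n(V/T)$ by $\Be$, $\BdR/\BdR^+$, $\hat{t}_V(\Qpbar)$, $t_V(\Qpbar)$, $E_\delta(V/T)$ and~\eqref{eq:funda_fil} by~\eqref{eq:funda}. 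The right-hand square of the diagram is obtained by applying $\HH^\ast(L,-)$ to the morphism of short exact sequences~\eqref{eq:grp_pts}, and the left-hand square commutes because its two vertical arrows are the canonical inclusions into $\HH^1(L,V/T)$. I expect the only delicate point to be the matching of the connecting map coming from the abstract definition of the Bloch--Kato filtration (via $\Be$) with the one coming from Fontaine's discrete module $E_\delta^n(V/T)$; this is precisely what the two morphisms of short exact sequences above are for, and it is the step carried out in~\cite[\S 3]{Ponsinet2024:1} in the analogous unfiltered case.
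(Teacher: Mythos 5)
Your proposal is correct and follows essentially the same route as the paper: identify $\Fil^{-n}\HH^1_e(K',V/T)$ over each finite $K'$ by comparing the fundamental exact sequence tensored with $V$ (and its quotient by $T$) with Fontaine's sequence $0\to V/T\to E_\delta^n(V/T)\to\Fil^{-n}t_V(\Qpbar)\to 0$, using that $\HH^0(K',-)$ does not change when passing to the maximal discrete submodule, that $\HH^1(K',\Fil^{-n}t_V(\Qpbar))$ vanishes because this module is uniquely divisible while higher cohomology of discrete modules is torsion, and then passing to the colimit over $K'\subseteq L$. The paper organizes the comparison via four-term exact sequences ending in $\Fil^{-n}\HH^1_e(K',V/T)$ rather than via images of connecting maps, but this is only a cosmetic difference.
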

\begin{proof}
	The statement for \(E_\delta(V/T)\) is \cite[Proposition~3.2.1]{Ponsinet2024:1}, while the statement for \(E_\delta^n(V/T)\) is proved similarly.
	If \(K^\prime\) is a finite extension of \(K\), then the cohomology of \(K^\prime\) of the diagram
	\[
		\begin{tikzcd}
			0 \ar{r} & V \ar{r} & E_n(V) \ar{r} & (\Fil^{-n} \BdR/\BdR^+) \otimes_{\Qp} V \ar{r} & 0 \\
			0 \ar{r} & V \ar{r} \ar[equal]{u} & E_+^n(V) \ar{r} \ar{u} & \Fil^{-n} \hat{t}_V(\Qpbar) \ar{r} \ar{u} & 0
		\end{tikzcd}
	\]
	induces the commutative diagram with exact rows
	\[
		\begin{tikzcd}
			0 \ar{r} & V^{G_{K^\prime}} \ar{r} & \Fil^{-n} \DDb_{\cris,K^\prime}(V)^{\varphi=1} \ar{r} & \Fil^{-n} t_V(K^\prime) \ar{r} & \HH^1(K^\prime,V) \\
			0 \ar{r} & V^{G_{K^\prime}} \ar{r} \ar[equal]{u} & E_+^n(V)^{G_{K^\prime}} \ar{r} \ar{u}[sloped]{\sim} & \Fil^{-n} t_V(K^\prime) \ar{r} \ar[equal]{u} & \HH^1(K^\prime,V) \ar[equal]{u}.
		\end{tikzcd}
	\]
	Hence, by definition of \(\Fil^{-n} \HH^1_e(K^\prime,V)\), there is an exact sequence
	\begin{equation} \label{eq:E_BK}
		0 \rightarrow V^{G_{K^\prime}} \rightarrow E_+^n(V)^{G_{K^\prime}} \rightarrow \Fil^{-n} t_V(K^\prime) \rightarrow \Fil^{-n} \HH^1_e(K^\prime,V) \rightarrow 0.
	\end{equation}
	The commutative diagram
	\[
		\begin{tikzcd}
			0 \ar{r} & V \ar{r} \ar{d} & E_+^n(V) \ar{r}  \ar{d} & \Fil^{-n} \hat{t}_V(\Qpbar) \ar{r}  \ar[equal]{d} & 0 \\
			0 \ar{r} & V/T \ar{r} & E_+^n(V/T) \ar{r} & \Fil^{-n} \hat{t}_V(\Qpbar) \ar{r} & 0
		\end{tikzcd}
	\]
	induces the commutative diagram with exact rows
	\begin{equation} \label{eq:E_V_T}
		\begin{tikzcd}
			0 \ar{r} & V^{G_{K^\prime}} \ar{r} \ar{d} & E_+^n(V)^{G_{K^\prime}} \ar{r} \ar{d} & \Fil^{-n} t_V(K^\prime) \ar{r} \ar[equal]{d} & \HH^1(K^\prime,V) \ar{d} \\
			0 \ar{r} & (V/T)^{G_{K^\prime}} \ar{r}  & E_+^n(V/T)^{G_{K^\prime}} \ar{r}  & \Fil^{-n} t_V(K^\prime) \ar{r} & \HH^1(K^\prime,V/T).
		\end{tikzcd}
	\end{equation}
	Hence, by definition of \(\Fil^{-n} \HH^1_e(K^\prime,V/T)\), the exact sequence~\eqref{eq:E_BK}, and the commutativity of the diagram~\eqref{eq:E_V_T}, there is an exact sequence
	\begin{equation} \label{eq:E_BK_T}
		0 \rightarrow (V/T)^{G_{K^\prime}} \rightarrow E_+^n(V/T)^{G_{K^\prime}} \rightarrow \Fil^{-n} t_V(K^\prime) \rightarrow \Fil^{-n} \HH^1_e(K^\prime,V/T) \rightarrow 0.
	\end{equation}
	Therefore, by the exact sequence~\eqref{eq:E_BK_T}, the short exact sequence of discrete \(G_K\)\=/modules
	\[
		0 \rightarrow V/T \rightarrow E_\delta^n(V/T) \rightarrow \Fil^{-n} t_V (\Qpbar) \rightarrow 0
	\]
	induces a short exact sequence
	\begin{equation} \label{eq:delta_BK}
		0 \rightarrow \Fil^{-n} \HH^1_e(K^\prime,V/T) \rightarrow \HH^1(K^\prime,V/T) \rightarrow \HH^1(K^\prime,E_\delta^n(V/T)) \rightarrow 0.
	\end{equation}
	By definition of \(\Fil^{-n} \HH^1_e(L,V/T)\) and by continuity of Galois cohomology with coefficients in discrete modules~\cite[I \S 2.2 Proposition~8]{Serre1994}, the limit of the short exact sequences~\eqref{eq:delta_BK} over the finite extensions \(K^\prime\) of \(K\) contained in \(L\) with transition morphisms the restriction maps yields
	\[
		0 \rightarrow \Fil^{-n} \HH^1_e(L,V/T) \rightarrow \HH^1(L,V/T) \rightarrow \HH^1(L,E_\delta^n(V/T)) \rightarrow 0.
	\]
\end{proof}

The combination of Proposition~\ref{proposition:BK_delta} and Lemma~\ref{lemma:E+_Fil} yields the following.

\begin{corollary} \label{corollary:BK_HT}
	Let \(L\) be an algebraic extension of \(K\).
	Assume that \(V\) is de Rham.
	\begin{enumerate}
		\item If the Hodge\--Tate weights of \(V\) are all \(\leq 0\), then
		\[
			\HH^1_e(L,V/T) = 0.
		\]
		\item If the Hodge\--Tate weights of \(V\) are all \(\leq n\), then
		\[
			\Fil^{-n} \HH^1_e(L,V/T) = \HH^1_e(L,V/T).
		\]
	\end{enumerate}
\end{corollary}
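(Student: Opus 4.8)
The plan is to obtain both statements as purely formal consequences of Proposition~\ref{proposition:BK_delta} and Lemma~\ref{lemma:E+_Fil}. Under each hypothesis, Lemma~\ref{lemma:E+_Fil} identifies the almost \(\Cp\)-representation \(E_+^n(V)\) (respectively \(E_+(V)\)) with a simpler object, and this identification propagates through the constructions \(X \mapsto X/T\) and \(M \mapsto M_\delta\) to the discrete \(G_K\)-modules \(E_\delta^n(V/T)\) and \(E_\delta(V/T)\) that govern the exponential Bloch--Kato groups via Proposition~\ref{proposition:BK_delta}.

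For the first statement I would argue as follows. If the Hodge--Tate weights of \(V\) are all \(\leq 0\), then Lemma~\ref{lemma:E+_Fil} gives \(E_+(V) = V\), hence \(E_+(V/T) = E_+(V)/T = V/T\); since \(V/T\) is already a discrete \(G_K\)-module, \(E_\delta(V/T) = (E_+(V/T))_\delta = (V/T)_\delta = V/T\), and by functoriality of the constructions involved the inclusion \(V/T \hookrightarrow E_\delta(V/T)\) in the top row of the diagram~\eqref{eq:grp_pts} becomes the identity. Consequently the map \(\HH^1(L,V/T) \to \HH^1(L,E_\delta(V/T))\) in the top row of the diagram of Proposition~\ref{proposition:BK_delta} is the identity, and exactness of that row forces \(\HH^1_e(L,V/T) = 0\).

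For the second statement, if the Hodge--Tate weights of \(V\) are all \(\leq n\), then Lemma~\ref{lemma:E+_Fil} gives \(E_+^n(V) = E_+(V)\); applying \(X \mapsto X/T\) and then \(M \mapsto M_\delta\) yields \(E_\delta^n(V/T) = E_\delta(V/T)\), compatibly with the two inclusions of \(V/T\), so that the right-hand vertical arrow in the diagram of Proposition~\ref{proposition:BK_delta} is the identity. By that proposition the subgroups \(\Fil^{-n}\HH^1_e(L,V/T)\) and \(\HH^1_e(L,V/T)\) of \(\HH^1(L,V/T)\) are the kernels of the maps to \(\HH^1(L,E_\delta^n(V/T))\) and to \(\HH^1(L,E_\delta(V/T))\) respectively; since these two maps agree under the identification just made, the two kernels coincide.

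I do not expect a genuine obstacle: the corollary is a formal bookkeeping step. The only point deserving care — and the one I would make explicit — is that the equalities \(E_+(V) = V\) and \(E_+^n(V) = E_+(V)\) furnished by Lemma~\ref{lemma:E+_Fil} are compatible with the respective structure morphisms (the inclusion of \(V\), and after passing to \(V/T\)), hence induce the identity on the cohomology groups \(\HH^1(L,E_\delta(V/T))\) and \(\HH^1(L,E_\delta^n(V/T))\) appearing in the diagram of Proposition~\ref{proposition:BK_delta}; this follows at once from the functoriality of \(X \mapsto X/T\) and of \(M \mapsto M_\delta\), so no real computation is required.
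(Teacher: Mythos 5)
Your argument is correct and is precisely the paper's route: the paper derives the corollary by combining Proposition~\ref{proposition:BK_delta} with Lemma~\ref{lemma:E+_Fil}, exactly as you do, and you have merely made explicit the (routine) compatibility of the identifications \(E_+(V)=V\) and \(E_+^n(V)=E_+(V)\) with the passage to \(E_\delta(V/T)\) and \(E_\delta^n(V/T)\). No gaps.
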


\subsection{Bloch\--Kato groups and Galois theory of \(\BdR^+\)}
\label{subsec:universal_norms_Gal_periods}

Let \(V\) be a \(p\)\=/adic representation of \(G_K\).
Let \(L\) be an algebraic extension of \(K\).

If \(V\) is de Rham and if \(n \geq 1\) is an integer, then we denote by \(T^{\leq 0,> n}\) the image of \(T\) in \(V^{\leq 0,> n}\), and the quotient map \(V/T \rightarrow V^{\leq 0,> n}/T^{\leq 0,> n}\) induces a morphism
\[
	\pi_{0,n} : \HH^1(L,V/T) \rightarrow \HH^1(L,V^{\leq 0,> n}/T^{\leq 0,> n}).
\]
Note that if the Hodge\--Tate weights of \(V\) are all \(\leq n\), then the representation \(V^{\leq 0,> n}\) is the maximal quotient representation of \(V\) whose Hodge\--Tate weights are all \(\leq 0\), and we then simply denote the representation \(V^{\leq 0,> n}\) by \(V^{\leq 0}\), the lattice \(T^{\leq 0,> n}\) by \(T^{\leq 0}\), and the map \(\pi_{0,n}\) by
\[
	\pi_0 : \HH^1(L,V/T) \rightarrow \HH^1(L,V^{\leq 0}/T^{\leq 0}).
\]

\begin{theorem} \label{theorem:main}
	Let \(n \geq 1\) be an integer.
	If \(V\) is de Rham and if \(\hat{L}\) is a perfectoid field such that \(L\) is dense in \(\Bb_n^{G_L}\), then the map \(\pi_{0,n}\) induces an isomorphism
	\[
		\HH^1(L,V/T)/\Fil^{-n} \HH^1_e(L,V/T) \similarrightarrow \HH^1(L,V^{\leq 0,> n}/T^{\leq 0,> n}).
	\]
\end{theorem}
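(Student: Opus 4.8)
The plan is to deduce the statement from Proposition~\ref{proposition:BK_delta}, which already identifies the quotient $\HH^1(L,V/T)/\Fil^{-n}\HH^1_e(L,V/T)$ with $\HH^1(L,E_\delta^n(V/T))$, by computing the latter group via Corollary~\ref{corollary:delta_perfectoid}. Concretely, I would apply Corollary~\ref{corollary:delta_perfectoid} to the short exact sequence
\[
	0 \rightarrow V \rightarrow E_+^n(V) \rightarrow \Fil^{-n}\hat{t}_V(\Qpbar) \rightarrow 0
\]
in $\Cc(G_K)$ (the bottom row of~\eqref{eq:funda_+}) with the $G_K$-stable lattice $T$ in $V$, so that $X=E_+^n(V)$, $Z=V$, $\Zc=T$, $Y=\Fil^{-n}\hat{t}_V(\Qpbar)$, $X/\Zc = E_+^n(V/T)$ and $(X/\Zc)_\delta = E_\delta^n(V/T)$, while $Y_\delta = \Fil^{-n}t_V(\Qpbar)$ as in~\eqref{eq:grp_pts}.

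The next step is to identify the three terms occurring in the conclusion of Corollary~\ref{corollary:delta_perfectoid}. By Corollary~\ref{corollary:E0} one has $X^0 = E_+^n(V)^0 \similarrightarrow V^{\leq 0,>n}$; chasing the lattice through this isomorphism shows the image $\Zc^{(0)}$ of $T$ is $T^{\leq 0,>n}$, so $X^0/\Zc^{(0)} = V^{\leq 0,>n}/T^{\leq 0,>n}$. Moreover $Y = \Fil^{-n}\hat{t}_V(\Qpbar)$ is a trivial $\Bb_n$-representation, hence a torsion $\BdR^+$-representation and therefore an object of $\Cc^{+\infty}(G_K) \subseteq \Cc^{\neq 0}(G_K)$; consequently its maximal quotient $p$-adic representation $Y^0$ is zero, so $\HH^1(L,Y^0)=0$. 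Granting the density hypothesis below, Corollary~\ref{corollary:delta_perfectoid} then yields an isomorphism $\HH^1(L,E_\delta^n(V/T)) \similarrightarrow \HH^1(L,V^{\leq 0,>n}/T^{\leq 0,>n})$, and composing with Proposition~\ref{proposition:BK_delta} gives the asserted isomorphism; that it is induced by $\pi_{0,n}$ follows by unwinding the maps, using the naturality of the unit of $\tau_{\HT}^{\leq 0,>n}$ together with Proposition~\ref{proposition:HN_E+n} to see that $E_+^n(V)\rightarrow E_+^n(V)^0$ restricts on $V$ to the quotient map $V\rightarrow V^{\leq 0,>n}$, so that the composite $V/T \rightarrow V^{\leq 0,>n}/T^{\leq 0,>n}$ is the one defining $\pi_{0,n}$.

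The one hypothesis of Corollary~\ref{corollary:delta_perfectoid} that is not immediate is that $(Y_\delta)^{G_L} = (\Fil^{-n}t_V(\Qpbar))^{G_L}$ is dense in $Y^{G_L}=(\Fil^{-n}\hat{t}_V(\Qpbar))^{G_L}$, and establishing it is where the running assumption enters. Here I would invoke Proposition~\ref{proposition:t_V}: since $V$ is de Rham, $Y \similarrightarrow \bigoplus_{i\in[1,n]}\Bb_i^{\oplus m_i(V)}$, and under this isomorphism $Y_\delta = \Fil^{-n}t_V(\Qpbar) \similarrightarrow \Fil^{-n}t_V\otimes_K\Qpbar$ is the maximal discrete submodule, that is, the $\Qpbar$-span of the constants in the factors $\Bb_i$; taking $G_L$-invariants reduces the required density to the density of $L$ in $\Bb_i^{G_L}$ for each $1\leq i\leq n$. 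To pass from the hypothesis that $L$ is dense in $\Bb_n^{G_L}$ to density in each $\Bb_i^{G_L}$, I would use that the $G_K$-equivariant continuous surjection $\Bb_n \twoheadrightarrow \Bb_i$ has kernel $\Fil^i\BdR/\Fil^n\BdR$, a torsion $\BdR^+$-representation and thus an object of $\Cc^{+\infty}(G_K)\subseteq\Cc^{>0}(G_K)$, so that $\HH^1(L,\Fil^i\BdR/\Fil^n\BdR)=0$ by Corollary~\ref{corollary:FF_perfectoid_almost} as $\hat{L}$ is perfectoid; hence $\Bb_n^{G_L}\rightarrow\Bb_i^{G_L}$ is a continuous surjection carrying the constant field $L$ onto the constant field $L$, and density propagates.

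I expect this density step to be the crux: it is the only place where the specific hypothesis that $L$ is dense in $\Bb_n^{G_L}$ (rather than, say, merely in $\Bb_1^{G_L}$) is used, and essentially the only place where perfectoidness of $\hat{L}$ intervenes beyond what Corollary~\ref{corollary:delta_perfectoid} already packages, through the vanishing of $\HH^1(L,-)$ on torsion $\BdR^+$-representations. The remaining steps --- the identifications of $X^0$, $\Zc^{(0)}$ and $Y^0$, and the verification that the composite isomorphism is induced by $\pi_{0,n}$ --- are straightforward diagram chases relying on Corollaries~\ref{corollary:E0} and~\ref{corollary:delta_perfectoid} and Proposition~\ref{proposition:HN_E+n}.
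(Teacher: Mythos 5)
Your proposal is correct and follows essentially the same route as the paper's proof: reduce via Proposition~\ref{proposition:BK_delta} to computing \(\HH^1(L,E_\delta^n(V/T))\), apply Corollary~\ref{corollary:delta_perfectoid} to the extension \(0 \rightarrow V \rightarrow E_+^n(V) \rightarrow \Fil^{-n}\hat{t}_V(\Qpbar) \rightarrow 0\) using the vanishing of \(\HH^1(L,-)\) on objects of \(\Cc^{+\infty}(G_K)\), and identify \(E_+^n(V)^0\) with \(V^{\leq 0,>n}\) via Corollary~\ref{corollary:E0}. Your elaboration of the density step --- reducing density of \((\Fil^{-n}t_V(\Qpbar))^{G_L}\) in \((\Fil^{-n}\hat{t}_V(\Qpbar))^{G_L}\) to density of \(L\) in each \(\Bb_i^{G_L}\), \(i\leq n\), via the surjectivity of \(\Bb_n^{G_L}\rightarrow\Bb_i^{G_L}\) --- correctly fills in what the paper leaves implicit in its citation of Proposition~\ref{proposition:t_V}.
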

\begin{proof}
	By Proposition~\ref{proposition:BK_delta}, there is an isomorphism
	\[
		\HH^1(L,V/T)/\Fil^{-n} \HH^1_e(L,V/T) \similarrightarrow \HH^1(L,E_\delta^n(V/T)).
	\]
	Since \(L\) is dense in \(\Bb_n^{G_L}\), the module \((\Fil^{-n} t_V(\Qpbar))^{G_L}\) is dense in \((\Fil^{-n} \hat{t}_V(\Qpbar))^{G_L}\) by Proposition~\ref{proposition:t_V}.
	Moreover, since \(\Fil^{-n} \hat{t}_V(\Qpbar)\) is an object of \(\Cc^{+\infty}(G_K)\) and thus of \(\Cc^{\neq 0}(G_K)\), the group \(\HH^1(L,\Fil^{-n} \hat{t}_V(\Qpbar))\) is trivial by Proposition~\ref{proposition:cohomology_almost_perfectoid}.
	Therefore, by Corollary~\ref{corollary:delta_perfectoid} and Corollary~\ref{corollary:E0}, there are isomorphisms
	\[
		\HH^1(L,E_\delta^n(V/T)) \similarrightarrow \HH^1(L,E_+^n(V/T)) \similarrightarrow \HH^1(L,V^{\leq 0,> n}/T^{\leq 0,> n}).
	\]
\end{proof}

\begin{corollary} \label{corollary:main}
	Let \(n \geq 1\) be an integer.
	Assume that \(V\) is de Rham and that \(\hat{L}\) is a perfectoid field such that \(L\) is dense in \(\Bb_n^{G_L}\).
	\begin{enumerate}
		\item If the quotient representation \(V^{\leq 0,> n}\) is trivial, then
		\[
			\HH^1_e(L,V/T) = \HH^1(L,V/T).
		\]
		\item If the Hodge\--Tate weights of \(V\) are all \(\leq n\), then the map \(\pi_0\) induces an isomorphism
		\[
			\HH^1(L,V/T)/\HH^1_e(L,V/T) \similarrightarrow \HH^1(L,V^{\leq 0}/T^{\leq 0}).
		\]
	\end{enumerate}
\end{corollary}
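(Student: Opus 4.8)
The plan is to deduce both parts directly from Theorem~\ref{theorem:main}, which already absorbs all the substantive content of the result --- the perfectoid hypothesis, the density of $L$ in $\Bb_n^{G_L}$, the reduction to the almost $\Cp$-representation $E_+^n(V)$, and the classification of coherent sheaves over the Fargues--Fontaine curve. So no new idea is needed: one simply feeds the two hypotheses of the corollary into that isomorphism and reads off the conclusions.

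For part~(1), if $V^{\leq 0,>n}$ is trivial, i.e.\ the zero representation, then $T^{\leq 0,>n} = 0$, hence $V^{\leq 0,>n}/T^{\leq 0,>n} = 0$ and $\HH^1(L,V^{\leq 0,>n}/T^{\leq 0,>n}) = 0$. Theorem~\ref{theorem:main} then gives $\HH^1(L,V/T) = \Fil^{-n}\HH^1_e(L,V/T)$. On the other hand, since $\Fil^0\Be = \Qp \subseteq \Fil^{-n}\Be \subseteq \Be$, the inclusion $V \hookrightarrow \Be\otimes_{\Qp} V$ factors through $\Fil^{-n}\Be\otimes_{\Qp} V$, so functoriality of Galois cohomology gives $\Fil^{-n}\HH^1_e(K^\prime,V) \subseteq \HH^1_e(K^\prime,V)$ for every finite extension $K^\prime/K$; passing to images under $\HH^1(K^\prime,V)\to\HH^1(K^\prime,V/T)$ and then to the colimit over $K^\prime \subset L$ yields $\Fil^{-n}\HH^1_e(L,V/T)\subseteq\HH^1_e(L,V/T)\subseteq\HH^1(L,V/T)$. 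Comparing with the previous equality forces $\HH^1_e(L,V/T) = \HH^1(L,V/T)$.

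For part~(2), I would first invoke the notational convention recorded just before Theorem~\ref{theorem:main}: when all Hodge--Tate weights of $V$ are $\leq n$, one has $V^{\leq 0,>n} = V^{\leq 0}$, $T^{\leq 0,>n} = T^{\leq 0}$, and $\pi_{0,n} = \pi_0$. Next, Corollary~\ref{corollary:BK_HT}(2) --- equivalently, Lemma~\ref{lemma:E+_Fil}(2), which gives $E_+^n(V) = E_+(V)$ and hence $E_\delta^n(V/T) = E_\delta(V/T)$, combined with Proposition~\ref{proposition:BK_delta} --- shows that $\Fil^{-n}\HH^1_e(L,V/T) = \HH^1_e(L,V/T)$ under this hypothesis. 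Substituting both identifications into the isomorphism of Theorem~\ref{theorem:main} produces the desired isomorphism $\HH^1(L,V/T)/\HH^1_e(L,V/T)\similarrightarrow\HH^1(L,V^{\leq 0}/T^{\leq 0})$ induced by $\pi_0$.

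I do not expect a genuine obstacle here: the corollary is a bookkeeping consequence of Theorem~\ref{theorem:main}. The only steps deserving a line of care are the vanishing $\HH^1(L,0)=0$ together with the elementary inclusion $\Fil^{-n}\HH^1_e\subseteq\HH^1_e$ in part~(1), and the Hodge--Tate-weight identifications (plus Corollary~\ref{corollary:BK_HT}) in part~(2); neither is delicate.
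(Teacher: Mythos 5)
Your proposal is correct and follows the paper's own route exactly: part (1) is read off from Theorem~\ref{theorem:main} together with the containment \(\Fil^{-n}\HH^1_e(L,V/T)\subseteq\HH^1_e(L,V/T)\subseteq\HH^1(L,V/T)\), and part (2) combines Theorem~\ref{theorem:main} with Corollary~\ref{corollary:BK_HT}(2) and the notational identification \(V^{\leq 0,>n}=V^{\leq 0}\). The extra details you supply (the factorisation through \(\Fil^{-n}\Be\otimes_{\Qp}V\), and the provenance of Corollary~\ref{corollary:BK_HT} from Lemma~\ref{lemma:E+_Fil} and Proposition~\ref{proposition:BK_delta}) are accurate elaborations of what the paper leaves implicit.
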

\begin{proof}
	The first statement follows immediately from Theorem~\ref{theorem:main}.
	The second statement follows from Theorem~\ref{theorem:main} and Corollary~\ref{corollary:BK_HT}.
\end{proof}

\printbibliography[heading=bibintoc]
\end{document}